\pgfplotsset{compat=1.16}
\newtheorem{theorem}{Theorem}
\newtheorem{lemma}[theorem]{Lemma}
\newtheorem{assumption}{Assumption}
\newtheorem{definition}{Definition}
\newtheorem{proposition}[theorem]{Proposition}
\newtheorem{remark}{Remark}
\newtheorem{corollary}[theorem]{Corollary}
\DeclareMathAlphabet{\mathpzc}{OT1}{pzc}{m}{it}
\DeclareMathOperator{\dist}{dist}
\let\div\relax
\DeclareMathOperator{\div}{div}
\DeclareMathOperator{\Int}{Int}
\DeclareMathOperator{\dom}{dom}
\newcommand{\dps}{\displaystyle }
\newcommand{\bbR}{\mathbb{R}}
\newcommand{\bbT}{\mathbb{T}}
\newcommand{\bbE}{\mathbb{E}} 
\newcommand{\bbN}{\mathbb{N}}
\newcommand{\calO}{\mathcal{O}}
\newcommand{\calA}{\mathcal{A}}
\newcommand{\calB}{\mathcal{B}}
\newcommand{\calX}{\mathcal{X}}
\newcommand{\calC}{\mathcal{C}}
\newcommand{\calK}{\mathcal{K}}
\newcommand{\calF}{\mathcal{F}}
\newcommand{\calL}{\mathcal{L}}
\newcommand{\calE}{\mathcal{E}}
\newcommand{\calJ}{\mathcal{J}}
\newcommand{\calR}{\mathcal{R}}
\newcommand{\calU}{\mathcal{U}}
\newcommand{\calG}{\mathcal{G}}
\newcommand{\calN}{\mathcal{N}}
\newcommand{\calV}{\mathcal{V}}
\newcommand{\calD}{\mathcal{D}}
\newcommand{\rmO}{\mathrm{O}}
\newcommand{\rmG}{\mathrm{G}}
\newcommand{\rmI}{\mathrm{I}}
\newcommand{\rme}{\mathrm{e}}
\newcommand{\rmd}{\mathrm{d}}
\newcommand{\sfT}{\mathsf{T}}
\newcommand{\scrF}{\mathscr{F}}
\newcommand{\imp}{\mathrm{imp}}
\newcommand{\rev}{\mathrm{rev}}
\newcommand{\GSV}{\mathrm{GSV}}
\newcommand{\IMR}{\mathrm{IMR}}
\newcommand{\ES}{\mathrm{ES}}
\newcommand{\GHMALA}{\mathrm{GHMALA}}
\newcommand{\FP}{\mathrm{FP}}
\newcommand{\FD}{\mathrm{FD}}
\newcommand{\SV}{\mathrm{SV}}
\newcommand{\EulerA}{\mathrm{Euler\,A}}
\newcommand{\EulerB}{\mathrm{Euler\,B}}
\newcommand{\id}{\mathrm{id}}
\newcommand{\pr}{\mathrm{pr}}
\newcommand{\prior}{\mathrm{prior}}
\newcommand{\lik}{\mathrm{lik}}
\newcommand{\post}{\mathrm{post}}
\newcommand{\rmsim}{\mathrm{sim}}
\newcommand{\iter}{\mathrm{iter}}
\newcommand{\bins}{\mathrm{bins}}
\newcommand{\Newton}{\mathrm{Newton}}
\newcommand{\TV}{\mathrm{TV}}
\let\real\relax
\newcommand{\real}{\mathrm{real}}
\newcommand{\iso}{\mathrm{iso}}
\newcommand{\aniso}{\mathrm{aniso}}
\newcommand{\pol}{\mathrm{pol}}
\newcommand{\fwd}{\mathrm{fwd}}
\begin{document}

%---------------------------------------------------------
%---------------------------------------------------------
%---------------------------------------------------------
%---------------------------------------------------------
%---------------------------------------------------------
%---------------- TITLE ----------------------------------
%---------------------------------------------------------
%---------------------------------------------------------
%---------------------------------------------------------
%---------------------------------------------------------
%---------------------------------------------------------
  
\title{Unbiasing Hamiltonian Monte Carlo algorithms for a general Hamiltonian function}
\author{T. Lelièvre,$^{1,2}$ R. Santet,$^{1,2}$  and G. Stoltz$^{1,2}$ \\
\small 1: CERMICS, Ecole des Ponts, Marne-la-Vallée, France \\
\small 2: MATHERIALS project-team, Inria, Paris, France
}

%---------------------------------------------------------
%---------------------------------------------------------
%---------------------------------------------------------
%---------------------------------------------------------
%---------------------------------------------------------
%---------------- ABSTRACT -------------------------------
%---------------------------------------------------------
%---------------------------------------------------------
%---------------------------------------------------------
%---------------------------------------------------------
%---------------------------------------------------------
\maketitle
%\tableofcontents
%\listoftodos[Notes]
\begin{abstract}
  
  Hamiltonian Monte Carlo (HMC) is a Markov chain Monte Carlo method that allows to sample high dimensional probability measures. It relies on the integration of the Hamiltonian dynamics to propose a move which is then accepted or rejected thanks to a Metropolis procedure. Unbiased sampling is guaranteed by the preservation by the numerical integrators of two key properties of the Hamiltonian dynamics: volume-preservation and reversibility up to momentum reversal. For separable Hamiltonian functions, some standard explicit numerical schemes, such as the Störmer--Verlet integrator, satisfy these properties. However, for numerical or physical reasons, one may consider a Hamiltonian function which is nonseparable, in which case the standard numerical schemes which preserve the volume and satisfy reversibility up to momentum reversal are implicit. When implemented in practice, such implicit schemes may admit many solutions or none, especially when the timestep is too large. We show here how to enforce the numerical reversibility, and thus unbiasedness, of HMC schemes in this context by introducing a reversibility check. In addition, for some specific forms of the Hamiltonian function, we discuss the consistency of these HMC schemes with some Langevin dynamics, and show in particular that our algorithm yields an efficient discretization of the metropolized overdamped Langevin dynamics with position-dependent diffusion coefficients. Numerical results illustrate the relevance of the reversibility check on simple problems.
\end{abstract}

%---------------------------------------------------------
%---------------------------------------------------------
%---------------------------------------------------------
%---------------------------------------------------------
%---------------------------------------------------------
%---------------- INTRODUCTION ---------------------------
%---------------------------------------------------------
%---------------------------------------------------------
%---------------------------------------------------------
%---------------------------------------------------------
%---------------------------------------------------------
\section{Introduction}

Many applications require sampling probability measures. For instance, in statistical physics~\cite{balian_2007}, macroscopic static properties or transport coefficients are obtained by averaging microscopic properties with respect to probability measures defining the underlying thermodynamic ensemble~\cite{frenkel_2001,evans_2008,tuckerman_2010,leimkuhler_2015,lelievre_2016}. In Bayesian inference, the distribution of parameter values explaining given data points are determined~\cite{robert_2007}. More precisely, the probability measure of interest is the posterior distribution~$P_{\post}(\theta|x)$ of a parameter~$\theta$ given the observed data~$x$, obtained by Bayes' rule as the product (up to a multiplicative constant) of a prior distribution~$P_{\prior}(\theta)$ on~$\theta$ and the likelihood of the data~$P_{\lik}(x|\theta)$. Typically, these problems are set in a high-dimensional setting (the number of atoms in a simulation or the number of parameters explaining the data). The predominant tools to sample probability measures in high dimension are Markov chain Monte Carlo (MCMC) methods~\cite{robert_2004}, where a Markov chain~$(X^{n})_{n\geqslant 0}$ which has a target probability measure~$\mu$ as its invariant probability distribution is built. Ergodic theorems are then invoked to guarantee the convergence of trajectory averages of observables to their expectations with respect to~$\mu$~\cite{meyn_2009,douc_2018}.

In this paper, we are particularly interested in (Generalized) Hamiltonian Monte Carlo--(G)HMC--algorithms~\cite{duane_1987,horowitz_1991,neal_1993,lelievre_2010} to perform MCMC sampling. These algorithms aim at performing an unbiased sampling of the Boltzmann--Gibbs probability measure defined by
\begin{equation}
  \label{eq:measure_non_separable}
  \mu(\rmd q\,\rmd p)=Z_{\mu}^{-1}\rme^{-H(q,p)}\,\rmd q\,\rmd p,
  \qquad Z_{\mu}=\int_{\bbR^{m}\times\bbR^{m}}\rme^{-H},
\end{equation}
where~$H$ is the so-called Hamiltonian function, satisfying~$H(q,-p)=H(q,p)$, and the normalization constant~$Z_{\mu}$ is assumed to be finite. These algorithms are specific cases of Metropolis--Hastings algorithms~\cite{metropolis_1953,hastings_1970} with proposal moves computed by numerically integrating the Hamiltonian dynamics over some fixed time interval:
\begin{equation}
  \label{eq:hamiltonian_dynamics}
  \frac{\rmd}{\rmd t}\begin{pmatrix}
    q_t\\p_t
  \end{pmatrix}=J\nabla H(q_t,p_t),\quad J=\begin{pmatrix}
    0_m&\rmI_m\\-\rmI_m&0_m
  \end{pmatrix},
\end{equation}
where~$\rmI_m$ denotes the identity matrix in dimension~$m$. We make precise in Algorithm~\ref{alg:hmc_standard} the usual HMC scheme, see for instance~\cite[Algorithm~2.3]{lelievre_2010}. The core construction of this paper is to properly define the discrete flow~$\varphi_{\Delta t}$ in step~\ref{step:hmc_standard_2} when dealing with nonseparable Hamiltonian functions, see the ``main result'' paragraph below and Section~\ref{sec:unbiased_HMC_schemes_implicit_integrators}.
\begin{algorithm}
  \caption[]{Standard HMC scheme.}
  \label{alg:hmc_standard}
  Consider an initial condition~$(q^{0},p^{0})\in\bbR^{m}\times\bbR^{m}$, and set~$n= 0$.
  \begin{enumerate}[label={[\thealgorithm.\roman*]}]
    \item \label{step:hmc_standard_1} For a fixed position~$q^{n}$, sample~$\tilde{p}^{n}$ according to~$Z_{\mu_n}^{-1}\rme^{-H(q^{n},\cdot)}$ with~$Z_{\mu_n}=\int_{\bbR^{m}}\rme^{-H(q^{n},p)}\rmd p$, and compute the Hamiltonian~$H(q^{n},\tilde{p}^{n})$ of the configuration~$(q^{n},\tilde{p}^{n})$;
    \item \label{step:hmc_standard_2} Integrate the Hamiltonian dynamics~\eqref{eq:hamiltonian_dynamics} starting from~$(q^{n},\tilde{p}^{n})$ using a numerical flow~$\varphi_{\Delta t}$ to obtain a new configuration (the proposal) denoted by~$(\tilde{q}^{n+1},\tilde{p}^{n+1}):=\varphi_{\Delta t}(q^{n},\tilde{p}^{n})$, and compute its Hamiltonian~$H(\tilde{q}^{n+1},\tilde{p}^{n+1})$;
    \item Accept the proposal with probability
    \begin{equation*}
      \min\left(
        1,
        \exp\left(
          -H(\tilde{q}^{n+1},\tilde{p}^{n+1})+H(q^{n},\tilde{p}^{n})
        \right)
      \right),
    \end{equation*}
    and set~$(q^{n+1},p^{n+1})=(\tilde{q}^{n+1},\tilde{p}^{n+1})$ in this case; otherwise set~$(q^{n+1},p^{n+1})=(q^{n},\tilde{p}^{n})$.
    \item Increment~$n$ and go back to~\ref{step:hmc_standard_1}.
  \end{enumerate}
\end{algorithm}

If the Hamiltonian function is separable, \emph{i.e.}~of the form
\begin{equation}
  \label{eq:hamiltonian_function_separable}
  H(q,p)=V(q)+K(p),
\end{equation}
and~$K(p)=K(-p)$, then Algorithm~\ref{alg:hmc_standard} is easy to implement using the usual Störmer--Verlet integrator~$\varphi_{\Delta t}^{\SV}$ for step~\ref{step:hmc_standard_2} (see~\cite{leimkuhler_2005,hairer_2006}):
\begin{equation}
  \label{eq:stormer_verlet_integrator}
  \varphi_{\Delta t}^{\SV}(q^{n},\tilde{p}^{n}) =: (\tilde{q}^{n+1},\tilde{p}^{n+1}),\qquad
  \left\lbrace
  \begin{aligned}
    \tilde{p}^{n+1/2}&=\tilde{p}^{n}-\frac{\Delta t}{2}\nabla V(q^{n}),\\
    \tilde{q}^{n+1}&=q^{n}+\Delta t\,\nabla K(\tilde{p}^{n+1/2}),\\
    \tilde{p}^{n+1}&=\tilde{p}^{n+1/2}-\frac{\Delta t}{2}\nabla V(\tilde{q}^{n+1}).
  \end{aligned}
  \right.
\end{equation}
Notice that in this case the Störmer--Verlet integrator is explicit.

Algorithm~\ref{alg:hmc_standard} using~$\varphi_{\Delta t}^{\SV}$ is shown to be unbiased (see for instance~\cite{neal_1993,schuette_1999}) using two fundamental properties of the numerical flow~$\varphi_{\Delta t}^{\SV}$:
\begin{itemize}
  \item~$\varphi_{\Delta t}^{\SV}$ preserves the Lebesgue measure, \emph{i.e.} denoting by~$\calB(\bbR^{m}\times\bbR^{m})$ the Borel~$\sigma$-algebra and~$\lambda$ the Lebesgue measure in~$\bbR^{m}\times\bbR^{m}$:
  \begin{equation}
    \label{eq:lebesgue_measure_preservation}
    \forall \calE\in\calB(\bbR^{m}\times\bbR^{m}),\qquad \lambda\left(\left(\varphi_{\Delta t}^{\SV}\right)^{-1}(\calE)\right)=\lambda(\calE);
  \end{equation}
  \item~$\varphi_{\Delta t}^{\SV}$ satisfies~$S$-reversibility, \emph{i.e.}
  \begin{equation}
    \label{eq:S_reversibility_Stormer_Verlet_explicit}
    S\circ\varphi_{\Delta t}^{\SV}\circ S\circ\varphi_{\Delta t}^{\SV}=\id,
  \end{equation}
  where~$\id$ is the identity map and~$S$ is the momentum reversal map acting as
  \begin{equation}
    \label{eq:S_momentum_reversal}
    S(q,p)=(q,-p).
  \end{equation}
\end{itemize}
The Hamiltonian function is required to be an even function of the momenta so that~\eqref{eq:S_reversibility_Stormer_Verlet_explicit} can be satisfied by the numerical integrator. Property~\eqref{eq:lebesgue_measure_preservation} follows from the symplecticity of~$\varphi_{\Delta t}^{\SV}$:
\begin{equation}
  \label{eq:symplecticity}
    \forall(q,p)\in\bbR^{m}\times\bbR^{m},\qquad\nabla \varphi_{\Delta t}^{\SV}(q,p)^{\sfT}J\nabla\varphi_{\Delta t}^{\SV}(q,p)=J,
\end{equation}
where~$J$ is defined in~\eqref{eq:hamiltonian_dynamics}. Property~\eqref{eq:S_reversibility_Stormer_Verlet_explicit} is a consequence of the following two properties~\cite{hairer_2006}:
\begin{equation*}
  \left\lbrace
    \begin{aligned}
      \left(\varphi_{\Delta t}^{\SV}\right)^{-1}&=\varphi_{-\Delta t}^{\SV},\\
      S\circ\varphi_{\Delta t}^{\SV}\circ S&=\varphi_{-\Delta t}^{\SV}.
    \end{aligned}
  \right.
\end{equation*}

For numerical or physical reasons, one may consider a Hamiltonian function which is not separable~\cite{izaguirre_2004,girolami_2011}. For instance, the Riemannian Manifold Hamiltonian Monte Carlo (RMHMC) algorithm proposed in~\cite{girolami_2011} uses a Hamiltonian function of the form
\begin{equation}
  \label{eq:H_RMHMC}
  H(q,p)=V(q)-\frac{1}{2}\ln\det D(q)+\frac{1}{2}p^{\sfT}D(q)p,
\end{equation}
where~$V$ is a potential energy function and~$D$ is a position-dependent diffusion coefficient. To ensure irreducibility, for all~$q\in\bbR^{m}$,~$D(q)$ is a positive definite symmetric matrix. The aim of the RMHMC algorithm is to sample the marginal in position of the probability measure~$\mu$ defined by~\eqref{eq:measure_non_separable}, with~$H$ the Hamiltonian function~\eqref{eq:H_RMHMC}, namely:
\begin{equation}
  \label{eq:invariant_measure_position_RMHMC}
  \pi(\rmd q)=Z_{\pi}^{-1}\rme^{-V(q)}\,\rmd q,\qquad Z_{\pi}=\int_{\bbR^{m}}\rme^{-V}.
\end{equation}
The RMHMC algorithm uses the Riemannian metric tensor~$D^{-1}$ as a numerical parameter to enhance convergence. The computational interest of RMHMC over standard HMC algorithms, in particular for anisotropic or multiscale target distributions, has been discussed in many papers, see for example~\cite{brofos_2021} for a recent contribution. Another situation where nonseparable Hamiltonian functions are involved is the Shadow Hybrid Monte Carlo method~\cite{izaguirre_2004}: starting from a separable Hamiltonian function of the form~\eqref{eq:hamiltonian_function_separable}, additional terms (which are nonseparable in general) define a modified Hamiltonian function which is better preserved over one time step by symplectic integrators, and this leads to faster sampling because moves are less often rejected in the Metropolis--Hastings procedure.

In addition, nonseparable Hamiltonian functions also arise from physical reasons, for instance through a change of coordinate to so-called internal coordinates, see~\cite[Section~XIII.10]{hairer_2006},~\cite{fang_2014} and Section~\ref{subsec:RMHMC_overdamped_Langevin} below. As will be shown in Section~\ref{subsec:RMHMC_overdamped_Langevin}, RMHMC efficiently discretizes overdamped Langevin dynamics (a.k.a.~Brownian dynamics) with position-dependent diffusion coefficients, as it provides a proposal which drastically reduces the rejection probability in the Metropolis--Hastings step, in contrast to simple proposals based on Euler--Maruyama discretizations. These dynamics naturally appear in biological models for example~\cite{saffman_1975,comer_2013,phillips_2023}, and can be derived as effective dynamics along some reaction coordinates in molecular dynamics, see~\cite{legoll_2010}.

When confronted with a nonseparable Hamiltonian function, it is not clear how to numerically integrate the Hamiltonian dynamics~\eqref{eq:hamiltonian_dynamics} while preserving the two fundamental properties~\eqref{eq:lebesgue_measure_preservation} and~\eqref{eq:S_reversibility_Stormer_Verlet_explicit} that ensure the unbiased sampling of the target probability distribution~\eqref{eq:measure_non_separable} by the HMC algorithm. This was already noted in~\cite{brofos_2021}. More precisely, one natural choice for the numerical integrator is the Generalized Störmer--Verlet (GSV) scheme (which reduces to~\eqref{eq:stormer_verlet_integrator} for separable Hamiltonian functions) defined by
\begin{equation}
  \label{eq:GSV_intro}
  \varphi_{\Delta t}^{\GSV}(q^{n},\tilde{p}^{n}) =: (\tilde{q}^{n+1},\tilde{p}^{n+1}),\quad
  \left\lbrace
  \begin{aligned}
    \tilde{p}^{n+1/2}&=\tilde{p}^{n}-\frac{\Delta t}{2}\nabla_{q} H(q^{n},\tilde{p}^{n+1/2}),\\
    \tilde{q}^{n+1}&=q^{n}+\frac{\Delta t}{2}\left(\nabla_{p} H(q^{n},\tilde{p}^{n+1/2})+\nabla_{p}H(\tilde{q}^{n+1},\tilde{p}^{n+1/2})\right),\\
    \tilde{p}^{n+1}&=\tilde{p}^{n+1/2}-\frac{\Delta t}{2}\nabla_{q} H(\tilde{q}^{n+1},\tilde{p}^{n+1/2}).
  \end{aligned}
  \right.
\end{equation}
Indeed, the integrator~$\varphi_{\Delta t}^{\GSV}$ satisfies, in the limit~$\Delta t\to0$, the two properties~\eqref{eq:lebesgue_measure_preservation} and~\eqref{eq:S_reversibility_Stormer_Verlet_explicit} (see for instance~\cite{leimkuhler_2005}). However, the GSV integrator is generally implicit for nonseparable Hamiltonian functions: there is no analytic and unique way to define~$(\tilde{q}^{n+1},\tilde{p}^{n+1})$ solution to~\eqref{eq:GSV_intro} for a given initial configuration~$(q^{n},\tilde{p}^{n})$. As a consequence, the relation~\eqref{eq:S_reversibility_Stormer_Verlet_explicit} may not hold true for a fixed time step~$\Delta t>0$ for two reasons:
\begin{enumerate}[label=(\roman*)]
  \item one of the two numerical methods used to solve the two implicit problems in~\eqref{eq:GSV_intro} may not yield a solution;
  \item even if solutions to the two implicit problems are found, the fact that there may be multiple solutions to the implicit problems may lead to situations where the $S$-reversibility~\eqref{eq:S_reversibility_Stormer_Verlet_explicit} is not satisfied.
\end{enumerate}
The main objective of this paper is to propose a modification of the numerical flow in order to satisfy the two properties~\eqref{eq:lebesgue_measure_preservation} and~\eqref{eq:S_reversibility_Stormer_Verlet_explicit} for any choice of the time step. The idea is to introduce a~$S$-reversibility check in the algorithm, in the spirit of what has recently been proposed to unbias HMC algorithms to sample measures on submanifolds describing mechanically constrained systems~\cite{zappa_2018,lelievre_2019}. In this setting, the Hamiltonian dynamics is constrained to live on the cotangent space associated with the submanifold, which requires to solve an implicit problem to project configurations back to the cotangent space. Popular methods to integrate these Hamiltonian dynamics are the RATTLE and SHAKE methods~\cite{ryckaert_1977, andersen_1983}. The HMC algorithm using the RATTLE or the SHAKE method without a~$S$-reversibility check leads to biased results, see~\cite{zappa_2018,lelievre_2019,lelievre_2022}; similarly, the HMC algorithm using the GSV dynamics without a~$S$-reversibility check exhibits a bias, as numerically demonstrated in Section~\ref{sec:numerical_results}.

%---------------------------------------------------------
%---------------------------------------------------------
%---------------- MAIN RESULTS ---------------------------
%---------------------------------------------------------
%---------------------------------------------------------

\paragraph{Main results.} The main results of this work are the following.
\begin{itemize}
  \item First, we construct a map~$\varphi_{\Delta t}^{\GSV,\rev}$ which corresponds to one step of the GSV dynamics with~$S$-reversibility checks (see~\eqref{eq:B_k}--\eqref{eq:psi_rev} in Section~\ref{subsec:enforcing_S_reversibility} below for a precise definition). We then show that the HMC algorithm described in Algorithm~\ref{alg:hmc_standard} with~$\varphi_{\Delta t}=\varphi_{\Delta t}^{\GSV,\rev}$ in step~\ref{step:hmc_standard_2} is unbiased. This leads to Algorithm~\ref{alg:hmc_scheme_rev_check}. We also show that the GHMC algorithm can be unbiased in a similar way, see Algorithm~\ref{alg:ghmc_scheme_rev_check}.

  In practice, the output of~$\varphi_{\Delta t}^{\GSV,\rev}$ provides a proposal move when three successive conditions are met:
  \begin{itemize}
    \item the GSV scheme~\eqref{eq:GSV_intro} can be solved from an initial condition~$(q^{0},p^{0})$, in which case the obtained configuration is denoted by~$(q^{1},p^{1})$;
    \item the GSV scheme can be solved from~$S(q^{1},p^{1})=(q^{1},-p^{1})$, in which case the obtained configuration is denoted by~$(q^{2},p^{2})$;
    \item $S$-reversibility is observed. For GSV, this reduces to the condition~$q^{2}=q^{0}$ (see Lemma~\ref{lem:solver_rev_from_flow_rev_gsv} below).
  \end{itemize}
  If one of the conditions is not met, the proposal move is rejected. We make precise practical implementations of the algorithm using Newton's method in Sections~\ref{subsec:practical_numerial_solver_newton_method}, see Algorithms~\ref{alg:newton_practice} and~\ref{alg:psi_dt_rev}.
  \item Second, we discuss the consistency of the obtained algorithms with Langevin and overdamped Langevin dynamics. In particular, we prove in Section~\ref{subsec:RMHMC_overdamped_Langevin} that RMHMC yields a very efficient consistent discretization of the overdamped Langevin dynamics with position-dependent diffusion coefficient:
  \begin{equation*}
    \rmd q_t = \left(
        -D(q_t)\nabla V(q_t)+\div D(q_t)
        \right)\rmd t + \sqrt{2D(q_t)}~\rmd W_t.
  \end{equation*}
  Indeed, the rejection probability due to the Metropolis--Hastings procedure scales as~$\rmO(h^{3/2})$ (where $h$ is the time step discretization of the overdamped Langevin dynamics) instead of the usual~$\rmO(h^{1/2})$ obtained when using a Metropolized Euler--Maruyama scheme (\emph{i.e.}~the MALA algorithm)~\cite{fathi_2017}. As already mentionned above, we believe this is interesting since such dynamics arise in many contexts, for numerical and physical reasons. Indeed, position-dependent diffusion coefficients are used to precondition the overdamped Langevin dynamics, which leads to better mixing properties~\cite{leimkuhler_2018,garbuno_2020,liu_2022}. Overdamped Langevin dynamics with position-dependent diffusion coefficients also appear naturally to model various physical and biological processes~\cite{saffman_1975,jardat_1999} and efficient numerical methods to integrate these dynamics have been recently proposed in~\cite{phillips_2023} for example.
\end{itemize}

In the subsequent sections, we present the results in a more general setting than the one used in this introduction. Our approach offers a framework to sample without bias the invariant probability measure of a stochastic process, provided that the numerical scheme satisfies certain reversibility properties. In particular, we also discuss the Implicit Midpoint Rule (IMR) integrator of the Hamiltonian dynamics~\eqref{eq:hamiltonian_dynamics} which is also commonly used in the literature~\cite{brofos_2021,brofos_2021_b,kook_2022,kook_2023}, and adapt the argument to other types of phase-space dynamics, such as for the Generalized Hybrid Metropolis--Adjusted Langevin Algorithm (GHMALA) introduced in~\cite{poncet_2017}, see Section~\ref{subsec:ghmala} below.

Let us emphasize that this work focuses on unbiasedness, namely establishing that the target probability measure~$\mu$ is invariant for the (G)HMC algorithms we introduce in Section~\ref{sec:unbiased_HMC_schemes_implicit_integrators} with general Hamiltonian functions. To prove ergodicity, one needs to show that the Markov chains generated by these algorithms are in addition irreducible. This has been studied for the (G)HMC algorithms with separable Hamiltonian functions in~\cite{cances_2007,livingstone_2019,durmus_2020}, but irreducibility for nonseparable Hamiltonian functions will not be further discussed in this work.

Before presenting the outline of our work, let us mention a related independent recent preprint by Noble \textit{et al.}~\cite{noble_2023} where the authors also correct the bias of RMHMC using a reversibility check, focusing on the Generalized Störmer Verlet numerical scheme, and using fixed-point iterations schemes to define the numerical flows. In our work, we consider more general nonseparable Hamiltonian functions, and more general numerical schemes (IMR, Newton's method to define the numerical flow, etc.). In particular, our analysis can be used to study other dynamics than RMHMC, as illustrated in Section~\ref{subsec:ghmala}.

%---------------------------------------------------------
%---------------------------------------------------------
%---------------- OUTLINE --------------------------------
%---------------------------------------------------------
%---------------------------------------------------------

\paragraph{Outline of the work.} This paper is organized as follows. We make precise in Section~\ref{sec:numerical_flow_s_reversibility} the definitions of numerical schemes, solvers and flows, and the notions of~$S$-reversibility when~$S$ is a general involution. We then show how to enforce~$S$-reversibility and state our main results (whose proofs are postponed to Section~\ref{sec:proofs}). In Section~\ref{sec:unbiased_HMC_schemes_implicit_integrators}, (G)HMC algorithms that sample the target probability measure without bias are introduced. This is where we also discuss the consistency of (G)HMC algorithms with Langevin and overdamped Langevin dynamics. We then construct in Section~\ref{sec:construction_practical_numerical_flows} ideal numerical flows for the usual HMC integrators, and use Newton's method to provide practical implementations. The analysis of Sections~\ref{sec:numerical_flow_s_reversibility},~\ref{sec:unbiased_HMC_schemes_implicit_integrators} and~\ref{sec:construction_practical_numerical_flows} are illustrated on two running examples: the Generalized Störmer--Verlet (GSV) and the Implicit Midpoint Rule (IMR) numerical schemes. We finally illustrate in Section~\ref{sec:numerical_results} on two numerical examples the interest of the~$S$-reversibility check as well as using a position-dependent diffusion coefficient using the Hamiltonian function~\eqref{eq:H_RMHMC}.

%---------------------------------------------------------
%---------------------------------------------------------
%---------------------------------------------------------
%---------------------------------------------------------
%---------------------------------------------------------
%---------------- NUMERICAL FLOWS ------------------------
%---------------------------------------------------------
%---------------------------------------------------------
%---------------------------------------------------------
%---------------------------------------------------------
%---------------------------------------------------------
\section{Numerical flows and \texorpdfstring{$S$}{S}-reversibility}
\label{sec:numerical_flow_s_reversibility}
We describe in this section the framework for numerical schemes and numerical flows which allows to properly define~$S$-reversibility. The definitions of numerical solvers and numerical flows are given in Section~\ref{subsec:numerical_scheme_solver_flow}. We then discuss in Section~\ref{subsec:regularity_numerical_solver_local_preservation_lebesgue_measure} the regularity and the Lebesgue measure preservation of a numerical flow. The notion of~$S$-reversibility is introduced in Section~\ref{subsec:S_reversibility}. We finally show how to enforce this property using a numerical flow with~$S$-reversibility check in Section~\ref{subsec:enforcing_S_reversibility}.

Consider a time step~$\Delta t>0$ and denote by~$\calX$ the configuration space. We assume that~$\calX=\calO\times\bbR^{m}$ with~$\calO$ an open subset of~$\bbR^{m}$ (it is easy to generalize all the results to domains with periodic boundary conditions in position, namely~$\calX=\bbT^{m}\times\bbR^{m}$ with~$\bbT$ the one-dimensional torus). Notice that the space~$\calX$ is an open subset of~$\bbR^{d}$ with~$d=2m$. It is equipped with the Euclidean norm~$\left\lVert\,\cdot\,\right\rVert$ in~$\bbR^{d}$. The matrix norm is the usual operator norm associated with~$\left\lVert\,\cdot\,\right\rVert$. If~$k\geqslant 1$ and~$\Phi\colon\calX\times\calX^k\to\calX^k$ is a~$\calC^{1}$ map, then~$\nabla_{x}\Phi$ (respectively~$\nabla_y \Phi$) corresponds to the gradient of~$\Phi$ with respect to the first coordinate of the argument of~$\Phi$ (respectively the last~$k$ coordinates).

%---------------------------------------------------------
%---------------------------------------------------------
%---------------- NUMERICAL SCHEME -----------------------
%---------------------------------------------------------
%---------------------------------------------------------

\subsection{Numerical scheme, numerical solver and numerical flow}
\label{subsec:numerical_scheme_solver_flow}
We present in this section the adaptation of the strategy developed to analyze schemes for constrained systems~\cite{zappa_2018,lelievre_2019} to the case of implicit integrators. For a given initial condition~$x^{n}$ at time~$t_n=n\Delta t$, let us consider an implicit \textit{numerical scheme}~$\Phi_{\Delta t}$ defining the approximation~$x^{n+1}$ of the solution to an ordinary differential equation (ODE) at time~$t_{n+1}$ as a solution to an equation of the form
\begin{equation*}
  \Phi_{\Delta t}\left(x^{n},x^{n+1}_{1},\dots,x^{n+1}_{k-1},x^{n+1}_{k}\right)=0,  
\end{equation*}
where~$\Phi_{\Delta t}\colon\calX\times\calX^{k}\to\calX^{k}$ is a map depending on the time step~$\Delta t$ and the underlying ODE. The configurations~$\left(x^{n+1}_{i}\right)_{1\leqslant i\leqslant k-1}$ are intermediate configurations depending on the structure of the numerical scheme, while~$x^{n+1}_{k}=x^{n+1}$ is the new configuration. 

Let us introduce the two running examples of numerical schemes~$\Phi_{\Delta t}$ that will be used in the following.
\begin{definition}[Implicit Midpoint Rule]
  \label{def:IMR}
  Fix~$\Delta t>0$. Starting from~$x^{n}=(q^{n},p^{n})\in\calX$, the Implicit Midpoint Rule (IMR) numerical scheme discretizing the Hamiltonian dynamics~\eqref{eq:hamiltonian_dynamics} reads:
  \begin{equation}
    \label{eq:IMR}
    \left\lbrace
    \begin{aligned}
      q^{n+1}&= q^{n}+\Delta t~\nabla_{p}H\left(\dfrac{q^{n}+q^{n+1}}{2},\dfrac{p^{n}+p^{n+1}}{2}\right),\\
      p^{n+1}&= p ^{n}-\Delta t~\nabla_{q}H\left(\dfrac{q^{n}+q^{n+1}}{2},\dfrac{p^{n}+p^{n+1}}{2}\right),
    \end{aligned}
    \right.
  \end{equation}
  with~$H$ the Hamiltonian function of the system. In that case, one has~$k=1$ and 
  \begin{equation}
    \label{eq:Phi_IMR}
    \Phi_{\Delta t}^{\IMR}(q,p,q_1,p_1)=
    \begin{pmatrix}
      q_1-q-\Delta t~\nabla_{p}H\left(\dfrac{q+q_1}{2},\dfrac{p+p_1}{2}\right)\\[0.3cm]
      p_1-p+\Delta t~\nabla_{q}H\left(\dfrac{q+q_1}{2},\dfrac{p+p_1}{2}\right)
    \end{pmatrix}.
  \end{equation}
  Except for very specific Hamiltonian functions,~$x^{n+1}=(q^{n+1},p^{n+1})$ cannot be uniquely obtained in an analytical form as a function of~$x^{n}$: the scheme is implicit.
\end{definition}

\begin{definition}[Generalized Störmer--Verlet]
  \label{def:GSV}
  Fix~$\Delta t>0$. Starting from~$x^{n}=(q^{n},p^{n})\in\calX$, the Generalized Störmer--Verlet (GSV) scheme discretizing the Hamiltonian dynamics~\eqref{eq:hamiltonian_dynamics} reads
  \begin{equation}
    \label{eq:GSV}
    \left\lbrace
      \begin{aligned}
        \widetilde{p}^{n+1/2} &= p^{n}-\frac{\Delta t}{2}\nabla_q H(q^{n},\widetilde{p}^{n+1/2}),\\
        q^{n+1}&= q^{n}+\frac{\Delta t}{2}\left(
          \nabla_pH(q^{n},\widetilde{p}^{n+1/2})+\nabla_pH(q^{n+1},\widetilde{p}^{n+1/2})
        \right),\\
        p^{n+1}&= \widetilde{p}^{n+1/2}-\frac{\Delta t}{2}\nabla_qH(q^{n+1},\widetilde{p}^{n+1/2}),
      \end{aligned}
    \right.
  \end{equation}
  where~$H$ is the Hamiltonian function of the system. This scheme can be seen as the composition of the symplectic Euler B numerical scheme with time step~$\Delta t/2$ and the symplectic Euler A numerical scheme with time step~$\Delta t/2$, which respectively read: 
  \begin{equation*}
    \left\lbrace
    \begin{aligned}
      p^{n+1} &= p^{n}-\frac{\Delta t}{2}~\nabla_q H(q^{n},p^{n+1}),\\
      q^{n+1} &= q^{n}+\frac{\Delta t}{2}~\nabla_p H(q^{n},p^{n+1}),
    \end{aligned}
    \right.\qquad 
    \left\lbrace
    \begin{aligned}
      q^{n+1} &= q^{n}+\frac{\Delta t}{2}~\nabla_p H(q^{n+1},p^{n}),\\
      p^{n+1} &= p^{n}-\frac{\Delta t}{2}~\nabla_q H(q^{n+1},p^{n}).
    \end{aligned}
    \right.
  \end{equation*}
  For both numerical schemes,~$k=1$ and
  \begin{equation*}
    %\label{eq:Phi_EulerB}
    \Phi_{\Delta t/2}^{\EulerB}(q,p,q_1,p_1)=\begin{pmatrix}
      q_1-q-\dfrac{\Delta t}{2}~\nabla_p H(q,p_1)\\[0.2cm]
      p_1-p+\dfrac{\Delta t}{2}~\nabla_q H(q,p_1)
    \end{pmatrix},
  \end{equation*}
  \begin{equation*}
    %\label{eq:Phi_EulerA}
    \Phi^{\EulerA}_{\Delta t/2}(q,p,q_1,p_1)=\begin{pmatrix}
      q_1-q-\dfrac{\Delta t}{2}~\nabla_p H(q_1,p)\\[0.2cm]
      p_1-p+\dfrac{\Delta t}{2}~\nabla_q H(q_1,p)
    \end{pmatrix}.
  \end{equation*}
  As a consequence, the numerical scheme for GSV is defined with~$k=2$ as
  \begin{align}
    \Phi_{\Delta t}^{\GSV}\left(q,p,q_1,p_1,q_2,p_2\right)
    &=\left(\Phi_{\Delta t/2}^{\EulerB}(q,p,q_1,p_1),\Phi_{\Delta t/2}^{\EulerA}(q_1,p_1,q_2,p_2)\right),\nonumber\\
    &=\label{eq:Phi_GSV}
    \begin{pmatrix}
      q_1-q-\dfrac{\Delta t}{2}~\nabla_p H(q,p_1)\\[0.2cm]
      p_1-p+\dfrac{\Delta t}{2}~\nabla_q H(q,p_1)\\[0.2cm]
      q_2-q_1-\dfrac{\Delta t}{2}~\nabla_p H(q_2,p_1)\\[0.2cm]
      p_2-p_1+\dfrac{\Delta t}{2}~\nabla_q H(q_2,p_1)
    \end{pmatrix}.
  \end{align}
  For a general nonseparable Hamiltonian function,~$x^{n+1}=(q^{n+1},p^{n+1})$ cannot be uniquely obtained in an analytical form as a function of~$x^{n}$: the scheme is implicit.
\end{definition}

The numerical schemes we are interested in are implicit: this implies that for a given configuration~$x^{n}$, it may be the case that there is no solution~$x^{n+1}$ to the implicit problem. For instance the algorithm implemented in practice to solve the implicit problem does not converge to any solution. Besides, there may be more than one solution to the implicit problem, so that, for example, the algorithm may converge to different solutions depending on numerical parameters. In order to precisely define the numerical procedure, let us then introduce the notion of \emph{numerical solver}, which includes the notion of \emph{numerical flow}.

\begin{definition}[Numerical solver and numerical flow associated with a numerical scheme~$\Phi_{\Delta t}$]
  \label{def:numerical_solver}
  Let~$k\geqslant1$,~$\Delta t>0$ and~$\Phi_{\Delta t}\colon\calX\times\calX^{k}\to\calX^{k}$ be a~$\calC^{1}$ numerical scheme. A \emph{numerical solver} is a continuous map~$\chi_{\Delta t}\colon\calA_{\Delta t}\to\calX^{k}$ defined on a nonempty open set~$\calA_{\Delta t}\subset\calX$ such that
  \begin{equation}
    \label{eq:phi_chi_relation}
    \forall x\in\calA_{\Delta t},\qquad 
    \left\lbrace
    \begin{aligned}
      &\Phi_{\Delta t}\left(x,\chi_{\Delta t}(x)\right)=0,\\
      &\nabla_{y}\Phi_{\Delta t}\left(x,\chi_{\Delta t}(x)\right)\text{ is an invertible }dk\times dk\text{-matrix}.
    \end{aligned}
    \right.
  \end{equation}
  The associated \emph{numerical flow}~$\varphi_{\Delta t}$ is then defined as the~$k$-th coordinate of~$\chi_{\Delta t}$:
  \begin{equation*}
    \forall x\in\calA_{\Delta t},\qquad \varphi_{\Delta t}(x)=\chi_{\Delta t,k}(x),\text{ where }\chi_{\Delta t}(x)=\left(\chi_{\Delta t,1}(x),\dots,\chi_{\Delta t,k}(x)\right).
  \end{equation*}
  In particular, if~$k=1$, the notions of numerical solver and numerical flow coincide.
\end{definition}
The set~$\calA_{\Delta t}$ denotes the possible starting configurations for which the numerical solver~$\chi_{\Delta t}$ gives a solution to the numerical scheme. In particular, the numerical flow updates the configuration~$x\in\calA_{\Delta t}$ to~$\chi_{\Delta t,k}(x)=\varphi_{\Delta t}(x)$ via the intermediate steps~$(\chi_{\Delta t,1}(x),\dots,\chi_{\Delta t,k-1}(x))$. In practical terms, a configuration~$x$ belongs to~$\calA_{\Delta t}$ if the algorithm implemented converges to a unique solution.

\begin{remark}
  In Definition~\ref{def:numerical_solver}, the univalued maps~$\chi_{\Delta t}$ and~$\varphi_{\Delta t}$ encode the fact that the implemented algorithm is able to find a unique solution of the implicit problem. It should be possible to extend the results of this work to algorithms which are able to find multiple solutions to the implicit problem, in the spirit of~\cite{lelievre_2022}.
\end{remark}

To conclude this section, we illustrate on a simple example that implicit problems lead in general to multiple possible solutions.
\begin{remark}
  \label{rem:problem_S_rev_introduction}
  Consider the GSV numerical scheme in the case of a RMHMC algorithm in dimension~$m=1$, \emph{i.e.}~the scheme defined in Definition~\ref{def:GSV} with the Hamiltonian function~\eqref{eq:H_RMHMC}. Consider the diffusion coefficient~$D(q)=1+q^{2}$. Starting from~$(q^{n},p^{n})\in\calX$, the GSV numerical scheme then reads
  \begin{equation*}
  \left\lbrace
    \begin{aligned}
      \widetilde{p}^{n+1/2} &= p^{n} - \frac{\Delta t}{2}\left(V'(q^{n})-\dfrac{q^{n}}{1+\left(q^{n}\right)^{2}}+q^{n}\left(\widetilde{p}^{n+1/2}\right)^{2}\right),\\
      q^{n+1} &= q^{n}+\frac{\Delta t}{2}\left(2+\left(q^{n}\right)^{2}+\left(q^{n+1}\right)^{2}\right)\widetilde{p}^{n+1/2},\\
      p^{n+1} &= \widetilde{p}^{n+1/2} - \frac{\Delta t}{2}\left(V'(q^{n+1})-\dfrac{q^{n+1}}{1+\left(q^{n+1}\right)^{2}}+q^{n+1}\left(\widetilde{p}^{n+1/2}\right)^{2}\right).
    \end{aligned}
  \right.
\end{equation*}
Note that only the first two steps are implicit. When~$q^{n}\neq 0$, the first equation is a quadratic equation in~$\widetilde{p}^{n+1/2}$ that can be solved analytically. This means that two solutions exist when the discriminant of the quadratic equation is strictly positive (which is the case when the time step~$\Delta t$ is small enough). The second equation is also a quadratic equation in~$q^{n+1}$ if~$\widetilde{p}^{n+1/2}\neq0$, which again admits two different solutions when the time step is small enough. This means that there are generically four possible outputs for the configuration~$\left(q^{n+1},p^{n+1}\right)$.
\end{remark}

%---------------------------------------------------------
%---------------------------------------------------------
%---------------- REGULARITY -----------------------------
%---------------------------------------------------------
%---------------------------------------------------------

\subsection{Regularity of the numerical solver and local preservation of the Lebesgue measure}
\label{subsec:regularity_numerical_solver_local_preservation_lebesgue_measure}

In Definition~\ref{def:numerical_solver}, the requirement that, for all~$x\in\calA_{\Delta t}$, the~$dk\times dk$-matrix~$\nabla_{y}\Phi_{\Delta t}(x,\chi_{\Delta t}(x))$ is invertible is used from a theoretical viewpoint to prove that the numerical flow with $S$-reversibiliy checks is measurable, see Lemma~\ref{lem:B_open_set}. It is a natural assumption in order to prove that the numerical solver~$\chi_{\Delta t}$ is a~$\calC^{1}$ map, see Lemma~\ref{lem:numerical_solver_C1}. This regularity is needed to check that the numerical flow preserves the Lebesgue measure, see~\eqref{eq:lebesgue_measure_preservation} and Proposition~\ref{prop:numerical_flow_symplectic}. In practice, this is also needed when using Newton's method to solve the implicit problem, see Remark~\ref{rem:case_k_2} and Algorithm~\ref{alg:newton_idealized}. It is assumed that the set~$\calA_{\Delta t}$ is open, which is convenient to discuss the regularity of~$\chi_{\Delta t}$, and will also be needed in order to construct a measurable numerical flow that is defined globally on~$\calX$ and satisfies the two fundamental properties~\eqref{eq:lebesgue_measure_preservation} and~\eqref{eq:S_reversibility_Stormer_Verlet_explicit}, see Proposition~\ref{prop:psi_involution}. We refer to Section~\ref{sec:construction_practical_numerical_flows} where we discuss theoretical constructions and practical algorithms to define numerical flows.

We first show that a numerical solver is in fact~$\calC^{1}$ and not just continuous, as required in Definition~\ref{def:numerical_solver}.
\begin{lemma}
  \label{lem:numerical_solver_C1}
  Let~$k\geqslant 1$,~$\Delta t>0$ and~$\chi_{\Delta t}$ be a numerical solver associated with a numerical scheme~$\Phi_{\Delta t}$. Then~$\chi_{\Delta t}$ is~$\calC^{1}$ and it holds
  \begin{equation*}
    \forall x\in\calA_{\Delta t},\qquad
    \nabla\chi_{\Delta t}(x)=-\Bigl(\nabla_{y}\Phi_{\Delta t}\bigl(x,\chi_{\Delta t}(x)\bigr)\Bigr)^{-1}\nabla_{x}\Phi_{\Delta t}\bigl(x,\chi_{\Delta t}(x)\bigr).
  \end{equation*}
\end{lemma}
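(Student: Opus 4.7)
The strategy is a direct application of the Implicit Function Theorem (IFT) to the defining relation $\Phi_{\Delta t}(x, \chi_{\Delta t}(x)) = 0$. First I would fix an arbitrary $x_0 \in \calA_{\Delta t}$ and set $y_0 = \chi_{\Delta t}(x_0) \in \calX^k$. The hypotheses of IFT are available: $\Phi_{\Delta t}$ is $\calC^{1}$ by assumption in Definition~\ref{def:numerical_solver}, $\Phi_{\Delta t}(x_0, y_0) = 0$, and the partial Jacobian $\nabla_y \Phi_{\Delta t}(x_0, y_0)$ is an invertible $dk \times dk$ matrix by the second condition in~\eqref{eq:phi_chi_relation}. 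IFT then yields open neighborhoods $U \subset \calA_{\Delta t}$ of $x_0$ and $V \subset \calX^k$ of $y_0$, together with a $\calC^{1}$ map $\psi\colon U \to V$ such that $\psi(x_0) = y_0$ and, for each $x \in U$, the element $\psi(x)$ is the unique $y \in V$ satisfying $\Phi_{\Delta t}(x, y) = 0$.

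The key step is to identify $\psi$ with the restriction $\chi_{\Delta t}|_U$. Using the continuity of $\chi_{\Delta t}$ at $x_0$ (Definition~\ref{def:numerical_solver}), I would shrink $U$ further so that $\chi_{\Delta t}(U) \subset V$. Then for every $x \in U$, both $\psi(x)$ and $\chi_{\Delta t}(x)$ lie in $V$ and satisfy $\Phi_{\Delta t}(x, \cdot) = 0$, so the local uniqueness clause of IFT forces $\chi_{\Delta t}(x) = \psi(x)$. Thus $\chi_{\Delta t}$ inherits the $\calC^{1}$ regularity of $\psi$ on a neighborhood of every $x_0 \in \calA_{\Delta t}$, hence is $\calC^{1}$ on all of $\calA_{\Delta t}$.

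For the derivative formula, differentiating the identity $\Phi_{\Delta t}(x, \chi_{\Delta t}(x)) = 0$ on $\calA_{\Delta t}$ via the chain rule gives
\[
  \nabla_x \Phi_{\Delta t}\bigl(x, \chi_{\Delta t}(x)\bigr) + \nabla_y \Phi_{\Delta t}\bigl(x, \chi_{\Delta t}(x)\bigr)\, \nabla \chi_{\Delta t}(x) = 0.
\]
The invertibility of $\nabla_y \Phi_{\Delta t}(x, \chi_{\Delta t}(x))$ allows me to left-multiply by its inverse and solve for $\nabla \chi_{\Delta t}(x)$, which produces exactly the claimed expression.

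The only genuine subtlety is the identification step. IFT only provides a locally unique $\calC^{1}$ branch of solutions, and Remark~\ref{rem:problem_S_rev_introduction} shows that several such branches may coexist for nonseparable Hamiltonian functions. Continuity of $\chi_{\Delta t}$ is what confines its values to the neighborhood $V$ on which IFT's uniqueness applies, thereby ruling out the possibility that $\chi_{\Delta t}$ jumps between distinct branches of $\Phi_{\Delta t}^{-1}(\{0\})$; once this is handled, the regularity and the formula follow without further work.
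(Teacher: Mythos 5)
Your proof is correct and is essentially the same argument the paper gives, only spelled out in more detail: the paper's one-line proof simply says regularity is ``a direct consequence of the implicit function theorem and the continuity of $\chi_{\Delta t}$,'' which is precisely your identification step, and then differentiates the identity $\Phi_{\Delta t}(x,\chi_{\Delta t}(x))=0$ to get the gradient formula.
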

\begin{proof}
  The fact that~$\chi_{\Delta t}$ is~$\calC^{1}$ is a direct consequence of the implicit function theorem and the continuity of~$\chi_{\Delta t}$. The expression of the gradient of~$\chi_{\Delta t}$ is then obtained by differentiating~$\Phi_{\Delta t}(x,\chi_{\Delta t}(x))=0$ with respect to~$x$.
\end{proof}

We now discuss the preservation of the Lebesgue measure by the numerical flow. Hamiltonian dynamics enjoy nice structural properties as their exact flows are themselves symplectic. It is natural in this case to choose a numerical scheme and a numerical solver which, when implemented, yield a symplectic numerical flow~$\varphi_{\Delta t}$ (\emph{i.e.}~\eqref{eq:symplecticity} is satisfied). In this case, the numerical flow satisfies property~\eqref{eq:lebesgue_measure_preservation} since~$\left\lvert \det\nabla\varphi_{\Delta t}\right\rvert=1$. Examples of symplectic integrators include IMR and GSV, as the following proposition states. The result is a straightforward adaptation of~\cite[Theorems~VI.3.4 and~VI.3.5]{hairer_2006} in our context, see Section~\ref{subsec:proof_symplectic} for the proof.

\begin{proposition}
\label{prop:numerical_flow_symplectic}
  Let~$H$ be a~$\calC^{2}$ Hamiltonian function, and fix~$\Delta t>0$. Assume that~$\chi_{\Delta t}\colon\calA_{\Delta t}\to\calX$ (respectively~$\chi_{\Delta t}\colon\calA_{\Delta t}\to\calX^{2}$) is a numerical solver for the IMR (respectively the GSV) numerical scheme. Then its associated numerical flow~$\varphi_{\Delta t}$ is symplectic on~$\calA_{\Delta t}$. In particular, for all~$x\in\calA_{\Delta t}$, it holds~$\left\lvert\det\nabla\varphi_{\Delta t}(x)\right\rvert=1$.
\end{proposition}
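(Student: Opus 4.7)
The plan is to use Lemma~\ref{lem:numerical_solver_C1} to differentiate the implicit scheme equations and obtain an explicit expression for the Jacobian $\nabla\varphi_{\Delta t}$, then verify symplecticity by direct matrix computation using the properties $J^{\sfT}=-J$, $J^{2}=-\rmI_{d}$, and the symmetry of the Hessian $\nabla^{2}H$. The final bound $|\det\nabla\varphi_{\Delta t}|=1$ is immediate from symplecticity, since $\varphi_{\Delta t}^{\sfT}J\varphi_{\Delta t}=J$ implies $(\det\nabla\varphi_{\Delta t})^{2}=1$ by taking determinants (using $\det J\neq 0$).

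For the IMR case, I would fix $x=(q,p)\in\calA_{\Delta t}$ and let $M=\nabla^{2}H((q+q_{1})/2,(p+p_{1})/2)$ at the point $(q_{1},p_{1})=\chi_{\Delta t}(x)=\varphi_{\Delta t}(x)$. Differentiating the relation $\Phi_{\Delta t}^{\IMR}(x,\varphi_{\Delta t}(x))=0$ with respect to $x$, and using Lemma~\ref{lem:numerical_solver_C1} with the formula~\eqref{eq:Phi_IMR}, I obtain
\begin{equation*}
  \left(\rmI_{d}-\frac{\Delta t}{2}JM\right)\nabla\varphi_{\Delta t}(x)=\rmI_{d}+\frac{\Delta t}{2}JM.
\end{equation*}
Setting $A=\rmI_{d}-\tfrac{\Delta t}{2}JM$ and $B=\rmI_{d}+\tfrac{\Delta t}{2}JM$, the invertibility assumption on $\nabla_{y}\Phi_{\Delta t}$ ensures that $A$ is invertible, so $\nabla\varphi_{\Delta t}(x)=A^{-1}B$. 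Since $B=2\rmI_{d}-A$, the matrices $A$ and $B$ commute. A short computation using $M^{\sfT}=M$ and $J^{\sfT}=-J$ shows $A^{\sfT}JA=B^{\sfT}JB$: indeed both expand to $J-\tfrac{(\Delta t)^{2}}{4}MJ^{3}M=J+\tfrac{(\Delta t)^{2}}{4}MJM$, the cross terms $\mp\tfrac{\Delta t}{2}(J^{2}M+MJ^{2})$ cancelling after simplification. Conjugating by $A^{-1}$ then gives $(A^{-1}B)^{\sfT}J(A^{-1}B)=J$, which is~\eqref{eq:symplecticity}.

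For the GSV case, I would exploit the factorization $\varphi_{\Delta t}^{\GSV}=\varphi_{\Delta t/2}^{\EulerA}\circ\varphi_{\Delta t/2}^{\EulerB}$ stated in Definition~\ref{def:GSV}, combined with the fact that a composition of symplectic maps is symplectic (by the chain rule applied to~\eqref{eq:symplecticity}). It therefore suffices to show that the numerical flows associated with the symplectic Euler A and Euler B schemes are symplectic on the relevant open sets. For Euler B, implicit differentiation gives
\begin{equation*}
  \nabla\varphi_{\Delta t/2}^{\EulerB}(q,p) = \begin{pmatrix}\rmI_{m}&0\\0&\rmI_{m}\end{pmatrix}+\frac{\Delta t}{2}\begin{pmatrix}\nabla_{qp}H&\nabla_{pp}H\\-\nabla_{qq}H&-\nabla_{pq}H\end{pmatrix}\begin{pmatrix}\rmI_{m}&0\\\partial_{q}p^{n+1}&\partial_{p}p^{n+1}\end{pmatrix},
\end{equation*}
and an analogous direct computation (following the proof of~\cite[Theorem~VI.3.3]{hairer_2006}) verifies~\eqref{eq:symplecticity}; the Euler A case is symmetric. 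Subtleties about invertibility of the block appearing when inverting for $\partial p^{n+1}$ are again absorbed into the assumption on $\nabla_{y}\Phi_{\Delta t}$ in Definition~\ref{def:numerical_solver}.

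The main obstacle is bookkeeping rather than conceptual: correctly identifying which partial Jacobians of the Hessian of $H$ must be evaluated at which intermediate point (a subtle issue for GSV because the two stages use different arguments of $\nabla H$), and confirming that the invertibility hypothesis from Definition~\ref{def:numerical_solver} coincides with the invertibility of $A$ for IMR and of the corresponding implicit block for Euler A/B, so that the implicit differentiation is legitimate throughout $\calA_{\Delta t}$.
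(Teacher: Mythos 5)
Your proposal is correct and follows essentially the same route as the paper's proof. For IMR, both you and the paper differentiate the implicit equation via Lemma~\ref{lem:numerical_solver_C1} to obtain $\nabla\varphi_{\Delta t}=(\rmI_d-\tfrac{\Delta t}{2}JM)^{-1}(\rmI_d+\tfrac{\Delta t}{2}JM)$ and then verify symplecticity by matrix algebra: your identity $A^{\sfT}JA=B^{\sfT}JB=J+\tfrac{\Delta t^2}{4}MJM$ together with commutativity of $A$ and $B$ is a clean and correct way to conclude (the paper reaches the same conclusion by commuting transposes through $J$ via $J(\rmI_d-A)=(\rmI_d+A^{\sfT})J$). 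For GSV, the decomposition into Euler~B followed by Euler~A is exactly the paper's strategy, and you correctly flag the one genuine subtlety: a numerical solver for $\Phi_{\Delta t}^{\GSV}$ with $k=2$ is a single implicit map of the bundled vector $(x_1,x_2)$, so it is not \emph{a priori} a composition of two flows, and one must check that (i) the maps extracted from the components of $\chi_{\Delta t}^{\GSV}$ satisfy the requirements of Definition~\ref{def:numerical_solver} for $\Phi_{\Delta t/2}^{\EulerB}$ and $\Phi_{\Delta t/2}^{\EulerA}$; (ii) the block-lower-triangular structure of $\nabla_y\Phi_{\Delta t}^{\GSV}$ makes its invertibility equivalent to that of the two Euler blocks; and (iii) one in fact gets $\nabla\varphi_{\Delta t}^{\GSV}=\nabla\varphi_{\Delta t/2}^{\EulerA}\,\nabla\varphi_{\Delta t/2}^{\EulerB}$ from Lemma~\ref{lem:numerical_solver_C1}. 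The paper works these three points out explicitly, whereas your proposal gestures at them; spelling them out is the only thing missing from a complete write-up, and there is no conceptual gap.
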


This motivates the following assumption.

\begin{assumption}
  \label{ass:numerical_flow_symplectic}
  Let~$\Delta t>0$ and~$\chi_{\Delta t}\colon\calA_{\Delta t}\to\calX$ be a numerical solver associated with the numerical scheme~$\Phi_{\Delta t}$. The associated numerical flow~$\varphi_{\Delta t}\colon\calA_{\Delta t}\to\calX$ is such that~$\left\lvert\det\nabla\varphi_{\Delta t}(x)\right\rvert=1$ for any~$x\in\calA_{\Delta t}$.
\end{assumption}

Note that it makes sense to write~$\nabla\varphi_{\Delta t}$ since a numerical flow is~$\calC^1$ on the open set~$\calA_{\Delta t}$ by Lemma~\ref{lem:numerical_solver_C1}. Assumption~\ref{ass:numerical_flow_symplectic} implies that~$\varphi_{\Delta t}$ locally preserves the Lebesgue measure on~$\calA_{\Delta t}$.

\begin{remark}
  If the numerical flow does not satisfy Assumption~\ref{ass:numerical_flow_symplectic}, the Metropolis procedure can be modified to lead to unbiased HMC schemes, see~\cite{fang_2014}.
\end{remark}

%---------------------------------------------------------
%---------------------------------------------------------
%---------------- S-REVERSIBILITY ------------------------
%---------------------------------------------------------
%---------------------------------------------------------
\subsection{\texorpdfstring{$S$-reversibility}{S-reversibility}}
\label{subsec:S_reversibility}

Denoting by~$S\colon\calX\to\calX$ an involution, we now define the concept of~$S$-reversibility for a numerical scheme~$\Phi_{\Delta t}$ and a numerical solver~$\chi_{\Delta t}$. Let us introduce
\begin{equation}
    \label{eq:psi_dt}
    \psi_{\Delta t}=S\circ\varphi_{\Delta t},    
\end{equation}
where~$\varphi_{\Delta t}$ is the numerical flow associated with~$\chi_{\Delta t}$. For Hamiltonian dynamics, the involution~$S$ is typically the momentum reversal map~\eqref{eq:S_momentum_reversal}.

\begin{definition}[$S$-reversibility for a numerical scheme, a numerical solver and a numerical flow]
  \label{def:S_rev_numerical_scheme}
  Fix~$\Delta t>0$. A numerical scheme~$\Phi_{\Delta t}$ is said to be S-reversible if
  \begin{equation*}
    \forall(x,x_1,\dots x_k)\in\calX\times\calX^{k},\qquad\Phi_{\Delta t}(x,x_1,\dots,x_k)=0\Longleftrightarrow \Phi_{\Delta t}(S(x_k), \dots, S(x_1), S(x))=0.
  \end{equation*}
  A numerical solver~$\chi_{\Delta t}\colon\calA_{\Delta t}\to\calX^{k}$ is~$S$-reversible if, for any~$x\in \calA_{\Delta t}\cap\psi_{\Delta t}^{-1}(\calA_{\Delta t})$, it holds
  \begin{align}
    \chi_{\Delta t}\circ\psi_{\Delta t}(x)
    &=\left(
    \chi_{\Delta t,1}(\psi_{\Delta t}(x)),\dots,\chi_{\Delta t,k}(\psi_{\Delta t}(x))
    \right),\nonumber\\
    &=\label{eq:s_rev_solver}
    \left(
      S\circ\chi_{\Delta t,k-1}(x),\dots,S\circ\chi_{\Delta t,1}(x),S(x)
    \right).
  \end{align}
  A numerical flow~$\varphi_{\Delta t}\colon\calA_{\Delta t}\to\calX$ is~$S$-reversible if for any~$x\in\calA_{\Delta t}\cap\psi_{\Delta t}^{-1}(\calA_{\Delta t})$, it holds
  \begin{equation*}
    \psi_{\Delta t}\circ\psi_{\Delta t}(x)=x.
  \end{equation*}
\end{definition}

To better understand what it means for a numerical solver to be~$S$-reversible, we refer to Figure~\ref{fig:understand_s_rev_solver}: if a sequence of configurations solves the numerical scheme, then, after applying~$S$ along the sequence, the time reversed sequence also solves the numerical scheme.

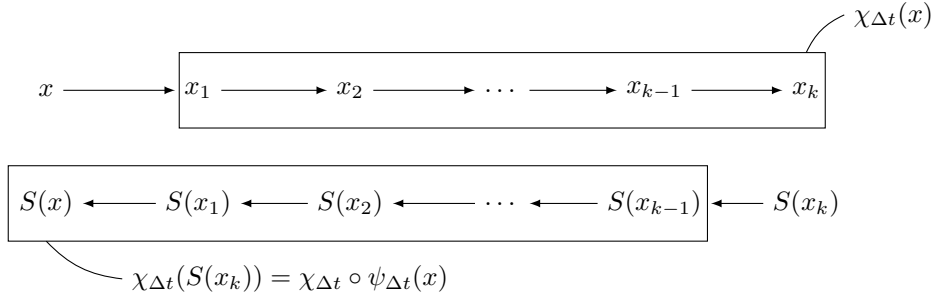
\begin{figure}
  \centering
  \begin{tikzpicture}
    \node (x) at (0,0) {$x$};
    \node (x1) at (2,0) {$x_1$};
    \node (x2) at (4,0) {$x_2$};
    \node (dots) at (6,0) {$\dots$};
    \node (xk1) at (8,0) {$x_{k-1}$};
    \node (xk) at (10,0) {$x_k$};
    
    \node (Sxk) at (10,-1.5) {$S(x_{k})$};
    \node (Sxk1) at (8,-1.5) {$S(x_{k-1})$};
    \node (dot) at (6,-1.5) {$\dots$};
    \node (Sx2) at (4,-1.5) {$S(x_{2})$};
    \node (Sx1) at (2,-1.5) {$S(x_{1})$};
    \node (Sx) at (0,-1.5) {$S(x)$};

    \draw [-latex] (x) edge (x1) (x1) edge (x2) (x2) edge (dots) (dots) edge (xk1) (xk1) edge (xk);
    \draw (1.75,-0.5) rectangle (10.25,0.5);
    \draw (10,0.5) to[bend left=20] (10.5,1) node [right] {$\chi_{\Delta t}(x)$};

    \draw [-latex] (Sxk) edge (Sxk1) (Sxk1) edge (dot) (dot) edge (Sx2) (Sx2) edge (Sx1) (Sx1) edge (Sx);
    \draw (-0.5,-2) rectangle (8.7,-1);
    \draw (0,-2) to[bend right=20] (1,-2.5) node [right] {$\chi_{\Delta t}(S(x_k))=\chi_{\Delta t}\circ\psi_{\Delta t}(x)$};
    
  \end{tikzpicture}
  \caption[]{Illustration of the~$S$-reversibility: the two sequences should contain the same elements up to applying~$S$ and inverting the order.}
  \label{fig:understand_s_rev_solver}
\end{figure}

Note that, if a numerical solver~$\chi_{\Delta t}$ is~$S$-reversible, then for all~$x\in\calA_{\Delta t}\cap\psi_{\Delta t}^{-1}\left(\calA_{\Delta t}\right)$,
\begin{equation}
    \label{eq:varphi_s_rev}
  \varphi_{\Delta t}\circ S\circ\varphi_{\Delta t}(x)
  =
  \chi_{\Delta t,k}\circ \psi_{\Delta t}(x)
  =
  S(x),
\end{equation}
so that~$\psi_{\Delta t}\circ\psi_{\Delta t}(x)=x$. Thus, if~$\chi_{\Delta t}$ is a~$S$-reversible numerical solver, then its associated numerical flow~$\varphi_{\Delta t}$ is~$S$-reversible, and~$\psi_{\Delta t}$ defined by~\eqref{eq:psi_dt} is an involution on~$\calA_{\Delta t}\cap\psi_{\Delta t}^{-1}\left(\calA_{\Delta t}\right)$. We require in~\eqref{eq:s_rev_solver} that the reversibility property is satisfied by the intermediate configurations (and not only the starting and ending configurations). This is crucial in the proof of Lemma~\ref{lem:B_open_set} below as well as for the construction of practical numerical flows, see Proposition~\ref{prop:GSV_FP}. In particular, when~$k\geqslant 2$, the fact that~$\psi_{\Delta t}^{2}(x)=x$ for all~$x\in\calA_{\Delta t}\cap\psi_{\Delta t}^{-1}\left(\calA_{\Delta t}\right)$ is usually not enough to satisfy~\eqref{eq:s_rev_solver}, except in very specific situations such as in Lemma~\ref{lem:solver_rev_from_flow_rev_gsv} below.

\begin{lemma}
  \label{lem:solver_rev_from_flow_rev_gsv}
  Fix~$\Delta t>0$ and let~$\Phi_{\Delta t}^{\GSV}$ be the map defined in~\eqref{eq:Phi_GSV}. Let~$S$ be the momentum reversal map~\eqref{eq:S_momentum_reversal}, and consider the Hamiltonian function~\eqref{eq:H_RMHMC} where~$V$ and~$D$ are~$\calC^{1}$ maps. Then checking the~$S$-reversibility of the numerical flow ($\psi_{\Delta t}\circ\psi_{\Delta t}=\id$ on~$\calA_{\Delta t}\cap\psi_{\Delta t}^{-1}\left(\calA_{\Delta t}\right)$) is enough to ensure the~$S$-reversibility of the numerical solver (\emph{i.e.}~\eqref{eq:s_rev_solver}). In fact, it is even enough to check~$S$-reversibility on positions only, \emph{i.e.}~if~$(q,p,q_1,p_1,q_2,p_2)\in\calX^{3}$ are such that 
  \begin{equation}
    \label{eq:solver_rev_flow_rev_1}
    \Phi_{\Delta t}^{\GSV}(q,p,q_1,p_1,q_2,p_2)=0,
  \end{equation}
  and if there exist~$(\tilde{q}_1,\tilde{p}_1)\in\calX$ and~$\tilde{p}\in\bbR^{m}$ such that 
  \begin{equation}
    \label{eq:solver_rev_flow_rev_2}
    \Phi_{\Delta t}^{\GSV}(S(q_2,p_2),\tilde{q}_1,\tilde{p}_1,q,\tilde{p})=0,
  \end{equation}
  then~$(\tilde{q}_1,\tilde{p}_1)=S(q_1,p_1)$ and~$\tilde{p}=-p$ so that~$(q,\tilde{p})=S(q,p)$, and~\eqref{eq:s_rev_solver} is thus satisfied.
\end{lemma}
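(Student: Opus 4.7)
The plan is to exploit the very specific algebraic structure of the Hamiltonian~\eqref{eq:H_RMHMC}: since $H(q,p) = V(q) - \tfrac12 \ln\det D(q) + \tfrac12 p^{\sfT} D(q) p$, the gradient $\nabla_p H(q,p) = D(q)p$ is \emph{linear} in $p$, while $\nabla_q H(q,p)$ is \emph{even} (in fact quadratic) in $p$, so in particular
\begin{equation*}
\nabla_p H(q,-p) = -\nabla_p H(q,p), \qquad \nabla_q H(q,-p) = \nabla_q H(q,p).
\end{equation*}
I will write out the eight scalar equations encoded in the hypotheses. From~\eqref{eq:solver_rev_flow_rev_1}, expanding $\Phi_{\Delta t}^{\GSV}=0$ using~\eqref{eq:Phi_GSV} yields four equations for $(q_1,p_1,q_2,p_2)$; from~\eqref{eq:solver_rev_flow_rev_2}, applied to the initial condition $S(q_2,p_2)=(q_2,-p_2)$ with prescribed final position $q$, one gets four analogous equations for $(\tilde q_1,\tilde p_1,\tilde p)$.

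The core observation is that the first and third components of each system (the two position updates) telescope when added, eliminating the intermediate position. For~\eqref{eq:solver_rev_flow_rev_1}, summing the two position equations gives
\begin{equation*}
q_2 - q \;=\; \frac{\Delta t}{2}\bigl[D(q) + D(q_2)\bigr] p_1,
\end{equation*}
while for~\eqref{eq:solver_rev_flow_rev_2} the analogous sum gives
\begin{equation*}
q - q_2 \;=\; \frac{\Delta t}{2}\bigl[D(q_2) + D(q)\bigr] \tilde p_1.
\end{equation*}
Here I crucially use that the final position appearing in the reversed application is the same $q$ as the initial position of the original sequence. Since $D(q)$ and $D(q_2)$ are positive definite, so is their sum, hence it is invertible, and comparing the two displays forces $\tilde p_1 = -p_1$. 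Substituting this back into the first position equation of~\eqref{eq:solver_rev_flow_rev_2} and using the third position equation of~\eqref{eq:solver_rev_flow_rev_1} immediately yields $\tilde q_1 = q_1$, so $(\tilde q_1,\tilde p_1)=S(q_1,p_1)$. Finally, the last momentum equation of~\eqref{eq:solver_rev_flow_rev_2} reads $\tilde p = \tilde p_1 - \tfrac{\Delta t}{2}\nabla_q H(q,\tilde p_1)$; using $\tilde p_1=-p_1$, the parity of $\nabla_q H$ in $p$, and the second (momentum) equation of~\eqref{eq:solver_rev_flow_rev_1}, a one-line computation gives $\tilde p = -p$. This establishes~\eqref{eq:s_rev_solver} for the single nontrivial trajectory.

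The first (weaker) part of the lemma---that flow-reversibility $\psi_{\Delta t}\circ\psi_{\Delta t}=\id$ implies solver-reversibility---follows as a special case: flow-reversibility in particular forces the returned position to be $q$, so the position-only criterion applies. There is no conceptual obstacle; the main point to get right is the bookkeeping of the two systems of four equations each, together with the observation that the linearity of $\nabla_p H$ in $p$ is what makes the telescoped displacement reduce to a linear equation in the intermediate momentum whose matrix $D(q)+D(q_2)$ is invertible. It is precisely this feature of the Hamiltonian~\eqref{eq:H_RMHMC} that makes the position-only check sufficient; this argument does not extend verbatim to a generic nonseparable $H$.
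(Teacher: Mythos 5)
Your proof is correct and follows essentially the same route as the paper's: telescope the two position-update equations in each system to obtain $q_2 - q = \frac{\Delta t}{2}[D(q)+D(q_2)]p_1$ and $q - q_2 = \frac{\Delta t}{2}[D(q)+D(q_2)]\tilde p_1$, use invertibility of $D(q)+D(q_2)$ to conclude $\tilde p_1 = -p_1$, then propagate through the remaining equations using the parity of $\nabla_p H$ (odd) and $\nabla_q H$ (even) in the momentum. The only (inconsequential) difference is the order of the last two deductions and which pair of equations you combine: you derive $\tilde q_1 = q_1$ before $\tilde p = -p$ using components $(1,3)$ and $(3,4)$ of the two systems, whereas the paper derives $\tilde p = -p$ first via components $(2,4)$ and then $\tilde q_1 = q_1$ via $(1,3)$; both pairings give identical results, so this is purely a bookkeeping variation.
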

  
\begin{proof}
  For more readability, we write explicitly~\eqref{eq:solver_rev_flow_rev_1} and~\eqref{eq:solver_rev_flow_rev_2} using the definition of the GSV numerical scheme~\eqref{eq:Phi_GSV}:
  \begin{equation*}
    \begin{pmatrix}
      q_1-q-\dfrac{\Delta t}{2}~\nabla_p H(q,p_1)\\[0.2cm]
      p_1-p+\dfrac{\Delta t}{2}~\nabla_q H(q,p_1)\\[0.2cm]
      q_2-q_1-\dfrac{\Delta t}{2}~\nabla_p H(q_2,p_1)\\[0.2cm]
      p_2-p_1+\dfrac{\Delta t}{2}~\nabla_q H(q_2,p_1)
    \end{pmatrix}=0,\qquad
    \begin{pmatrix}
      \tilde{q}_1-q_2-\dfrac{\Delta t}{2}~\nabla_p H(q_2,\tilde{p}_1)\\[0.2cm]
      \tilde{p}_1+p_2+\dfrac{\Delta t}{2}~\nabla_q H(q_2,\tilde{p}_1)\\[0.2cm]
      q-\tilde{q}_1-\dfrac{\Delta t}{2}~\nabla_p H(q,\tilde{p}_1)\\[0.2cm]
      \tilde{p}-\tilde{p}_1+\dfrac{\Delta t}{2}~\nabla_q H(q,\tilde{p}_1)
    \end{pmatrix}=0.
  \end{equation*}

  By adding the first and third components of~\eqref{eq:solver_rev_flow_rev_1}, one obtains
  \begin{equation*}
    q_2-q=\frac{\Delta t}{2}\left(\nabla_pH(q_2,p_1)+\nabla_pH(q,p_1)\right)=\Delta t\frac{D(q)+D(q_2)}{2}p_1,
  \end{equation*}
  while~\eqref{eq:solver_rev_flow_rev_2} likewise yields
  \begin{equation*}
    q-q_2=\Delta t\frac{D(q)+D(q_2)}{2}\tilde{p}_{1}.
  \end{equation*}
  The matrix~$D(q)+D(q_2)$ being invertible, one obtains~$\tilde{p}_{1}=-p_1$. Then, subtracting the fourth component of~\eqref{eq:solver_rev_flow_rev_2} from the second component of~\eqref{eq:solver_rev_flow_rev_1} and using the fact that~$\nabla_{q}H(q,-p)=\nabla_{q}H(q,p)$, it holds~$\tilde{p}=-p$. Finally, adding the first component of~\eqref{eq:solver_rev_flow_rev_1} and the third from~\eqref{eq:solver_rev_flow_rev_2} then implies that~$q_1=\tilde{q}$, using again that the Hamiltonian function is an even function of the momenta. One therefore obtains~$(\tilde{q}_1,\tilde{p}_1)=S(q_1,p_1)$ and~$\tilde{p}=-p$.
\end{proof}

Let us now check that the numerical schemes of the two running examples are~$S$-reversible with~$S$ the momentum reversal map~\eqref{eq:S_momentum_reversal}. At the continuous level, the exact Hamiltonian flow is~$S$-reversible when the Hamiltonian function is an even function of the momenta. This means that, if~$\phi_t$ denotes the Hamiltonian flow for~\eqref{eq:hamiltonian_dynamics}, then (compare with~\eqref{eq:varphi_s_rev})
\begin{equation*}
  \phi_t\circ S\circ\phi_t=S.
\end{equation*}
Numerical discretizations such as the Implicit Midpoint Rule~\eqref{eq:IMR} or the Generalized Störmer--Verlet~\eqref{eq:GSV} schemes also satisfy~$S$-reversibility.

\begin{proposition}
  \label{prop:IMR_GSV_S_reversibility}
  Let~$H$ be a~$\calC^{1}$ Hamiltonian function even in the momentum variable. Let~$S$ be the momentum reversal map~\eqref{eq:S_momentum_reversal}, and fix~$\Delta t>0$. Then both~$\Phi_{\Delta t}^{\IMR}$ defined by~\eqref{eq:Phi_IMR} and~$\Phi_{\Delta t}^{\GSV}$ defined by~\eqref{eq:Phi_GSV} are~$S$-reversible numerical schemes.
\end{proposition}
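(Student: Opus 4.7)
My plan is to verify both claims by direct algebraic manipulation of~\eqref{eq:Phi_IMR} and~\eqref{eq:Phi_GSV}. The one analytic input I need is the parity consequence obtained by differentiating $H(q,-p)=H(q,p)$, namely
\[
\nabla_q H(q,-p) = \nabla_q H(q,p), \qquad \nabla_p H(q,-p) = -\nabla_p H(q,p).
\]

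For IMR, I would substitute $(S(q_1,p_1),S(q,p))=(q_1,-p_1,q,-p)$ into $\Phi^{\IMR}_{\Delta t}$. The argument of both gradients becomes $\bigl(\tfrac{q+q_1}{2},-\tfrac{p+p_1}{2}\bigr)$, so applying the parity identities flips the sign of $\nabla_p H$ while leaving $\nabla_q H$ unchanged. A short rearrangement will then produce the identity
\[
\Phi^{\IMR}_{\Delta t}(q_1,-p_1,q,-p) \;=\; -\,\Phi^{\IMR}_{\Delta t}(q,p,q_1,p_1),
\]
from which the equivalence is immediate since multiplication by $-1$ is a bijection on $\bbR^{d}$.

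For GSV the calculation has the same flavour but is longer, since the four rows of~\eqref{eq:Phi_GSV} have to be matched pairwise after the time reversal. I would substitute $(q_2,-p_2,q_1,-p_1,q,-p)$ into $\Phi^{\GSV}_{\Delta t}$ and evaluate its rows one by one, noting that the only momentum argument inside any gradient is $p_1$, which becomes $-p_1$ under the substitution. The parity identities then show that the two position-type rows (rows~$1$ and~$3$) of the reversed expression are the opposites of the position-type rows of the original taken in reversed order, while the two momentum-type rows (rows~$2$ and~$4$) coincide exactly with the momentum-type rows of the original in reversed order. In each of the four cases, one row of the reversed expression vanishes if and only if the corresponding row of $\Phi^{\GSV}_{\Delta t}(q,p,q_1,p_1,q_2,p_2)$ does, which gives the equivalence.

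The whole argument will be routine; I do not anticipate a real obstacle. The only point needing care is the sign bookkeeping in the GSV case, where one must keep straight which gradient argument actually carries $p$-dependence (and therefore picks up a minus sign through the parity) versus those that depend purely on position variables. This is easily handled by processing the four rows one at a time.
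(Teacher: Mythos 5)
Your overall strategy (direct substitution of the time-reversed, momentum-flipped tuple, followed by the parity identities for $\nabla_q H$ and $\nabla_p H$) is exactly the paper's approach. However, your specific claim for IMR is wrong: the identity
\[
\Phi^{\IMR}_{\Delta t}(q_1,-p_1,q,-p) = -\,\Phi^{\IMR}_{\Delta t}(q,p,q_1,p_1)
\]
does \emph{not} hold. Writing $\Phi^{\IMR}_{\Delta t}(q,p,q_1,p_1)=(A,B)$ with
\[
A = q_1-q-\Delta t\,\nabla_p H\!\left(\tfrac{q+q_1}{2},\tfrac{p+p_1}{2}\right), \qquad
B = p_1-p+\Delta t\,\nabla_q H\!\left(\tfrac{q+q_1}{2},\tfrac{p+p_1}{2}\right),
\]
the substitution and parity identities give $\Phi^{\IMR}_{\Delta t}(q_1,-p_1,q,-p)=(-A,\,B)$, not $(-A,-B)$. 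The $q$-block flips sign (because $\nabla_p H$ is odd in $p$, in tandem with the swap of $q$ and $q_1$), but the $p$-block is invariant (because $\nabla_q H$ is even in $p$, and the $p_1-p$ difference is unchanged by the simultaneous negation and swap of $p$ and $p_1$). Compare this with your own correct analysis of GSV, where you state precisely this asymmetry: position rows get the opposite sign, momentum rows coincide. The structure is identical for IMR; your GSV bookkeeping is right and your IMR bookkeeping contradicts it.

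This slip does not sink the proof, since $(-A,B)=0$ if and only if $(A,B)=0$ — the map $(A,B)\mapsto(-A,B)$ is still an invertible linear involution — so the equivalence in Definition~\ref{def:S_rev_numerical_scheme} still follows. But as stated the IMR step asserts a false algebraic identity and needs the same block-by-block treatment you used for GSV.
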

  
\begin{proof}
  Let~$(q,p,q_1,p_1)\in\calX^{2}$ such that~$\Phi_{\Delta t}^{\IMR}(q,p,q_1,p_1)=0$. Since~$\nabla_{p}H(q,-p)=-\nabla_p H(q,p)$ and~$\nabla_{q}H(q,-p)=\nabla_q H(q,p)$, it holds 
  \begin{align*}
    \Phi_{\Delta t}^{\IMR}(S(q_1,p_1), S(q,p))
    &=
    \Phi_{\Delta t}^{\IMR}(q_1,-p_1,q,-p),\\
    &=
    \begin{pmatrix}
      q-q_1-\Delta t~\nabla_{p}H\left(\dfrac{q+q_1}{2},-\dfrac{p_1+p}{2}\right)\\
      -p+p_1+\Delta t~\nabla_{q}H\left(\dfrac{q+q_1}{2},-\dfrac{p_1+p}{2}\right)
    \end{pmatrix}=0.
  \end{align*}
  Likewise, if~$(q,p,q_1,p_1,q_2,p_2)\in\calX^{3}$ satisfy~$\Phi_{\Delta t}^{\GSV}(q,p,q_1,p_1,q_2,p_2)=0$, then 
  \begin{align*}
    \Phi_{\Delta t}^{\GSV}(S(q_2,p_2),S(q_1,p_1),S(q,p))
    &=
    \Phi_{\Delta t}^{\GSV}(q_2,-p_2,q_1,-p_1,q,-p),\\
    &=
    \begin{pmatrix}
      q_1-q_2-\dfrac{\Delta t}{2}~\nabla_p H(q_2,-p_1)\\[0.2cm]
      -p_1+p_2+\dfrac{\Delta t}{2}~\nabla_q H(q_2,-p_1)\\[0.2cm]
      q-q_1-\dfrac{\Delta t}{2}~\nabla_p H(q,-p_1)\\[0.2cm]
      -p+p_1+\dfrac{\Delta t}{2}~\nabla_q H(q,-p_1)
    \end{pmatrix}=0,
  \end{align*}
  which gives the claimed result.
\end{proof}

It is of paramount importance to note that~$S$-reversibility for a numerical scheme does not imply~$S$-reversibility for an associated numerical solver. Let us list the three difficulties one may encounter in practice when building a numerical solver for a~$S$-reversible numerical scheme, choosing~$k=1$ for simplicity of exposition. For a given configuration~$x\in\calX$,
\begin{itemize}
  \item it may happen that~$x\notin\calA_{\Delta t}$, in which case the output~$\varphi_{\Delta t}(x)$ is not defined;
  \item if~$x\in\calA_{\Delta t}$, it may happen that~$\psi_{\Delta t}(x)\notin\calA_{\Delta t}$, \emph{i.e.}~$\varphi_{\Delta t}(\psi_{\Delta t}(x))$ is not defined;
  \item if~$x\in\calA_{\Delta t}$ and~$\psi_{\Delta t}(x)\in\calA_{\Delta t}$, it may happen that~$\psi_{\Delta t}^{2}(x)\neq x$, that is the numerical flow does not satisfy~$S$-reversibility for this configuration. Indeed, multiple solutions can be available at each integration steps since they are solution to an implicit problem (see \emph{e.g.}~Remark~\ref{rem:problem_S_rev_introduction}), and the implemented numerical flow may not choose the one compatible with~$S$-reversibility. Therefore, the numerical flow may not be~$S$-reversible while the numerical scheme is.
\end{itemize}

%---------------------------------------------------------
%---------------------------------------------------------
%---------------- ENFORCING S-REV ------------------------
%---------------------------------------------------------
%---------------------------------------------------------
\subsection{Enforcing \texorpdfstring{$S$}{S}-reversibility}
\label{subsec:enforcing_S_reversibility}
In order to correct for the lack of~$S$-reversibility arising from the use of implicit methods, we define, for a numerical solver~$\chi_{\Delta t}$,
\begin{equation}
  \label{eq:B_k}
  \calB_{\Delta t}=\left\lbrace 
  x\in\calA_{\Delta t}\cap\left(\psi_{\Delta t}\right)^{-1}(\calA_{\Delta t}),~
  \chi_{\Delta t}\circ\psi_{\Delta t}(x)=
  \Bigl(
    S\circ\chi_{\Delta t,k-1}(x),\dots,S\circ\chi_{\Delta t,1}(x),S(x)
  \Bigr)
  \right\rbrace.
\end{equation}
The set~$\calB_{\Delta t}$ is thus the set of configurations~$x$ for which:
\begin{itemize}
    \item each of the steps of the integrator starting from~$x$ has successfully converged, so that~$\chi_{\Delta t}(x)$ (hence~$\varphi_{\Delta t}(x)$) is well-defined;
    \item each of the steps of the integrator starting from~$\psi_{\Delta t}(x)=S\circ\varphi_{\Delta t}(x)$ has successfully converged, so that~$\chi_{\Delta t}\circ\psi_{\Delta t}(x)=\chi_{\Delta t}\circ S\circ\varphi_{\Delta t}(x)$ is well-defined, and hence also the configuration~$\varphi_{\Delta t}\circ S\circ\varphi_{\Delta t}(x)$;
    \item the map~$\chi_{\Delta t}$ is~$S$-reversible starting from the configuration~$x$, in the sense of Definition~\ref{def:S_rev_numerical_scheme}.
\end{itemize}
Let us then introduce the maps
\begin{equation}
  \label{eq:psi_rev}
  \forall x\in\calX,\qquad 
  \left\lbrace
  \begin{aligned}
    \varphi_{\Delta t}^{\rev}(x)&=\varphi_{\Delta t}(x)\,\mathbb{1}_{x\in \calB_{\Delta t}}+S(x)\,\mathbb{1}_{x\notin \calB_{\Delta t}},\\
    \psi_{\Delta t}^{\rev}(x)&=S\circ\varphi_{\Delta t}^{\rev}(x)=\psi_{\Delta t}(x)\mathbb{1}_{x\in\calB_{\Delta t}}+x\,\mathbb{1}_{x\notin\calB_{\Delta t}}.
  \end{aligned}
  \right.
\end{equation}
This is the natural way to define the flow ``with~$S$-reversibility check''~\cite{zappa_2018,lelievre_2019} based on the numerical scheme: integrations of the numerical scheme for which the numerical solver does not satisfy~$S$-reversibility are discarded, and replaced by the map~$S$, so that~$\psi_{\Delta t}^{\rev}$ is a global involution, see Lemma~\ref{prop:psi_rev_involution}. 

The following result shows that the numerical flow with~$S$-reversibility check~$\varphi_{\Delta t}^{\rev}$ is~$S$-reversible on the whole configuration space.
\begin{proposition}
    \label{prop:psi_rev_involution}
    Fix~$\Delta t>0$. Let~$\chi_{\Delta t}\colon\calA_{\Delta t}\to\calX^{k}$ be a numerical solver and~$\varphi_{\Delta t}\colon\calA_{\Delta t}\to\calX$ be the associated numerical flow. Then the numerical flow with~$S$-reversibility check~$\varphi_{\Delta t}^{\rev}$ defined by~\eqref{eq:B_k}--\eqref{eq:psi_rev} is~$S$-reversible.
\end{proposition}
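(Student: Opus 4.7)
The plan is to show directly that $\psi_{\Delta t}^{\rev} \circ \psi_{\Delta t}^{\rev} = \id$ on the whole of $\calX$, which by Definition~\ref{def:S_rev_numerical_scheme} is what $S$-reversibility of $\varphi_{\Delta t}^{\rev}$ means (note that since $\varphi_{\Delta t}^{\rev}$ is defined globally, the set $\calA_{\Delta t} \cap \psi_{\Delta t}^{-1}(\calA_{\Delta t})$ in the definition is replaced by all of $\calX$). I would split on whether $x \in \calB_{\Delta t}$ or not.

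The easy case is $x \notin \calB_{\Delta t}$: by~\eqref{eq:psi_rev}, $\psi_{\Delta t}^{\rev}(x) = x$, so applying $\psi_{\Delta t}^{\rev}$ once more trivially returns $x$.

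The substantive case is $x \in \calB_{\Delta t}$. First, I would use the defining relation of $\calB_{\Delta t}$ in~\eqref{eq:B_k} to extract $\chi_{\Delta t,k}\circ \psi_{\Delta t}(x) = S(x)$, i.e. $\varphi_{\Delta t}\circ\psi_{\Delta t}(x)=S(x)$, and hence $\psi_{\Delta t}^{2}(x) = S\circ\varphi_{\Delta t}(\psi_{\Delta t}(x)) = S\circ S(x) = x$. So $\psi_{\Delta t}^{\rev}(\psi_{\Delta t}(x)) = x$ provided I can also show that $y := \psi_{\Delta t}(x)$ belongs to $\calB_{\Delta t}$ (so that the first indicator in~\eqref{eq:psi_rev} fires at $y$). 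This is the one place that requires a careful calculation, and it is the main obstacle: unpacking the defining condition of $\calB_{\Delta t}$ at $y$.

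To verify $y \in \calB_{\Delta t}$, I check the three clauses: (i) $y \in \calA_{\Delta t}$ because $x \in \calB_{\Delta t}$ implies $\psi_{\Delta t}(x) \in \calA_{\Delta t}$; (ii) $\psi_{\Delta t}(y) = \psi_{\Delta t}^{2}(x) = x \in \calA_{\Delta t}$, so $y \in \psi_{\Delta t}^{-1}(\calA_{\Delta t})$; (iii) I must verify the component-wise identity
\begin{equation*}
    \chi_{\Delta t}\circ\psi_{\Delta t}(y) = \bigl(S\circ\chi_{\Delta t,k-1}(y),\dots,S\circ\chi_{\Delta t,1}(y),S(y)\bigr).
\end{equation*}
The left-hand side equals $\chi_{\Delta t}(x)$ since $\psi_{\Delta t}(y) = x$. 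For the right-hand side, I substitute the values of $\chi_{\Delta t,i}(y) = \chi_{\Delta t,i}(\psi_{\Delta t}(x))$ provided by $x \in \calB_{\Delta t}$: namely $\chi_{\Delta t,i}(y) = S\circ\chi_{\Delta t,k-i}(x)$ for $1\leq i\leq k-1$ and $\chi_{\Delta t,k}(y) = S(x)$. Applying $S$ and using $S^{2}=\id$ gives $S\circ\chi_{\Delta t,i}(y) = \chi_{\Delta t,k-i}(x)$ for $1\leq i\leq k-1$, and $S(y) = S\circ\psi_{\Delta t}(x) = \varphi_{\Delta t}(x) = \chi_{\Delta t,k}(x)$. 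Reading the tuple with the index reversal baked into the definition, the right-hand side reassembles exactly into $(\chi_{\Delta t,1}(x),\dots,\chi_{\Delta t,k}(x)) = \chi_{\Delta t}(x)$, matching the left-hand side.

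Thus $y \in \calB_{\Delta t}$, and $\psi_{\Delta t}^{\rev}(y) = \psi_{\Delta t}(y) = x$, which concludes the proof in both cases. The argument is purely algebraic manipulation of indices together with $S^2 = \id$; no regularity or topology of $\calB_{\Delta t}$ is needed here.
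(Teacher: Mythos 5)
Your proof is correct and follows essentially the same strategy as the paper: split on whether $x\in\calB_{\Delta t}$, and in the nontrivial case verify that $\psi_{\Delta t}(x)\in\calB_{\Delta t}$ by checking the three clauses of~\eqref{eq:B_k}, the component-wise identity following from the index reversal and $S^{2}=\id$. The only cosmetic difference is that you make explicit the (minor) point that $\varphi_{\Delta t}^{\rev}$ is globally defined so the domain condition in Definition~\ref{def:S_rev_numerical_scheme} becomes all of $\calX$.
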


\begin{proof}
  Fix~$x\in\calX$. If~$x\notin\calB_{\Delta t}$,~$\psi_{\Delta t}^{\rev}(x)=x\notin\calB_{\Delta t}$ so that~$\psi_{\Delta t}^{\rev}\circ\psi_{\Delta t}^{\rev}(x)=x$. If~$x\in\calB_{\Delta t}$, we claim that~$\psi_{\Delta t}^{\rev}(x)\in\calB_{\Delta t}$. Indeed, it holds~$\psi_{\Delta t}^{\rev}(x)=\psi_{\Delta t}(x)\in\calA_{\Delta t}$ by the definition of~$\calB_{\Delta t}$. Moreover,
  \begin{equation}
    \label{eq:tmp}
    \psi_{\Delta t}\circ\psi_{\Delta t}^{\rev}(x)
    =
    \psi_{\Delta t}\circ\psi_{\Delta t}(x)
    =
    S\circ\chi_{\Delta t,k}\circ \psi_{\Delta t}(x)
    =
    S\circ S(x)=x\in\calA_{\Delta t},
  \end{equation}
  where we used~\eqref{eq:B_k} for the third equality. Using~\eqref{eq:B_k} again, it holds 
  \begin{equation*}
    \forall 1\leqslant i\leqslant k-1,\qquad
    \chi_{\Delta t,k-i}\left(
      \psi_{\Delta t}^{\rev}(x)
    \right)=\chi_{\Delta t,k-i}\left(
      \psi_{\Delta t}(x)
    \right)=S\circ\chi_{\Delta t,i}(x),
  \end{equation*}
  so that
  \begin{align*}
    \MoveEqLeft[12]
    \Bigl(
      S\circ\chi_{\Delta t,k-1}(\psi_{\Delta t}^{\rev}(x)),\dots,S\circ\chi_{\Delta t,1}(\psi_{\Delta t}^{\rev}(x)),S(\psi_{\Delta t}^{\rev}(x))
    \Bigr)\\
    &=
    \Bigl(
      S\circ S\circ\chi_{\Delta t,1}(x),\dots,S\circ S\circ\chi_{\Delta t,k-1}(x),S\circ S\circ\varphi_{\Delta t}(x)
    \Bigr),\\
    &=
    \chi_{\Delta t}(x)
    =
    \chi_{\Delta t}\circ\psi_{\Delta t}(\psi_{\Delta t}^{\rev}(x)),
  \end{align*}
  where we used~\eqref{eq:tmp} for the last equality. Hence, if~$x\in\calB_{\Delta t}$, then~$\psi_{\Delta t}^{\rev}(x)\in\calB_{\Delta t}$, and it holds~$\psi_{\Delta t}^{\rev}(\psi_{\Delta t}^{\rev}(x))=\psi_{\Delta t}(\psi_{\Delta t}(x))=x$. This concludes the proof.
\end{proof}

\noindent Note that, even if~$\calA_{\Delta t}$ is nonempty,~$\calB_{\Delta t}$ can be empty. We make precise in Section~\ref{subsec:numerical_solvers_fixed_point} a setting where~$\calB_{\Delta t}$ is nonempty under reasonable assumptions for both running examples.

To show that the map~$\varphi_{\Delta t}^{\rev}$ satisfies the two fundamental properties~\eqref{eq:lebesgue_measure_preservation} and~\eqref{eq:S_reversibility_Stormer_Verlet_explicit}, we proceed as follows: first, we show that~$\calB_{\Delta t}$ is an open set so that~$\varphi_{\Delta t}^{\rev}$ is measurable; second, we prove that~$\varphi_{\Delta t}\colon\calB_{\Delta t}\to\calB_{\Delta t}$ is a~$\calC^{1}$-diffeomorphism. Together with the fact that~$\left\lvert\det\nabla\varphi_{\Delta t}\right\rvert=1$ on~$\calB_{\Delta t}\subset\calA_{\Delta t}$ (see Assumption~\ref{ass:numerical_flow_symplectic}), this implies that~$\psi_{\Delta t}^{\rev}\colon\calX\to\calX$ is globally well-defined and preserves the Lebesgue measure on~$\calX$, as stated in Proposition~\ref{prop:psi_involution} thanks to the following additional assumption.

\begin{assumption}
  \label{ass:S_C1_involution}
  Fix~$\Delta t>0$. The numerical scheme~$\Phi_{\Delta t}$ is~$S$-reversible where~$S\colon\calX\to\calX$ is a~$\calC^{1}$ involution such that~$\left\lvert\det\nabla S\right\rvert=1$.
\end{assumption}
For instance, both the IMR and GSV numerical schemes satisfy Assumption~\ref{ass:S_C1_involution} with~$S$ the momentum reversal map~\eqref{eq:S_momentum_reversal}, see Proposition~\ref{prop:IMR_GSV_S_reversibility}.

Following the proofs of~\cite{lelievre_2019}, we show in Lemma~\ref{lem:B_open_set} below that~$\calB_{\Delta t}$ is an open set of~$\calX$ as the union of path connected components of the open set~$\calA_{\Delta t}\bigcap\psi_{\Delta t}^{-1}\left(\calA_{\Delta t}\right)$. Since this set is a topological manifold, it is locally path connected, which implies that path connected components of any open subset of~$\calA_{\Delta t}\bigcap\psi_{\Delta t}^{-1}\left(\calA_{\Delta t}\right)$ are actually its connected components, and that these connected components are open subsets of~$\calA_{\Delta t}\bigcap\psi_{\Delta t}^{-1}(\calA_{\Delta t})$ (see for instance~\cite[Theorem~25.5]{munkres_2000}). The proof of Lemma~\ref{lem:B_open_set} is postponed to Section~\ref{subsec:proof:B_open_set}.

\begin{lemma}
  \label{lem:B_open_set}
  Let~$\Phi_{\Delta t}$ a numerical scheme such that Assumption~\ref{ass:S_C1_involution} holds. Let~$\chi_{\Delta t}\colon\calA_{\Delta t}\to\calX$ be a numerical solver associated with~$\Phi_{\Delta t}$, and consider a path connected component~$C$ of the open set~$\calA_{\Delta t}\bigcap\psi_{\Delta t}^{-1}(\calA_{\Delta t})$. If there exists~$x_{\star}\in C$ such that
  \begin{equation*}
    \chi_{\Delta t}\circ\psi_{\Delta t}(x_{\star})=
    \Bigl(
        S\circ\chi_{\Delta t,k-1}(x_{\star}),\dots,S\circ\chi_{\Delta t,1}(x_{\star}),S(x_{\star})
    \Bigr),
  \end{equation*}
  then 
  \begin{equation}
      \label{eq:reversibility_open_set}
      \forall x\in C,\qquad 
        \chi_{\Delta t}\circ\psi_{\Delta t}
      (x)=
      \Bigl(
        S\circ\chi_{\Delta t,k-1}(x),\dots,S\circ\chi_{\Delta t,1}(x),S(x)
      \Bigr).
  \end{equation}
  The set~$\calB_{\Delta t}$ defined by~\eqref{eq:B_k} is therefore the union of path connected components of the open set~$\calA_{\Delta t}\bigcap\psi_{\Delta t}^{-1}(\calA_{\Delta t})$, hence an open set of~$\calX$.
\end{lemma}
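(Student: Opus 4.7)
The plan is a classical connectedness argument. I will introduce the two $\calX^k$-valued continuous maps on $C$,
\[
G(x) := \chi_{\Delta t}\circ \psi_{\Delta t}(x), \qquad H(x) := \bigl(S\circ\chi_{\Delta t,k-1}(x),\dots,S\circ\chi_{\Delta t,1}(x),S(x)\bigr),
\]
and define $B_C := \{x \in C : G(x) = H(x)\}$. The aim is to show $B_C = C$. Since $B_C$ is nonempty (it contains $x_\star$) and trivially closed in $C$ (as the preimage of the diagonal of $\calX^k \times \calX^k$ under the continuous map $x \mapsto (G(x), H(x))$), by connectedness of $C$ it suffices to establish that $B_C$ is also open in $C$.

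For the openness argument, the key observation is that $H(x)$ is always an alternative solution of the implicit scheme at $\psi_{\Delta t}(x)$. Indeed, applying the $S$-reversibility of $\Phi_{\Delta t}$ (Definition~\ref{def:S_rev_numerical_scheme}) to the identity $\Phi_{\Delta t}(x,\chi_{\Delta t}(x))=0$, and using $S(\chi_{\Delta t,k}(x)) = S(\varphi_{\Delta t}(x)) = \psi_{\Delta t}(x)$, one gets
\[
\Phi_{\Delta t}\bigl(\psi_{\Delta t}(x), H(x)\bigr) = 0 \qquad \text{for every } x \in C.
\]
Now fix $x \in B_C$. Since $\psi_{\Delta t}(x) \in \calA_{\Delta t}$, the matrix $\nabla_y \Phi_{\Delta t}(\psi_{\Delta t}(x), G(x))$ is invertible by the defining property~\eqref{eq:phi_chi_relation} of a numerical solver. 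I then invoke the implicit function theorem to obtain open neighborhoods $U$ of $\psi_{\Delta t}(x)$ and $V$ of $G(x) = H(x)$ together with a $\calC^1$ map $\eta \colon U \to V$ such that $\Phi_{\Delta t}(z,y) = 0$ on $U \times V$ holds if and only if $y = \eta(z)$. By continuity of $G$, $H$, and $\psi_{\Delta t}$, there exists a neighborhood $N \subset C$ of $x$ with $\psi_{\Delta t}(N) \subset U$ and $G(N) \cup H(N) \subset V$. For any $x' \in N$, both $G(x')$ and $H(x')$ solve $\Phi_{\Delta t}(\psi_{\Delta t}(x'), \cdot) = 0$ within $V$, so both must equal $\eta(\psi_{\Delta t}(x'))$, giving $N \subset B_C$.

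Finally, to conclude that $\calB_{\Delta t}$ is open in $\calX$: by the argument above, $\calB_{\Delta t}$ coincides with the union of all path connected components of $\calA_{\Delta t}\cap\psi_{\Delta t}^{-1}(\calA_{\Delta t})$ that contain at least one configuration satisfying the reversibility identity. This intersection is itself an open subset of $\calX$ (since $\calA_{\Delta t}$ is open and $\psi_{\Delta t}=S\circ\varphi_{\Delta t}$ is continuous on $\calA_{\Delta t}$ by Lemma~\ref{lem:numerical_solver_C1} and Assumption~\ref{ass:S_C1_involution}), hence locally path connected, so its path connected components coincide with its connected components and are themselves open in $\calX$ by~\cite[Theorem~25.5]{munkres_2000}. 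The main obstacle is ensuring that both candidate solutions $G(x')$ and $H(x')$ actually land in the small neighborhood $V$ where the implicit function theorem delivers uniqueness; this is guaranteed by continuity of the intermediate components $\chi_{\Delta t,i}$ (again from Lemma~\ref{lem:numerical_solver_C1}) together with continuity of $S$ and of $\psi_{\Delta t}$.
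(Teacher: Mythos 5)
Your proof is correct and takes a genuinely different route from the paper's. The paper proceeds by contradiction with a sequential/compactness argument: it connects a ``good'' point $X_0$ to a hypothetical ``bad'' point $X_1$, takes the supremum $\theta_\star$ of the parameter set where reversibility holds along the path, extracts a converging sequence of unit vectors from the discrepancy $G(X_{\theta_n})-H(X_{\theta_n})$, performs a first-order Taylor expansion of $\Phi_{\Delta t}$, and uses the $S$-reversibility of the scheme to kill the zeroth-order term so that the limit yields $\nabla_y\Phi_{\Delta t}(\cdot)\tau_\star=0$ with $\tau_\star\neq 0$, contradicting invertibility. Your proof is the clean topological open-and-closed argument: closedness of $\{G=H\}$ is automatic (preimage of the diagonal), and openness is handled by invoking the implicit function theorem once, at a point where $\nabla_y\Phi_{\Delta t}$ is invertible, and noting that both $G$ and $H$ are continuous solution branches of the same implicit equation $\Phi_{\Delta t}(\psi_{\Delta t}(\cdot),\,\cdot\,)=0$. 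The core ingredients are the same ---the fact that $H(x)$ solves the scheme at $\psi_{\Delta t}(x)$ by $S$-reversibility of $\Phi_{\Delta t}$, and invertibility of $\nabla_y\Phi_{\Delta t}$ at a coincidence point--- but your route packages the local uniqueness through the IFT instead of rederiving it by hand via a Taylor expansion and a compactness extraction, which makes the argument shorter and conceptually more transparent. One may view the paper's proof as unrolling the IFT step you invoke as a black box.
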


Lemma~\ref{lem:B_open_set} is the main ingredient for the following proposition to hold. This is one of the main theoretical results of this work as it implies the unbiasedness of usual HMC schemes using the numerical flow with~$S$-reversibility check~\eqref{eq:psi_rev}, see Propositions~\ref{prop:HMC_mu_invariance} and~\ref{prop:GHMC_mu_invariance} below.

\begin{proposition}
  \label{prop:psi_involution}
  Under Assumptions~\ref{ass:numerical_flow_symplectic} and~\ref{ass:S_C1_involution}, the map~$\psi_{\Delta t}^{\rev}\colon\calX\to\calX$ defined by~\eqref{eq:B_k}--\eqref{eq:psi_rev} is well-defined and preserves the Lebesgue measure on~$\calX$.
\end{proposition}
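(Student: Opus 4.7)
The plan is to decompose the proof into three stages: well-definedness (meaning Borel measurability), the identification of $\psi_{\Delta t}$ as a $\calC^1$-diffeomorphism from $\calB_{\Delta t}$ onto itself, and a change-of-variables argument performed piecewise on $\calB_{\Delta t}$ and its complement.

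First, I would invoke Lemma~\ref{lem:B_open_set} to assert that $\calB_{\Delta t}$ is an open subset of $\calX$, so that $\mathbb{1}_{\calB_{\Delta t}}$ and $\mathbb{1}_{\calX\setminus\calB_{\Delta t}}$ are Borel measurable. On $\calA_{\Delta t}$ (hence on the subset $\calB_{\Delta t}$), Lemma~\ref{lem:numerical_solver_C1} gives that $\varphi_{\Delta t}$ is $\calC^{1}$, and Assumption~\ref{ass:S_C1_involution} tells us $S$ is $\calC^{1}$, so $\psi_{\Delta t}=S\circ\varphi_{\Delta t}$ is $\calC^{1}$ on $\calB_{\Delta t}$. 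Combined with the identity map on $\calX\setminus\calB_{\Delta t}$, the formula~\eqref{eq:psi_rev} defines a Borel measurable map $\psi_{\Delta t}^{\rev}\colon\calX\to\calX$.

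Second, I need $\psi_{\Delta t}$ to restrict to a $\calC^{1}$-diffeomorphism from $\calB_{\Delta t}$ onto itself. Proposition~\ref{prop:psi_rev_involution} already establishes that $\psi_{\Delta t}^{\rev}$ is an involution, and its proof shows precisely that if $x\in\calB_{\Delta t}$, then $\psi_{\Delta t}^{\rev}(x)=\psi_{\Delta t}(x)$ belongs to $\calB_{\Delta t}$ and $\psi_{\Delta t}\circ\psi_{\Delta t}(x)=x$. Thus $\psi_{\Delta t}\colon\calB_{\Delta t}\to\calB_{\Delta t}$ is a bijection whose inverse is itself, hence a $\calC^{1}$-diffeomorphism. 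By the chain rule and the multiplicativity of the determinant,
\begin{equation*}
  \forall x\in\calB_{\Delta t},\qquad \left\lvert\det\nabla\psi_{\Delta t}(x)\right\rvert=\left\lvert\det\nabla S(\varphi_{\Delta t}(x))\right\rvert\cdot\left\lvert\det\nabla\varphi_{\Delta t}(x)\right\rvert=1,
\end{equation*}
using Assumption~\ref{ass:S_C1_involution} for the factor involving $S$ and Assumption~\ref{ass:numerical_flow_symplectic} for the factor involving $\varphi_{\Delta t}$.

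Third, for any Borel set $\calE\in\calB(\calX)$, I would split
\begin{equation*}
  \left(\psi_{\Delta t}^{\rev}\right)^{-1}(\calE)=\Bigl(\left(\psi_{\Delta t}\right)^{-1}(\calE)\cap\calB_{\Delta t}\Bigr)\cup\Bigl(\calE\cap(\calX\setminus\calB_{\Delta t})\Bigr),
\end{equation*}
the two sets on the right-hand side being disjoint by construction of $\psi_{\Delta t}^{\rev}$ and by the stability of $\calB_{\Delta t}$ under $\psi_{\Delta t}$ established in the second stage. The change-of-variables formula applied to the diffeomorphism $\psi_{\Delta t}\colon\calB_{\Delta t}\to\calB_{\Delta t}$ with unit Jacobian gives $\lambda((\psi_{\Delta t})^{-1}(\calE)\cap\calB_{\Delta t})=\lambda(\calE\cap\calB_{\Delta t})$, and summing yields $\lambda((\psi_{\Delta t}^{\rev})^{-1}(\calE))=\lambda(\calE)$.

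No step is a serious obstacle, since Lemma~\ref{lem:B_open_set} and Proposition~\ref{prop:psi_rev_involution} do the heavy lifting; the subtlety I would be most careful about is the disjoint decomposition of $(\psi_{\Delta t}^{\rev})^{-1}(\calE)$, which requires knowing that $\psi_{\Delta t}$ does not map $\calB_{\Delta t}$ outside itself (so that no point of $\calB_{\Delta t}$ is ``double-counted'' with a point of $\calX\setminus\calB_{\Delta t}$ mapped into $\calE$), a property which is exactly what the involution argument in the proof of Proposition~\ref{prop:psi_rev_involution} provides.
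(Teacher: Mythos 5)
Your proof is correct and follows essentially the same route as the paper's: invoke Lemma~\ref{lem:B_open_set} for measurability, use the involution property on $\calB_{\Delta t}$ (from Proposition~\ref{prop:psi_rev_involution}) together with Assumptions~\ref{ass:numerical_flow_symplectic} and~\ref{ass:S_C1_involution} to get a unit-Jacobian $\calC^1$-diffeomorphism of $\calB_{\Delta t}$, and then split $(\psi_{\Delta t}^{\rev})^{-1}(\calE)$ into the $\calB_{\Delta t}$ piece and its complement. The only cosmetic difference is that you spell out the chain-rule computation of $\det\nabla\psi_{\Delta t}$ and the disjointness of the decomposition a bit more explicitly than the paper does.
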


Proposition~\ref{prop:psi_involution} is proved as~\cite[Proposition~2.4]{lelievre_2019}. We nonetheless recall the proof here for the sake of completeness.

\begin{proof}
  Since~$\calB_{\Delta t}$ is open,~$\varphi_{\Delta t}^{\rev}$ and~$\psi_{\Delta t}^{\rev}$ are measurable maps. From Assumptions~\ref{ass:numerical_flow_symplectic} and~\ref{ass:S_C1_involution}, one obtains~$\left\lvert\det\nabla\psi_{\Delta t}^{\rev}\right\rvert=1$ on~$\calB_{\Delta t}$. To show that~$\psi_{\Delta t}^{\rev}$ globally preserves the Lebesgue measure, we decompose any measurable set~$\calE\subset \calX$ as its intersection with~$\calB_{\Delta t}$ and its complementary, noting that the sets~$\left(\psi_{\Delta t}^{\rev}\right)^{-1}\left(\calE\cap\calB_{\Delta t}\right)\subset\calB_{\Delta t}$ and~$\left(\psi_{\Delta t}^{\rev}\right)^{-1}\left(\calE\cap\left(\calX\setminus\calB_{\Delta t}\right)\right)\subset\calX\setminus\calB_{\Delta t}$ are disjoint. Therefore, using the notation~$\left\lvert \calE\right\rvert=\int_{\calX}\mathbb{1}_{\calE}(x)\,\rmd x$,
  \begin{align*}
      \left\lvert\left(\psi_{\Delta t}^{\rev}\right)^{-1}(\calE)\right\rvert
      &=
      \left\lvert
        \left(
          \psi_{\Delta t}^{\rev}
        \right)^{-1}
        \left(
          \calE\cap \calB_{\Delta t}
        \right)
      \right\rvert
      +
      \left\lvert
        \left(
          \psi_{\Delta t}^{\rev}
        \right)^{-1}
        \left(
          \calE\cap 
          \left(
            \calX\setminus\calB_{\Delta t}
          \right)
        \right)
      \right\rvert,\\
      &=
      \left\lvert
        \left(
          \psi_{\Delta t}
        \right)^{-1}
        \left(
          \calE\cap\calB_{\Delta t}
        \right)
      \right\rvert
      +
      \left\lvert
        \calE\cap
        \left(
          \calX\setminus\calB_{\Delta t}
        \right)
      \right\rvert
      =
      \left\lvert
        \calE\cap \calB_{\Delta t}
      \right\rvert
      +
      \left\lvert 
        \calE\cap
        \left(
          \calX\setminus\calB_{\Delta t}
        \right)
      \right\rvert
      =
      \left\lvert \calE\right\rvert,
  \end{align*}
  where we used the definition of~$\psi_{\Delta t}^{\rev}$ to obtain the second equality, and the fact that~$\psi_{\Delta t}$ is an involution hence a~$\calC^{1}$ diffeomorphism on~$\calB_{\Delta t}$ for the third one. This shows that~$\psi_{\Delta t}^{\rev}$ is a globally defined, Lebesgue measure preserving map on~$\calX$.
\end{proof}

Proposition~\ref{prop:psi_involution} along with Proposition~\ref{prop:psi_rev_involution} show that, starting from a~$S$-reversible numerical scheme satisfying Assumption~\ref{ass:S_C1_involution} along with a numerical solver whose numerical flow satisfies Assumption~\ref{ass:numerical_flow_symplectic}, one can always construct a numerical flow with~$S$-reversibility check~$\varphi_{\Delta t}^{\rev}$ defined by~\eqref{eq:B_k}--\eqref{eq:psi_rev} which satisfies the two fundamental properties~\eqref{eq:lebesgue_measure_preservation} and~\eqref{eq:S_reversibility_Stormer_Verlet_explicit} on the whole configuration space for any time step~$\Delta t$.

%---------------------------------------------------------
%---------------------------------------------------------
%---------------------------------------------------------
%---------------------------------------------------------
%---------------------------------------------------------
%---------------- UNBIASED HMC ---------------------------
%---------------------------------------------------------
%---------------------------------------------------------
%---------------------------------------------------------
%---------------------------------------------------------
%---------------------------------------------------------
\section{Unbiased HMC schemes for implicit integrators}
\label{sec:unbiased_HMC_schemes_implicit_integrators}
We now present (G)HMC schemes whose proposals are computed using implicit integrators, such as the IMR or the GSV numerical schemes, see Definitions~\ref{def:IMR} and~\ref{def:GSV}. We propose in Section~\ref{subsec:hmc} an unbiased one-step HMC algorithm, and in Section~\ref{subsec:ghmc} an unbiased GHMC algorithm to sample the phase-space Boltzmann--Gibbs measure~$\mu$ defined by~\eqref{eq:measure_non_separable} when~$H$ is an even function the momenta that can be nonseparable. It is shown in Section~\ref{subsec:RMHMC_overdamped_Langevin} that the one-step HMC algorithm combined with the Hamiltonian function~\eqref{eq:H_RMHMC} can be used to obtain a weakly consistent discretization of order 1 of the overdamped Langevin dynamics with a position-dependent diffusion coefficient, improving upon the weak consistency order 1/2 of the standard MALA algorithm~\cite{fathi_2017}. In Section~\ref{subsec:ghmala}, we propose an unbiased version of the Generalized Hybrid Metropolis--Adjusted Langevin Algorithm (GHMALA) introduced in~\cite{poncet_2017}, whose aim is to sample the Gibbs measure~$\pi$ defined by~\eqref{eq:invariant_measure_position_RMHMC} using a discretization of the overdamped Langevin dynamics with nonreversible drifts.

%---------------------------------------------------------
%---------------------------------------------------------
%---------------- HMC ------------------------------------
%---------------------------------------------------------
%---------------------------------------------------------
\subsection{Hamiltonian Monte Carlo}
\label{subsec:hmc}

\begin{algorithm}
  \caption{One-step HMC algorithm with~$S$-reversibility checks}
  \label{alg:hmc_scheme_rev_check}
  Consider an initial condition~$(q^{0},p^{0})\in\calX$, and set~$n=0$.
  \begin{enumerate}[label={[\thealgorithm.\roman*]}, align=left]
      \item \label{step:hmc_1} Sample~$\tilde{p}^{n}$ according to~$Z_{q^n}^{-1}\rme^{-H(q^{n},\cdot)}$ with~$Z_{q^n}=\int_{\bbR^{m}}\rme^{-H(q^{n},p)}\rmd p$, the conditional probability density of~$p$ given~$q^{n}$;
      \item \label{step:hmc_2} Apply one step of the Hamiltonian dynamics with momentum reversal and~$S$-reversibility check:
      \begin{equation*}
          (\widetilde{q}^{n+1}, \widetilde{p}^{n+1})=\psi_{\Delta t}^{\rev}(q^{n},\widetilde{p}^{n}),
      \end{equation*}
      where~$\psi_{\Delta t}^{\rev}$ is defined by~\eqref{eq:B_k}--\eqref{eq:psi_rev};
      \item \label{step:hmc_3} Draw a random variable~$U^{n}$ with uniform law on~$[0,1]$:
      \begin{itemize}[label=$\bullet$]
          \item if~$U^{n}\leqslant \exp(-H(\widetilde{q}^{n+1},\widetilde{p}^{n+1})+H(q^{n},\widetilde{p}^{n}))$, accept the proposal and set~$(q^{n+1},p^{n+1})=(\widetilde{q}^{n+1},\widetilde{p}^{n+1})$;
          \item else reject the proposal and set~$(q^{n+1},p^{n+1})=(q^{n},\widetilde{p}^{n})$;
      \end{itemize}
      \item Increment~$n$ and go back to~\ref{step:hmc_1}.
  \end{enumerate}
\end{algorithm}

The one-step unbiased HMC method we propose corresponds to Algorithm~\ref{alg:hmc_scheme_rev_check}. The HMC algorithm yields actually a Markov chain in~$q^{n}$. The GHMC algorithm is a variant where in step~\ref{step:hmc_1}, the momenta are partially refreshed starting from the previous momenta~$p^{n}$, see Algorithm~\ref{alg:ghmc_scheme_rev_check} below. Notice that it is assumed that one can sample exactly the conditional probability measure~$\mu(\rmd p|q^{n})$ in step~\ref{step:hmc_1}: this is typically the case in practice when this measure is a centered Gaussian probability measure with a covariance matrix which depends on~$q$, see in particular the Riemann Manifold HMC algorithm in Section~\ref{subsec:RMHMC_overdamped_Langevin}.

The Markov chain~$(q^{n},p^{n})_{n\geqslant0}$ generated by Algorithm~\ref{alg:hmc_scheme_rev_check} admits~$\mu$ as an invariant probability measure since the algorithm is the composition of two steps which leave~$\mu$ invariant:
\begin{itemize}
  \item the resampling of momenta in step~\ref{step:hmc_1};
  \item the Metropolis--Hastings steps~\ref{step:hmc_2}--\ref{step:hmc_3}.
\end{itemize}
The invariance of~$\mu$ by the Metropolis--Hastings step follows from the reversibility of the associated Markov chain, which crucially relies on the fact that~$\psi_{\Delta t}^{\rev}$ satisfies the two properties~\eqref{eq:lebesgue_measure_preservation} and~\eqref{eq:S_reversibility_Stormer_Verlet_explicit}. This is shown in the following proposition.
\begin{proposition}
  \label{prop:HMC_mu_invariance}
  Suppose that Assumptions~\ref{ass:numerical_flow_symplectic} and~\ref{ass:S_C1_involution} hold. Then the probability measure~$\mu$ is invariant by Algorithm~\ref{alg:hmc_scheme_rev_check} with~$\psi_{\Delta t}^{\rev}$ defined by~\eqref{eq:B_k}--\eqref{eq:psi_rev}.
\end{proposition}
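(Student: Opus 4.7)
The plan is to decompose one iteration of Algorithm~\ref{alg:hmc_scheme_rev_check} into the composition of two Markov kernels on $\calX$ and to show that each leaves the target measure~$\mu$ invariant. Let $K_1$ denote the kernel associated with the momentum refresh in step~\ref{step:hmc_1},
\[
K_1\bigl((q,p), \rmd q'\,\rmd p'\bigr) = \delta_q(\rmd q') \, Z_{q}^{-1}\rme^{-H(q, p')}\,\rmd p',
\]
and let $K_2$ denote the Metropolis--Hastings kernel associated with steps~\ref{step:hmc_2}--\ref{step:hmc_3}, namely
\[
K_2\bigl(x, \rmd x'\bigr) = a(x) \, \delta_{\psi_{\Delta t}^{\rev}(x)}(\rmd x') + \bigl(1-a(x)\bigr)\, \delta_x(\rmd x'),
\]
where $a(x) = \min\bigl(1, \exp(H(x) - H(\psi_{\Delta t}^{\rev}(x)))\bigr)$. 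Note that since $H\circ S = H$ (as $H$ is even in the momenta) and $\psi_{\Delta t}^{\rev} = S\circ\varphi_{\Delta t}^{\rev}$, this acceptance probability matches the one used in step~\ref{step:hmc_3}. The one-step kernel of Algorithm~\ref{alg:hmc_scheme_rev_check} is $K = K_1 K_2$.

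The invariance $\mu K_1 = \mu$ follows at once from the factorization $\mu(\rmd q\,\rmd p) = \pi_q(\rmd q)\, \mu(\rmd p | q)$, where $\pi_q$ is the position marginal, since $K_1$ leaves $q$ unchanged and resamples $p$ exactly from its conditional distribution under $\mu$. The main task is therefore to show $\mu K_2 = \mu$, for which I will prove the stronger property of $\mu$-reversibility: for all bounded measurable $f,g\colon\calX\to\bbR$,
\[
\int_{\calX} f(x) (K_2 g)(x)\, \mu(\rmd x) = \int_{\calX} g(x)(K_2 f)(x)\, \mu(\rmd x).
\]
The contribution of the rejection part $(1-a(x))\delta_x$ is trivially symmetric in $f$ and $g$, so it remains to handle the acceptance part. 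Writing the density of $\mu$ as $Z_\mu^{-1} \rme^{-H}$, this reduces to
\[
\int_{\calX} f(x)\, g\bigl(\psi_{\Delta t}^{\rev}(x)\bigr)\, a(x)\, \rme^{-H(x)}\, \rmd x = \int_{\calX} g(x)\, f\bigl(\psi_{\Delta t}^{\rev}(x)\bigr)\, a(x)\, \rme^{-H(x)}\, \rmd x.
\]

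To prove this identity, I will perform the change of variable $y = \psi_{\Delta t}^{\rev}(x)$ in the left-hand side. By Proposition~\ref{prop:psi_rev_involution}, $\psi_{\Delta t}^{\rev}$ is a global involution on $\calX$, and by Proposition~\ref{prop:psi_involution} (which rests on Assumptions~\ref{ass:numerical_flow_symplectic} and~\ref{ass:S_C1_involution}) it preserves the Lebesgue measure on $\calX$. Consequently, for any nonnegative measurable $\Psi$,
\[
\int_\calX \Psi(x)\,\rmd x = \int_\calX \Psi\bigl(\psi_{\Delta t}^{\rev}(y)\bigr)\,\rmd y.
\]
Applying this to $\Psi(x) = f(x) g(\psi_{\Delta t}^{\rev}(x)) a(x) \rme^{-H(x)}$ and using $\psi_{\Delta t}^{\rev}\circ\psi_{\Delta t}^{\rev} = \id$ transforms the left-hand side into
\[
\int_{\calX} f\bigl(\psi_{\Delta t}^{\rev}(y)\bigr)\, g(y)\, a\bigl(\psi_{\Delta t}^{\rev}(y)\bigr)\, \rme^{-H(\psi_{\Delta t}^{\rev}(y))}\,\rmd y.
\]
The proof is then completed by the standard Metropolis identity $a(x)\rme^{-H(x)} = a(\psi_{\Delta t}^{\rev}(x))\rme^{-H(\psi_{\Delta t}^{\rev}(x))}$, which holds because both sides equal $\min\bigl(\rme^{-H(x)}, \rme^{-H(\psi_{\Delta t}^{\rev}(x))}\bigr)$. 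Composing $\mu K_1 = \mu$ and $\mu K_2 = \mu$ yields $\mu K = \mu$.

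The main conceptual point (rather than obstacle) is that the change of variable must be performed via the global Lebesgue-preservation property of $\psi_{\Delta t}^{\rev}$ stated in Proposition~\ref{prop:psi_involution}, not via a pointwise Jacobian formula: the map $\psi_{\Delta t}^{\rev}$ is only piecewise $\calC^1$ on $\calX$ (it equals the smooth $\psi_{\Delta t}$ on the open set $\calB_{\Delta t}$ and $S$ elsewhere), but its global measure-preserving and involutive character, together with its measurability (guaranteed by $\calB_{\Delta t}$ being open, per Lemma~\ref{lem:B_open_set}), is exactly what is needed to carry out the above substitution rigorously.
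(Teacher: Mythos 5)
Your proof is correct and follows essentially the same route as the paper: both establish $\mu$-reversibility of the Metropolis--Hastings step~\ref{step:hmc_2}--\ref{step:hmc_3} via a change of variables justified by the Lebesgue-measure preservation and the involutiveness of $\psi_{\Delta t}^{\rev}$ (Propositions~\ref{prop:psi_involution} and~\ref{prop:psi_rev_involution}), combined with momentum resampling leaving $\mu$ invariant. The only stylistic difference is that you invoke the pre-packaged Proposition~\ref{prop:psi_involution} directly for the substitution, whereas the paper's proof inlines the case split between $\calB_{\Delta t}$ and its complement.
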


\begin{proof}
  This result is standard and is proven for instance in~\cite[Section~2.1.4]{lelievre_2010}. The only computation that differs in the proof is the one associated with the Metropolis--Hastings ratio in steps~\ref{step:hmc_2}--\ref{step:hmc_3} of Algorithm~\ref{alg:hmc_scheme_rev_check}. Let us prove that, for a proposed move from~$(q,p)$ to~$(q',p')$, this ratio writes
  \begin{equation}
    \label{eq:ratio_invariant_measure}
    \frac{\delta_{\psi_{\Delta t}^{\rev}(q',p')}\left(\rmd q\,\rmd p\right)\exp\left(-H(q',p')\right)\rmd q'\,\rmd p'}{\delta_{\psi_{\Delta t}^{\rev}(q,p)}\left(\rmd q'\,\rmd p'\right)\exp\left(-H(q,p)\right)\rmd q\,\rmd p}=
    \exp\left(-H(q',p')+H(q,p)\right).
  \end{equation}
  This computation is performed as in the proof of~\cite[Lemma~2.11]{lelievre_2019}. Equation~\eqref{eq:ratio_invariant_measure} is equivalent to 
  \begin{equation*}
    \delta_{\psi_{\Delta t}^{\rev}(q',p')}\left(\rmd q\,\rmd p\right)\rmd q'\,\rmd p'=\delta_{\psi_{\Delta t}^{\rev}(q,p)}\left(\rmd q'\,\rmd p'\right)\rmd q\,\rmd p.
  \end{equation*}
  To prove that this identity holds, we fix a bounded continuous function~$f\colon\calX\times\calX\to\bbR$. Then,
  \begin{align*}
    &\int_{(q,p)\in\calX}\int_{(q',p')\in\calX}f(q,p,q',p')\delta_{\psi_{\Delta t}^{\rev}(q',p')}\left(\rmd q\,\rmd p\right)\rmd q'\,\rmd p'\\
    \quad&=
    \int_{(q',p')\in\calX}f(\psi_{\Delta t}^{\rev}(q',p'),q',p')\,\rmd q'\,\rmd p',\\
    \quad&=
    \int_{(q',p')\in\calB_{\Delta t}}f(\psi_{\Delta t}(q',p'),q',p')\,\rmd q'\,\rmd p'
    +
    \int_{(q',p')\in\calX\setminus\calB_{\Delta t}}f(q',p',q',p')\,\rmd q'\,\rmd p',\\
    \quad&=
    \int_{(q,p)\in\calB_{\Delta t}}f\left(q,p,\left(\psi_{\Delta t}\right)^{-1}(q,p)\right)\,\rmd q\,\rmd p
    +
    \int_{(q,p)\in\calX\setminus\calB_{\Delta t}}f(q,p,q,p)\,\rmd q\,\rmd p,\\
    &=
    \int_{(q,p)\in\calB_{\Delta t}}f(q,p,\psi_{\Delta t}(q,p))\,\rmd q\,\rmd p+\int_{(q,p)\in\calX\setminus\calB_{\Delta t}}f(q,p,q,p)\,\rmd q\,\rmd p,\\
    &=
    \int_{(q,p)\in\calB_{\Delta t}}f(q,p,\psi_{\Delta t}^{\rev}(q,p))\,\rmd q\,\rmd p+\int_{(q,p)\in\calX\setminus\calB_{\Delta t}}f(q,p,\psi_{\Delta t}^{\rev}(q,p))\,\rmd q\,\rmd p,\\
    &=
    \int_{(q,p)\in\calX}f(q,p,\psi_{\Delta t}^{\rev}(q,p))\,\rmd q\,\rmd p,\\
    &=
    \int_{(q,p)\in\calX}\int_{(q',p')\in\calX}f(q,p,q',p')\delta_{\psi_{\Delta t}^{\rev}(q,p)}(\rmd q'\,\rmd p')\,\rmd q\,\rmd p,
  \end{align*}
  where we used 
  \begin{itemize}
    \item for the second and fifth equalities that~$\psi_{\Delta t}^{\rev}(x)=\psi_{\Delta t}(x)$ on~$\calB_{\Delta t}$ and~$\psi_{\Delta t}^{\rev}(x)=x$ on~$\calX\setminus\calB_{\Delta t}$ (see~\eqref{eq:B_k}--\eqref{eq:psi_rev});
    \item for the third equality, a change of variables~$(q,p)=\psi_{\Delta t}(q',p')$ on~$\calB_{\Delta t}$ (which is an open set thanks to Lemma~\ref{lem:B_open_set}) using the fact that~$\psi_{\Delta t}$ is a~$\calC^{1}$-diffeomorphism on~$\calB_{\Delta t}$ and that~$\left\lvert\det\nabla\psi_{\Delta t}\right\rvert=1$ on~$\calB_{\Delta t}$ (see Assumptions~\ref{ass:numerical_flow_symplectic} and~\ref{ass:S_C1_involution});
    \item for the fourth equality that~$\psi_{\Delta t}$ is an involution on~$\calB_{\Delta t}$ (see~\eqref{eq:B_k}--\eqref{eq:psi_rev}).
  \end{itemize}
  This concludes the proof.
\end{proof}

Note that Algorithm~\ref{alg:hmc_scheme_rev_check} in fact only generates a Markov chain~$(q^{n})_{n\geqslant0}$ since momenta are resampled at each iteration. The Markov chain~$(q^{n})_{n\geqslant0}$ samples without bias the marginal of~$\mu$ in position. This marginal does not have a simple analytical form in general, unless choosing a specific Hamiltonian function as for RMHMC. Indeed, in this case, the Hamiltonian function is~\eqref{eq:H_RMHMC}, and the marginal in position is the probability measure~$\pi$ defined by~\eqref{eq:invariant_measure_position_RMHMC}. For RMHMC, step~\ref{step:hmc_1} simply consists in sampling the Gaussian distribution~$\calN\left(0,D(q^{n})^{-1}\right)$, see~\cite{girolami_2011}. We will come back to the RMHMC algorithm in Section~\ref{subsec:RMHMC_overdamped_Langevin}, where it will be proven that it yields a consistent discretization of the overdamped Langevin dynamics with position-dependent diffusion coefficients.

%---------------------------------------------------------
%---------------------------------------------------------
%---------------- GHMC -----------------------------------
%---------------------------------------------------------
%---------------------------------------------------------
\subsection{Generalized Hamiltonian Monte Carlo}
\label{subsec:ghmc}

The GHMC method is a popular variant of HMC which relies on a discretization of the Langevin dynamics:
\begin{equation}
  \label{eq:langevin_dynamics}
  \left\lbrace
  \begin{aligned}
    \rmd q_t &= \nabla_{p}H(q_t,p_t)\,\rmd t,\\
    \rmd p_t &= -\nabla_{q}H(q_t,p_t)\,\rmd t-\gamma\nabla_pH(q_t,p_t)\,\rmd t+\sqrt{2\gamma}\,\rmd W_t,
  \end{aligned}
  \right.
\end{equation}
where~$\gamma>0$ is the friction parameter, and~$(W_t)_{t\geqslant 0}$ is a standard~$d$-dimension Brownian motion. We present in Section~\ref{subsubsec:ghmc_algorithm} the GHMC algorithm, and formally discuss in Section~\ref{subsubsec:consistency_GHMC_Langevin_dynamics} its weak consistency with the Langevin dynamics.

\subsubsection{The GHMC algorithm}
\label{subsubsec:ghmc_algorithm}

\begin{algorithm}
  \caption{GHMC algorithm with~$S$-reversibility checks}
  \label{alg:ghmc_scheme_rev_check}
  Consider an initial condition~$(q^{0},p^{0})\in\calX$, and set~$n= 0$.
  \begin{enumerate}[label={[\thealgorithm.\roman*]}, align=left]
      \item\label{step:ghmc_1} Evolve the momenta by integrating the fluctuation-dissipation part of the Langevin dynamics with time increment~$\Delta t/2$:~$(q^{n},p^{n+1/4})=\varphi_{\Delta t/2}^{\FD}(q^{n},p^{n},\Xi^n)$.
      \item\label{step:ghmc_2} Apply one step of the Hamiltonian dynamics with momentum reversal and~$S$-reversibility check:
      \begin{equation*}
          (\widetilde{q}^{n+1}, \widetilde{p}^{n+3/4})=\psi_{\Delta t}^{\rev}(q^{n},p^{n+1/4}),
      \end{equation*}
      where~$\psi_{\Delta t}^{\rev}$ is defined in~\eqref{eq:B_k}--\eqref{eq:psi_rev};
      \item\label{step:ghmc_3} Draw a random variable~$U^{n}$ with uniform law on~$(0,1)$:
      \begin{itemize}[label=$\bullet$]
          \item if~$U^{n}\leqslant \exp(-H(\widetilde{q}^{n+1},\widetilde{p}^{n+3/4})+H(q^{n},p^{n+1/4}))$, accept the proposal and set~$(q^{n+1},p^{n+3/4})=(\widetilde{q}^{n+1},\widetilde{p}^{n+3/4})$;
          \item else reject the proposal and set~$(q^{n+1},p^{n+3/4})=(q^{n},p^{n+1/4})$.
      \end{itemize}
      \item\label{step:ghmc_4} Reverse the momenta as~$\widetilde{p}^{n+1}=-p^{n+3/4}$.
      \item\label{step:ghmc_5} Evolve the momenta by integrating the fluctuation-dissipation part with time increment~$\Delta t/2$:~$(q^{n+1},p^{n+1})=\varphi_{\Delta t/2}^{\FD}(q^{n+1},\widetilde{p}^{n+1},\Xi^{n+1/2})$.
      \item Increment~$n$ and go back to~\ref{step:ghmc_1}.
  \end{enumerate}
\end{algorithm}

The GHMC algorithm is obtained thanks to a splitting technique, considering separately the Hamiltonian and fluctuation-dissipation parts of the dynamics~\eqref{eq:langevin_dynamics}. The latter part of the dynamics induces a partial refreshment of the momenta instead of a full resampling as in HMC. A Strang splitting based on these elements leads to Algorithm~\ref{alg:ghmc_scheme_rev_check}. To integrate the fluctuation-dissipation part of the dynamics for a fixed position~$q^{n}$, \emph{i.e.}~the elementary dynamics
\begin{equation}
  \label{eq:FD_dynamics}
  \rmd p_t = -\gamma\nabla_pH(q^{n},p_t)\,\rmd t+\sqrt{2\gamma}\,\rmd W_t,
\end{equation}
we use an explicit numerical scheme which preserves the invariant measure~$\mu$. The associated numerical flow, for a time step~$\Delta t$, is denoted by~$\varphi_{\Delta t}^{\FD}$. In the case when~$H$ is quadratic in~$p$, equation~\eqref{eq:FD_dynamics} is simply an Ornstein--Uhlenbeck process and one can rely on a mid-point scheme or on an analytic integration to define~$\varphi_{\Delta t}^{\FD}$. This is for example what can be done for the Hamiltonian function~\eqref{eq:H_RMHMC}, see~\eqref{eq:OU_update_rmhmc} below. Alternatively,~$\varphi_{\Delta t}^{\FD}$ can be defined by a one-step Euler--Maruyama discretization corrected by a Metropolis--Hastings acceptance-rejection step: this is the celebrated MALA algorithm~\cite{rossky_1978,roberts_1998}. Typically, the map~$\varphi_{\Delta t}^{\FD}$ takes as input a configuration~$(q,p)\in\calX$ as well as random numbers (for instance a Gaussian increment when discretizing the Brownian motion or a uniformly drawn number in~$[0,1]$ for the Metropolis--Hastings procedure) which we regroup into a vector~$\Xi$.

Step~\ref{step:ghmc_4} is only needed for the consistency of GHMC with respect to the Langevin dynamics, see Section~\ref{subsubsec:consistency_GHMC_Langevin_dynamics}. When accepting the proposal in step~\ref{step:ghmc_3}, the sign of the momenta of the output is not changed over the iteration.

Since the momenta updates~\ref{step:ghmc_1} and~\ref{step:ghmc_5}, the Metropolis--Hastings procedure~\ref{step:ghmc_2}--\ref{step:ghmc_3} (see the proof of Proposition~\ref{prop:HMC_mu_invariance}) and the momentum reversal~\ref{step:ghmc_4} are all reversible with respect to~$\mu$, the Markov chain obtained from this algorithm admits~$\mu$ as an invariant probability measure, as the following proposition states.

\begin{proposition}
  \label{prop:GHMC_mu_invariance}
  Suppose that Assumptions~\ref{ass:numerical_flow_symplectic} and~\ref{ass:S_C1_involution} hold. The measure~$\mu$ is invariant by Algorithm~\ref{alg:ghmc_scheme_rev_check} with~$\psi_{\Delta t}^{\rev}$ defined by~\eqref{eq:B_k}--\eqref{eq:psi_rev}.
\end{proposition}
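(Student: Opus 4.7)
The plan is to write the transition kernel of Algorithm~\ref{alg:ghmc_scheme_rev_check} as the composition of elementary Markov kernels on~$\calX$, each of which leaves~$\mu$ invariant, and then to invoke the elementary fact that a composition of $\mu$-invariant kernels is $\mu$-invariant. More precisely, one iteration of the algorithm factors as
\begin{equation*}
  P^{\GHMC}_{\Delta t} = P^{\FD}_{\Delta t/2}\circ P^{S}\circ P^{\rm MH}_{\Delta t}\circ P^{\FD}_{\Delta t/2},
\end{equation*}
where~$P^{\FD}_{\Delta t/2}$ is the Markov kernel associated with the fluctuation--dissipation update in steps~\ref{step:ghmc_1} and~\ref{step:ghmc_5},~$P^{\rm MH}_{\Delta t}$ is the Metropolis--Hastings kernel built from the proposal~$\psi_{\Delta t}^{\rev}$ in steps~\ref{step:ghmc_2}--\ref{step:ghmc_3}, and~$P^{S}$ is the deterministic kernel associated with the momentum reversal in step~\ref{step:ghmc_4}.

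I would then check $\mu$-invariance of each elementary kernel separately. First, $P^{\FD}_{\Delta t/2}$ is by construction a numerical flow for the fluctuation--dissipation part of the Langevin dynamics~\eqref{eq:FD_dynamics} at frozen position; as explained in the paragraph following Algorithm~\ref{alg:ghmc_scheme_rev_check}, this scheme (whether an exact Ornstein--Uhlenbeck update when~$H$ is quadratic in~$p$, or a Metropolis--Hastings corrected Euler--Maruyama step in general) is designed to preserve the conditional Gibbs distribution of~$p$ given~$q$, hence preserves~$\mu$. Second, $P^{S}$ is the pushforward by the involution~$S$, and invariance of~$\mu$ follows from the two identities~$H\circ S=H$ and~$|\det\nabla S|=1$ provided by Assumption~\ref{ass:S_C1_involution}. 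Third, $P^{\rm MH}_{\Delta t}$ is shown to be reversible (hence invariant) with respect to~$\mu$ by a verbatim repetition of the Metropolis--Hastings detailed balance computation already performed in the proof of Proposition~\ref{prop:HMC_mu_invariance}, which only uses that~$\psi_{\Delta t}^{\rev}\colon\calX\to\calX$ is a globally defined involution preserving the Lebesgue measure---this is exactly the content of Propositions~\ref{prop:psi_rev_involution} and~\ref{prop:psi_involution}, which in turn require Assumptions~\ref{ass:numerical_flow_symplectic} and~\ref{ass:S_C1_involution}.

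Combining these three ingredients, $P^{\GHMC}_{\Delta t}$ leaves~$\mu$ invariant. I do not expect any real obstacle here beyond being careful about bookkeeping: the main point to flag is that the Metropolis--Hastings factor~$P^{\rm MH}_{\Delta t}$ is a bona fide $\mu$-reversible kernel irrespective of the marginal distribution fed to it (in particular, the momenta entering step~\ref{step:ghmc_2} are only partially refreshed rather than sampled afresh from~$\mu(\rmd p|q)$ as in Algorithm~\ref{alg:hmc_scheme_rev_check}), so the proof of Proposition~\ref{prop:HMC_mu_invariance} applies unchanged. The only place where the identity~$H(q,-p)=H(q,p)$ enters the final assembly, beyond what is already used inside~$P^{\rm MH}_{\Delta t}$, is the invariance of~$\mu$ under the momentum reversal kernel~$P^{S}$.
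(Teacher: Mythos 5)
Your proof follows essentially the same route as the paper: the paper's (implicit) argument, given in the paragraph immediately preceding Proposition~\ref{prop:GHMC_mu_invariance}, is precisely that the GHMC transition kernel factors into the fluctuation--dissipation updates, the Metropolis--Hastings step (reversible by the computation in the proof of Proposition~\ref{prop:HMC_mu_invariance}), and the momentum reversal, each of which leaves~$\mu$ invariant. Your decomposition and the ingredients you invoke match this exactly; the only minor caveat is that $H\circ S=H$ is a standing hypothesis on~$H$ (declared just after~\eqref{eq:measure_non_separable}) rather than part of Assumption~\ref{ass:S_C1_involution}, but this does not affect the argument.
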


Let us finally make precise the flow~$\varphi_{\Delta t}^{\FD}$ for the Hamiltonian function~\eqref{eq:H_RMHMC}. In this case, the diffusion process associated with the steps~\ref{step:ghmc_1} and~\ref{step:ghmc_5} of Algorithm~\ref{alg:ghmc_scheme_rev_check} reads
\begin{equation}
  \label{eq:Ornstein_Uhlenbeck_dynamics}
  \rmd p_t = -\gamma D(q^{n})p_t\,\rmd t+\sqrt[]{2\gamma}\,\rmd W_t.
\end{equation}
One can exactly integrate~\eqref{eq:Ornstein_Uhlenbeck_dynamics}, at least for matrices~$D(q^{n})$ for which~$\exp(-\gamma D(q^{n})\Delta t)$ is easy to compute (\emph{e.g.}~if~$D(q^{n})$ is diagonal). In a more general case, it is preferable to use a mid-point Euler scheme to update the momenta, which requires only solving a linear problem. Using a time increment~$\Delta t/2$, this scheme reads
\begin{equation*}
  p^{n+1/4}=p^{n}-\frac{\Delta t}{4}\gamma D\left(q^{n}\right)\left(p^{n}+p^{n+1/4}\right)+\sqrt{\gamma\Delta t}\,G^{n}.
\end{equation*}
This requires solving the linear system
\begin{equation*}
    \left[
        \rmI_{m} + \frac{\Delta t}{4}\gamma D\left(q^{n}\right)
    \right] p^{n+1/4}
    = \left[\rmI_{m}-\frac{\Delta t}{4}\gamma D\left(q^{n}\right)\right]p^{n}+\sqrt{\gamma\Delta t}\,G^{n},
\end{equation*}
which has a unique solution:
\begin{equation}
  \label{eq:OU_update_rmhmc}
    p^{n+1/4}= \left[
        \rmI_{m} + \frac{\Delta t}{4}\gamma D\left(q^{n}\right)
    \right]^{-1}\left[
        \left(\rmI_{m}-\frac{\Delta t}{4}\gamma D\left(q^{n}\right)\right)p^{n}+\sqrt{\gamma\Delta t}\,G^{n}
    \right].
\end{equation}

\subsubsection{Formal consistency of GHMC with Langevin dynamics} 
\label{subsubsec:consistency_GHMC_Langevin_dynamics}
We now formally discuss the weak consistency of the GHMC algorithm with the Langevin dynamics~\eqref{eq:langevin_dynamics} in the limit~$\Delta t\to0$. Such results have been formalized for separable Hamiltonian functions, see for instance~\cite{bou-rabee_2010}.

We first give the definition of the weak consistency order of a numerical discretization of a general diffusion process on a family of sets~$(B_{\Delta t})_{\Delta t>0}$ for a class of functions~$\scrF$.

\begin{definition}[Weak consistency order on a family of sets~$(B_{\Delta t})_{\Delta t>0}$ for a class of functions~$\scrF$]
  \label{def:weak_consistency_order}
  Let~$(X_t)_{t\geqslant 0}$ be a diffusion process with infinitesimal generator~$\calL$ whose domain includes~$\scrF$. Consider a numerical discretization with time step~$\Delta t$ of this diffusion process, which generates a Markov chain~$(X^n)_{n\geqslant 0}$ with transition operator~$P_{\Delta t}$. The numerical discretization is a weakly consistent discretization of order~$r>0$ of the diffusion process~$(X_t)_{t\geqslant0}$ for a class of functions~$\scrF$ if there exists~$\Delta t_{\star}>0$ and a family of sets~$(B_{\Delta t})_{0<\Delta t\leqslant\Delta t^{\star}}$ such that, for any function~$f\in\scrF$, there exists~$K\in\bbR_{+}$ for which
  \begin{equation}
    \label{eq:upper_bound_weak_consistency}
    \forall\Delta t\in(0,\Delta t^{\star}],\quad \forall x\in B_{\Delta t},\qquad 
    P_{\Delta t}f(x)= \rme^{\Delta t\calL}f(x)+\Delta t^{r+1}\kappa_{\Delta t}(x),\quad
    \left\lvert \kappa_{\Delta t}(x)\right\rvert\leqslant K.
  \end{equation}
\end{definition}
Notice that this definition also makes sense for deterministic processes (\emph{i.e.}~solutions of ordinary differential equations). Sometimes, to alleviate the notation, we simply write~$P_{\Delta t}f(x)= \rme^{\Delta t\calL}f(x)+\rmO(\Delta t^{r+1})$. Note also that it would be possible to consider more general definitions where the upper bound in~\eqref{eq:upper_bound_weak_consistency} is given by a function~$\calK(x)$ which has certain integrability properties.

Let us make the following assumptions: there is a class of functions~$\scrF$ such that
\begin{enumerate}[label={[H.\roman*]}, align=left]
  \item \label{hyp:ghmc_1} the map~$\varphi_{\Delta t}^{\FD}$ yields a weakly consistent discretization of the stochastic differential equation~\eqref{eq:Ornstein_Uhlenbeck_dynamics} of order~$r_1\geqslant0$ on~$\bbR^{m}$ for~$\scrF$;
  \item \label{hyp:ghmc_2} the map~$\varphi_{\Delta t}$ yields a consistent discretization of the ordinary differential equation~\eqref{eq:hamiltonian_dynamics} of order~$r_2\geqslant0$ on~$\calA_{\Delta t}$ for~$\scrF$.
\end{enumerate}
Moreover,
\begin{enumerate}[label={[H.\roman*]}, resume, align=left]
  \item \label{hyp:ghmc_3} The initial distribution~$\mu^{0}$ is such that~$\mu^{0}(\calX\setminus\calB_{\Delta t})\leqslant C\Delta t^{r_3+1}$ for a constant~$C\geqslant0$.
\end{enumerate}
Define~$\alpha=\min(2,r_1,r_2,r_3)$. Under assumptions~\ref{hyp:ghmc_1} to~\ref{hyp:ghmc_3}, GHMC is formally a weakly consistent discretization of order~$\alpha$ of the Langevin dynamics~\eqref{eq:langevin_dynamics}. This typically implies that the following one-step weak estimate holds, for some constant~$C_f\in\bbR_{+}$:
\begin{equation}
  \label{eq:one_step_horizon_weak_estimate}
  \left\lvert\bbE^{\mu^{0}}\left[ f(q^{1},p^{1})\right]-\bbE^{\mu^{0}}\left[f(q_{\Delta t},p_{\Delta t})\right]\right\rvert
  \leqslant C_{f}\Delta t^{\alpha+1},
\end{equation}
where~$(q^{1},p^{1})$ is the output of Algorithm~\ref{alg:ghmc_scheme_rev_check} using a time step~$\Delta t$ starting from~$(q^{0},p^{0})\sim\mu^{0}$ and~$(q_{\Delta t},p_{\Delta t})$ is the exact solution of~\eqref{eq:langevin_dynamics} at time~$\Delta t$ starting from~$(q_{0},p_{0})\sim\mu^{0}$. The constant~$C_{f}$ a priori depends on~$f$ in~\eqref{eq:one_step_horizon_weak_estimate} but may be uniform in~$f$ for some class of functions~$\scrF$ (\emph{e.g.} when~$\scrF$ is a set of functions with uniformly bounded derivatives). Using standard techniques such the ones described in~\cite[Chapter 2]{milstein_2004} or~\cite{talay_1990,kopec_2015,stoltz_2018}, the estimate~\eqref{eq:one_step_horizon_weak_estimate} can be extended to a finite time horizon estimate, as
\begin{equation*}
  \forall n\in\left\lbrace 1,\dots,\left\lfloor\frac{T}{\Delta t}\right\rfloor\right\rbrace,\qquad
  \left\lvert\bbE^{\mu^{0}}\left[ f(q^{n},p^{n})\right]-\bbE^{\mu^{0}}\left[f(q_{n\Delta t},p_{n\Delta t})\right]\right\rvert
  \leqslant C_{f,T}\Delta t^{\alpha},
\end{equation*}
where~$T>0$ is a fixed time horizon and~$C_{f,T}\in\bbR_{+}$, upon extending assumption~\ref{hyp:ghmc_3} as
\begin{equation*}
  \exists C_T\geqslant 0,\quad
  \forall n\in\left\lbrace 0,\ldots,\left\lfloor\frac{T}{\Delta t}\right\rfloor-1\right\rbrace,\quad\mu^{n}\left(\calX\setminus\calB_{\Delta t}\right)\leqslant C_T\Delta t^{r_3+1},
\end{equation*}
where~$\mu^{n}$ is the law of~$(q^{n},p^{n})$. One way to establish such an inequality is to prove a pointwise estimate similar to~\eqref{eq:rmhmc_rejection_probability} below, and to control some moments of~$\mu^{n}$ to integrate the pointwise bound (with an upper bound~$\rmO\left(\Delta t^{r_3+1}\right)$).

Let us now comment on the consistency result~\eqref{eq:one_step_horizon_weak_estimate}.
\begin{itemize}
  \item The momentum reversal step~\ref{step:ghmc_4} is needed for the result to hold (see for instance~\cite[Section~2.2.3.2]{lelievre_2010}). Upon acceptance of the proposal, the momenta are not reversed, so that a consistent one-step discretization of the Hamiltonian dynamics is obtained.
  \item The upper bound~$\alpha\leqslant 2$ comes from the Strang splitting underlying GHMC: one cannot generally expect more than a second order weak consistency according to the BCH formula (see for instance~\cite[Section~III.4]{hairer_2006} as well as~\cite{leimkuhler_2015} for the application to Langevin dynamics).
  \item The formalization of~\eqref{eq:one_step_horizon_weak_estimate} for a general Hamiltonian function requires to identify a class of functions~$\scrF$ in Definition~\ref{def:weak_consistency_order}. Typical classes of functions used to prove similar results in the literature are for instance the set of smooth bounded functions with uniformly bounded derivatives or the set of functions growing at most polynomially with derivatives growing at most polynomially. The class~$\scrF$ should be stable by the discrete and continuous transition operators of the dynamics~\eqref{eq:langevin_dynamics} and~\eqref{eq:Ornstein_Uhlenbeck_dynamics}. Establishing such stability properties is out of the scope of this work, and we refer for example to~\cite{stoltz_2018,monmarche_2022} for estimates allowing to make precise the functional setting for separable Hamiltonian functions.
\end{itemize}

Let us finally discuss the relevance of the hypotheses~\ref{hyp:ghmc_1}--\ref{hyp:ghmc_3}:
\begin{itemize}
  \item Item~\ref{hyp:ghmc_1} formally holds when using MALA (in which case~$r_1=1$). For specific Hamiltonian functions such as~\eqref{eq:H_RMHMC}, the midpoint scheme~\eqref{eq:OU_update_rmhmc} yields a second-order consistent discretization (\emph{i.e.}~$r_1=2$, see for example~\cite[Lemma~3]{fathi_2017}). In order to make precise statements, one should identify sufficient assumptions on the Hamiltonian function~$H$ for remainder terms to be controlled.
  \item Item~\ref{hyp:ghmc_2} holds for the IMR or the GSV numerical schemes with~$r_2=2$ (see~\cite[Theorems~VI.3.4 and~VI.3.5]{hairer_2006}). This implies that the rejection probability due to the Metropolis--Hastings acceptance/rejection procedure (step~\ref{step:ghmc_3}) scales as~$\rmO\left(\Delta t^{r_2+1}\right)$ since the Hamiltonian is conserved approximately at order~$\rmO\left(\Delta t^{r_2+1}\right)$ over one time step.
  \item We observe in the numerical simulations we performed that assumption~\ref{hyp:ghmc_3} holds when using the Hamiltonian function~\eqref{eq:H_RMHMC} (see Figure~\ref{fig:rejection_probabilities} below). 
\end{itemize}
The combination of these considerations suggests that, when using a second order scheme to discretize~\eqref{eq:Ornstein_Uhlenbeck_dynamics} ($r_1=2$) and the IMR or the GSV numerical scheme to numerically integrate~\eqref{eq:hamiltonian_dynamics}, GHMC is a discretization of weak order~$2$ of the Langevin dynamics on~$\calB_{\Delta t}$ for the class of functions~$\scrF$.

Such consistency results can be made rigorous, as examplified in the next section for RMHMC.

%---------------------------------------------------------
%---------------------------------------------------------
%---------------- RMHMC ----------------------------------
%---------------------------------------------------------
%---------------------------------------------------------

\subsection{RMHMC as a discretization of the overdamped Langevin dynamics with position-dependent diffusion coefficient}
\label{subsec:RMHMC_overdamped_Langevin}
The overdamped Langevin dynamics with a position-dependent diffusion coefficient is the process
\begin{equation*}
  \label{eq:overdamped_langevin_diffusion}
  \rmd q_t = \left(
      -D(q_t)\nabla V(q_t)+\div D(q_t)
      \right)\rmd t + \sqrt[]{2D(q_t)}~\rmd W_t,
\end{equation*}
where~$(W_t)_{t\geqslant0}$ is a Brownian motion,~$V$ a potential energy function and~$D$ a diffusion coefficient (\emph{i.e.}~a map whose output is a positive definite symmetric matrix). The divergence of the matrix valued function~$D$ is the vector field whose~$i$-th component is the divergence of the~$i$-th column of~$D$:
\begin{equation*}
  \left(\div D\right)_{i}=\sum_{j=1}^{m}\partial_{q_j}D_{i,j}. 
\end{equation*}
Denote by~$\calL$ the generator of the dynamics~\eqref{eq:overdamped_langevin_diffusion}. It acts on test functions~$f$ as
\begin{equation}
  \label{eq:generator_overdamped_langevin}
  \calL f=\left(-D\nabla V+\div D\right)^{\sfT}\nabla f+D\colon\nabla^{2} f.
\end{equation}
A straightforward computation shows that, for any smooth test functions~$f,g$,
\begin{equation*}
  \left\langle \calL f, g\right\rangle_{L^{2}(\mu)}=-\int_{\bbR^{m}}\left(\nabla f\right)^{\sfT}D\nabla g\,\rmd\mu.
\end{equation*}
Choosing~$g=1$ shows that~$\int_{\bbR^{m}}\calL f\rmd\mu=0$, so that the process~\eqref{eq:overdamped_langevin_diffusion} admits~\eqref{eq:invariant_measure_position_RMHMC} as an invariant probability measure, whatever the diffusion coefficient~$D$ (provided that the dynamics~\eqref{eq:overdamped_langevin_diffusion} is well-defined). These dynamics (along with a Metropolis--Hastings procedure) are used for example in the molecular dynamics literature to simulate ionic solutions and estimate macroscopic properties~\cite{jardat_1999}. In this situation, the diffusion coefficient~$D$ encodes some physical information corresponding to degrees of freedom which have not been retained in a coarse graining procedure. The diffusion coefficient~$D$ can also be added for numerical reasons to accelerate convergence towards equilibrium and to reduce the asymptotic variance of MCMC estimators~\cite{roberts_2002,rey-bellet_2016,abdulle_2019,lelievre_2023}.

A natural and consistent discretization of the process~\eqref{eq:overdamped_langevin_diffusion} is provided by the Euler--Maruyama scheme with time step~$h$, whose bias is corrected by a Metropolis--Hastings procedure. When the diffusion coefficient~$D$ is constant (\emph{i.e.}~it does not depend on the position~$q_t$), the rejection probability of the Metropolis--Hastings acceptance/rejection step scales as~$\rmO(h^{3/2})$ in the limit~$h\to0$. However, this rejection probability scales as~$\rmO(h^{1/2})$ when~$D$ is not constant (see~\cite[Lemma~4]{fathi_2017}). This high rejection probability constrains the time step to be smaller, which strongly reduces, or even cancels, the benefits from using a non-constant diffusion coefficient. 

We show in this section that, under natural assumptions, using the Hamiltonian function~\eqref{eq:H_RMHMC} in Algorithm~\ref{alg:hmc_scheme_rev_check} with either the IMR or GSV numerical scheme yields a weakly consistent discretization of weak order 1 of~\eqref{eq:overdamped_langevin_diffusion} with an effective time step~$h=\Delta t^{2}/2$. A key ingredient is that the Metropolis--Hastings rejection probability scales as~$\rmO(h^{3/2})$ for such numerical schemes even for non-constant diffusion coefficients (since the rejection probability for IMR or GSV is expected to be of order $\rmO(\Delta t^3)$, for instance following similar computations as in the proof of~\cite[Lemma 3.1]{stoltz_2018}). The objective of this section is to provide one possible setting of sufficient conditions under which this weak order can be obtained. The results heavily rely on the fact that when using the Hamiltonian function~\eqref{eq:H_RMHMC}, the conditional probability distribution of the momenta knowing the position~$q^{n}$ appearing in step~\ref{step:hmc_1} is simply the Gaussian distribution with zero mean and covariance matrix~$D(q^{n})^{-1}$, see~\cite{girolami_2011}.

In order to state the result, we make the following assumption.
\begin{assumption}
  \label{ass:RMHMC}
  The Hamiltonian function~$H$ is defined by~\eqref{eq:H_RMHMC}, where~$V$ and~$D$ are~$\calC^{\infty}$ functions. The map~$D$ has values in the space of symmetric positive definite matrices and is uniformly bounded from above and below: there exist~$0<a\leqslant b<+\infty$ such that
  \begin{equation}
    \label{eq:ass_D_bound}
    \forall q\in\calO,\qquad
    a\rmI_m\leqslant D(q)\leqslant b\rmI_m.  
  \end{equation}
  The map~$V$ is uniformly lower bounded: there exists~$c\in\bbR$ such that
  \begin{equation*}
    \forall q\in\calO,\qquad
    V(q)\geqslant c.
  \end{equation*}
  Furthermore, the map~$V$ and all its derivatives, as well as all the derivatives of the map~$D$, grow at most polynomially.

  The map~$\Phi_{\Delta t}$ is either the IMR or the GSV numerical scheme. The associated numerical solver~$\chi_{\Delta t}$ returns a solution~$y=(y_1,\dots,y_k)$ close to the initial point~$(q^{n},p^{n})\in\calA_{\Delta t}$ of the equation~$\Phi_{\Delta t}((q^{n},p^{n}),y_1,\ldots,y_k)=0$ in the following sense: there exist~$\Delta t_{\star}>0$ and~$C\in\bbR_{+}$ (which depend on~$(q^{n},p^{n})$) such that
  \begin{equation}
    \label{eq:solution_close}
    \forall 1\leqslant i\leqslant k,\quad \forall\Delta t\in(0,\Delta t_{\star}],\qquad \left\lVert y_{i}-(q^{n},p^{n})\right\rVert\leqslant C\Delta t. 
  \end{equation}
\end{assumption}
The last assumption, whose relevance is discussed more thoroughly in Section~\ref{subsec:numerical_solvers_fixed_point}, is used to obtain estimates on~$\left\lVert \widetilde{q}^{n+1}-q^{n}\right\rVert$ (where~$\widetilde{q}^{n+1}$ is the proposal in step~\ref{step:hmc_2}) using the implicit function theorem, as done for instance in~\cite{hairer_2006}.

We give in Lemma~\ref{lem:one_step_hmc_weak_consistency_overdamped_langevin} an expansion in powers of the time step of the output of a numerical flow associated either with the IMR or GSV numerical schemes. The proof is postponed to Section~\ref{subsec:proof:HMC_weak_consistency}.

\begin{lemma}
  \label{lem:one_step_hmc_weak_consistency_overdamped_langevin}
  Suppose that Assumption~\ref{ass:RMHMC} holds. Fix~$\Delta t>0$. Let~$(q^{n},p^{n})\in\calA_{\Delta t}$ and denote by~$(\widetilde{q}^{n+1},\widetilde{p}^{n+1})=\varphi_{\Delta t}(q^{n},p^{n})$. Then the following expansion holds:
  \begin{equation}
    \label{eq:expansion_position_rmhmc}
      \widetilde{q}^{n+1}= q^{n} + \Delta t D(q^{n})p^{n} + \dfrac{\Delta t^{2}}{2}\calF_{2}(q^{n}, p^{n})+\Delta t^{3}\calF_{3}(q^{n},p^{n})+\Delta t^{4}\calR_{\Delta t}(q^{n},p^{n}),
  \end{equation}
  for some functions~$\calF_2$,~$\calF_3$ and~$\calR_{\Delta t}$ that satisfy:
  \begin{itemize}
    \item There exist~$\Delta t_{\star}>0$ and~$K\in\bbR_{+}$ (which depend on~$(q^{n},p^{n})$) such that
    \begin{equation}
      \label{eq:expansion_position_rmhmc_bounds}
      \forall\Delta t\in(0,\Delta t_{\star}],\qquad
      \left\lVert\calR_{\Delta t}(q^{n},p^{n})\right\rVert\leqslant K.
    \end{equation}
    \item For~$\rmG^{n}$ a random variable following a standard normal distribution, it holds
    \begin{equation}
      \label{eq:computation_f2}
        \bbE_{\rmG^{n}}\left[\calF_2(q^{n},D(q^{n})^{-1/2}\rmG^{n})\right]=-D(q^{n})\nabla V(q^{n})+\div D(q^{n}),
    \end{equation}
    and~$\bbE_{\rmG^{n}}\left[\calF_3(q^{n},D(q^{n})^{-1/2}\rmG^{n})\right]=0$, where the subscript~$\rmG^{n}$ indicates that the expectation is over~$\rmG^{n}$.
  \end{itemize}
\end{lemma}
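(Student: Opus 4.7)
The plan is to Taylor-expand the implicit scheme in powers of $\Delta t$, identify the coefficients $\calF_2$ and $\calF_3$ in~\eqref{eq:expansion_position_rmhmc}, compute $\bbE_{\rmG^n}[\calF_2]$ directly from the explicit form of $H$ in~\eqref{eq:H_RMHMC}, and argue that $\calF_3$ is odd in $p^n$ via the $S$-reversibility of the scheme so that its Gaussian expectation vanishes.

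Using the a priori estimate~\eqref{eq:solution_close} from Assumption~\ref{ass:RMHMC}, I would write the ansatz $\widetilde{q}^{n+1} = q^n + \Delta t\,a_1 + \Delta t^2 a_2 + \Delta t^3 a_3 + \Delta t^4 \rho_{\Delta t}$ (and an analogous one for $\widetilde{p}^{n+1}$, together with the intermediate configuration for GSV), substitute into~\eqref{eq:IMR} (respectively~\eqref{eq:GSV}), Taylor expand $\nabla_q H$ and $\nabla_p H$ around $(q^n,p^n)$ using the $\calC^{\infty}$ regularity of $V$ and $D$, and match powers of $\Delta t$ iteratively to determine $a_1$, $a_2$, $a_3$ uniquely. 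Since IMR and GSV are at least second-order consistent discretizations of~\eqref{eq:hamiltonian_dynamics}, the coefficient $a_2 = \calF_2/2$ would coincide with that of the exact Hamiltonian flow, so that $\calF_2 = \ddot q_0$, where the second time-derivative of $q_t = \phi_t(q^n,p^n)$ at $t=0$ has a quadratic part in $p^n$ coming from differentiating $D(q_t)p_t$ along the flow together with a term $-D(q^n)\nabla_q H(q^n,p^n)$. Setting $p^n = D(q^n)^{-1/2}\rmG^n$, the quadratic part simplifies to $(\div D)(q^n)$ after using $\bbE[p^n_l p^n_j] = [D(q^n)^{-1}]_{lj}$ and contracting indices, while the Jacobi identity $\partial_{q_k}\ln\det D = \mathrm{tr}(D^{-1}\partial_{q_k}D)$ applied to~\eqref{eq:H_RMHMC} gives $\bbE_{\rmG^n}[\nabla_q H(q^n,p^n)] = \nabla V(q^n)$, producing~\eqref{eq:computation_f2}.

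For $\calF_3$, the key ingredient is that symmetric one-step methods like IMR and GSV satisfy $\varphi_{-\Delta t} = \varphi_{\Delta t}^{-1}$, which combined with Proposition~\ref{prop:IMR_GSV_S_reversibility} gives $S \circ \varphi_{\Delta t} \circ S = \varphi_{-\Delta t}$. Writing $\widetilde{Q}(q,p;\Delta t)$ for the position component of $\varphi_{\Delta t}(q,p)$, this reads as the functional identity $\widetilde{Q}(q^n,-p^n;\Delta t) = \widetilde{Q}(q^n,p^n;-\Delta t)$; substituting the expansion~\eqref{eq:expansion_position_rmhmc} into both sides and matching powers of $\Delta t$ forces $\calF_j(q^n,-p^n) = (-1)^j\calF_j(q^n,p^n)$, so $\calF_3$ is odd in $p^n$ and its expectation against the centered Gaussian $D(q^n)^{-1/2}\rmG^n$ vanishes. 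The remainder bound~\eqref{eq:expansion_position_rmhmc_bounds} would follow from applying the implicit function theorem to the defining equation of $\chi_{\Delta t}$, whose Jacobian with respect to the unknowns at $\Delta t=0$ reduces to the identity, ensuring smooth dependence of the solution on $\Delta t$ in a neighborhood of zero.

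The main obstacle I anticipate is the bookkeeping in the Taylor expansion at order $\Delta t^3$: since $\widetilde{q}^{n+1} - q^n$ and $\widetilde{p}^{n+1} - p^n$ are only known a priori to be $\rmO(\Delta t)$, obtaining the $\Delta t^3$ coefficient requires iteratively substituting the lower-order ansatz into the implicit equations and carefully tracking cross-terms, and for GSV one must additionally compose the expansions of the Euler B and Euler A substeps. Controlling the remainder uniformly in $\Delta t\in(0,\Delta t_\star]$ by a constant depending on $(q^n,p^n)$ is made possible by the polynomial growth of the derivatives of $V$ and $D$ provided by Assumption~\ref{ass:RMHMC}.
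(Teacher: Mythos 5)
Your proposal is correct but takes a genuinely different, more conceptual route than the paper's. Where you differ most substantially is in how $\calF_2$ and $\calF_3$ are handled: the paper performs a direct, rather lengthy Taylor expansion of the implicit scheme equations (separately for GSV and IMR), iteratively substituting the lower-order ansatz into the implicit relations until all the $\Delta t^3$ cross-terms appear explicitly, and then observes \emph{a posteriori} that the resulting $\calF_1,\calF_3$ are odd in $p^n$ and $\calF_2$ is even, so the odd-order expectations vanish and $\bbE[\calF_2]$ can be computed term by term. You instead shortcut the $\calF_2$ identification by appealing to second-order consistency with the exact Hamiltonian flow (so $\calF_2 = \ddot q_0$), and you get the oddness of $\calF_3$ structurally from the symmetry $\varphi_{-\Delta t}=\varphi_{\Delta t}^{-1}$ (which holds for IMR and for the GSV composition of adjoint Euler~A/B substeps) combined with the $S$-reversibility of the scheme, yielding $\calF_j(q,-p)=(-1)^j\calF_j(q,p)$ without computing $\calF_3$ at all. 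Both approaches are sound and produce the same result; yours is cleaner and avoids the index bookkeeping, but it relies on two additional facts that should be stated for the argument to be self-contained: (i) the numerical flow $\varphi_{\Delta t}$ in this paper is a chosen continuous branch, not the abstract scheme, so the identity $S\circ\varphi_{\Delta t}\circ S=\varphi_{-\Delta t}$ must be justified for the \emph{solver} --- this follows from the uniqueness of the nearby solution given Assumption~\ref{ass:RMHMC}'s closeness condition~\eqref{eq:solution_close} and the implicit function theorem, which gives a single smooth-in-$\Delta t$ branch around $\Delta t=0$ automatically inheriting the scheme's symmetries; and (ii) one should note that GSV is indeed symmetric (Euler~A and~B are adjoint methods). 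The paper's brute-force route, while heavier, has the advantage of exhibiting $\calF_2$ and $\calF_3$ explicitly, which the paper uses to check the parity claims directly; your route trades that explicitness for a structural argument.
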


The following proposition states that HMC is a weakly consistent discretization of~\eqref{eq:overdamped_langevin_diffusion} under the assumption that the probability to reject the proposal due to~$S$-reversibility checks is controlled in the limit~$\Delta t\to0$. The proof can be read in Section~\ref{subsec:proof:HMC_weakly_consistent}. We denote by~$\scrF_{\pol}$ the set of smooth functions of the position variable~$q\in\calO$ that grow at most polynomially, with derivatives that grow at most polynomially.
\begin{proposition}
  \label{prop:one_step_hmc_weakly_consistent_overdamped_langevin}
  Suppose that Assumption~\ref{ass:RMHMC} holds and fix~$f\in\scrF_{\pol}$. Assume that there exist~$\Delta t_{\star}>0$ and~$K,\delta\in\bbR_{+},$ such that
  \begin{equation}
    \label{eq:stability_continous_process}
    \forall\Delta t\in(0,\Delta t_{\star}],\quad
    \forall q\in\calO,\qquad
    \left\lvert \left(
      \rme^{\Delta t\calL}f-(\id+\Delta t\calL)f
    \right)(q)\right\rvert
    \leqslant K\Delta t^{2}\left(1+\left\lVert q\right\rVert^{\delta}\right),
  \end{equation}
  and
  \begin{equation}
    \label{eq:remainder_term_uniform_bound}
    \forall\Delta t\in(0,\Delta t_{\star}],\quad
    \forall(q,p)\in\calA_{\Delta t},\qquad
    \left\lVert \varphi_{\Delta t}(q,p)-\phi_{\Delta t}(q,p)\right\rVert\leqslant K\Delta t^{3}\left(1+\left\lVert q\right\rVert^{\delta}+\left\lVert p\right\rVert^{\delta}\right),
  \end{equation}
  where~$\phi_{\Delta t}$ is the flow of the continuous Hamiltonian dynamics~\eqref{eq:hamiltonian_dynamics}.
  Assume furthermore that there exists~$r_3> 0$ such that, for all~$a\in\bbN$, there are~$C_{a},b_{a}\in\bbR_{+}$ for which
  \begin{equation}
    \label{eq:rmhmc_rejection_probability}
    \forall \Delta t\in(0,\Delta t_{\star}],\quad
    \forall q\in\calO,
    \qquad
    \bbE_{\rmG}\left[
      \left\lVert \rmG\right\rVert^{a}\mathbb{1}_{\left(q,D(q)^{-1/2}\rmG\right)\in\calX\setminus\calB_{\Delta t}}
    \right]\leqslant 
    C_a\left(1+\left\lVert q\right\rVert^{b_a}\right)\Delta t^{2(r_3+1)},
  \end{equation}
  where the set~$\calB_{\Delta t}$ is defined by~\eqref{eq:B_k} and~$\rmG\sim\calN(0,\rmI_m)$. Then the following one step weak estimate holds for~$f\in\scrF_{\pol}$: there exists~$C\in\bbR_{+}$ such that
  \begin{equation}
    \label{eq:rmhmc_weak_estimate}
    \forall \Delta t\in(0,\Delta t_{\star}],\quad\forall q^{0}\in\calO,\qquad
    \left\lvert
    \bbE^{q^{0}}\left[
       f(q^{1})
    \right]-\bbE^{q^{0}}\left[f(q_{\Delta t^{2}/2})\right]\right\rvert
    \leqslant C\left(1+\left\lVert q^{0}\right\rVert^{\alpha}\right)\Delta t^{2(\min(1,r_3)+1)},
  \end{equation}
  where~$q^{1}$ is the position after one step of Algorithm~\ref{alg:hmc_scheme_rev_check} starting from~$(q^{0},D(q^{0})^{-1/2}G^{0})$ with~$G^{0}\sim\calN(0,\rmI_m)$, and~$q_{\Delta t^{2}/2}$ is the exact solution of the overdamped Langevin dynamics~\eqref{eq:overdamped_langevin_diffusion} at time~$\Delta t^{2}/2$ starting from the initial condition~$q_{0}=q^{0}$.
\end{proposition}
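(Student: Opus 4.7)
The strategy is to expand both sides of~\eqref{eq:rmhmc_weak_estimate} to order~$\Delta t^4$ and match them. Since~\eqref{eq:stability_continous_process} gives $\bbE^{q^0}[f(q_{\Delta t^2/2})] = f(q^0) + \tfrac{\Delta t^2}{2}\calL f(q^0) + \rmO(\Delta t^4(1+\|q^0\|^{\delta}))$, it suffices to prove the same expansion for the one-step HMC transition $P_{\Delta t}f(q^0) := \bbE^{q^0}[f(q^1)]$ up to an error of the claimed order. Setting $p^0 := D(q^0)^{-1/2}G^0$ with $G^0 \sim \calN(0,\rmI_m)$, writing $\tilde q^1$ for the position component of $\varphi_{\Delta t}(q^0,p^0)$ whenever $(q^0,p^0) \in \calA_{\Delta t}$ (and setting $\tilde q^1 := q^0$ otherwise), and denoting by $a \in [0,1]$ the Metropolis acceptance probability, I would decompose
\begin{equation*}
  P_{\Delta t}f(q^0)-f(q^0)
  =\bbE_{G^0}\!\left[\mathbb{1}_{\calB_{\Delta t}}(f(\tilde q^1)-f(q^0))\right]
  -\bbE_{G^0}\!\left[\mathbb{1}_{\calB_{\Delta t}}(1-a)(f(\tilde q^1)-f(q^0))\right]
\end{equation*}
and analyze the two terms separately.

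For the first term, I would further split off the $S$-reversibility failure as $\bbE_{G^0}[(f(\tilde q^1)-f(q^0))] - \bbE_{G^0}[\mathbb{1}_{\calB_{\Delta t}^c}(f(\tilde q^1)-f(q^0))]$. The second piece is controlled by the bound $|f(\tilde q^1)-f(q^0)|=\rmO(\Delta t(1+\|q^0\|^\alpha+\|G^0\|^\alpha))$ (from~\eqref{eq:expansion_position_rmhmc} and polynomial growth of $\nabla f$) combined with assumption~\eqref{eq:rmhmc_rejection_probability}, giving $\rmO(\Delta t^{2r_3+3}(1+\|q^0\|^\alpha))$. The first piece is computed by substituting~\eqref{eq:expansion_position_rmhmc} into a third-order Taylor expansion of $f$ around $q^0$ and integrating over~$G^0$. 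Collecting by powers of~$\Delta t$: the~$\Delta t$ contribution $\nabla f^{\sfT}D(q^0)p^0$ vanishes in expectation since $p^0$ is centered; at order~$\Delta t^2$, the gradient contribution $\tfrac12\nabla f^{\sfT}\bbE_{G^0}[\calF_2] = \tfrac12\nabla f^{\sfT}(-D\nabla V + \div D)$ given by~\eqref{eq:computation_f2}, combined with the Hessian contribution $\tfrac12\bbE_{G^0}[(Dp^0)^{\sfT}\nabla^2 f\,Dp^0] = \tfrac12\,D:\nabla^2 f$ (using $\bbE[Dp^0(Dp^0)^{\sfT}] = D(q^0)$), reproduces exactly $\tfrac12\calL f(q^0)$; all~$\Delta t^3$ contributions cancel, since $\bbE_{G^0}[\calF_3]=0$ by~\eqref{eq:computation_f2}, the cubic Taylor term $D^3 f[Dp^0]^{\otimes 3}$ vanishes by oddness of the Gaussian, and the cross term $\bbE_{G^0}[(Dp^0)^{\sfT}\nabla^2 f\,\calF_2]$ vanishes because $\calF_2$ is even in~$p$ (as can be read off from the RMHMC Hamiltonian~\eqref{eq:H_RMHMC}: $\nabla_p H$ is linear-odd and $\nabla_q H$ is quadratic-even in~$p$, so $\ddot q_t$ along the Hamiltonian flow, as well as the IMR/GSV approximation encoded in $\calF_2$, is even in~$p$). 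The residual $\Delta t^4$ term is upgraded to $\rmO(\Delta t^4(1+\|q^0\|^\alpha))$ using the polynomial-envelope version of~\eqref{eq:expansion_position_rmhmc_bounds} that follows from~\eqref{eq:remainder_term_uniform_bound} and the Taylor expansion of the exact Hamiltonian flow~$\phi_{\Delta t}$.

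For the Metropolis correction term, since the momentum reversal leaves $H$ invariant (by evenness of $H$ in $p$) and $\phi_{\Delta t}$ conserves $H$ exactly, assumption~\eqref{eq:remainder_term_uniform_bound} combined with the polynomial growth of $\nabla H$ (Assumption~\ref{ass:RMHMC}) yields $|H(\tilde q^1,\tilde p^1)-H(q^0,p^0)|=\rmO(\Delta t^3(1+\|q^0\|^{\delta'}+\|G^0\|^{\delta'}))$; hence $1-a \leqslant |H(\tilde q^1,\tilde p^1) - H(q^0,p^0)|\wedge 1 = \rmO(\Delta t^3(1+\|q^0\|^{\delta'}+\|G^0\|^{\delta'}))$. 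Multiplied by $|f(\tilde q^1)-f(q^0)|=\rmO(\Delta t(\cdots))$ and integrated against $G^0$ using Gaussian moment bounds, this term is $\rmO(\Delta t^4(1+\|q^0\|^\alpha))$. Summing the continuous-time error $\rmO(\Delta t^4)$, the $S$-reversibility error $\rmO(\Delta t^{2r_3+3})$, and the Metropolis error $\rmO(\Delta t^4)$ gives~\eqref{eq:rmhmc_weak_estimate} with rate $2(\min(1,r_3)+1)$. The main technical obstacles are (i) the order-$\Delta t^3$ cancellation of the cross term, which requires the parity of $\calF_2$ in $p$ (a structural property not explicitly recorded in Lemma~\ref{lem:one_step_hmc_weak_consistency_overdamped_langevin}), and (ii) promoting the pointwise bounds~\eqref{eq:expansion_position_rmhmc}--\eqref{eq:expansion_position_rmhmc_bounds} (whose constants depend on $(q^0,p^0)$) to uniform-in-$(q^0,G^0)$ polynomial envelopes so that integration against the Gaussian $G^0$ yields the desired $(1+\|q^0\|^\alpha)$-weighted estimate.
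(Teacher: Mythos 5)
Your proposal is correct and follows essentially the same route as the paper's proof: decompose $(P_{\Delta t}f - f)(q^0)$ into the Taylor increment on~$\calB_{\Delta t}$ plus a Metropolis-correction term, identify $\tfrac{\Delta t^2}{2}\calL f(q^0)$ via the expectations from Lemma~\ref{lem:one_step_hmc_weak_consistency_overdamped_langevin} and the oddness-in-$\rmG^0$ cancellations, control the $\calB_{\Delta t}^c$ contribution through~\eqref{eq:rmhmc_rejection_probability}, bound $1-A_{\Delta t}=\rmO(\Delta t^3)$ by exact $H$-conservation of~$\phi_{\Delta t}$ together with~\eqref{eq:remainder_term_uniform_bound}, and conclude via~\eqref{eq:stability_continous_process}. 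Your observation that the cross-term cancellation $\bbE_{\rmG}[\calF_1\otimes\calF_2]=0$ is needed but not in the \emph{statement} of Lemma~\ref{lem:one_step_hmc_weak_consistency_overdamped_langevin} is accurate (the paper establishes it inside the lemma's proof, in~\eqref{eq:expectation_odd_f}, precisely because $\calF_2$ is even in $p$), and your parity argument from the structure of~\eqref{eq:H_RMHMC} is a legitimate shortcut to that fact.
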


Defining~$h=\Delta t^{2}/2$, Proposition~\ref{prop:one_step_hmc_weakly_consistent_overdamped_langevin} states that the one-step RMHMC is a weakly consistent discretization of the overdamped Langevin dynamics on~$\calO$ for the class of functions~$\scrF_{\pol}$, with effective time step~$h$ and order~$\min(1,r_3)$. In order to get this result, note that we have formulated the bound~\eqref{eq:rmhmc_rejection_probability} directly in terms of the effective time step, \emph{i.e.}~in powers of~$\Delta t^{2}$. As discussed after~\eqref{eq:one_step_horizon_weak_estimate}, one can extend these results to obtain weak estimates over finite time horizons.

The error estimate~\eqref{eq:stability_continous_process} is obtained by proving that~$\scrF_{\pol}$ is stable by~$\calL$ and~$\rme^{t\calL}$, which is typically done by proving moment estimates of the solution of~\eqref{eq:overdamped_langevin_diffusion}. Assumption~\eqref{eq:remainder_term_uniform_bound} can be obtained by performing Taylor expansions of~$\varphi_{\Delta t}$ and~$\phi_{\Delta t}$ in powers of~$\Delta t$: first, the factor~$\Delta t^{3}$ follows from the fact that the IMR or the GSV numerical scheme is a consistent discretization of~\eqref{eq:hamiltonian_dynamics} of order~2 (see for instance~\cite{hairer_2006}, and~\cite[Lemma 3.1]{stoltz_2018} for precise computations in the specific case of the GSV numerical scheme for separable Hamiltonian functions, similar to the ones performed in Lemma~\ref{lem:one_step_hmc_weak_consistency_overdamped_langevin}); second, the polynomial growth in the remainder term can be obtained using the implicit function theorem (as in~\cite{hairer_2006}) coupled with moment estimates on the numerical solution (see~\cite{kopec_2015} for the specific case when~$D(q)=\rmI_m$ for any~$q\in\calO$). 

Let us finally comment on the validity of assumption~\eqref{eq:rmhmc_rejection_probability}. Upon integrating in~$q$ with respect to~$\pi$ the inequality~\eqref{eq:rmhmc_rejection_probability} for~$a=0$ (assuming that this probability measure admits a moment of order~$b_0$),~\eqref{eq:rmhmc_rejection_probability} implies that~$\mu(\calX\setminus\calB_{\Delta t})=\rmO\left(\Delta t^{2(r_3+1)}\right)$. This bound will be numerically investigated in Section~\ref{sec:numerical_results}. In fact, the convergence towards~0 seems to be faster than any polynomial function of the time step, see Figure~\ref{fig:rejection_probabilities} below. Thus, in practice, it is observed that~$r_3\geqslant 1$ so that RMHMC yields a discretization of~\eqref{eq:overdamped_langevin_diffusion} of order~$1$ as announced at the beginning of this section.

%---------------------------------------------------------
%---------------------------------------------------------
%---------------- GHMALA ---------------------------------
%---------------------------------------------------------
%---------------------------------------------------------
\subsection{A fully unbiased version of Generalized Hybrid Metropolis--Adjusted Langevin Algorithm}
\label{subsec:ghmala}

In the context of the overdamped Langevin dynamics, considering nonreversible drifts has been shown to improve the convergence towards the Gibbs measure~$\pi$ defined in~\eqref{eq:invariant_measure_position_RMHMC} (in terms of the spectral gap of the generator of the dynamics), and to lead to estimators with lower variance~\cite{rey-bellet_2015,rey-bellet_2015_ii,duncan_2016,zhang_2022}. These nonreversible dynamics are typically of the form
\begin{equation}
  \label{eq:OL_dynamics_nonreversible}
  \rmd q_t=-\nabla V(q_t)\,\rmd t+\gamma(q_t)\,\rmd t+\sqrt[]{2}\,\rmd W_t,
\end{equation}
where~$V$ is a~$\calC^{1}$ potential energy function,~$\gamma$ is any smooth vector field satisfying~$\div (\gamma\pi)=0$, and~$(W_t)_{t\geqslant 0}$ is a standard~$m$-dimensional Brownian motion. To correct for the bias when discretizing these dynamics, for instance with the Euler--Maruyama scheme, one cannot directly use a Metropolis--Hastings acceptance/rejection step on a discretization of~\eqref{eq:OL_dynamics_nonreversible} since this would cancel the nonreversible term~$\gamma(q_t)$, negating the interest of adding this nonreversible part~\cite{duncan_2016}. The Generalized Hybrid Metropolis--Adjusted Langevin Algorithm (GHMALA) introduced in~\cite{poncet_2017} aims at Metropolizing the nonreversible dynamics~\eqref{eq:OL_dynamics_nonreversible} while preserving the nice properties they enjoy. In particular, it performs a bias-free sampling of the target distribution for sufficiently small time steps using a discretization of~\eqref{eq:OL_dynamics_nonreversible} (see~\cite[Lemma~3.2]{poncet_2017}). In this section, we adapt the GHMALA algorithm by implementing~$S$-reversibility checks in order to perform unbiased sampling \emph{whatever the time step} (see Algorithm~\ref{alg:GHMALA_rev_check}) under additional assumptions on~$\gamma$, see Proposition~\ref{prop:GHMALA_pi_invariance}. 

The GHMALA algorithm considers discretizations of the process~\eqref{eq:OL_dynamics_nonreversible} where the non-gradient term can change sign, which allows to ensure some form of reversibility. The discretization is based on a splitting procedure. First, the reversible dynamics
\begin{equation}
  \label{eq:PONCET_reversible_part}
  \rmd q_t=-\nabla V(q_t)\,\rmd t+\sqrt{2}\,\rmd W_t  
\end{equation}
is integrated using the MALA algorithm. Then, the nonreversible dynamics is rewritten as
\begin{equation}
  \label{eq:PONCET_nonreversible_part}
  \rmd q_t=\xi\gamma(q_t)\,\rmd t,  
\end{equation}
where~$\xi\in\bbR$ specifies the direction and the intensity of the non reversibility. The elementary dynamics~\eqref{eq:PONCET_nonreversible_part} is then integrated using a one-step HMC method in an augmented space of configurations~$(q,\xi)$. The sign of~$\xi$ can be reversed depending on the outcome of a Metropolis--Hastings acceptance/rejection step. While the integration of~\eqref{eq:PONCET_reversible_part} with MALA is explicit, the proposal move for~\eqref{eq:PONCET_nonreversible_part} is constructed using an implicit numerical integrator (IMR for GHMALA) to ensure some form of reversibility. We make precise in Definition~\ref{def:GHMALA} this second step. We denote by~$\calX=\bbR^{m}\times\bbR$ the augmented space of dimension~$d=m+1$.

\begin{definition}[IMR for Generalized Hybrid MALA]
    \label{def:GHMALA}
    The proposal move for~\eqref{eq:PONCET_nonreversible_part} is
    \begin{equation*}
        \left\lbrace
        \begin{aligned}
            q^{n+1}&=q^{n}+\Delta t\xi^{n}\gamma\left(\frac{q^{n}+q^{n+1}}{2}\right),\\
            \xi^{n+1}&=\xi^{n}.
        \end{aligned}
        \right.
    \end{equation*}
    In that case,~$k=1$ and
    \begin{equation}
    \label{eq:Phi_GHMALA}
        \Phi_{\Delta t}^{\GHMALA}(q,\xi,q_1,\xi_1)=
        \begin{pmatrix}
            q_1-q-\Delta t\xi\gamma\left(\dfrac{q+q_1}{2}\right)\\
            \xi_1-\xi
        \end{pmatrix}.
    \end{equation}
\end{definition}

The following two propositions are the counterparts of Propositions~\ref{prop:numerical_flow_symplectic} and~\ref{prop:IMR_GSV_S_reversibility} for the IMR numerical scheme in this non-Hamiltonian setting. Denote by~$S$ the direction reversal map defined by
\begin{equation}
  \label{eq:S_direction_reversal_Poncet}
  \forall(q,\xi)\in\calX,\qquad S(q,\xi)=(q,-\xi).
\end{equation}

\begin{proposition}
  \label{prop:GHMALA_s_rev}
  Fix~$\Delta t>0$ and let~$\Phi_{\Delta t}^{\GHMALA}$ be the numerical scheme defined by~\eqref{eq:Phi_GHMALA}. Let~$S$ be the direction reversal map defined by~\eqref{eq:S_direction_reversal_Poncet}. Then~$\Phi_{\Delta t}^{\GHMALA}$ is~$S$-reversible. 
\end{proposition}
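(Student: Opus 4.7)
The plan is to verify the $S$-reversibility condition from Definition~\ref{def:S_rev_numerical_scheme} directly by an algebraic computation. Since $k=1$ for $\Phi_{\Delta t}^{\GHMALA}$, the condition to check is the equivalence
\begin{equation*}
\Phi_{\Delta t}^{\GHMALA}(q,\xi,q_1,\xi_1)=0 \ \Longleftrightarrow \ \Phi_{\Delta t}^{\GHMALA}(q_1,-\xi_1,q,-\xi)=0,
\end{equation*}
for all $(q,\xi,q_1,\xi_1)\in\calX\times\calX$. Because $S$ is an involution, it is enough to prove one implication and apply it to $(q_1,-\xi_1,q,-\xi)$ to recover the other.

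Assume the left-hand equation holds. The second component of~\eqref{eq:Phi_GHMALA} yields $\xi_1=\xi$, while the first component gives
\begin{equation*}
q_1-q=\Delta t\,\xi\,\gamma\!\left(\frac{q+q_1}{2}\right).
\end{equation*}
Substituting into $\Phi_{\Delta t}^{\GHMALA}(q_1,-\xi_1,q,-\xi)$, the first block reads
\begin{equation*}
q - q_1 - \Delta t(-\xi_1)\gamma\!\left(\frac{q_1+q}{2}\right) = -(q_1-q) + \Delta t\,\xi\,\gamma\!\left(\frac{q+q_1}{2}\right) = 0,
\end{equation*}
where I use the trivial symmetry $(q_1+q)/2=(q+q_1)/2$ of the midpoint and the identity $\xi_1=\xi$. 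The second block is $-\xi-(-\xi_1)=\xi_1-\xi=0$. This proves the forward implication, and the converse follows by applying the same argument starting from $(q_1,-\xi_1,q,-\xi)$ and using $S\circ S=\id$.

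There is no real obstacle in this proof: the $S$-reversibility of the implicit midpoint scheme for~\eqref{eq:PONCET_nonreversible_part} comes from two elementary features: the midpoint $(q+q_1)/2$ is invariant under the order swap of its arguments, and the $\xi$-dependence of the drift is linear, so that reversing the sign of $\xi$ exactly compensates for the exchange $q\leftrightarrow q_1$ that reverses the sign of $q_1-q$. This mirrors the computation already carried out for $\Phi_{\Delta t}^{\IMR}$ in the proof of Proposition~\ref{prop:IMR_GSV_S_reversibility}, with the momentum reversal $p\mapsto-p$ replaced by the direction reversal $\xi\mapsto-\xi$.
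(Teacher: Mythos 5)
Your proof is correct and takes essentially the same direct computation as the paper's, just spelled out a bit more explicitly: the paper also expands $\Phi_{\Delta t}^{\GHMALA}(S(q_1,\xi_1),S(q,\xi))$ componentwise and implicitly uses $\xi_1=\xi$ from the second block, whereas you state that step. Your closing observation about why the midpoint structure and the linearity in $\xi$ make the cancellation work is a nice remark but is not present in the paper; it does not change the essence of the argument.
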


\begin{proof}
    Let~$(q,\xi,q_1,\xi_{1})\in\calX^{2}$ such that~$\Phi_{\Delta t}^{\GHMALA}(q,\xi,q_1,\xi_{1})=0$. Then 
  \begin{align*}
    \Phi_{\Delta t}^{\GHMALA}(S(q_1,\xi_1),S(q,\xi))
    &=
    \Phi_{\Delta t}^{\GHMALA}(q_1,-\xi_1,q,-\xi),\\
    &=
    \begin{pmatrix}
      q-q_1-\Delta t(-\xi_1) \gamma\left(\dfrac{q+q_1}{2}\right)\\
      -\xi+\xi_1
    \end{pmatrix}=0,
  \end{align*}
  from which the conclusion follows.
\end{proof}

\begin{proposition}
  \label{prop:GHMALA_symplectic_num_flow}
  Fix~$\Delta t>0$. Let~$\gamma$ be a~$\calC^{1}$ function which satisfies
  \begin{equation}
    \label{eq:det_relation}
    \forall\theta\in\bbR,\quad
    \forall q\in\bbR^{m},\qquad
    \det(\rmI_m-\theta\nabla\gamma(q))=\det(\rmI_m+\theta\nabla\gamma(q)).
  \end{equation}
  Assume that~$\chi_{\Delta t}^{\GHMALA}\colon\calA^{\GHMALA}_{\Delta t}\to\calX$ is a numerical solver for the GHMALA numerical scheme. Then, for all~$(q,\xi)\in\calA_{\Delta t}^{\GHMALA}$, the numerical flow~$\varphi_{\Delta t}^{\GHMALA}$ satisfies
  \begin{equation*}
    \left\lvert\det\nabla\varphi_{\Delta t}^{\GHMALA}(q,\xi)\right\rvert=1.
  \end{equation*}
\end{proposition}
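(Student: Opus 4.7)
The plan is to apply Lemma~\ref{lem:numerical_solver_C1} directly to the GHMALA numerical scheme~\eqref{eq:Phi_GHMALA}. Writing $x=(q,\xi)$ and $y=(q_1,\xi_1)$ with $\chi_{\Delta t}^{\GHMALA}(q,\xi)=(q_1,\xi_1)$, and noting that $\varphi_{\Delta t}^{\GHMALA}=\chi_{\Delta t}^{\GHMALA}$ since $k=1$, I first compute the block-matrix Jacobians of $\Phi_{\Delta t}^{\GHMALA}$. Setting $\bar q=(q+q_1)/2$, a direct differentiation yields
\begin{equation*}
  \nabla_x\Phi_{\Delta t}^{\GHMALA}(x,y)=
  \begin{pmatrix}
    -\rmI_m-\dfrac{\Delta t\,\xi}{2}\nabla\gamma(\bar q) & -\Delta t\,\gamma(\bar q)\\[0.2cm]
    0 & -1
  \end{pmatrix},\qquad
  \nabla_y\Phi_{\Delta t}^{\GHMALA}(x,y)=
  \begin{pmatrix}
    \rmI_m-\dfrac{\Delta t\,\xi}{2}\nabla\gamma(\bar q) & 0\\[0.2cm]
    0 & 1
  \end{pmatrix}.
\end{equation*}

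Next, since $\nabla_y\Phi_{\Delta t}^{\GHMALA}$ is block diagonal, its invertibility (which holds for $(q,\xi)\in\calA_{\Delta t}^{\GHMALA}$ by Definition~\ref{def:numerical_solver}) is equivalent to the invertibility of $\rmI_m-\tfrac{\Delta t\,\xi}{2}\nabla\gamma(\bar q)$. Applying Lemma~\ref{lem:numerical_solver_C1}, multiplying out the block matrices and using the fact that the resulting product is block upper triangular, I obtain
\begin{equation*}
  \det\nabla\varphi_{\Delta t}^{\GHMALA}(q,\xi)=
  \frac{\det\bigl(\rmI_m+\tfrac{\Delta t\,\xi}{2}\nabla\gamma(\bar q)\bigr)}{\det\bigl(\rmI_m-\tfrac{\Delta t\,\xi}{2}\nabla\gamma(\bar q)\bigr)}.
\end{equation*}

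The last step is to invoke the hypothesis~\eqref{eq:det_relation} with $\theta=\Delta t\,\xi/2$ at the point $\bar q\in\bbR^m$: the numerator and the denominator coincide, and thus $\lvert\det\nabla\varphi_{\Delta t}^{\GHMALA}(q,\xi)\rvert=1$. The only subtlety in the argument lies in keeping track of the block structure, since the equation on $\xi$ is trivially an identity; beyond this, the computation is a routine application of the implicit function theorem in the form of Lemma~\ref{lem:numerical_solver_C1}, and no real obstacle is expected.
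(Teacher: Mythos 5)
Your proposal is correct and follows essentially the same route as the paper's own proof: compute the two block Jacobians $\nabla_x\Phi_{\Delta t}^{\GHMALA}$ and $\nabla_y\Phi_{\Delta t}^{\GHMALA}$, apply Lemma~\ref{lem:numerical_solver_C1}, exploit the block upper-triangular structure to reduce the determinant to a ratio $\det(\rmI_m+\tfrac{\Delta t\xi}{2}\nabla\gamma)/\det(\rmI_m-\tfrac{\Delta t\xi}{2}\nabla\gamma)$, and invoke~\eqref{eq:det_relation} with $\theta=\Delta t\xi/2$. No gaps; the only thing the paper does additionally is display the product matrix explicitly before taking the determinant.
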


The relation~\eqref{eq:det_relation} is satisfied for instance when the matrix~$\nabla\gamma$ is antisymmetric (since the determinant of a matrix and its transpose coincide), or when~$\gamma=\calJ\nabla V$ with~$\calJ$ an antisymmetric matrix. The latter statement is a consequence of the fact that~$\det(\rmI_m+B^{\sfT}A^{\sfT})=\det(\rmI_m+AB)=\det(\rmI_m+BA)$ for any square matrices~$A,B$.

\begin{proof}
  Using the definition of the GHMALA numerical scheme~\eqref{eq:Phi_GHMALA}, we readily compute
  \begin{equation}
    \label{eq:nabla_GHMALA}
    \forall(x,x_1)\in\calX^{2},\qquad
    \left\lbrace
    \begin{aligned}
      &\nabla_{x}\Phi_{\Delta t}^{\GHMALA}(x,x_1)=
      \begin{pmatrix}
        -\rmI_m-\dfrac{\Delta t}{2}\xi\nabla\gamma\left(\dfrac{q+q_1}{2}\right) & -\Delta t\gamma\left(\dfrac{q+q_1}{2}\right)\\[0.2cm]
        0_{1,m} & -1
      \end{pmatrix},\\  
      &\nabla_{y}\Phi_{\Delta t}^{\GHMALA}(x,x_1)=
      \begin{pmatrix}
        \rmI_m-\dfrac{\Delta t}{2}\xi\nabla\gamma\left(\dfrac{q+q_1}{2}\right) & 0_{m,1}\\[0.2cm]
        0_{1,m} & 1
      \end{pmatrix}.
    \end{aligned}
    \right. 
  \end{equation}
  Let~$x=(q,\xi)\in\calA_{\Delta t}^{\GHMALA}$ and denote by~$x_1=(q_1,\xi_1):=\varphi_{\Delta t}^{\GHMALA}(x)$. By definition of a numerical solver (see Definition~\ref{def:numerical_solver}), the matrix~$\nabla_{y}\Phi_{\Delta t}^{\GHMALA}\left(x,x_1\right)$ is invertible, which is equivalent to~$\rmI_m-\frac{\Delta t}{2}\xi\nabla\gamma\left(\frac{q+q_1}{2}\right)$ being an invertible matrix as seen by the second equality in~\eqref{eq:nabla_GHMALA}. Using Lemma~\ref{lem:numerical_solver_C1}, we can then compute the gradient of~$\varphi_{\Delta t}^{\GHMALA}$. Not explicitly indicating the argument~$\frac{q+q_1}{2}$ for the maps~$\gamma$ and~$\nabla\gamma$ for readability, it holds
  \begin{align*}
    \nabla\varphi_{\Delta t}^{\GHMALA}(x)
    &=\begin{pmatrix}
      \left(\rmI_m-\dfrac{\Delta t}{2}\xi\nabla\gamma\right)^{-1} & 0_{m,1}\\[0.2cm]
      0_{1,m} & 1
    \end{pmatrix}
    \begin{pmatrix}
      \rmI_m+\dfrac{\Delta t}{2}\xi\nabla\gamma & \Delta t\gamma\\[0.2cm]
      0_{1,m} & 1
    \end{pmatrix},\\[0.2cm]
    &=\begin{pmatrix}
      \left(\rmI_m-\dfrac{\Delta t}{2}\xi\nabla\gamma\right)^{-1}\left(\rmI_m+\dfrac{\Delta t}{2}\xi\nabla\gamma\right)& \Delta t\left(\rmI_m-\dfrac{\Delta t}{2}\xi\nabla\gamma\right)^{-1}\gamma\\[0.2cm]
      0_{1,m} & 1
    \end{pmatrix}.
  \end{align*}
  The condition~\eqref{eq:det_relation} immediately implies that~$\left\lvert\det\nabla\varphi_{\Delta t}^{\GHMALA}(x)\right\rvert=1$ for any~$x\in\calA_{\Delta t}^{\GHMALA}$. 
\end{proof}

We now describe a fully unbiased GHMALA algorithm. Let~$\chi_{\Delta t}^{\GHMALA}\colon\calA_{\Delta t}^{\GHMALA}\to\calX$ be a numerical solver associated with~$\Phi_{\Delta t}^{\GHMALA}$, and denote by~$\psi_{\Delta t}^{\GHMALA,\rev}$ the map defined by~\eqref{eq:B_k}--\eqref{eq:psi_rev} with~$\varphi_{\Delta t}=\varphi_{\Delta t}^{\GHMALA}$ and~$\calA_{\Delta t}=\calA_{\Delta t}^{\GHMALA}$ the numerical flow associated with~$\chi_{\Delta t}^{\GHMALA}$ (which is simply equal to~$\chi_{\Delta t}^{\GHMALA}$ in this particular case) and~$S$ the direction reversal map defined by~\eqref{eq:S_direction_reversal_Poncet}. The GHMALA algorithm with~$S$-reversibility checks is then given in Algorithm~\ref{alg:GHMALA_rev_check}.

\begin{algorithm}
  \caption{GHMALA algorithm with~$S$-reversibility checks}
  \label{alg:GHMALA_rev_check}
  Consider an initial condition~$(q^{0},\xi^{0})\in\calX$, and set~$n= 0$.
  \begin{enumerate}[label={[\thealgorithm.\roman*]}, align=left]
      \item Integrate the reversible part of the dynamics~\eqref{eq:PONCET_reversible_part} using the MALA algorithm on the position~$q^{n}$. The output is denoted by~$(q^{n+1/3},\xi^n)$.
      \item Integrate the nonreversible part of the dynamics~\eqref{eq:PONCET_nonreversible_part} using the map~$\psi_{\Delta t}^{\GHMALA,\rev}$:
      \begin{equation*}
          (q^{n+2/3}, -\xi^n)=\psi_{\Delta t}^{\GHMALA,\rev}(q^{n+1/3},\xi^n);
      \end{equation*}
      \item Draw a random variable~$U^{n}$ with uniform law on~$(0,1)$:
      \begin{itemize}[label=$\bullet$]
          \item if~$U^{n}\leqslant \exp(-V(q^{n+2/3})+V(q^{n+1/3}))$, accept the proposal and set~$(q^{n+1},\xi^{n+1})=(q^{n+2/3},\xi^n)$;
          \item else reject the proposal and set~$(q^{n+1},\xi^{n+1})=(q^{n+1/3},-\xi^{n})$.
      \end{itemize}
      \item Increment~$n$ and go back to~\ref{step:ghmc_1}.
  \end{enumerate}
\end{algorithm}

The following proposition shows that Algorithm~\ref{alg:GHMALA_rev_check} is unbiased whatever the time step~$\Delta t$, by which we mean that there is an invariant probability measure whose marginal in position is~$\pi$.
\begin{proposition}
  \label{prop:GHMALA_pi_invariance}
  Assume that~$\gamma$ is a~$\calC^{1}$ function which satisfies~\eqref{eq:det_relation}. Then the measure~$\pi\otimes(\delta_{-\xi^0}/2+\delta_{\xi^0}/2)$ is an invariant measure for the Markov chain generated by Algorithm~\ref{alg:GHMALA_rev_check}.
\end{proposition}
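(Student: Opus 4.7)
The strategy is to exhibit one iteration of Algorithm~\ref{alg:GHMALA_rev_check} as a composition of measure-preserving sub-transitions. Set $\nu_0 := \tfrac{1}{2}(\delta_{\xi^0} + \delta_{-\xi^0})$ and $\mu := \pi \otimes \nu_0$, the measure whose invariance we want to establish.

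First I would collect the structural properties of $\psi_{\Delta t}^{\GHMALA,\rev}$. Proposition~\ref{prop:GHMALA_s_rev} shows that $\Phi_{\Delta t}^{\GHMALA}$ is $S$-reversible for the direction reversal map~\eqref{eq:S_direction_reversal_Poncet}, which is a $\calC^{1}$ involution with unit Jacobian, so Assumption~\ref{ass:S_C1_involution} holds. Proposition~\ref{prop:GHMALA_symplectic_num_flow}, which uses the hypothesis~\eqref{eq:det_relation} on $\gamma$, verifies Assumption~\ref{ass:numerical_flow_symplectic}. Propositions~\ref{prop:psi_rev_involution} and~\ref{prop:psi_involution} then ensure that $\psi_{\Delta t}^{\GHMALA,\rev}$ is a globally defined involution on $\calX=\bbR^{m}\times\bbR$ that preserves the Lebesgue measure $dq\,d\xi$.

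I would next recognise the second and third steps of Algorithm~\ref{alg:GHMALA_rev_check} as a composition of (a)~a Metropolis--Hastings step with proposal $\psi_{\Delta t}^{\GHMALA,\rev}$ and acceptance probability $\min(1,\exp(-V(q')+V(q)))$, followed by (b)~the direction reversal $S$. Applied to $(q^{n+1/3},\xi^{n})$, step~(a) produces either the proposal $(q^{n+2/3},-\xi^{n})$ on acceptance or the current state $(q^{n+1/3},\xi^{n})$ on rejection; applying $S$ afterwards yields respectively $(q^{n+2/3},\xi^{n})$ and $(q^{n+1/3},-\xi^{n})$, matching exactly the two outputs of the algorithm. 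It then suffices to check invariance of $\mu$ under each piece of the resulting three-step composition (MALA, then~(a), then~(b)). The MALA step preserves $\pi$ and leaves $\xi$ untouched, hence preserves $\mu$. The direction reversal~(b) preserves $\mu$ because $V$ does not depend on $\xi$ and $\nu_0$ is symmetric under $\xi\mapsto -\xi$. The heart of the argument is the invariance of $\mu$ under~(a), which I would obtain by adapting the computation~\eqref{eq:ratio_invariant_measure} from the proof of Proposition~\ref{prop:HMC_mu_invariance}: the momentum Lebesgue measure $dp$ is replaced by the discrete measure $\nu_0(d\xi)$ and the Hamiltonian $H$ by $V$. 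The required detailed balance identity then reduces to proving
\begin{equation*}
\int f\bigl(\psi_{\Delta t}^{\GHMALA,\rev}(q',\xi'),q',\xi'\bigr)\,dq'\,\nu_0(d\xi')=\int f\bigl(q,\xi,\psi_{\Delta t}^{\GHMALA,\rev}(q,\xi)\bigr)\,dq\,\nu_0(d\xi)
\end{equation*}
for bounded measurable test functions $f$, which follows by splitting on $\calB_{\Delta t}$ and its complement and using the involution and Lebesgue-preserving properties established above.

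The main subtlety I anticipate is that the change of variables in the identity above mixes a continuous factor $dq$ with a discrete factor $\nu_0(d\xi)$. The resolution is that $\psi_{\Delta t}^{\GHMALA,\rev}$ swaps the two slices $\bbR^{m}\times\{\xi^0\}$ and $\bbR^{m}\times\{-\xi^0\}$ (since $\varphi_{\Delta t}^{\GHMALA}$ preserves $\xi$ while $S$ negates it), and its restriction to either slice defines a map on $\bbR^{m}$ whose Jacobian has absolute value~$1$; combined with the symmetry of $\nu_0$, the product measure $dq\otimes\nu_0$ is then preserved under $\psi_{\Delta t}^{\GHMALA,\rev}$, which makes the change of variables in the detailed balance computation legitimate.
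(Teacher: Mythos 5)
Your proof is correct and follows essentially the same route as the paper: verify Assumptions~\ref{ass:numerical_flow_symplectic} and~\ref{ass:S_C1_involution} via Propositions~\ref{prop:GHMALA_symplectic_num_flow} and~\ref{prop:GHMALA_s_rev}, then adapt the Metropolis--Hastings detailed-balance computation of Proposition~\ref{prop:HMC_mu_invariance} to the target $\pi\otimes(\delta_{-\xi^0}/2+\delta_{\xi^0}/2)$. Your explicit handling of the mixed continuous--discrete change of variables under $\psi_{\Delta t}^{\GHMALA,\rev}$ spells out a point the paper leaves implicit.
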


\begin{proof}
  Denote by~$\pi_{\xi^0}(\rmd q\,\rmd \xi)=\pi(\rmd q)\otimes(\delta_{-\xi^0}(\rmd\xi)/2+\delta_{\xi^0}(\rmd\xi)/2)$. By Lemma~\ref{prop:GHMALA_s_rev}, the numerical scheme~$\Phi_{\Delta t}^{\GHMALA}$ is~$S$-reversible with~$S$ defined by~\eqref{eq:S_direction_reversal_Poncet}, which is a~$\calC^{1}$ involution. This shows that Assumption~\ref{ass:S_C1_involution} is satisfied. Furthermore, Proposition~\ref{prop:GHMALA_symplectic_num_flow} shows that the map~$\varphi_{\Delta t}^{\GHMALA}$ satisfies Assumption~\ref{ass:numerical_flow_symplectic}. The result then follows from~\cite[Section~2.1.4]{lelievre_2010} by adapting the computation of the Metropolis--Hastings ratio. Here, for a proposed move from~$(q,\xi)$ to~$(q',\xi')$, this ratio writes
  \begin{equation*}
    \frac{
      \delta_{
        \psi_{\Delta t}^{\GHMALA,\rev}(q',\xi')
        }
        (\rmd q\, \rmd\xi)
        \pi_{\xi^0}(\rmd q'\, \rmd\xi')
    }{
      \delta_{
        \psi_{\Delta t}^{\GHMALA,\rev}(q,\xi)
        }
        (\rmd q'\, \rmd\xi')
        \pi_{\xi^0}(\rmd q\, \rmd\xi)
    }
    =
    \exp\left(
      -V(q')+V(q)
    \right),
  \end{equation*}
  which is readily verified by following the same computations as in the proof of Proposition~\ref{prop:HMC_mu_invariance}.
\end{proof}

Theoretical constructions of the map~$\chi_{\Delta t}^{\GHMALA}$ can be performed using arguments similar to the ones used for the construction of numerical solvers for the IMR numerical scheme associated with the Hamiltonian dynamics, see Proposition~\ref{prop:IMR_FP} below. Practical implementations can be readily adapted from the ones presented in Section~\ref{subsec:practical_numerial_solver_newton_method} below.

%---------------------------------------------------------
%---------------------------------------------------------
%---------------------------------------------------------
%---------------------------------------------------------
%---------------------------------------------------------
%---------------- PRACTICAL NUM FLOWS --------------------
%---------------------------------------------------------
%---------------------------------------------------------
%---------------------------------------------------------
%---------------------------------------------------------
%---------------------------------------------------------
\section{Construction of practical numerical flows}
\label{sec:construction_practical_numerical_flows}

In practice, an iterative method is used to solve the implicit problem~$\Phi_{\Delta t}(x,y)=0$ for a fixed~$x\in\calX$ with~$y\in\calX^k$ (\emph{e.g.}~fixed point algorithm or Newton's method). Therefore, one needs to check that such a method yields a numerical flow in the sense of Definition~\ref{def:numerical_solver}. However, usual implemented methods typically do not fit into the framework described in Section~\ref{sec:numerical_flow_s_reversibility}, as they do not solve \textit{exactly}~$\Phi_{\Delta t}(x,y)=0$ since only a finite number of steps are used to find a solution with the iterative method at hand (and because of floating point arithmetic). We therefore present idealized versions of such methods, and show that they yield numerical solvers which satisfy Assumption~\ref{ass:numerical_flow_symplectic}, so that, combined with Assumption~\ref{ass:S_C1_involution}, unbiasedness sampling is guaranteed using the (G)HMC algorithms introduced in Section~\ref{subsec:hmc} and~\ref{subsec:ghmc}. In particular, we use the Banach fixed-point theorem in Section~\ref{subsec:numerical_solvers_fixed_point} to show that, for sufficiently small time steps, a numerical flow can be constructed for both the IMR and GSV numerical schemes, the set~$\calB_{\Delta t}$ defined in~\eqref{eq:B_k} being nonempty. We then focus on an idealized Newton's method in Section~\ref{subsec:numerical_solver_newton_method_ift}. A practical implementation of Newton's method is provided in Section~\ref{subsec:practical_numerial_solver_newton_method} (see Algorithm~\ref{alg:newton_practice}) along with a practical implementation of the associated numerical flows with~$S$-reversibility check~$\varphi_{\Delta t}^{\rev}$ (defined in~\eqref{eq:B_k}--\eqref{eq:psi_rev}) in Algorithm~\ref{alg:psi_dt_rev}.

%---------------------------------------------------------
%---------------------------------------------------------
%---------------- FIXED-POINT ----------------------------
%---------------------------------------------------------
%---------------------------------------------------------
\subsection{Numerical solvers using the Banach fixed-point theorem}
\label{subsec:numerical_solvers_fixed_point}

We construct in this section examples of numerical solvers~$\chi_{\Delta t}^{\FP}\colon\calA_{\Delta t}^{\FP}\to\calX^k$ for the two running examples we consider, based on the Banach fixed-point (FP) theorem, such that the set~$\calA_{\Delta t}^{\FP}$ is open and nonempty. The associated set~$\calB_{\Delta t}^{\FP}$, defined by~\eqref{eq:B_k} with~$\calA_{\Delta t}=\calA_{\Delta t}^{\FP}$, is then shown to be nonempty, so that the associated numerical flow with~$S$-reversibility checks and~$S$-reversal~$\psi_{\Delta t}^{\rev,\FP}$ as defined in~\eqref{eq:psi_rev} (with~$\calB_{\Delta t}=\calB_{\Delta t}^{\FP}$ and~$\varphi_{\Delta t}^{\rev}=\varphi_{\Delta t}^{\rev,\FP}$) is nontrivial.

We first show in Proposition~\ref{prop:IMR_FP} how to construct a numerical flow in the case~$k=1$ for the IMR numerical scheme (see Definition~\ref{def:IMR}). A similar result is then stated for the GSV numerical scheme (see Definition~\ref{def:GSV}) in Proposition~\ref{prop:GSV_FP}.

We make the following assumption.
\begin{assumption}
  \label{ass:H_C2_S_momentum_reversal}
 The Hamiltonian function~$H$ is~$\calC^{2}$ and even in the momentum variable, and~$S$ is the momentum reversal map~\eqref{eq:S_momentum_reversal}.
\end{assumption}

\begin{proposition}
  \label{prop:IMR_FP}
  Suppose that Assumption~\ref{ass:H_C2_S_momentum_reversal} holds. Then, there exists~$\Delta t_{\star}>0$ such that, for any~$\Delta t\in(0,\Delta t_{\star}]$, there are a nonempty open set~$\calA_{\Delta t}^{\IMR,\FP}\subset\calX$ and a~$\calC^{1}$ map~$\chi_{\Delta t}^{\IMR,\FP}\colon\calA_{\Delta t}^{\IMR,\FP}\to\calX$ such that
  \begin{equation}
    \label{eq:imr_fp}
    \forall x\in\calA_{\Delta t}^{\IMR,\FP},\quad \Phi_{\Delta t}^{\IMR}\left(x,\chi_{\Delta t}^{\IMR,\FP}(x)\right)=0,\quad\nabla_{y}\Phi_{\Delta t}^{\IMR}\left(x,\chi_{\Delta t}^{\IMR,\FP}(x)\right)\text{ is invertible.}
  \end{equation}
  Furthermore, defining~$\psi_{\Delta t}^{\IMR,\FP}=S\circ\chi_{\Delta t}^{\IMR,\FP}$, the set
  \begin{equation}
    \label{eq:B_IMR_FP}
    \calB_{\Delta t}^{\IMR,\FP}=\left\lbrace
      x\in\calA_{\Delta t}^{\IMR,\FP}\bigcap\left(\psi_{\Delta t}^{\IMR,\FP}\right)^{-1}\left(\calA_{\Delta t}^{\IMR,\FP}\right)
      ,\psi_{\Delta t}^{\IMR,\FP}\circ\psi_{\Delta t}^{\IMR,\FP}(x)=x
    \right\rbrace
  \end{equation}
  is open and nonempty.
\end{proposition}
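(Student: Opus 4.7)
The plan is to recast the IMR equation $\Phi_{\Delta t}^{\IMR}(x,y)=0$ as a parameterized fixed-point problem $y=T_x(y)$ with
\begin{equation*}
T_x(y) := x + \Delta t\, J\,\nabla H\!\left(\frac{x+y}{2}\right),
\end{equation*}
and then to apply the Banach fixed-point theorem with $x$ as a parameter. Since Proposition~\ref{prop:IMR_FP} only asserts the existence of some nonempty open set, I will work locally around a carefully chosen base point. Specifically, I would pick $x_{\star}=(q_{\star},0)\in\calX$, so that $S(x_{\star})=x_{\star}$ (this symmetry will make the $\calB_{\Delta t}^{\IMR,\FP}$ argument clean), and a bounded open neighborhood $U$ of $x_{\star}$ on which $M_{1}:=\sup_{U}\|\nabla H\|$ and $M_{2}:=\sup_{U}\|\nabla^{2}H\|$ are finite (available because $H\in\calC^{2}$). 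I would choose $r>0$ with $\bar B(x_{\star},3r)\subset U$ and then $\Delta t_{\star}>0$ small enough that $\Delta t_{\star}M_{1}<r$ and $\Delta t_{\star}M_{2}/2\leqslant 1/2$. Standard estimates then show that for every $\Delta t\in(0,\Delta t_{\star}]$ and every $x\in V:=B(x_{\star},r)$, $T_{x}$ is a $\tfrac{1}{2}$-contraction of $\bar B(x,r)$ into itself.

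The parametric Banach fixed-point theorem then produces a unique fixed point $\chi_{\Delta t}^{\IMR,\FP}(x)\in\bar B(x,r)$ depending continuously on $x\in V$. Setting $\calA_{\Delta t}^{\IMR,\FP}:=V$ gives a nonempty open set. The matrix $\nabla_{y}\Phi_{\Delta t}^{\IMR}(x,\chi_{\Delta t}^{\IMR,\FP}(x))=\rmI_{2m}-\tfrac{\Delta t}{2}J\,\nabla^{2}H\!\left((x+\chi_{\Delta t}^{\IMR,\FP}(x))/2\right)$ has operator norm deviation from $\rmI_{2m}$ at most $\Delta t_{\star}M_{2}/2\leqslant 1/2$, hence is invertible, yielding \eqref{eq:imr_fp}. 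Continuity of $\chi_{\Delta t}^{\IMR,\FP}$ then upgrades to $\calC^{1}$ by Lemma~\ref{lem:numerical_solver_C1}.

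For the nonemptiness of $\calB_{\Delta t}^{\IMR,\FP}$, I would check that $x_{\star}$ itself belongs to it whenever $\Delta t\in(0,\Delta t_{\star}]$. Denote $y_{\star}:=\chi_{\Delta t}^{\IMR,\FP}(x_{\star})$; the fixed-point identity gives $\|y_{\star}-x_{\star}\|\leqslant\Delta t\,M_{1}<r$. Because $S$ fixes $x_{\star}$ and is an isometry, $\|S(y_{\star})-x_{\star}\|=\|y_{\star}-x_{\star}\|<r$, so $S(y_{\star})=\psi_{\Delta t}^{\IMR,\FP}(x_{\star})\in V=\calA_{\Delta t}^{\IMR,\FP}$; moreover $\|x_{\star}-S(y_{\star})\|\leqslant r$. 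By the $S$-reversibility of the IMR scheme (Proposition~\ref{prop:IMR_GSV_S_reversibility}), $x_{\star}=S(x_{\star})$ is a solution of $\Phi_{\Delta t}^{\IMR}(S(y_{\star}),\cdot)=0$; uniqueness of the Banach fixed point in $\bar B(S(y_{\star}),r)$ then forces $\chi_{\Delta t}^{\IMR,\FP}(S(y_{\star}))=x_{\star}$, whence $\psi_{\Delta t}^{\IMR,\FP}\circ\psi_{\Delta t}^{\IMR,\FP}(x_{\star})=S(x_{\star})=x_{\star}$. Thus $x_{\star}\in\calB_{\Delta t}^{\IMR,\FP}$, and openness of this set is immediate from Lemma~\ref{lem:B_open_set}.

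The main obstacle will be the joint calibration of $r$, $V$, and $\Delta t_{\star}$ so that the same ball serves both as the contraction domain of $T_{x}$ around $x_{\star}$ and as the uniqueness domain around $S(y_{\star})$; anchoring the construction at a fixed point of $S$ is precisely what reduces this calibration to the single bound $\Delta t_{\star}M_{1}<r$, as otherwise the two balls would have to be compared through the $S$-displacement of the base point.
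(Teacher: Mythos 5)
Your proof is correct. The underlying toolkit is the same as the paper's---the Banach fixed-point theorem for the map $T_x(y)=x+\Delta t\,J\nabla H((x+y)/2)$, $S$-reversibility of the IMR scheme (Proposition~\ref{prop:IMR_GSV_S_reversibility}), uniqueness of the fixed point in a ball, and the bound $\|\nabla_y\Phi_{\Delta t}^{\IMR}-\rmI\|\leqslant\Delta t\,M_2/2$ for invertibility and Lemma~\ref{lem:numerical_solver_C1} for $\calC^1$ regularity---but the packaging is genuinely different and more economical. You anchor the whole construction at a single point $x_\star=(q_\star,0)$ fixed by $S$, which collapses the forward and backward contraction balls onto one another: because $S$ fixes the base point and is an isometry, the backward implicit problem started from $S(y_\star)$ automatically has $x_\star$ inside its uniqueness ball, so the reversibility check for $x_\star$ is immediate and $\calB_{\Delta t}^{\IMR,\FP}$ is visibly nonempty. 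The paper instead defines two globally-valued functions $F,G\colon\calX\to(0,+\infty]$ (encoding, pointwise, the fixed-point radius and the composed forward/backward condition) and takes superlevel sets $\calA_{\Delta t}=F^{-1}(\Delta t,+\infty)\cup\Int F^{-1}(\{+\infty\})$ and $\calB\supset G^{-1}(\Delta t,+\infty]$. The paper's version costs more bookkeeping but yields a $\calA_{\Delta t}^{\IMR,\FP}$ that is as large as the local contraction argument permits at every point of $\calX$; yours is a single small ball $V=B(x_\star,r)$, which is all the proposition requires. Two minor things you could tighten: (i) when introducing $U$, take explicitly $U=B(x_\star,R')$ with $\overline{B(x_\star,R')}\subset\calX$, so that $M_1,M_2<\infty$ follows from compactness of $\overline U$; (ii) state explicitly that for $x\in V$ and $y\in\bar B(x,r)$ the midpoint $(x+y)/2$ lies in $B(x_\star,\tfrac{3r}{2})\subset U$, which is the inclusion you use both for the self-mapping and the Lipschitz bound.
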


In the proof, the construction of the numerical flow~$\chi_{\Delta t}^{\IMR,\FP}$ is performed so that it provides, for~$x\in\calA_{\Delta t}^{\IMR,\FP}$, the nearest configuration from~$x$ (for the norm on~$\calX$) that solves~$\Phi_{\Delta t}^{\IMR}(x,x')=0$.

\begin{proof}
  We first construct the numerical flow~$\chi_{\Delta t}^{\IMR,\FP}$ on a nonempty open set~$\calA_{\Delta t}^{\IMR,\FP}\subset\calX$. Let~$x=(q,p)\in\calX$. The configurations~$x_1\in\calX$ such that~$\Phi_{\Delta t}^{\IMR}(x,x_1)=0$ satisfy
  \begin{equation*}
    x_1=x+\Delta tJ\nabla H\left(\frac{x+x_1}{2}\right),
  \end{equation*}
  where the matrix~$J$ is defined in~\eqref{eq:hamiltonian_dynamics}. Fix~$R>0$ and denote by~$B_{x}^{R}:=\overline{\calB\left(x,R\right)}$ the closed ball centered at~$x\in\calX$ with radius~$R$. We denote by~$\left\lVert\cdot\right\rVert_{\calC^{0}(E)}$ the sup norm on the set~$E\subset\calX$. For~$\Delta t> 0$ such that~$\Delta t\left\lVert\nabla H\right\rVert_{\calC^0\left(B_x^{R/2}\right)}\leqslant R$, define
  \begin{equation*}
    \calF_{\Delta t}^{x}\colon\left\lbrace
    \begin{array}[]{ccl}
      B_x^R &\to&B_x^R \\
      x_1&\mapsto&x+\Delta t J\nabla H\left(\dfrac{x+x_1}{2}\right).
    \end{array}
    \right.
  \end{equation*}
  Note that, for all~$x_1\in B_x^R$,~$\Phi_{\Delta t}^{\IMR}(x,x_1)=0$ if and only if~$x_1$ is a fixed point of~$\calF_{\Delta t}^{x}$. A simple computation shows that~$\calF_{\Delta t}^{x}$ is a contraction when
  \begin{equation*}
    \Delta t\frac{\left\lVert\nabla^{2}H\right\rVert_{\calC^0\left(B_x^{R/2}\right)}}{2}< 1.
  \end{equation*}
  Define~$F\colon\calX\to(0,+\infty]$ as
  \begin{equation*}
    F(x)=
    \left\lbrace
    \begin{array}[]{ll}
      \min\left(\dfrac{R}{\left\lVert\nabla H\right\rVert_{\calC^0\left(B_x^{R/2}\right)}},\dfrac{2}{\left\lVert\nabla^{2}H\right\rVert_{\calC^0\left(B_x^{R/2}\right)}}\right)&\text{if }\left\lVert\nabla H\right\rVert_{\calC^0\left(B_x^{R/2}\right)}\left\lVert\nabla^{2}H\right\rVert_{\calC^0\left(B_x^{R/2}\right)}\neq0,\\
      +\infty&\text{otherwise}.
    \end{array}
    \right.
  \end{equation*}
  Note that~$F$ is continuous on~$\dom F=\left\lbrace x\in\calX,F(x)<+\infty\right\rbrace$. If~$\Delta t< F(x)$, there exists by the Banach fixed point theorem~\cite[Section~1.6]{zeidler_1995} a unique~$x_1\in B_x^R$ such that~$\Phi_{\Delta t}^{\IMR}(x,x_1)=0$. This configuration is denoted by~$x_1=:\chi_{\Delta t}^{\IMR,\FP}(x)$. This construction naturally defines the numerical flow~$\chi_{\Delta t}^{\IMR,\FP}$ on the open set 
  \begin{equation*}
    \calA_{\Delta t}^{\IMR,\FP}:=F^{-1}(\Delta t,+\infty)\cup \Int F^{-1}\left(\left\lbrace +\infty\right\rbrace\right),
  \end{equation*}
  where~$\Int A$ denotes the interior of a set~$A$. For~$\Delta t>0$ small enough, this set is nonempty. Indeed, if for all~$x\in\calX$,~$F(x)=+\infty$, then~$\calA_{\Delta t}^{\IMR,\FP}=\calX$. Otherwise, there exists~$x_0\in\calX$ such that~$F(x_0)$ is finite. In this case, we choose~$\Delta t_{\star}=F(x_0)/2>0$ (which will later be reduced but will still remain positive), and~$\calA_{\Delta t_{\star}}^{\IMR,\FP}$ is nonempty. Moreover, for any~$\Delta t\in(0,\Delta t_{\star}]$, it holds~$\calA_{\Delta t}^{\IMR,\FP}\subset\calA_{\Delta t_{\star}}^{\IMR,\FP}$.
  
  We next construct the set~$\calB_{\Delta t}^{\IMR,\FP}$ and show that it is nonempty for~$\Delta t$ sufficiently small. First, let us shows that~$\chi_{\Delta t}^{\IMR,\FP}$ is~$S$-reversible (see Definition~\ref{def:S_rev_numerical_scheme}) for a sufficiently small time step. Define~$G\colon\calX\to(0,+\infty]$ as
  \begin{equation*}
    G(x)=\min\left(F(x),\min\limits_{y\in B_x^R}\left\lbrace F(S(y))\right\rbrace\right).
  \end{equation*}
  Note that the value~$G(x)$ is positive (and can be infinite). Assume that~$\Delta t< G(x)$ for some~$x\in\calX$. Then~$x_1=\chi_{\Delta t}^{\IMR,\FP}(x)$ is well-defined. Moreover, since in this case~$\Delta t<F(S(x_1))$, there exists a unique~$x_1'\in B_{S(x_1)}^{R}$ such that~$\Phi_{\Delta t}^{\IMR}(S(x_1),x_1')=0$, which is~$x_1'=\chi_{\Delta t}^{\IMR,\FP}(S(x_1))$. Recall that the map~$\Phi_{\Delta t}^{\IMR}$ is~$S$-reversible by Proposition~\ref{prop:IMR_GSV_S_reversibility} so that~$\Phi_{\Delta t}^{\IMR}(x,x_1)=0$ if and only if~$\Phi_{\Delta t}^{\IMR}(S(x_1),S(x))=0$. To conclude that~$x_1'=S(x)$, it therefore suffices to show that~$S(x)\in B_{S(x_1)}^R$, in view of the uniqueness of the fixed point of~$\calF_{\Delta t}^{S(x_1)}$ in~$B_{S(x_1)}^{R}$. Since~$S$ is the momentum reversal map, it is an isometry, so that~$\left\lVert S(x)-S(x_1)\right\rVert=\left\lVert x-x_1\right\rVert\leqslant R$. Therefore, one obtains that~$x_1'=S(x)$, and it holds
  \begin{equation*}
    x=\left(S\circ\chi_{\Delta t}^{\IMR,\FP}\circ S \circ\chi_{\Delta t}^{\IMR,\FP}\right)(x).
  \end{equation*}
  Following the same reasoning as above, there exists~$\Delta t_{\star}>0$ , for any~$\Delta t\in(0,\Delta t_{\star}]$, the map~$\chi_{\Delta t}^{\IMR,\FP}$ is~$S$-reversible. Then the set~$\calB_{\Delta t}^{\IMR,\FP}$ defined by~\eqref{eq:B_IMR_FP} is open by Lemma~\ref{lem:B_open_set} and is nonempty as it contains~$G^{-1}(\Delta t,+\infty]$.

  To conclude, it remains to prove that~$\chi_{\Delta t}^{\IMR,\FP}\colon\calA_{\Delta t}^{\IMR,\FP}\to\calX$ is~$\calC^{1}$ for~$\Delta t\in(0,\Delta t_{\star}]$ and that the second statement of~\eqref{eq:imr_fp} holds. Note that for~$x\in\calA_{\Delta t}^{\IMR,\FP}$, one has~$\Phi_{\Delta t}^{\IMR}\left(x,\chi_{\Delta t}^{\IMR}(x)\right)=0$ and the matrix
  \begin{equation*}
    \nabla_{y}\Phi_{\Delta t}^{\IMR}\left(x,\chi_{\Delta t}^{\IMR,\FP}(x)\right)=\rmI_d-\frac{\Delta t}{2}J\nabla^{2}H\left(\frac{x+\chi_{\Delta t}^{\IMR,\FP}(x)}{2}\right)
  \end{equation*}
  is invertible since~$\Delta t<2/\left\lVert\nabla^{2}H\right\rVert_{\calC^{0}\left(B_x^{R/2}\right)}$. Therefore, the map~$\chi_{\Delta t}^{\IMR,\FP}$ is~$\calC^{1}$ as a result of the implicit function theorem. This concludes the proof.
\end{proof}

A similar result holds for the GSV numerical scheme. The result is stated here and proved for completeness in Section~\ref{subsec:proof_GSV_FP}. The main idea is to repeat the construction of contracting maps for each implicit step in the splitting scheme. Note that the construction provided in the proof can be readily adapted to other numerical schemes whatever the number of intermediate configurations.

\begin{proposition}
  \label{prop:GSV_FP}
  Suppose that Assumption~\ref{ass:H_C2_S_momentum_reversal} holds. Then, there exists~$\Delta t_{\star}>0$ such that, for any time step~$\Delta t\in(0,\Delta t_{\star}]$, there are a nonempty open set~$\calA_{\Delta t}^{\GSV,\FP}\subset\calX$ and a~$\calC^{1}$ map~$\chi_{\Delta t}^{\GSV,\FP}\colon\calA_{\Delta t}^{\GSV,\FP}\to\calX^{2}$ such that
  \begin{equation}
    \label{eq:gsv_fp}
    \forall x\in\calA_{\Delta t}^{\GSV,\FP},\quad \Phi_{\Delta t}^{\GSV}(x,\chi_{\Delta t}^{\GSV,\FP}(x))=0,\quad\nabla_{y}\Phi_{\Delta t}^{\GSV}(x,\chi_{\Delta t}^{\GSV,\FP}(x))\text{ is invertible.}
  \end{equation}
  Furthermore, defining~$\varphi_{\Delta t}^{\GSV,\FP}=\chi_{\Delta t,2}^{\GSV,\FP}$ and~$\psi_{\Delta t}^{\GSV,\FP}=S\circ\varphi_{\Delta t}^{\GSV,\FP}$, the set
  \begin{equation}
    \label{eq:B_GSV_FP}
    \calB_{\Delta t}^{\GSV,\FP}=\left\lbrace
    \begin{aligned}
      &x\in\calA_{\Delta t}^{\GSV,\FP}\bigcap\left(\psi_{\Delta t}^{\GSV,\FP}\right)^{-1}\left(\calA_{\Delta t}^{\GSV,\FP}\right),\\
      &\qquad\chi_{\Delta t}^{\GSV,\FP}\circ\psi_{\Delta t}^{\GSV,\FP}(x)=
      \Bigl(
        S\circ\chi_{\Delta t,1}^{\GSV,\FP}(x),S(x)
      \Bigr)
    \end{aligned}
    \right\rbrace,
  \end{equation}
  is open and nonempty.
\end{proposition}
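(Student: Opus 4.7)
The plan is to adapt the proof of Proposition~\ref{prop:IMR_FP} by applying the Banach fixed-point theorem successively to the two implicit sub-steps of which GSV is composed. Recall that GSV factors as symplectic Euler~B followed by symplectic Euler~A, each with time step $\Delta t/2$. Crucially, each sub-step involves only one genuinely implicit unknown: for Euler~B the intermediate momentum $p_1$ (once $(q,p)$ is fixed, $p_1 = p - \frac{\Delta t}{2}\nabla_q H(q, p_1)$ is a fixed-point equation in $p_1$ alone, and $q_1$ is then explicit), and for Euler~A the intermediate position $q_2$ (from $q_2 = q_1 + \frac{\Delta t}{2}\nabla_p H(q_2, p_1)$, with $p_2$ subsequently explicit). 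So, fixing $R > 0$ and $x = (q,p) \in \calX$, one defines the map $\calG_{\Delta t}^{x} \colon u \mapsto p - \frac{\Delta t}{2}\nabla_q H(q, u)$ on the closed ball $\overline{B(p, R)}$; by the same quantitative estimates as in the IMR proof (involving local sup norms of $\nabla_q H(q,\cdot)$ and its Hessian), this map sends the ball into itself and is contractive provided $\Delta t$ is small enough. Banach's theorem yields a unique $p_1$, and then $q_1$ is explicit. Applying the same procedure to Euler~A on a ball around $q_1$ with the map $v \mapsto q_1 + \frac{\Delta t}{2}\nabla_p H(v, p_1)$ produces a unique $q_2$, followed by $p_2$ explicitly. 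Composing the two steps defines $\chi_{\Delta t}^{\GSV,\FP}(x) = \bigl((q_1,p_1),(q_2,p_2)\bigr)$.

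The set $\calA_{\Delta t}^{\GSV,\FP}$ is then defined, as in the IMR case, as the set of starting points for which both successive Banach conditions hold with strict contraction constants; this set is open by continuity of $H$ and its derivatives, and nonempty for $\Delta t$ below some $\Delta t_\star > 0$ by the same dichotomy as in the proof of Proposition~\ref{prop:IMR_FP} (either the Hessian of $H$ vanishes identically and one can take $\calA_{\Delta t}^{\GSV,\FP} = \calX$, or one picks a point with finite local sup norms and shrinks $\Delta t$). The $\calC^{1}$ regularity of $\chi_{\Delta t}^{\GSV,\FP}$ follows from the implicit function theorem applied successively to each sub-step. The invertibility of $\nabla_y \Phi_{\Delta t}^{\GSV}$ at a solution follows from the block-triangular structure of the Jacobian: up to a permutation of blocks, its determinant factors as the product of the two sub-step Jacobian determinants that the implicit function theorem already controls, both of which are invertible for $\Delta t$ sufficiently small.

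To establish nonemptiness of $\calB_{\Delta t}^{\GSV,\FP}$, I would invoke Lemma~\ref{lem:solver_rev_from_flow_rev_gsv}, which reduces matters to exhibiting one point $x_\star$ at which $\psi_{\Delta t}^{\GSV,\FP} \circ \psi_{\Delta t}^{\GSV,\FP}(x_\star) = x_\star$. Running the construction from such an $x_\star$ produces intermediate and final configurations $x_1 = (q_1,p_1)$ and $x_2 = (q_2,p_2)$; rerunning it from $S(x_2)$, the $S$-reversibility of the numerical scheme $\Phi_{\Delta t}^{\GSV}$ (Proposition~\ref{prop:IMR_GSV_S_reversibility}) guarantees that $S(x_1)$ and $S(x_\star)$ solve exactly the fixed-point equations of the reversed run; since $S$ is an isometry, they lie in the same closed balls as the Banach fixed points produced by the reversed construction, and uniqueness forces them to coincide with what the reversed integration outputs. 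The openness of $\calB_{\Delta t}^{\GSV,\FP}$ then follows from Lemma~\ref{lem:B_open_set}. The main technical obstacle is the careful bookkeeping of the ball radii across the four nested Banach applications (two sub-steps forward, two in the reversed run): $R$ and $\Delta t_\star$ must be chosen at the outset so that every intermediate and final configuration of both runs lies inside its designated contraction ball. This is the same kind of radius-tuning as in the IMR proof, but executed twice in succession.
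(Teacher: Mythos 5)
Your proposal is essentially the paper's proof: successive Banach fixed-point contractions on the two implicit sub-steps of GSV, an open domain $\calA_{\Delta t}^{\GSV,\FP}$ defined by strict contraction inequalities with the same dichotomy on the local sup-norms of $\nabla H$ and $\nabla^2 H$, $\calC^1$ regularity and invertibility of $\nabla_y\Phi_{\Delta t}^{\GSV}$ from its block-triangular structure, nonemptiness of $\calB_{\Delta t}^{\GSV,\FP}$ deduced from the scheme's $S$-reversibility (Proposition~\ref{prop:IMR_GSV_S_reversibility}) together with local uniqueness of the Banach fixed point, and openness from Lemma~\ref{lem:B_open_set}. Two small points deserve attention. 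First, you contract on the genuinely implicit coordinates alone ($p_1$, then $q_2$), whereas the paper contracts on the full intermediate states $x_1\in B_x^R$ and $x_2\in B_{x_1}^R$; the latter makes the isometry step immediate, since $\left\lVert S(x_1)-S(x_2)\right\rVert=\left\lVert x_1-x_2\right\rVert\leqslant R$. Your coordinate balls are centered at $p$ and $q_1$ rather than at $p_1$ and $q_2$, so placing $-p_1$ inside the reversed momentum ball $\overline{\calB(-p_2,R)}$ is not an isometry statement but the explicit estimate $\left\lVert p_1-p_2\right\rVert=\tfrac{\Delta t}{2}\left\lVert\nabla_qH(q_2,p_1)\right\rVert\leqslant R$, which must enter your choice of $\Delta t_\star$; you correctly flag this radius-tuning, but it is the one spot where the coordinate decomposition costs genuine extra care compared to the full-state balls. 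Second, Lemma~\ref{lem:solver_rev_from_flow_rev_gsv} is proved only for the RMHMC Hamiltonian~\eqref{eq:H_RMHMC}, whereas Proposition~\ref{prop:GSV_FP} holds under the general Assumption~\ref{ass:H_C2_S_momentum_reversal}, so you cannot invoke it here; this is harmless because your uniqueness argument already establishes the full solver reversibility~\eqref{eq:s_rev_solver} directly without any reduction to flow reversibility, but the citation should be dropped.
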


These results show that a numerical solver can be theoretically constructed for the IMR or the GSV numerical scheme and that it can be used in Algorithms~\ref{alg:hmc_scheme_rev_check} and~\ref{alg:ghmc_scheme_rev_check}, leading to unbiased sampling. This is stated here for the IMR numerical scheme, a similar result holding for the GSV numerical scheme. Denote by~$\psi_{\Delta t}^{\IMR,\FP}=S\circ\varphi_{\Delta t}^{\IMR,\FP}$ and
\begin{equation*}
  \psi_{\Delta t}^{\rev,\IMR,\FP}=\psi_{\Delta t}^{\IMR,\FP}\mathbb{1}_{\calB_{\Delta t}^{\IMR,\FP}}+\id\mathbb{1}_{\calX\setminus\calB_{\Delta t}^{\IMR,\FP}}.
\end{equation*}

\begin{corollary}
  Suppose that Assumption~\ref{ass:H_C2_S_momentum_reversal} holds. Consider the IMR numerical scheme~$\Phi_{\Delta t}^{\IMR}$. Let~$\chi_{\Delta t}^{\IMR,\FP}$ and~$\varphi_{\Delta t}^{\IMR,\FP}$ be the maps defined in Proposition~\ref{prop:IMR_FP}. Then Propositions~\ref{prop:psi_involution},~\ref{prop:HMC_mu_invariance} and~\ref{prop:GHMC_mu_invariance} hold with~$\psi_{\Delta t}^{\rev}$ replaced by~$\psi_{\Delta t}^{\rev,\IMR,\FP}$ where~$\psi_{\Delta t}^{\rev,\IMR,\FP}$ is the map defined by~\eqref{eq:B_k}--\eqref{eq:psi_rev} with~$\varphi_{\Delta t}=\varphi_{\Delta t}^{\IMR,\FP}$ and~$\calB_{\Delta t}=\calB_{\Delta t}^{\IMR,\FP}$. In particular, Algorithms~\ref{alg:hmc_scheme_rev_check} and~\ref{alg:ghmc_scheme_rev_check} are unbiased using the map~$\psi_{\Delta t}^{\rev,\IMR,\FP}$.
\end{corollary}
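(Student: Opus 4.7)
The plan is to verify that the two structural assumptions required by Propositions~\ref{prop:psi_involution}, \ref{prop:HMC_mu_invariance} and~\ref{prop:GHMC_mu_invariance}---namely Assumption~\ref{ass:numerical_flow_symplectic} on the numerical flow and Assumption~\ref{ass:S_C1_involution} on the numerical scheme and the involution~$S$---are both satisfied in the present setting. Once these are in place, the three propositions apply verbatim with $\varphi_{\Delta t}$ replaced by $\varphi_{\Delta t}^{\IMR,\FP}$, $\calA_{\Delta t}$ by $\calA_{\Delta t}^{\IMR,\FP}$, $\calB_{\Delta t}$ by $\calB_{\Delta t}^{\IMR,\FP}$, and $\psi_{\Delta t}^{\rev}$ by $\psi_{\Delta t}^{\rev,\IMR,\FP}$, giving all the claims of the corollary.

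For Assumption~\ref{ass:numerical_flow_symplectic}, I would argue as follows. Proposition~\ref{prop:IMR_FP} provides a map $\chi_{\Delta t}^{\IMR,\FP}\colon \calA_{\Delta t}^{\IMR,\FP}\to\calX$ which is $\calC^{1}$, satisfies $\Phi_{\Delta t}^{\IMR}(x,\chi_{\Delta t}^{\IMR,\FP}(x))=0$ and has invertible Jacobian $\nabla_{y}\Phi_{\Delta t}^{\IMR}$ at every $x\in \calA_{\Delta t}^{\IMR,\FP}$. In particular, $\chi_{\Delta t}^{\IMR,\FP}$ fulfills all the requirements of Definition~\ref{def:numerical_solver}, so it is a bona fide numerical solver for $\Phi_{\Delta t}^{\IMR}$ (with $k=1$, so that the numerical solver and the numerical flow coincide). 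Proposition~\ref{prop:numerical_flow_symplectic} then ensures that $\varphi_{\Delta t}^{\IMR,\FP}$ is symplectic on $\calA_{\Delta t}^{\IMR,\FP}$, which yields $\left\lvert\det \nabla\varphi_{\Delta t}^{\IMR,\FP}(x)\right\rvert=1$ for every $x\in\calA_{\Delta t}^{\IMR,\FP}$. This is exactly Assumption~\ref{ass:numerical_flow_symplectic}.

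For Assumption~\ref{ass:S_C1_involution}, I rely on Proposition~\ref{prop:IMR_GSV_S_reversibility}, which shows that, under Assumption~\ref{ass:H_C2_S_momentum_reversal}, the numerical scheme $\Phi_{\Delta t}^{\IMR}$ is $S$-reversible with $S$ the momentum reversal map~\eqref{eq:S_momentum_reversal}. This map is obviously a $\calC^{1}$ involution with Jacobian $\nabla S=\mathrm{diag}(\rmI_m,-\rmI_m)$, so that $\left\lvert \det\nabla S\right\rvert=1$. Assumption~\ref{ass:S_C1_involution} is therefore satisfied.

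With both assumptions verified, Proposition~\ref{prop:psi_involution} directly implies that $\psi_{\Delta t}^{\rev,\IMR,\FP}\colon \calX\to\calX$ is well defined on the whole configuration space and preserves the Lebesgue measure, while Propositions~\ref{prop:HMC_mu_invariance} and~\ref{prop:GHMC_mu_invariance} yield the invariance of~$\mu$ for the Markov chains generated by Algorithms~\ref{alg:hmc_scheme_rev_check} and~\ref{alg:ghmc_scheme_rev_check} when $\psi_{\Delta t}^{\rev}$ is replaced by $\psi_{\Delta t}^{\rev,\IMR,\FP}$, hence the claimed unbiasedness. There is no genuine obstacle in this proof: all technical work has already been done in Proposition~\ref{prop:IMR_FP} (construction of the numerical solver and nonemptyness of $\calB_{\Delta t}^{\IMR,\FP}$), in Proposition~\ref{prop:numerical_flow_symplectic} (symplecticity), and in Proposition~\ref{prop:IMR_GSV_S_reversibility} ($S$-reversibility of the scheme); the corollary is essentially a clean assembly of these ingredients.
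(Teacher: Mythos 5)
Your proof is correct and follows essentially the same route as the paper: check that $\chi_{\Delta t}^{\IMR,\FP}$ is a numerical solver in the sense of Definition~\ref{def:numerical_solver}, invoke Proposition~\ref{prop:numerical_flow_symplectic} to verify Assumption~\ref{ass:numerical_flow_symplectic}, invoke Proposition~\ref{prop:IMR_GSV_S_reversibility} together with the obvious properties of the momentum reversal map to verify Assumption~\ref{ass:S_C1_involution}, and then apply Propositions~\ref{prop:psi_involution}, \ref{prop:HMC_mu_invariance} and~\ref{prop:GHMC_mu_invariance}. Nothing is missing.
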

Since the set~$\calB_{\Delta t}^{\IMR,\FP}$ is nonempty for~$\Delta t$ sufficiently small by Proposition~\ref{prop:IMR_FP}, it follows that the map~$\psi_{\Delta t}^{\rev,\IMR,\FP}$ is nontrivial for sufficiently small time steps~$\Delta t$.

\begin{proof}
  Let us check that Assumptions~\ref{ass:numerical_flow_symplectic} and~\ref{ass:S_C1_involution} are satisfied:
  \begin{itemize}
    \item It is clear from Proposition~\ref{prop:IMR_FP} that the maps~$\chi_{\Delta t}^{\IMR,\FP}$ and~$\varphi_{\Delta t}^{\IMR,\FP}$ define a numerical solver and a numerical flow for the IMR numerical scheme~$\Phi_{\Delta t}^{\IMR}$. From Proposition~\ref{prop:numerical_flow_symplectic}, the numerical flow~$\varphi_{\Delta t}^{\IMR,\FP}$ is symplectic so that Assumption~\ref{ass:numerical_flow_symplectic} holds. 
    \item The momentum reversal map~$S$ is a~$\calC^{1}$ involution such that~$\left\lvert\det\nabla S\right\rvert=1$. Moreover, the IMR numerical scheme~$\Phi_{\Delta t}^{\IMR}$ is~$S$-reversible by Proposition~\ref{prop:IMR_GSV_S_reversibility}. This shows that Assumption~\ref{ass:S_C1_involution} is satisfied.
  \end{itemize}  
  Therefore, Propositions~\ref{prop:psi_involution},~\ref{prop:HMC_mu_invariance} and~\ref{prop:GHMC_mu_invariance} hold with~$\psi_{\Delta t}=\psi_{\Delta t}^{\rev,\IMR,\FP}$.
\end{proof}

%---------------------------------------------------------
%---------------------------------------------------------
%---------------- IMPLICIT FUNCTION THM ------------------
%---------------------------------------------------------
%---------------------------------------------------------
\subsection{An idealized numerical solver using Newton's method and the implicit function theorem}
\label{subsec:numerical_solver_newton_method_ift}

We present in this section an idealized version of Newton's algorithm which solves exactly the following implicit problem for a fixed~$x\in\calX$:
\begin{equation}
  \label{eq:implicit_problem}
  \text{find }y\in\calX^k\text{ such that }\Phi_{\Delta t}(x,y)=0.
\end{equation}
A numerical solver~$\chi_{\Delta t}^{\Newton,\id}$ based on this idealized Newton's method is then defined thanks to Algorithm~\ref{alg:newton_idealized}. This solver satisfies all the requirements of Definition~\ref{def:numerical_solver}, see Proposition~\ref{prop:Newton}. A practical implementation of Newton's method is discussed in Section~\ref{subsec:practical_numerial_solver_newton_method}, see Algorithm~\ref{alg:newton_practice}. We discuss at the end of this section the relationship between the objects defined in this section and the numerical solver~$\chi_{\Delta t}^{\FP}$ constructed in Section~\ref{subsec:numerical_solvers_fixed_point}. 

Newton's method starting from an initial condition~$y_0\in\calX^k$ with a number of steps~$N_{\Newton}\geqslant 1$ generates a sequence~$(y^{i})_{1\leqslant i\leqslant N_{\Newton}}$ such that, when a convergence criterion is met, the last element of the sequence~$y^{N_{\Newton}}$ is an approximation of the output of the numerical solver. If~$\left\lVert\Phi_{\Delta t}(x,y^{N_{\Newton}})\right\rVert$ is small enough, then one can think of projecting~$y^{N_{\Newton}}$ onto the zero of~$y\mapsto\Phi_{\Delta t}(x,y)$ which is the closest to~$y^{N_{\Newton}}$. Such a projection can be formalized by an implicit function theorem, see Proposition~\ref{prop:Dimp}.

We first make the following assumption on the existence of a particular solution to~\eqref{eq:implicit_problem}. This ensures that the construction of the numerical solver can be initiated. This assumption is satisfied for our two running examples for sufficiently small time steps, see Propositions~\ref{prop:IMR_FP} and~\ref{prop:GSV_FP}.
\begin{assumption}
  \label{ass:existence_solution_implicit_problem}
  Let~$\Delta t>0$ and~$\Phi_{\Delta t}$ a numerical scheme. There exists~$(x,y)\in\calX\times\calX^k$ such that~$\Phi_{\Delta t}(x,y)=0$ and~$\nabla_{y}\Phi_{\Delta t}(x,y)$ is invertible.
\end{assumption}

\paragraph{Projection on the closest solution.} Assumption~\ref{ass:existence_solution_implicit_problem} allows us to define a map~$\Lambda_{\Delta t}^{\imp}\colon\calD_{\Delta t}^{\imp}\to\calX^k$, where~$\calD_{\Delta t}^{\imp}\subset\calX\times\calX^{k}$. More precisely, for any~$(x,y')\in\calD_{\Delta t}^{\imp}$, the configuration~$y=y'+\Lambda_{\Delta t}^{\imp}(x,y')$ is the closest to~$y'$ (in the sense of the norm in~$\calX^{k}$) for which~$\Phi_{\Delta t}(x,y)=0$ and~$\nabla_{y}\Phi_{\Delta t}(x,y)$ is invertible. This is stated in the following proposition, whose proof is postponed to Section~\ref{subsec:proof:prop_D_imp}. The superscript ``$\imp$'' is motivated by the fact that the implicit function theorem is used to construct the map~$\Lambda_{\Delta t}^{\imp}$.

\begin{proposition}
  \label{prop:Dimp}
  Suppose that Assumption~\ref{ass:existence_solution_implicit_problem} holds. Then there exists an open set~$\calD_{\Delta t}^{\imp}$ and a~$\calC^{1}$ function~$\Lambda_{\Delta t}^{\imp}\colon\calD_{\Delta t}^{\imp}\to\calX^{k}$ such that the following properties hold:
  \begin{itemize}
      \item The set~$\calD_{\Delta t}^{\imp}$ contains~$\calG_{\Delta t}$ and~$\Lambda_{\Delta t}^{\imp}$ is zero on~$\calG_{\Delta t}$, where 
      \begin{equation}
        \label{eq:G_dt}
        \calG_{\Delta t}=\left\lbrace (x,y)\in\calX\times\calX^{k},\,\Phi_{\Delta t}(x,y)=0,\,\nabla_{y}\Phi_{\Delta t}(x,y)\text{ is invertible}\right\rbrace.
      \end{equation}
      \item For any~$(x,y)\in\calD_{\Delta t}^{\imp}$, it holds~$\Phi_{\Delta t}(x,y+\Lambda_{\Delta t}^{\imp}(x,y))=0$ and the matrix~$\nabla_{y}\Phi_{\Delta t}(x,y+\Lambda_{\Delta t}^{\imp}(x,y))$ is invertible.
      \item For any~$(x_0,y_0)\in\calD_{\Delta t}^{\imp}$, there exists a neighborhood~$\calV_0$ of~$(x_0,y_0)$ and a positive real number~$\alpha_0$ such that
      \begin{equation*}
          \forall (x,y)\in\calV_0,\quad
          \left\lbrace
          \begin{aligned}
            &\left\lVert\Lambda_{\Delta t}^{\imp}(x,y)\right\rVert<\alpha_0,\\
            &\forall \lambda\in\calX^{k}\setminus\left\lbrace\Lambda_{\Delta t}^{\imp}(x,y)\right\rbrace,\, \Phi_{\Delta t}(x,y'+\lambda)=0\Longrightarrow\left\lVert\lambda\right\rVert\geqslant\alpha_0.
          \end{aligned}
          \right.
      \end{equation*}
  \end{itemize}
\end{proposition}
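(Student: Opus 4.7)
The plan is to define $\calD_{\Delta t}^{\imp}$ as the set of pairs $(x, y) \in \calX \times \calX^{k}$ for which there exists $r > 0$ such that the map $\lambda \mapsto \Phi_{\Delta t}(x, y + \lambda)$ has a unique zero $\lambda_{0}$ in the open ball $B(0, r)$, with $\nabla_{y} \Phi_{\Delta t}(x, y + \lambda_{0})$ invertible, and then to set $\Lambda_{\Delta t}^{\imp}(x, y) := \lambda_{0}$. This value is independent of the choice of $r$: if $r_{1} \leqslant r_{2}$ both satisfy the condition with zeros $\lambda_{1} \in B(0, r_{1})$ and $\lambda_{2} \in B(0, r_{2})$, then $\lambda_{1} \in B(0, r_{2})$ as well and the uniqueness in the larger ball forces $\lambda_{1} = \lambda_{2}$.

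First, I would verify the inclusion $\calG_{\Delta t} \subset \calD_{\Delta t}^{\imp}$ with $\Lambda_{\Delta t}^{\imp} \equiv 0$ on $\calG_{\Delta t}$. For $(x_{0}, y_{0}) \in \calG_{\Delta t}$, I apply the implicit function theorem (IFT) to $\Psi(x, y, \lambda) := \Phi_{\Delta t}(x, y + \lambda)$ at $(x_{0}, y_{0}, 0)$: we have $\Psi(x_{0}, y_{0}, 0) = 0$ and $\nabla_{\lambda} \Psi(x_{0}, y_{0}, 0) = \nabla_{y} \Phi_{\Delta t}(x_{0}, y_{0})$ is invertible by definition of $\calG_{\Delta t}$. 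The theorem furnishes an open ball $B(0, r)$ within which $\lambda = 0$ is the unique zero of $\Phi_{\Delta t}(x_{0}, y_{0} + \cdot)$, and $\nabla_{y} \Phi_{\Delta t}$ remains invertible there by continuity after possibly shrinking~$r$.

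Next, I would establish the openness of $\calD_{\Delta t}^{\imp}$ and the $\calC^{1}$ regularity of $\Lambda_{\Delta t}^{\imp}$. Fix $(x_{0}, y_{0}) \in \calD_{\Delta t}^{\imp}$ and let $r > 0$ and $\lambda_{0} = \Lambda_{\Delta t}^{\imp}(x_{0}, y_{0}) \in B(0, r)$ be as in the definition. Applying the IFT to $\Psi$ at $(x_{0}, y_{0}, \lambda_{0})$ yields a neighborhood $V$ of $(x_{0}, y_{0})$, a ball $B(\lambda_{0}, \rho) \subset B(0, r)$ (taking $\rho < r - \|\lambda_{0}\|$), and a $\calC^{1}$ map $\lambda \colon V \to B(\lambda_{0}, \rho)$ such that $\Phi_{\Delta t}(x, y + \lambda(x, y)) = 0$ uniquely within $B(\lambda_{0}, \rho)$, with $\nabla_{y} \Phi_{\Delta t}$ invertible there. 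The delicate step is to rule out the appearance of additional zeros in the larger ball as $(x, y)$ varies. Picking $r' \in (\|\lambda_{0}\|, r)$, I would argue by contradiction: if uniqueness in $\overline{B(0, r')}$ failed on every neighborhood of $(x_{0}, y_{0})$, a sequence $(x_{n}, y_{n}) \to (x_{0}, y_{0})$ with zeros $\mu_{n} \in \overline{B(0, r')} \setminus \{\lambda(x_{n}, y_{n})\}$ would, by compactness of $\overline{B(0, r')}$ and continuity of $\Phi_{\Delta t}$, accumulate to some $\mu^{\star} \in \overline{B(0, r')} \subset B(0, r)$ with $\Phi_{\Delta t}(x_{0}, y_{0} + \mu^{\star}) = 0$. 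If $\mu^{\star} = \lambda_{0}$, then $\mu_{n}$ would eventually lie in $B(\lambda_{0}, \rho)$ and coincide with $\lambda(x_{n}, y_{n})$ by local uniqueness, a contradiction; if $\mu^{\star} \neq \lambda_{0}$, this contradicts the uniqueness of $\lambda_{0}$ in $B(0, r)$. Hence, on a small enough neighborhood $V' \subset V$, $\lambda(x, y)$ is the unique zero in $B(0, r')$, so $V' \subset \calD_{\Delta t}^{\imp}$ and $\Lambda_{\Delta t}^{\imp}$ coincides with the $\calC^{1}$ map $\lambda$ on $V'$.

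The last property is then immediate: setting $\alpha_{0} = r'$, continuity of $\lambda$ together with $\|\lambda_{0}\| < r'$ yields $\|\Lambda_{\Delta t}^{\imp}(x, y)\| < \alpha_{0}$ on a possibly smaller neighborhood $\calV_{0} \subset V'$, while any zero of $\Phi_{\Delta t}(x, y + \cdot)$ other than $\Lambda_{\Delta t}^{\imp}(x, y)$ lies outside $B(0, r') = B(0, \alpha_{0})$, hence is at distance at least $\alpha_{0}$. The main obstacle is the openness step: it crucially relies on the strict inequality $\|\lambda_{0}\| < r$ built into the open-ball definition of $\calD_{\Delta t}^{\imp}$ (without which one could not strictly shrink $r$ to $r'$), combined with the compactness-continuity argument preventing new zeros from migrating into the closed ball as $(x, y)$ varies.
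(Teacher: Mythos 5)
Your proof is correct but takes a genuinely different route from the paper's. You define $\calD_{\Delta t}^{\imp}$ intrinsically as the maximal domain: the set of $(x,y)$ for which there exists $r>0$ so that $\lambda\mapsto\Phi_{\Delta t}(x,y+\lambda)$ has a unique zero in $B(0,r)$ at which $\nabla_y\Phi_{\Delta t}$ is invertible; you then prove openness of this set and $\calC^1$ regularity of $\Lambda_{\Delta t}^{\imp}$ by applying the implicit function theorem at the base point and running a compactness/contradiction argument to rule out spurious zeros entering $\overline{B(0,r')}$ as the base point varies. The paper instead proceeds constructively: it exhausts $\calX$ by compacts $K_n$, slices $\calG_{\Delta t}$ into compact pieces $\calG_{\Delta t}^{n,m}$ by additionally bounding $\|(\nabla_y\Phi_{\Delta t})^{-1}\|\leqslant m$, covers each such compact by finitely many IFT charts, pastes the resulting local maps together using the IFT uniqueness, and then unions over $n,m$. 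The two approaches trade off in predictable ways: the paper's finite-cover construction delivers the uniform isolation radius $\widetilde\alpha(n,m)$ across each chart directly, at the cost of bookkeeping the exhaustion and the bounds $m$; your maximal-domain definition avoids that machinery entirely, but the openness step then hinges on the sequential compactness argument and the strict inequality $\|\lambda_0\|<r$ (as you correctly emphasize), which is slightly more delicate. One small omission to tidy up: when writing $\Phi_{\Delta t}(x,y+\lambda)$ you should require $y+B(0,r)\subset\calX^{k}$ (possible since $\calX$ is open) so the expression is defined; the paper handles this via the set $\calU_0$. This is easily incorporated into your definition of $\calD_{\Delta t}^{\imp}$ and into each IFT application without affecting the argument.
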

In particular, this shows that the map~$y\mapsto y+\Lambda_{\Delta t}^{\imp}(x,y)$ is a projection: if~$(x,y)\in\calD_{\Delta t}^{\imp}$, then the second statement shows that~$(x,y+\Lambda_{\Delta t}^{\imp}(x,y))\in\calG_{\Delta t}$ so that~\eqref{eq:G_dt} implies that
\begin{equation*}
    \Lambda_{\Delta t}^{\imp}\left(x,y+\Lambda_{\Delta t}^{\imp}(x,y)\right)=0.
\end{equation*}

\paragraph{Idealized Newton's algorithm.}
We now introduce an ``idealized'' version of Newton's algorithm to solve~\eqref{eq:implicit_problem}, see~Algorithm~\ref{alg:newton_idealized}. The algorithm takes as inputs a fixed number~$N_{\Newton}$ of Newton's iterations, starting from an initial guess~$y_{0}\in\calX^k$. We denote by~$\calD_{\Delta t}^{\Newton,\id}$ all the couples~$(x,y_0)$ such that Algorithm~\ref{alg:newton_idealized} does not return failure, in which case the output is denoted by~$\Pi_{\Delta t}^{\Newton,\id}(x,y_0)$. The map~$\Pi_{\Delta t}^{\Newton,\id}$ is thus defined from~$\calD_{\Delta t}^{\Newton,\id}$ to~$\calX^k$. Note that this algorithm is an idealized version of Newton's algorithm: it brings the point~$y^{\Newton}$ sufficiently close to the set of zeros of~$y\mapsto\Phi_{\Delta t}(x,y)$ so that it can be transported to an exact solution of~\eqref{eq:implicit_problem} using the map~$\Lambda_{\Delta t}^{\imp}$ defined in Proposition~\ref{prop:Dimp}. The symbol ``$\Newton,\id$'' is to insist that the objects are based on an idealized version of Newton's algorithm. A practical implementation is given in Algorithm~\ref{alg:newton_practice}, where other stopping criteria are used since the map~$\Lambda_{\Delta}^{\imp}$ does not have an analytical expression, see the discussion in Section~\ref{subsec:practical_numerial_solver_newton_method}.

\begin{algorithm}
  \caption{Idealized Newton's algorithm to find a zero of~$y\mapsto\Phi_{\Delta t}(x,y)=0$.}
  \label{alg:newton_idealized}
  Consider an initial condition~$y^{0}=y_{0}\in\calX^k$ and a number of iterations~$N_{\Newton}$, and set~$i=0$.
  \begin{enumerate}[label={[\thealgorithm.\roman*]}, align=left]
    \item \label{step:newton_1} If~$\nabla_{y}\Phi_{\Delta t}(x,y^{i})$ is not invertible, return failure.
    \item \label{step:newton_2} Compute
    \begin{equation}
      \label{eq:update_idealized_Newton}
      y^{i+1}=y^{i}-\left[\nabla_{y}\Phi_{\Delta t}(x,y^{i})\right]^{-1}\Phi_{\Delta t}(x,y^{i}).
    \end{equation}
    \item Increment~$i$. If~$i\leqslant N_{\Newton}-1$, go back to~\ref{step:newton_1}.
    \item \label{step:newton_4} If~$\left(x,y^{N_{\Newton}}\right)\notin\calD_{\Delta t}^{\imp}$, return failure. Else, return~$y^{N_\Newton}+\Lambda_{\Delta t}^{\imp}\left(x,y^{N_{\Newton}}\right)$ where~$\Lambda_{\Delta t}^{\imp}$ is defined in Proposition~\ref{prop:Dimp}.
  \end{enumerate}
\end{algorithm}

To define a numerical solver using the map~$\Pi_{\Delta t}^{\Newton,\id}$, one has to define the initial condition~$y_0\in\calX^{k}$ as a continuous function of~$x$. The initial condition~$y_0$ is a predictor of a solution to~\eqref{eq:implicit_problem}. One can for example choose~$y_0=(x,\dots,x)$ or~$y_0=\ES_{\Delta t}(x)$, namely the configuration obtained after one step of an explicit scheme (such as an explicit Euler scheme) with time step~$\Delta t>0$ applied to~$x$. The latter will be our choice in the following. For instance, for the IMR numerical scheme (see Definition~\ref{def:IMR}), this map can be chosen as
\begin{equation*}
  \ES_{\Delta t}^{\IMR}(q,p)=\begin{pmatrix}
    q+\Delta t\nabla_p H(q,p)\\[0.2cm]
    p-\Delta t\nabla_q H(q,p)
  \end{pmatrix}.
\end{equation*}
For the GSV numerical scheme, one can for instance choose
\begin{equation*}
  \ES_{\Delta t}^{\GSV}(q,p)=\begin{pmatrix}
    \ES^{\IMR}_{\Delta t/2}(q,p),\ES^{\IMR}_{\Delta t/2}\circ\ES^{\IMR}_{\Delta t/2}(q,p)
  \end{pmatrix}.
\end{equation*}
The idealized numerical solver associated with Algorithm~\ref{alg:newton_idealized} is then defined on
\begin{equation*}
  \calA_{\Delta t}^{\Newton,\id}=
  \left\lbrace
    x\in\calX\,\middle|\,
    (x,\ES_{\Delta t}(x))\in\calD_{\Delta t}^{\Newton,\id}
  \right\rbrace
\end{equation*}
by
\begin{equation}
  \label{eq:numerical_solver_newton_idealized}
  \chi_{\Delta t}^{\Newton,\id}(x)=\Pi_{\Delta t}^{\Newton,\id}(x,\ES_{\Delta t}(x)).
\end{equation}
Note that~$\calA_{\Delta t}^{\Newton,\id}$ is open since the map~$\ES_{\Delta t}$ is continuous and the updates~\eqref{eq:update_idealized_Newton} also define continuous mappings~$y^{i}\mapsto y^{i+1}$. The following proposition is then immediate thanks to the results of this section.
\begin{proposition}
  \label{prop:Newton}
  Suppose that Assumption~\ref{ass:existence_solution_implicit_problem} holds. Then the map~$\chi_{\Delta t}^{\Newton,\id}$ defined on the open set~$\calA_{\Delta t}^{\Newton,\id}$ is such that 
  \begin{equation*}
    \forall x\in\calA_{\Delta t}^{\Newton,\id},\qquad \Phi_{\Delta t}\left(x,\chi_{\Delta t}^{\Newton,\id}(x)\right)=0,\qquad \nabla_{y}\Phi_{\Delta t}\left(x,\chi_{\Delta t}^{\Newton,\id}(x)\right)\text{ is invertible}.
  \end{equation*}
\end{proposition}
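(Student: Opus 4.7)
The plan is to argue by unpacking the definitions and invoking Proposition~\ref{prop:Dimp}. Fix $x \in \calA_{\Delta t}^{\Newton,\id}$. By definition of $\calA_{\Delta t}^{\Newton,\id}$, the pair $(x,\ES_{\Delta t}(x))$ belongs to $\calD_{\Delta t}^{\Newton,\id}$, meaning that Algorithm~\ref{alg:newton_idealized} launched from $y^0 = \ES_{\Delta t}(x)$ does not return failure. First, I would note that this forces the matrices $\nabla_y \Phi_{\Delta t}(x,y^i)$ to be invertible for every $0 \leqslant i \leqslant N_{\Newton}-1$ (otherwise step~\ref{step:newton_1} would trigger a failure), so the Newton iterates $y^0,\ldots,y^{N_{\Newton}}$ are all well-defined via~\eqref{eq:update_idealized_Newton}. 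Moreover, the absence of failure at the final step~\ref{step:newton_4} means that $(x,y^{N_{\Newton}}) \in \calD_{\Delta t}^{\imp}$.

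Next, I would invoke the second bullet of Proposition~\ref{prop:Dimp} applied at the point $(x,y^{N_{\Newton}}) \in \calD_{\Delta t}^{\imp}$: this directly yields
\begin{equation*}
  \Phi_{\Delta t}\bigl(x,\,y^{N_{\Newton}} + \Lambda_{\Delta t}^{\imp}(x,y^{N_{\Newton}})\bigr) = 0,
\end{equation*}
together with the invertibility of $\nabla_y \Phi_{\Delta t}\bigl(x,\,y^{N_{\Newton}} + \Lambda_{\Delta t}^{\imp}(x,y^{N_{\Newton}})\bigr)$. Since by~\eqref{eq:numerical_solver_newton_idealized} and the output of Algorithm~\ref{alg:newton_idealized} one has
\begin{equation*}
  \chi_{\Delta t}^{\Newton,\id}(x) = \Pi_{\Delta t}^{\Newton,\id}(x,\ES_{\Delta t}(x)) = y^{N_{\Newton}} + \Lambda_{\Delta t}^{\imp}(x,y^{N_{\Newton}}),
\end{equation*}
both required properties follow immediately. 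This is the entire argument: no estimate on the convergence of the Newton iterates is needed, because the idealized algorithm deliberately offloads the ``exact solving'' onto the implicit-function-theorem map $\Lambda_{\Delta t}^{\imp}$, whose defining property from Proposition~\ref{prop:Dimp} is exactly what is needed.

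There is no real obstacle here; the proposition is essentially a bookkeeping statement assembling Proposition~\ref{prop:Dimp} with the definitions of $\Pi_{\Delta t}^{\Newton,\id}$, $\ES_{\Delta t}$, and $\calA_{\Delta t}^{\Newton,\id}$. The only care needed is to verify that each failure-trigger in Algorithm~\ref{alg:newton_idealized} is indeed excluded for $x \in \calA_{\Delta t}^{\Newton,\id}$, which follows tautologically from how $\calD_{\Delta t}^{\Newton,\id}$ (and hence $\calA_{\Delta t}^{\Newton,\id}$) was defined as the set of inputs for which no failure occurs.
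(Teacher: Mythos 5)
Your proof is correct and takes essentially the same approach the paper intends; the paper merely states the result is ``immediate thanks to the results of this section,'' and your unpacking—tracing the failure-avoidance of Algorithm~\ref{alg:newton_idealized} to conclude $(x,y^{N_{\Newton}})\in\calD_{\Delta t}^{\imp}$, then invoking the second bullet of Proposition~\ref{prop:Dimp}—is exactly that intended chain of reasoning. Your closing observation that no convergence estimate on the Newton iterates is required, because the work is offloaded to $\Lambda_{\Delta t}^{\imp}$, correctly captures the design of the idealized algorithm.
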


\paragraph{Relationship between the maps~$\chi_{\Delta t}^{\FP}$ and~$\Lambda_{\Delta t}^{\imp}$.} In contrast with the fixed point method used in Section~\ref{subsec:numerical_solvers_fixed_point}, which constructs a numerical solver that outputs a solution of~\eqref{eq:implicit_problem} close to~$x$ (\emph{i.e.}~each component of the solution of~\eqref{eq:implicit_problem} lies in a neighborhood of~$x$), the numerical flow constructed on Newton's method allows to consider proposal moves~$\widetilde{y}$ which are not close to~$x$. The two numerical solvers should however output the same solution when the last element~$y^{\Newton}$ computed in Algorithm~\ref{alg:newton_idealized} is close to~$\chi_{\Delta t}^{\FP}(x)$. In a sense, the projection constructed in step~\ref{step:newton_4} generalizes the projection~$\chi_{\Delta t}^{\IMR,\FP}$ for all couples~$(x,\tilde{y})\in\calX\times\calX^{k}$ such that there exists a solution~$y$ of~\eqref{eq:implicit_problem} close to~$\tilde{y}$ (and not only close to~$x$) for which~$\nabla_{y}\Phi_{\Delta t}(x,y)$ is invertible. This intuition is formalized in the next proposition, stated for the IMR numerical scheme for concreteness, a similar result holding for the GSV numerical scheme.

\begin{proposition}
  Suppose that Assumption~\ref{ass:H_C2_S_momentum_reversal} holds. Let~$\Lambda_{\Delta t}^{\IMR,\imp}$ be the map defined in Proposition~\ref{prop:Dimp} associated with the IMR numerical scheme. Then there exists~$\Delta t_{\star}>0$ such that, for any~$\Delta t\in(0,\Delta t_{\star}]$ and~$x\in\calA_{\Delta t}^{\IMR,\FP}$, there is a neighborhood~$\calU_x\subset\calX$ of~$\chi_{\Delta t}^{\IMR,\FP}(x)$ for which
  \begin{equation}
    \label{eq:relation_fp_imr}
    \forall y\in\calU_x,\qquad 
    \chi_{\Delta t}^{\IMR,\FP}(x)=y+\Lambda_{\Delta t}^{\IMR,\imp}(x,y).
  \end{equation}
\end{proposition}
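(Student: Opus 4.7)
The plan is to combine the local uniqueness encoded in Proposition~\ref{prop:Dimp} with the non-degeneracy provided by Proposition~\ref{prop:IMR_FP} at the fixed-point solution. First I would fix $\Delta t_\star>0$ as in Proposition~\ref{prop:IMR_FP} (so that $\chi_{\Delta t}^{\IMR,\FP}$ is well-defined and $\calC^1$ for $\Delta t \in (0,\Delta t_\star]$), pick $x \in \calA_{\Delta t}^{\IMR,\FP}$, and denote $z := \chi_{\Delta t}^{\IMR,\FP}(x)$. By \eqref{eq:imr_fp}, we have $\Phi_{\Delta t}^{\IMR}(x,z)=0$ and $\nabla_y\Phi_{\Delta t}^{\IMR}(x,z)$ invertible; hence $(x,z) \in \calG_{\Delta t}$, and the first item of Proposition~\ref{prop:Dimp} yields $(x,z) \in \calD_{\Delta t}^{\IMR,\imp}$ together with $\Lambda_{\Delta t}^{\IMR,\imp}(x,z)=0$. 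In particular the existence hypothesis needed to apply Proposition~\ref{prop:Dimp} to the IMR scheme is satisfied.

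Next I would invoke the third item of Proposition~\ref{prop:Dimp} at the base point $(x_0,y_0)=(x,z)$. This provides a neighborhood $\calV_0$ of $(x,z)$ in $\calX \times \calX$ and a radius $\alpha_0>0$ characterising local uniqueness: for every $(x',y)\in\calV_0$ and every $\lambda \neq \Lambda_{\Delta t}^{\IMR,\imp}(x',y)$ with $\Phi_{\Delta t}^{\IMR}(x',y+\lambda)=0$, one has $\|\lambda\|\geqslant\alpha_0$. I would then introduce
\begin{equation*}
\calU_x := \bigl\{ y \in \calX : (x,y) \in \calV_0,\ \|y-z\| < \alpha_0 \bigr\},
\end{equation*}
which is open and contains $z$.

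To conclude, for any $y\in\calU_x$ I set $\lambda:=z-y$. Then $\|\lambda\|<\alpha_0$ and $\Phi_{\Delta t}^{\IMR}(x,y+\lambda)=\Phi_{\Delta t}^{\IMR}(x,z)=0$, so the contrapositive of the local uniqueness statement forces $\lambda=\Lambda_{\Delta t}^{\IMR,\imp}(x,y)$. Rearranging gives $\chi_{\Delta t}^{\IMR,\FP}(x)=y+\Lambda_{\Delta t}^{\IMR,\imp}(x,y)$, which is precisely \eqref{eq:relation_fp_imr}.

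The argument is essentially bookkeeping: all the real analytic content (contraction estimates, invertibility of the Jacobian, and the implicit-function-theorem–type local uniqueness) has already been packaged into Propositions~\ref{prop:IMR_FP} and~\ref{prop:Dimp}. The only step that requires any care is the verification that $(x,z)\in\calG_{\Delta t}$ so that $z$ is a non-degenerate solution around which local uniqueness applies, and this is immediate from the second part of \eqref{eq:imr_fp}. There is no genuine obstacle; the main risk is simply notational — keeping straight that $\Lambda_{\Delta t}^{\IMR,\imp}$ measures the displacement from $y$ to the projected solution, so that the identity to prove is $z-y=\Lambda_{\Delta t}^{\IMR,\imp}(x,y)$ rather than $z=\Lambda_{\Delta t}^{\IMR,\imp}(x,y)$.
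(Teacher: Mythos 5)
Your proposal is correct and follows essentially the same route as the paper's proof: identify that $(x,\chi_{\Delta t}^{\IMR,\FP}(x))\in\calG_{\Delta t}$ via~\eqref{eq:imr_fp}, invoke the third item of Proposition~\ref{prop:Dimp} at this base point, shrink the neighborhood so that $\left\lVert y-\chi_{\Delta t}^{\IMR,\FP}(x)\right\rVert<\alpha_0$, and conclude by the local-uniqueness dichotomy (the paper phrases this as a contradiction, you use the contrapositive—logically identical).
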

\begin{proof}
  Fix~$\Delta t\in(0,\Delta t_{\star}]$ with~$\Delta t_{\star}$ defined in Proposition~\ref{prop:IMR_FP}, and consider~$x\in\calA_{\Delta t}^{\IMR,\FP}$. Equation~\eqref{eq:imr_fp} implies that~$\left(x,\chi_{\Delta t}^{\IMR,\FP}(x)\right)\in\calG_{\Delta t}\subset\calD_{\Delta t}^{\imp}$ where~$\calG_{\Delta t}$ is defined by~\eqref{eq:G_dt}. Using the third property of Proposition~\ref{prop:Dimp} with~$x_0=x$ and~$y_0=\chi_{\Delta t}^{\IMR,\FP}(x)$, there exists an open neighborhood~$\calU_x\subset\calX$ of~$\chi_{\Delta t}^{\IMR,\FP}(x)$ and a positive real number~$\alpha_x$ such that
  \begin{equation}
    \label{eq:alpha_x}
    \forall y\in\calU_x,\quad
    \left\lbrace
      \begin{aligned}
        &\left\lVert\Lambda_{\Delta t}^{\imp}(x,y)\right\rVert<\alpha_x,\\
        &\forall \lambda\in\calX\setminus\left\lbrace\Lambda_{\Delta t}^{\imp}(x,y)\right\rbrace,\, \Phi_{\Delta t}(x,y+\lambda)=0\Longrightarrow\left\lVert\lambda\right\rVert\geqslant\alpha_x.
      \end{aligned}
    \right.
  \end{equation}
  Upon reducing the size of~$\calU_x$, it can be assumed that
  \begin{equation}
    \label{eq:open_ball_alpha_x}
    \forall y\in\calU_x,\qquad \left\lVert \chi_{\Delta t}^{\IMR,\FP}(x)-y\right\rVert<\alpha_x.
  \end{equation}
  Fix~$y\in\calU_x$. If~\eqref{eq:relation_fp_imr} does not hold, let~$\lambda=\chi_{\Delta t}^{\IMR,\FP}(x)-y\neq\Lambda_{\Delta t}^{\imp}(x,y)$. Then, in view of~\eqref{eq:imr_fp},
  \begin{equation*}
    \Phi_{\Delta t}^{\IMR}(x,y+\lambda)=\Phi_{\Delta t}^{\IMR}\left(x,\chi_{\Delta t}^{\IMR,\FP}(x)\right)=0,
  \end{equation*}
  so that~\eqref{eq:alpha_x} implies that
  \begin{equation*}
    \left\lVert\lambda\right\rVert=\left\lVert\chi_{\Delta t}^{\IMR,\FP}(x)-y\right\rVert\geqslant \alpha_x,
  \end{equation*}
  which contradicts~\eqref{eq:open_ball_alpha_x}. This shows that~\eqref{eq:relation_fp_imr} holds.
\end{proof}

%---------------------------------------------------------
%---------------------------------------------------------
%---------------- PRACTICAL NEWTON -----------------------
%---------------------------------------------------------
%---------------------------------------------------------
\subsection{A practical numerical solver using a practical implementation of Newton's method}
\label{subsec:practical_numerial_solver_newton_method}

We give in this section a practical implementation of Algorithm~\ref{alg:newton_idealized}, see Algorithm~\ref{alg:newton_practice}. Denote respectively by~$\eta_{\Newton},\widetilde{\eta}_{\Newton}$ and~$N_{\Newton}$ the convergence thresholds and the maximum number of iteration loops for Newton's algorithm. A practical implementation of Newton's method is provided in Algorithm~\ref{alg:newton_practice} (with convergence checks expressed in terms of relative errors; absolute errors could be considered as well).

\begin{algorithm}
    \caption{Practical Newton's algorithm to find a zero of~$y\mapsto\Phi_{\Delta t}(x,y)=0$.}
    \label{alg:newton_practice}
    Consider an initial condition~$y^{0}=y_0\in\calX^k$, a maximum number of iterations~$N_{\Newton}$, and set~$i=0$.
    \begin{enumerate}[label={[\thealgorithm.\roman*]}, align=left]
      \item\label{step:practical_newton_1} If~$\left(\nabla_{y}\Phi_{\Delta t}\right)(x,y^{i})$ is not invertible, return failure.
      \item Else, compute 
      \begin{equation}
        \label{eq:newton_practice_updates}
        y^{i+1}=y^{i}-\left[
          \left(
            \nabla_{y}\Phi_{\Delta t}
          \right)(x,y^{i})
        \right]^{-1}\Phi_{\Delta t}(x,y^{i}).
      \end{equation}
      \item\label{step:practical_newton_3} If~$\left\lVert\Phi_{\Delta t}(x,y^{i+1})\right\rVert<\eta_{\Newton}\left\lVert\Phi_{\Delta t}(x,y_0)\right\rVert$ or~$\left\lVert y^{i+1}-y^{i}\right\rVert<\widetilde{\eta}_{\Newton}\left\lVert y^{i}\right\rVert$, return~$y^{i+1}$.
      \item Increment~$i$. If~$i\leqslant N_{\Newton}-1$, go back to~\ref{step:practical_newton_1}.
      \item Return failure. 
    \end{enumerate}
\end{algorithm}

Let us first highlight the key differences between Algorithms~\ref{alg:newton_idealized} and~\ref{alg:newton_practice}:
\begin{itemize}
  \item At each iteration, one checks that the matrix~$\nabla_{x'}\Phi_{\Delta t}(x,y^{i})$ is numerically invertible. Typically, this is done by checking its singular values (making sure that the condition number, which is the ratio of the largest and smallest singular values is not too large; and/or making sure that the matrix has full rank by counting the number of singular values whose magnitude is above a given threshold). If the matrix is not numerically invertible, then the algorithm returns failure.
  \item Along with the maximal number of iterations~$N_{\Newton}$, two other stopping criteria of the form~$\left\lVert\Phi_{\Delta t}(x,y^{i})\right\rVert<\eta_{\Newton}\left\lVert\Phi_{\Delta t}(x,y_0)\right\rVert$ and~$\left\lVert y^{i+1}-y^{i}\right\rVert<\widetilde{\eta}_{\Newton}\left\lVert y^{i}\right\rVert$ for some tolerances~$\eta_{\Newton},\widetilde{\eta}_{\Newton}$ are used. If these criteria are not satisfied after~$N_{\Newton}$ iterations, then the algorithm returns failure.
  \item One simply performs Newton's iterations, namely steps~\ref{step:newton_1}--\ref{step:newton_2} of Algorithm~\ref{alg:newton_idealized}, without projecting the final configuration using the map~$\Lambda_{\Delta t}^{\imp}$. It is thus assumed that the obtained configuration~$y$ is sufficiently close to an exact solution (since~$\Phi_{\Delta t}(x,y)$ is small) so that~$\Lambda_{\Delta t}^{\imp}(x,y)=0$.
\end{itemize}

\begin{remark}
  \label{rem:case_k_2}  
  In Algorithm~\ref{alg:newton_practice}, one may either compute the whole gradient~$\nabla_{y}\Phi_{\Delta t}$ or use specific strategies depending on the numerical scheme at hand. Let us illustrate this comment on the specific example of the GSV numerical scheme. Note that~$\nabla_{y}\Phi_{\Delta t}^{\GSV}$ is equivalent to both~$\nabla_{y}\Phi_{\Delta t}^{\EulerB}$ and~$\nabla_{y}\Phi_{\Delta t}^{\EulerA}$ being invertible, see the proof of Proposition~\ref{prop:numerical_flow_symplectic} in Section~\ref{subsec:proof_symplectic}. Therefore, three strategies at least can be considered to build a solution to~\eqref{eq:implicit_problem}: (i) using the full gradient~$\nabla_{y}\Phi_{\Delta t}^{\IMR}(x,x_1,x_2)$; (ii) iterate Newton's method to first obtain~$x_1$ in the Euler~B part of the scheme, and then iterate Newton's method to obtain~$x_2$ in the Euler~A part of the scheme; loop between updates of~$x_1$ and~$x_2$ until both have converged, as done for constrained molecular systems (see~\cite[Section~4.3.3]{leimkuhler_2015} and~\cite[Section~7.2.2]{leimkuhler_2005}).
\end{remark}

We now define the practical numerical solver constructed using Algorithm~\ref{alg:newton_practice}. We denote by~$\calD_{\Delta t}^{\Newton,\pr}$ all the couples~$(x,y_0)\in\calX\times\calX^{k}$ such that Algorithm~\ref{alg:newton_practice} does not return failure, in which case the output is denoted by~$\Pi_{\Delta t}^{\Newton,\pr}(x,y_0)$. The map~$\Pi_{\Delta t}^{\Newton,\pr}$ is thus defined from~$\calD_{\Delta t}^{\Newton,\pr}$ to~$\calX^{k}$. The set~$\calD_{\Delta t}^{\Newton,\pr}$ is open since the set on which the condition in step~\ref{step:practical_newton_1} is satisfied is open, the updates~\eqref{eq:newton_practice_updates} define continuous mappings~$y^{i}\mapsto y^{i+1}$ and the checks in step~\ref{step:practical_newton_3} are based on continuous mappings with strict inequality constraints. The superscript label ``$\Newton,\pr$'' should be contrasted with the previous label ``$\Newton,\id$'', since the objects are now defined based on a practical implementation of Newton's method.

Recall that~$\ES_{\Delta t}$ is defined on~$\calX$ and corresponds to an explicit scheme with time step~$\Delta t$ (such as explicit Euler), see Section~\ref{subsec:numerical_solver_newton_method_ift}. The numerical solver associated with Algorithm~\ref{alg:newton_practice} is defined on
\begin{equation*}
  \calA_{\Delta t}^{\Newton,\pr}=\left\lbrace x\in\calX\,\middle|\,(x,\ES_{\Delta t}(x))\in\calD_{\Delta t}^{\Newton,\pr}\right\rbrace
\end{equation*}
by
\begin{equation*}
  \chi_{\Delta t}^{\Newton,\pr}(x)=\Pi_{\Delta t}^{\Newton,\pr}(x,\ES_{\Delta t}(x)).
\end{equation*}
Note that~$\calA_{\Delta t}^{\Newton,\pr}$ is open since the map~$\ES_{\Delta t}$ is continuous and the set~$\calD_{\Delta t}^{\Newton,\pr}$ is open. Beware that we name the map~$\chi_{\Delta t}^{\Newton,\pr}$ a ``numerical solver'' as it resembles the numerical solver defined by~\eqref{eq:numerical_solver_newton_idealized}, but~$\chi_{\Delta t}^{\Newton,\pr}$ does not satisfy the requirements of Definition~\ref{def:numerical_solver} since, in general, the equality~$\Phi_{\Delta t}\left(x,\chi_{\Delta t}^{\Newton,\pr}(x)\right)= 0$ does not hold.

\paragraph{Practical implementation of the map~$\psi_{\Delta t}^{\rev}$.} We give in Algorithm~\ref{alg:psi_dt_rev} a practical implementation of the map~$\psi_{\Delta t}^{\rev}$ defined in~\eqref{eq:psi_rev} using the practical implementation of Newton's method described Algorithm~\ref{alg:newton_practice}. We introduce to this end a parameter~$\eta_{\rev}$ to quantify the~$S$-reversibility.
\begin{algorithm}
  \caption{Practical implementation of the map~$\psi_{\Delta t}^{\rev}$ to a configuration~$x\in\calX$.}
  \label{alg:psi_dt_rev}
  \begin{enumerate}[label={[\thealgorithm.\roman*]}, align=left]
    \item\label{step:psi_dt_rev_1} Apply Algorithm~\ref{alg:newton_practice} to solve~$\Phi_{\Delta t}(x,y)=0$. If it fails, return~$x$.
    \item\label{step:psi_dt_rev_2} Else, denote by~$(\tilde{y}_1,\dots,\tilde{y}_k)$ the output, and apply Algorithm~\ref{alg:newton_practice} to solve~$\Phi_{\Delta t}(S(\tilde{y}_k),y)=0$. If it fails, return~$x$.
    \item\label{step:psi_dt_rev_3} Else, denote by~$(\bar{y}_1,\dots,\bar{y}_k)$ the output. If
    \begin{equation*}
      \left\lVert (\bar{y}_1,\dots,\bar{y}_{k-1},\bar{y}_k)-(S(\tilde{y}_{k-1}),\dots,S(\tilde{y}_{1}),S(x))\right\rVert<\eta_{\rev}\left\lVert x\right\rVert,  
    \end{equation*}
    return~$S(\tilde{y}_k)$. Else, return~$x$.
  \end{enumerate}
\end{algorithm}

We clearly identify the three successive conditions announced in the introduction and made precise in Section~\ref{subsec:S_reversibility}: step~\ref{step:psi_dt_rev_1} refers to solving the implicit forward problem and rejecting if it fails; step~\ref{step:psi_dt_rev_2} corresponds to solving the implicit backward problem and rejecting if it fails; and step~\ref{step:psi_dt_rev_3} corresponds to the equality~\eqref{eq:s_rev_solver} and rejecting if it does not hold numerically.

%---------------------------------------------------------
%---------------------------------------------------------
%---------------------------------------------------------
%---------------------------------------------------------
%---------------------------------------------------------
%---------------- NUMERICAL RESULTS ----------------------
%---------------------------------------------------------
%---------------------------------------------------------
%---------------------------------------------------------
%---------------------------------------------------------
%---------------------------------------------------------

\section{Numerical results}
\label{sec:numerical_results}
We illustrate in this section through two numerical examples the need to check for~$S$-reversibility to guarantee unbiased sampling. In particular, we show that the Markov chains created via Algorithms~\ref{alg:hmc_scheme_rev_check} and~\ref{alg:ghmc_scheme_rev_check} sample without bias the target measure~$\mu$ defined in~\eqref{eq:measure_non_separable} (and hence~$\pi$ defined in~\eqref{eq:invariant_measure_position_RMHMC} when using the Hamiltonian function~\eqref{eq:H_RMHMC} as in RMHMC, see Section~\ref{subsec:hmc}), while the standard HMC and GHMC algorithms fail to do so for large time steps. We consider sampling from a Boltzmann--Gibbs measure associated with a one-dimensional double well potential in Section~\ref{subsec:one_dimensional_confining_double_well_potential}. We then sample from an anisotropic two-dimensional potential in Section~\ref{subsec:anisotropic_two_dimensional_potential} and show the benefits of using a position-dependent diffusion coefficient to faster reach stationarity. The GSV numerical scheme~\eqref{eq:GSV} is used to integrate the Hamiltonian dynamics, and consider the Hamiltonian function~\eqref{eq:H_RMHMC} and the momentum reversal map~$S$ defined in~\eqref{eq:S_momentum_reversal}. Note that, by Lemma~\ref{lem:solver_rev_from_flow_rev_gsv}, it suffices to check the~$S$-reversibility on the beginning/ending positions. Our implementation of the GSV algorithm consists in using successively two Newton's methods to first fully solve the Euler~A scheme, and then fully solve the Euler B scheme. This corresponds to strategy (ii) in Remark~\ref{rem:case_k_2}. To check the invertibility of the matrix~$\nabla_{y}\Phi_{\Delta t}(x,y^{i})$ in step~\ref{step:practical_newton_1}, we use the \textsc{rank()} method from the Julia programming language, which amounts to counting the number of singular values above a default threshold of~$m\varepsilon\sigma_m$ where~$m$ is the size of the matrix,~$\sigma_m$ its largest singular value and~$\varepsilon=2.22\times 10^{-16}$. The codes which have been used to obtain the numerical results are available at the url~\url{https://github.com/rsantet/RMHMC}.

%---------------------------------------------------------
%---------------------------------------------------------
%---------------- ONE DIM --------------------------------
%---------------------------------------------------------
%---------------------------------------------------------
\subsection{One dimensional confining double well potential}
\label{subsec:one_dimensional_confining_double_well_potential}

In this example, we show by computing histograms that not checking for numerical reversibility induces a bias on the invariant probability measure with respect to the target probability distribution, especially when time steps become large; while using either Algorithms~\ref{alg:hmc_scheme_rev_check} or~\ref{alg:ghmc_scheme_rev_check} with the numerical flow with~$S$-reversibility check~\eqref{eq:psi_rev} (implemented with Algorithm~\ref{alg:psi_dt_rev}) allows performing unbiased numerical sampling. 

We consider a confining double well potential defined on~$\bbR$ as
\begin{equation}
    \label{eq:example_V_1D}
    V_{\sigma,h}(q) = q^{2}-1+\frac{h}{\sqrt{2\pi\sigma^{2}}}\exp\left(-\frac{q^{2}}{2\sigma^{2}}\right),
\end{equation}
and the target probability measure is the usual Gibbs measure~$\pi$ defined by~\eqref{eq:invariant_measure_position_RMHMC}. The parameters~$\sigma$ and~$h$ allows to vary the width and height of the energy barrier at~$q=0$. The potential~\eqref{eq:example_V_1D} exhibits two wells as seen in Figure~\ref{fig:double_well_potential}, which enables us to evaluate the performance of our algorithm for tackling multimodal target distributions. We choose~$\sigma=0.2$,~$h=1$, a friction parameter~$\gamma=1$ for the GHMC algorithm, and the diffusion coefficient (which corresponds to the inverse mass in a Hamiltonian setting)
\begin{equation*}
  D(q)=\left(\dfrac{1.5+\cos(\pi q)}{2}\right)^{2}\geqslant\dfrac{1}{16}>0.
\end{equation*}
We run each simulation for~$N_{\iter} = 10^{7}$ time steps starting from~$q_0=-0.5$ and~$p_{0}$ sampled from~$\calN(0,D(q_0)^{-1})$. Newton's algorithm to solve the implicit steps has convergence tolerance thresholds of~$\eta_{\Newton} = \widetilde{\eta}_{\Newton} = 10^{-12}$, and we perform a maximum of~$N_{\Newton}=100$ iterations for every Newton algorithm loop. We use a tolerance threshold of~$\eta_{\rev}=10^{-8}$ to check for~$S$-reversibility. Observe that~$\eta_{\rev}$ is somewhat larger than~$\eta_{\Newton},\widetilde{\eta}_{\Newton}$ to take into account an error amplification phenomenon due to the iterations in Newton's method. 

\begin{figure}
  \centering
  \includegraphics[scale=0.17]{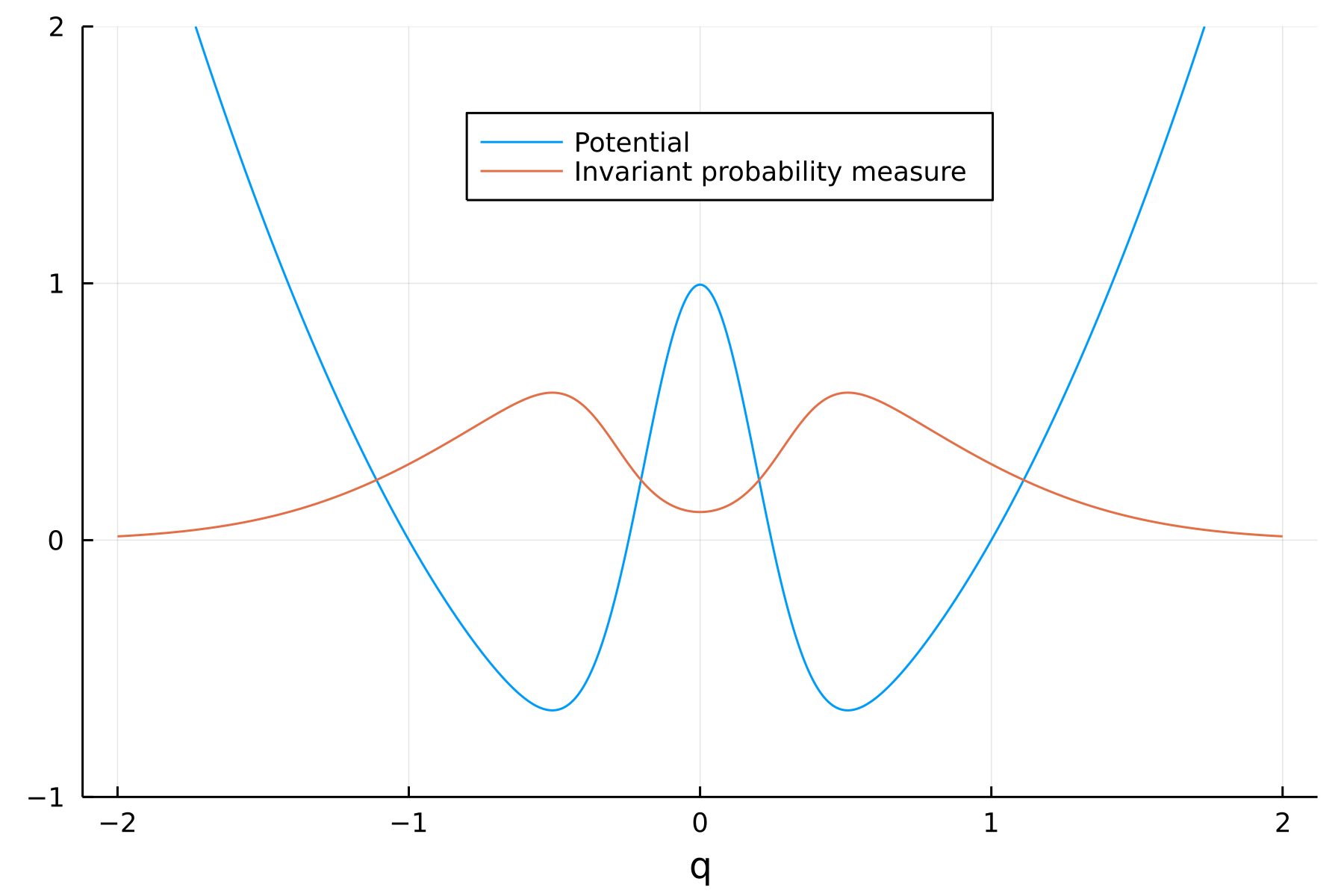}
  \caption[Confining one dimensional double well potential and its associated invariant measure.]{Confining double well potential with~$\sigma=0.2$,~$h=1$ and the associated invariant measure proportional to~$\rme^{-V}$.}
  \label{fig:double_well_potential}
\end{figure}

In our examples, not checking~$S$-reversibility means that we use Algorithm~\ref{alg:ghmc_scheme_rev_check} with~$\psi_{\Delta t}^{\rev}$ replaced by
\begin{equation*}
  \forall x\in\calX,\qquad
  \psi_{\Delta t}^{\fwd}(x)=\psi_{\Delta t}(x)\mathbb{1}_{x\in\calA_{\Delta t}}+x\mathbb{1}_{x\notin\calA_{\Delta t}}.
\end{equation*}
In other words, only forward convergence is checked.

\paragraph{Observing the bias with histograms.} For~$\Delta t$ small enough, checking for~$S$-reversibility is not needed as each implicit problem is well posed: the solution computed by the algorithm satisfies~$S$-reversibility, see Figure~\ref{fig:hist_3} (this is consistent with the results from Section~\ref{subsec:numerical_solvers_fixed_point}). When the time step increases, convergence problems for Newton's method appear and hence~$S$-reversibility issues arise, see Figures~\ref{fig:hist_5} and~\ref{fig:hist_7}. All these figures represent the stationary distributions of the Markov chains: we have indeed checked that the obtained histograms do not vary for a larger number of iterations. The fact that the histograms are even (as expected from the symmetry of the potential) is also a good indication that the stationary state has been reached since the initial condition is not symmetric. These results illustrate the fact that~$S$-reversibility checks are needed to create a Markov chain which yields an unbiased sampling of the Gibbs measure~$\pi$.

\begin{figure}
  \centering
  \includegraphics[scale=0.14]{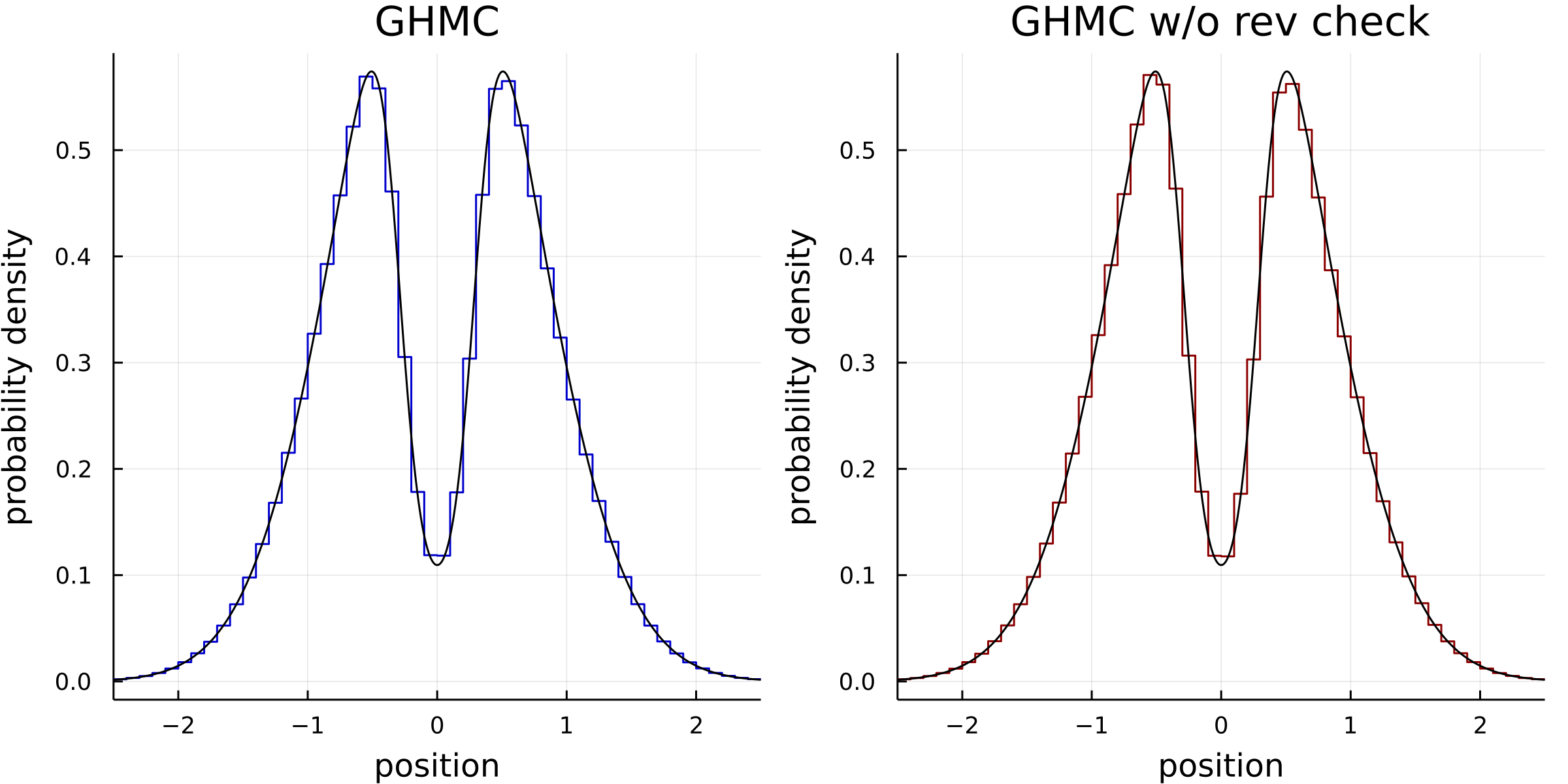}
  \caption{Sampling with a time step~$\Delta t=0.15$. The histogram on the left is obtained thanks to Algorithm~\ref{alg:ghmc_scheme_rev_check}, while the one on the right is obtained without checking~$S$-reversibility. There is no significant difference between the two choices.}
  \label{fig:hist_3}
\end{figure}

\begin{figure}
  \centering
  \includegraphics[scale=0.14]{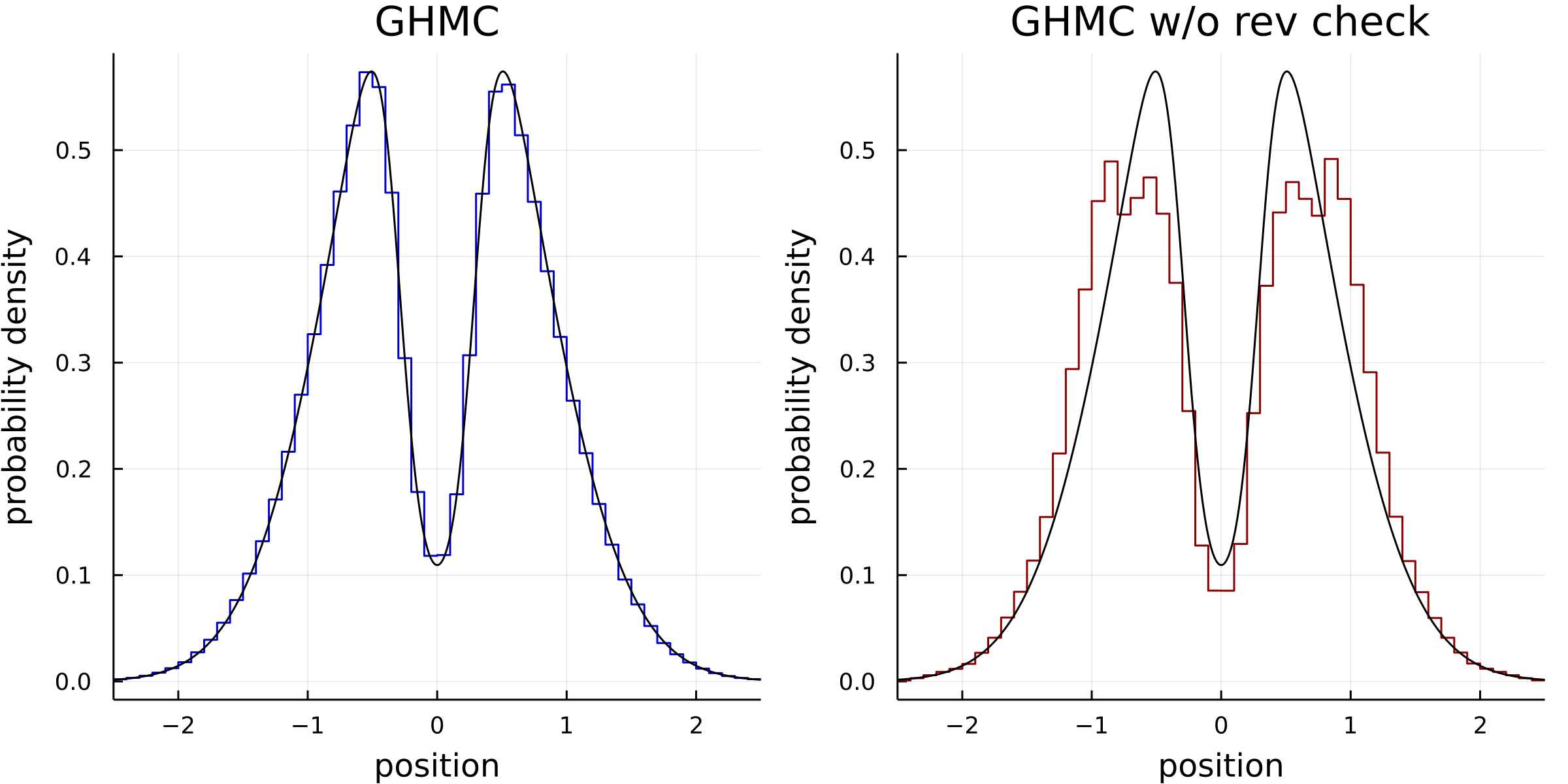}
  \caption{Same setting as Figure~\ref{fig:hist_3}, with a time step~$\Delta t=0.69$. Significant bias can be observed when~$S$-reversibility is not checked.}
  \label{fig:hist_5}
\end{figure}

\begin{figure}
  \centering
  \includegraphics[scale=0.14]{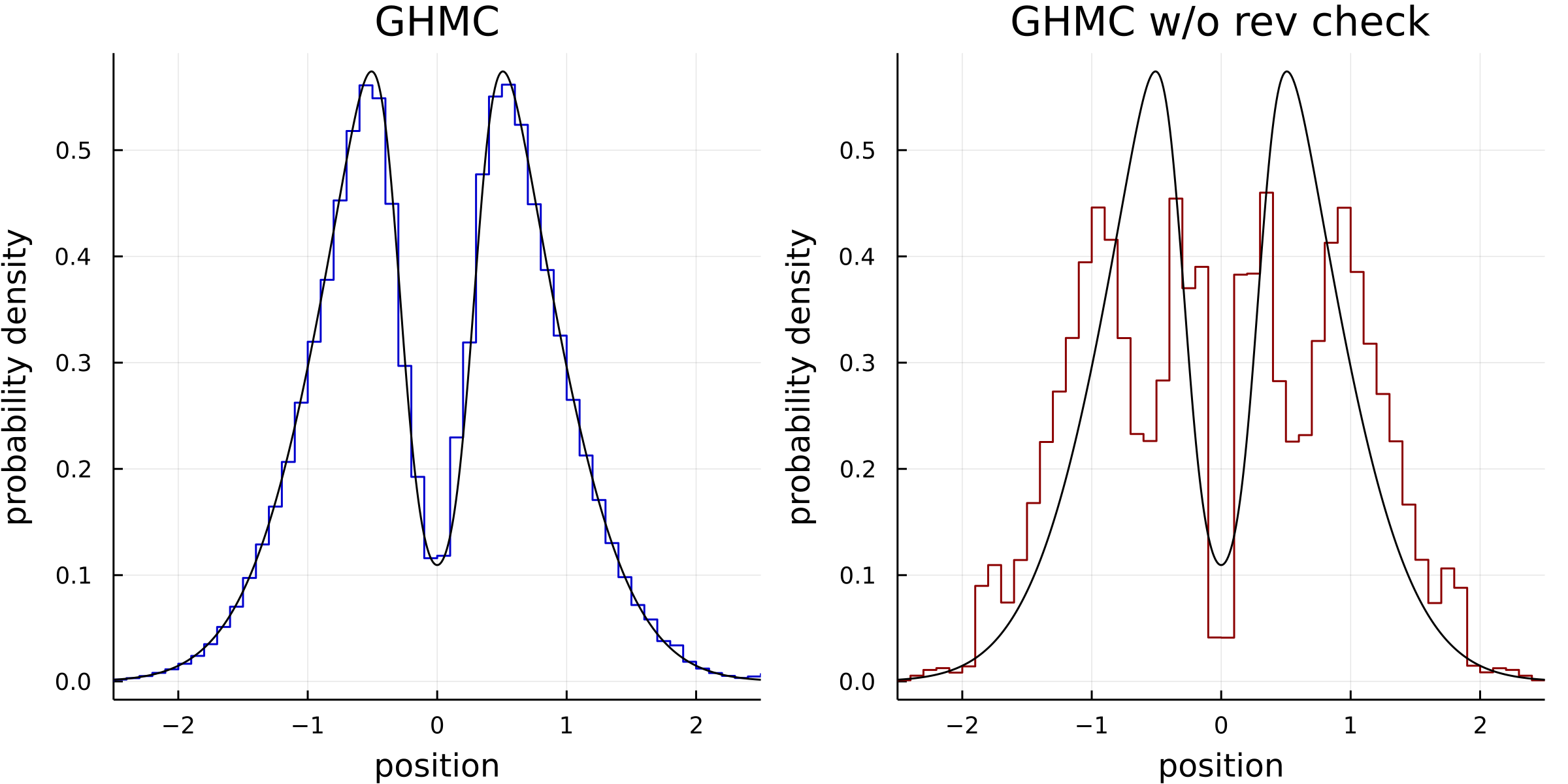}
  \caption{Same setting as Figure~\ref{fig:hist_3}, with a time step~$\Delta t=1.08$. Significant bias can be observed when~$S$-reversibility is not checked.}
  \label{fig:hist_7}
\end{figure}

\paragraph{Rejection probabilities and weak order consistency.} We present in Figure~\ref{fig:rejection_probabilities} the rejection probabilities as a function of the time step~$\Delta t$. We also give the values for the time steps associated with the histograms of Figures~\ref{fig:hist_3} to~\ref{fig:hist_7} in Table~\ref{tab:rejection_probabilities}. For small time steps, there is no rejection due to~$S$-reversibility checks, and the global rejection probability is only composed of the rejection due to the Metropolis--Hastings acceptance/rejection step. The global rejection probability therefore scales as~$\rmO(\Delta t^{3})$ for~$\Delta t$ small, and we observe numerically that the rejection probability due to the~$S$-reversibility is very small and that~$r_3\geqslant 3$ in any case. This is consistent with~\eqref{eq:rmhmc_rejection_probability}, and suggest that the HMC algorithm (Algorithm~\ref{alg:hmc_scheme_rev_check}) using the Hamiltonian function~\eqref{eq:H_RMHMC}, \emph{i.e.}~the RMHMC algorithm, yields a weakly consistent discretization of order~1 of the overdamped Langevin dynamics with a position-dependent diffusion coefficient~\eqref{eq:overdamped_langevin_diffusion}, following the result of Proposition~\ref{prop:one_step_hmc_weakly_consistent_overdamped_langevin}.

For large time steps, the rejection due to~$S$-reversibility checks become prominent. In particular, forward convergence and the~$S$-reversibility condition~\eqref{eq:s_rev_solver} are difficult to satisfy. It is interesting to note that, when there is convergence of the implicit forward problem, the implicit backward problem very often converges as well, but not necessarily to the solution compatible with~$S$-reversibility. This highlights the need to implement~$S$-reversibility checks to perform unbiased sampling.

\begin{table}
  \centering
  \begin{tabular}{c|ccccc}
    \toprule
    Time step & Forward & Backward & $S$-reversibility & Metropolis--Hastings & Global\\\midrule
    $0.15$ & $4.8\times10^{-1}$ & $5.1\times10^{-4}$ & $1.3\times10^{-3}$ & $2.6$ & $3.1$\\\midrule
    $0.69$ & $27$ & $0.5$ & $23.9$ & $13$ & $64$ \\\midrule
    $1.08$ & $34$ & $1.2$ & $44$ & $6.8$ & $86$\\
    \bottomrule
  \end{tabular}
  \caption{Decomposition in percentages of the rejection probabilities for various time steps~$\Delta t$. ``Forward'' refers to non-convergent iterations to solve the implicit forward problem; ``Backward'' refers to a convergence of the implicit forward problem but a non-convergent implicit backward problem; ``S-reversibility'' refers to convergent implicit forward and backward problems for which~\eqref{eq:s_rev_solver} is not satisfied; upon acceptance in the three previous steps, ``Metropolis--Hastings'' refers to a rejection in the acceptance/rejection step~\ref{step:ghmc_3}. Finally, ``Global'' is the global rejection probability, namely the sum of all the previous columns.}
  \label{tab:rejection_probabilities}
\end{table}

\begin{figure}
  \centering
  \includegraphics[scale=0.5]{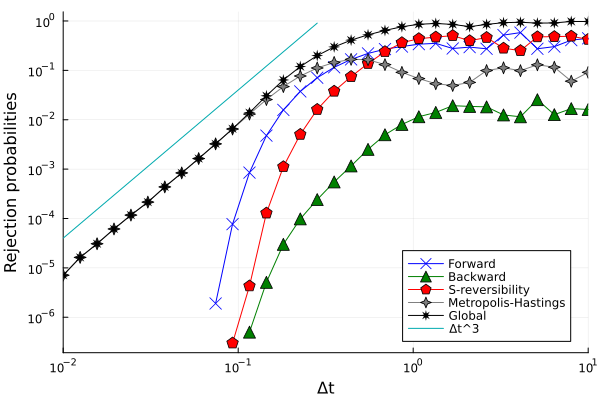}
  \caption{Rejection probabilities as a function of the time step~$\Delta t\in[10^{-2},10]$. We refer to the caption of Table~\ref{tab:rejection_probabilities} for the meanings of the various labels.}
  \label{fig:rejection_probabilities} 
\end{figure}
%---------------------------------------------------------
%---------------------------------------------------------
%---------------- TWO DIM --------------------------------
%---------------------------------------------------------
%---------------------------------------------------------
\subsection{Anisotropic two-dimensional potential}
\label{subsec:anisotropic_two_dimensional_potential}

\begin{figure}
  \centering
  \includegraphics[scale=0.65]{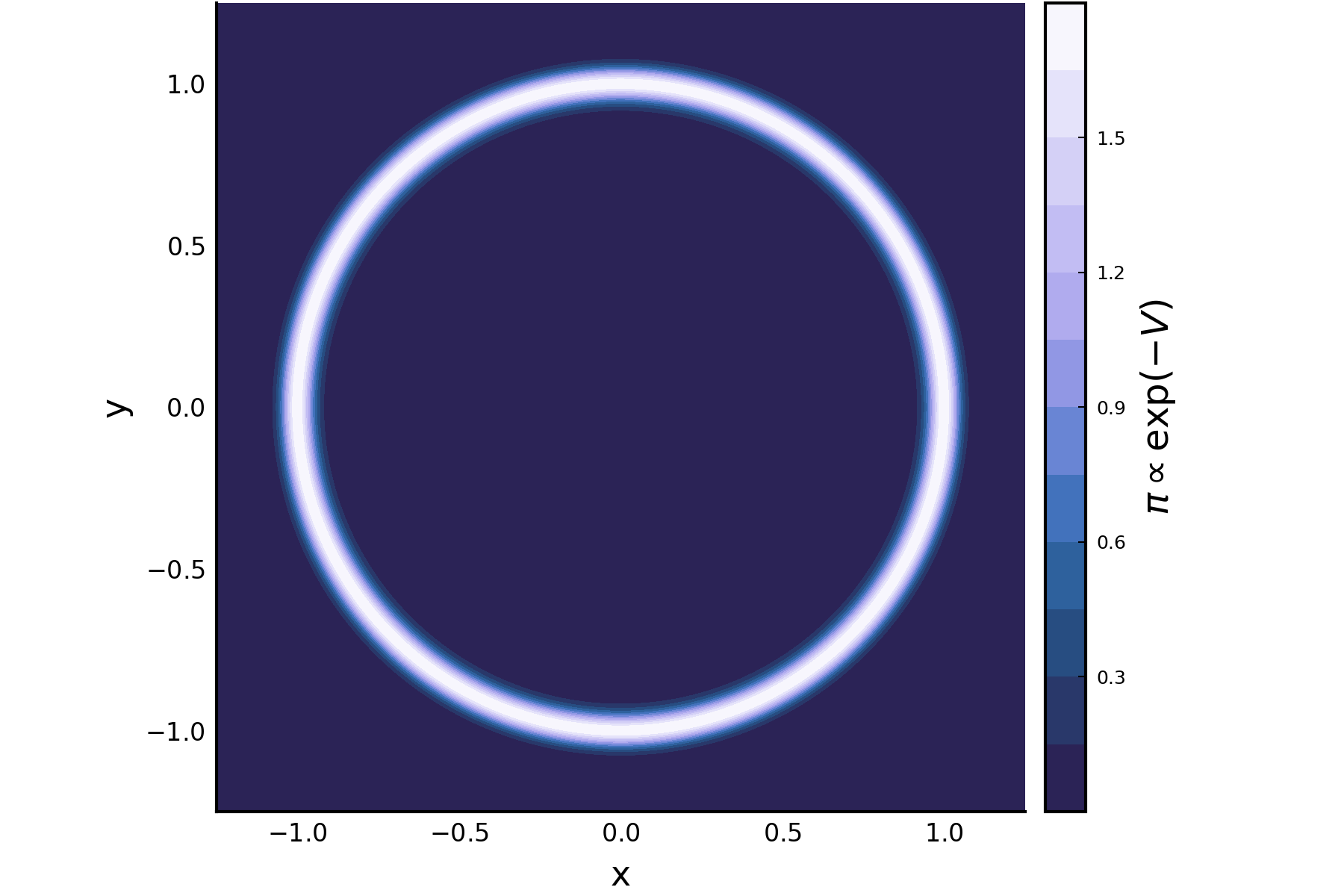}
  \caption{Invariant probability measure with density proportional to~$\rme^{-V}$ for the anisotropic two-dimensional potential~$V(x,y)=100(x^{2}+y^{2}-1)^{2}$.}
  \label{fig:circle_potential}
\end{figure}

We demonstrate on a representative example that using a non-constant diffusion coefficient helps to reach equilibrium faster. In particular, using the Hamiltonian function defined in~\eqref{eq:H_RMHMC}, we compare the use of a constant diffusion (which corresponds to the usual HMC scheme) to a diffusion coefficient which takes into account the anisotropy of the potential energy landscape (using the RMHMC algorithm). In the first case, GSV is an explicit scheme as the Hamiltonian function is separable, so that standard HMC results apply and unbiased numerical sampling is performed. In the second case, implicit schemes, hence~$S$-reversibility checks, have to be used in order to guarantee unbiasedness.

We consider a target probability measure~$\pi$ with density proportional to~$\rme^{-V(q)}$ with~$q=(x,y)\in\bbR^2$ and~$V(x,y)=100(x^{2}+y^{2}-1)^{2}$, whose support is essentially an annulus in~$\bbR^{2}$, see Figure~\ref{fig:circle_potential}. The width of the annulus is very small. This suggests that, when using an isotropic diffusion coefficient~$D(x,y)=\rmI_2$, the Metropolis--Hastings procedure will likely reject the proposal as it typically ends up in high energy regions. Instead, one can choose a diffusion coefficient which takes into account the geometry of the potential energy landscape so that the proposal is more likely to be close to the annulus. To that aim, we propose the following diffusion coefficient:
\begin{equation}
  \label{eq:rmhmc_anisotropic_diffusion_coefficient}
  D^{\aniso}(x,y)=\varepsilon\rmI_2+tt^{\sfT},
\end{equation}
where 
\begin{equation*}
  t=\frac{1}{\sqrt{x^{2}+y^{2}}}\begin{pmatrix}
    -y \\ x
  \end{pmatrix},
\end{equation*}
and~$\varepsilon>0$ is a small parameter ensuring that the diffusion coefficient is always positive definite. Looking at the overdamped Langevin dynamics~\eqref{eq:overdamped_langevin_diffusion}, the diffusion coefficient defined in~\eqref{eq:rmhmc_anisotropic_diffusion_coefficient} helps to project the noise onto the tangent plane to the circle of center 0 and radius 1. In the extended phase-space setting of (G)HMC, this is equivalent to drawing momenta which are essentially tangent to the circle.

We compare the speeds of convergence towards equilibrium of two Markov chains: one with the diffusion coefficient as defined in~\eqref{eq:rmhmc_anisotropic_diffusion_coefficient}, and one with the diffusion coefficient
\begin{equation}
  \label{eq:rmhmc_isotropic_diffusion_coefficient}
  D^{\iso}(q)=(1+\varepsilon)\rmI_2.  
\end{equation}
Both diffusion coefficients are normalized in order to have the same spectral radius~$1+\varepsilon$. To analyze the results, we consider the polar coordinates~$(x,y)=(r\cos\theta,r\sin\theta)$. As the potential exhibits a spherical symmetry, the random variable~$\theta$ should be uniformly distributed on~$[0,2\pi)$. In order to measure convergence, we thus compute the total variation distance between the empirical distribution on~$\theta$ and the uniform distribution on~$[0,2\pi)$ after a fixed number of iterations of the dynamics, and average over a given number of simulations, each time starting the dynamics at the point $q=(0,1)$. A small total variation indicates that the dynamics explores faster the annulus. In practice, the total variation is approximated by discretizing the interval~$[0,2\pi)$ into~$N_{\bins}$ bins of equal sizes, and comparing the probabilities of each bin. More precisely, for~$1\leqslant i\leqslant N_{\bins}$, we denote by~$B_i=[(i-1)/N_{\bins},i/N_{\bins})$ the~$i$-th bin. The total variation distance between two probability measures~$\mu,\nu$, namely~$\TV(\mu,\nu)=\frac{1}{2}\int\rmd\left\lvert\mu-\nu\right\rvert$, is numerically approximated as
\begin{equation*}
  \TV_{N_{\bins}}(\mu,\nu)=\frac{1}{2}\frac{2\pi}{N_{\bins}}\sum_{i=1}^{N_{\bins}}\left\lvert\widehat{\mu}_{i}-\widehat{\nu}_{i}\right\rvert,
\end{equation*}
where~$\widehat{\mu}_i,\widehat{\nu}_i\geqslant0$ are proportional to the fraction of points in the~$i$-th bin, and normalized as
\begin{equation*}
  \frac{2\pi}{N_{\bins}}\sum_{i=1}^{N_{\bins}}\widehat{\mu}_i=\frac{2\pi}{N_{\bins}}\sum_{i=1}^{N_{\bins}}\widehat{\nu}_i=1.
\end{equation*}
In particular, for the uniform law on~$[0,2\pi)$, it holds~$\widehat{\mu}_1=\dots=\widehat{\mu}_{N_\bins}=1/(2\pi)$. Note that the total variation distance is bounded by 1 with our convention.

We run the simulations for 32 values of~$\Delta t$, evenly spread log-wise on~$[10^{-3},2]$, the number of iterations being fixed to~$N_{\iter}=10^{6}$ for each value of~$\Delta t$. Each simulation is performed~$N_{\real}=10^{3}$ times. The approximation of the total variation distances relies on~$N_{\bins}=100$. The other parameters are the same as in Section~\ref{subsec:one_dimensional_confining_double_well_potential}. The results are presented in Figure~\ref{fig:circle_results}, where we plot the mean total variation distance for each time step. The standard deviation, quantified using the variance estimated over the independent~$N_{\real}$ realizations, is not shown in the plot as the error bars are smaller than the symbol sizes.

\begin{figure}
  \centering
  \includegraphics[scale=0.7]{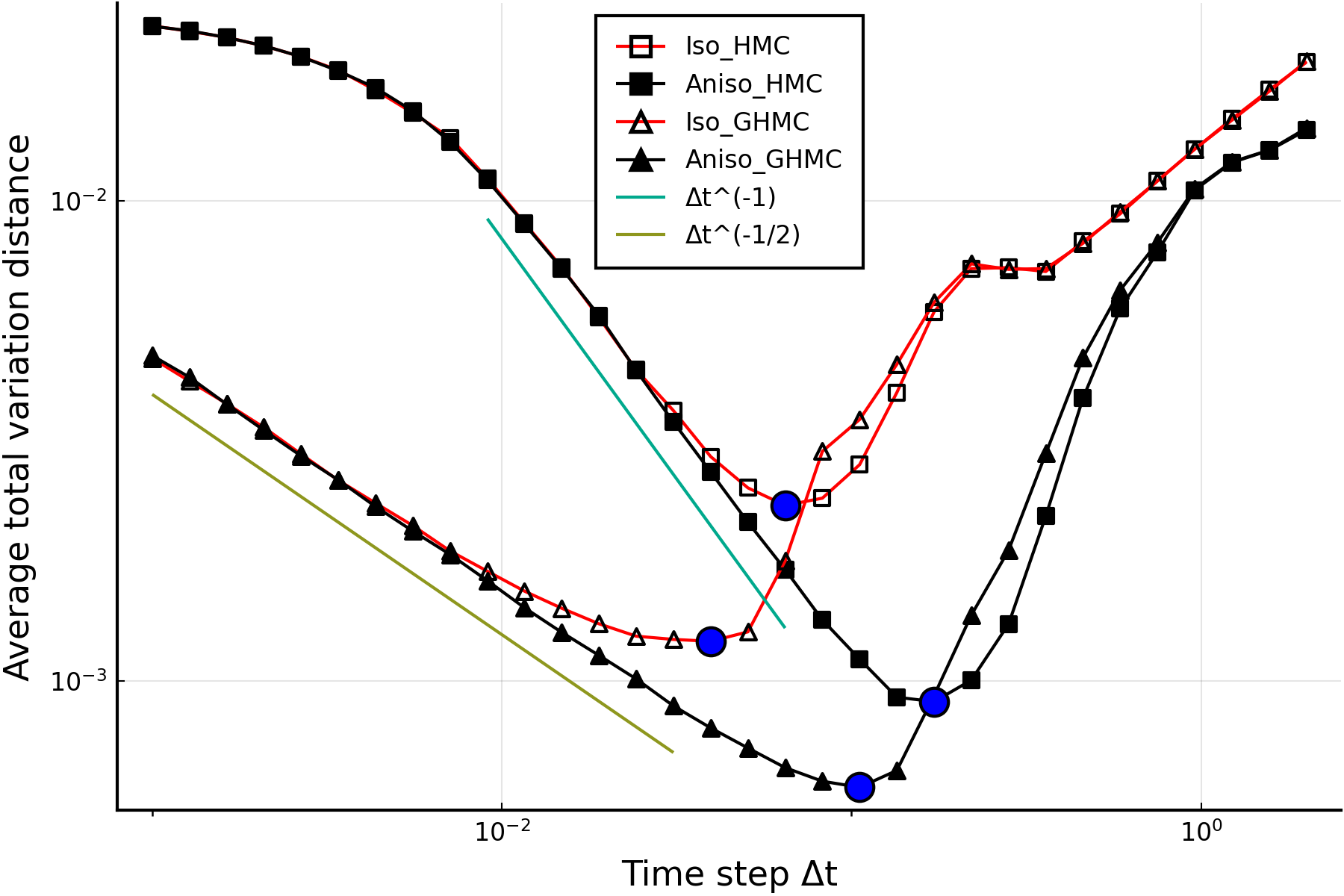}
  \caption{Comparison between the isotropic and anisotropic diffusion coefficients in log-log scale, using either Algorithms~\ref{alg:hmc_scheme_rev_check} (HMC) or~\ref{alg:ghmc_scheme_rev_check} (GHMC). ``Aniso'' corresponds to the diffusion coefficient defined by~\eqref{eq:rmhmc_anisotropic_diffusion_coefficient}, ``Iso'' to the one defined by~\eqref{eq:rmhmc_isotropic_diffusion_coefficient}. The red circles correspond to the minima of each curve, \emph{i.e.}~the point (optimal time step, minimum average TV distance). Minimum value/$\Delta t$ for Iso\_HMC:~$9.17\times 10^{-2}$,~$\Delta t = 6.46\times 10^{-2}$, Aniso\_HMC:~$3.58\times 10^{-2}$,~$\Delta t = 1.72\times 10^{-1}$, for Iso\_GHMC:~$4.78\times 10^{-2}$,~$\Delta t = 3.96\times 10^{-2}$, for Aniso\_GHMC:~$2.37\times10^{-2}$,~$\Delta t = 1.05\times 10^{-1}$.}
  \label{fig:circle_results}
\end{figure}

The results show that the optimal time step, \emph{i.e.}~the one which minimizes in average the total variation distance, yields a lower error of the empirical angle distribution with respect to the uniform distribution on~$[0,2\pi)$ when the anisotropic diffusion coefficient~\eqref{eq:rmhmc_anisotropic_diffusion_coefficient} is chosen. The error made is 2 times smaller in the HMC case and 2.5 times smaller in the GHMC case. Moreover, for a given algorithm, the optimal time step using the anisotropic diffusion coefficient is larger than the one with the isotropic diffusion coefficient. This is in agreement with the fact that larger moves are being accepted as the noise is projected back onto the circle.

Note also that the optimal minimum error corresponding to the GHMC method is lower than the one corresponding to HMC. Hence, the additional computation cost arising from integrating the Ornstein--Uhlenbeck process (namely steps~\ref{step:ghmc_1} and~\ref{step:ghmc_5}) is compensated by a faster exploration of the configuration space.

For small time steps, the total variation distance scales as~$\Delta t^{-1}$ for the HMC algorithm and as~$\Delta t^{-1/2}$ for the GHMC algorithm. This is consistent with the facts that:
\begin{enumerate}[label=(\roman*)]
  \item at the continuous level, the error is of order~$1/\sqrt{t_{\rmsim}}$ where~$t_{\rmsim}$ is the simulation time (by a standard central limit theorem for continuous time Markov processes);
  \item GHMC is formally a weakly consistent discretization of the Langevin dynamics~\eqref{eq:langevin_dynamics} (see Section~\ref{subsubsec:consistency_GHMC_Langevin_dynamics}), so that in this case~$t_{\rmsim}=N_{\iter}\Delta t$;
  \item HMC is a weakly consistent discretization of order 1 of the overdamped Langevin dynamics with a position-dependent diffusion coefficient~\eqref{eq:overdamped_langevin_diffusion} and effective time step~$h=\Delta t^{2}/2$ (see Proposition~\ref{prop:one_step_hmc_weakly_consistent_overdamped_langevin}), so that in this case~$t_{\rmsim}=N_{\iter}h$.
\end{enumerate}  
In this small time step regime, we also observe that there is no difference between using the isotropic or the anisotropic diffusion coefficient: the simulation time is not long enough to see the benefits from using an anisotropic diffusion coefficient. In fact, we see that the total variation is very close to 1 for the HMC algorithm, hence a saturation effect emerges: the average error simply converges to the total variation distance between the initial condition and the target probability measure. This saturation is less pronounced in GHMC thanks to the momentum variable, which allows for an initial ballistic exploration.

For large time steps, we observe that there is no difference between HMC and GHMC. Indeed, the proposal obtained by integrating (or attempting to integrate) the Hamiltonian dynamics is almost always rejected. Eventually, the algorithm only resamples the momenta, either by direct sampling in the HMC case (see step~\ref{step:hmc_1}) or by partial resampling in the GHMC case (see steps~\ref{step:ghmc_1}--\ref{step:ghmc_5}).

%---------------------------------------------------------
%---------------------------------------------------------
%---------------------------------------------------------
%---------------------------------------------------------
%---------------------------------------------------------
%---------------- PROOFS ---------------------------------
%---------------------------------------------------------
%---------------------------------------------------------
%---------------------------------------------------------
%---------------------------------------------------------
%---------------------------------------------------------
\section{Proof of various technical results}
\label{sec:proofs}
We provide in this section the proof of various technical results stated in Sections~\ref{subsec:regularity_numerical_solver_local_preservation_lebesgue_measure},~\ref{subsec:enforcing_S_reversibility},~\ref{subsec:RMHMC_overdamped_Langevin},~\ref{subsec:numerical_solvers_fixed_point} and~\ref{subsec:numerical_solver_newton_method_ift}.

%---------------------------------------------------------
%---------------------------------------------------------
%---------------- IMR/GSV SYMPLECTIC ---------------------
%---------------------------------------------------------
%---------------------------------------------------------
\subsection{Proof of Proposition~\ref{prop:numerical_flow_symplectic}}
\label{subsec:proof_symplectic}

\paragraph{The IMR numerical scheme.}
Using the definition of the IMR numerical scheme~\eqref{eq:Phi_IMR}, we readily compute
\begin{equation*}
  \forall (x,x_1)\in\calX^{2},\qquad 
  \nabla_{x}\Phi_{\Delta t}^{\IMR}(x,x_1)=\rmI_d-A(x,x_1),\qquad
  \nabla_{y}\Phi_{\Delta t}^{\IMR}(x,x_1)=\rmI_d+A(x,x_1),
\end{equation*}
where~$A(x,x_1)=\frac{\Delta t}{2}J\nabla^{2}H(x,x_1)$ with~$J$ the matrix defined in~\eqref{eq:hamiltonian_dynamics} and
\begin{equation*}
  \nabla^{2}H(x,x_1)=\begin{pmatrix}
    \nabla_q^{2}H\left(\frac{x+x_1}{2}\right) & \nabla_{p,q}^{2}H\left(\frac{x+x_1}{2}\right)\\[0.2cm]
    \nabla_{q,p}^{2}H\left(\frac{x+x_1}{2}\right) & \nabla_p^{2}H\left(\frac{x+x_1}{2}\right)
  \end{pmatrix}.
\end{equation*}
Note that~$\nabla^{2}H(x,x_1)$ is symmetric since~$H$ is~$\calC^{2}$. Let~$x\in\calA_{\Delta t}$. Since~$\nabla_{y}\Phi_{\Delta t}^{\IMR}\left(x,\varphi_{\Delta t}^{\IMR}(x)\right)$ is invertible, the matrix~$\rmI_d+A\left(x,\varphi_{\Delta t}^{\IMR}(x)\right)$ is invertible. In the remainder of the proof, we drop the arguments for readability: the argument of the matrix~$A$ is fixed to~$(x,\varphi_{\Delta t}^{\IMR}(x))$. Using the fact that~$J^{2}=-\rmI_d$,~$J^{-1}=J^{\sfT}=-J$,~$JA^{\sfT}=-AJ$ and~$A^{\sfT}J=-JA$, it holds~$J(\rmI_d-A)=(\rmI_d+A^{\sfT})J$. Thus, the matrix~$\rmI_d-A^{\sfT}$ is invertible and
\begin{equation*}
  (\rmI_d-A)^{-\sfT}J=J(\rmI_d+A)^{-1}.
\end{equation*}
By Lemma~\ref{lem:numerical_solver_C1}, it therefore holds
\begin{align*}
  \nabla\varphi_{\Delta}(x)^{\sfT}J\nabla\varphi_{\Delta t}(x)
  &=
  (\rmI_d+A)^{\sfT}(\rmI_d-A)^{-\sfT}J(\rmI_d-A)^{-1}(\rmI_d+A)\\
  &=
  (\rmI_d+A^{\sfT})J(\rmI_d+A)^{-1}(\rmI_d-A)^{-1}(\rmI_d+A)\\
  &=
  J(\rmI_d-A)(\rmI_d+A)^{-1}(\rmI_d-A)^{-1}(\rmI_d+A)=J,
\end{align*}
where we used the fact that the matrices~$(\rmI_d-A)^{-1}$ and~$(\rmI_d+A)^{-1}$ commute for the last equality. This shows that the numerical flow~$\varphi_{\Delta t}^{\IMR}=\chi_{\Delta t}^{\IMR}$ is symplectic on its domain.

\paragraph{The GSV numerical scheme.} Since the composition of symplectic methods is symplectic, and the GSV numerical scheme can be seen as the composition of the Euler~B and Euler~A symplectic methods, it should not be surprising that it is also symplectic. However, the proof still requires some care as we defined numerical flows on subsets of the whole configuration space and constructed the GSV numerical scheme by stacking the two Euler methods into a single vector (and not composing them). Therefore, one has to check that a numerical solver for the GSV numerical scheme allows us to define numerical flows for the Euler~B and Euler~A schemes, and that we still recover a symplectic numerical flow with our construction.

Using the definition of the GSV numerical scheme~\eqref{eq:Phi_GSV}, we readily compute (recall that differentiating with respect to~$y$ corresponds to computing the gradient with respect to the last~$k$ coordinates):
\begin{equation}
  \label{eq:GSV_nabla_y}
  \forall (x,x_1,x_2)\in\calX^{3},\qquad
  \left\lbrace
  \begin{aligned}
    &\nabla_{x}\Phi_{\Delta t}^{\GSV}(x,x_1,x_2)=\begin{pmatrix}
      \nabla_x\Phi_{\Delta t/2}^{\EulerB}(x,x_1)\\
      0_d
    \end{pmatrix}\in\bbR^{2d\times d},\\
    &\nabla_{y}\Phi_{\Delta t}^{\GSV}(x,x_1,x_2)=\begin{pmatrix}
      \nabla_{y}\Phi_{\Delta t/2}^{\EulerB}(x,x_1) & 0_{d}\\
      \nabla_{x}\Phi_{\Delta t/2}^{\EulerA}(x_1,x_2) & \nabla_{y}\Phi_{\Delta t/2}^{\EulerA}(x_1,x_2)
    \end{pmatrix}\in\bbR^{2d\times 2d}.
  \end{aligned}
  \right.
\end{equation}
Let~$\chi_{\Delta t}^{\GSV}\colon\calA_{\Delta t}^{\GSV}\to\calX^{2}$ be a numerical solver for the GSV numerical scheme. For~$x\in\calA_{\Delta t}^{\GSV}$, if~$\chi_{\Delta t}(x)=(x_1,x_2)$, we define~$\varphi_{\Delta t/2}^{\EulerB}(x)=x_1$ and~$\varphi_{\Delta t/2}^{\EulerA}(x_1)=x_2$. The map~$\varphi_{\Delta t/2}^{\EulerB}\colon\calA_{\Delta t}^{\GSV}\to\calX$ (respectively the map~$\varphi_{\Delta t/2}^{\EulerA}\colon\varphi_{\Delta t/2}^{\EulerB}\left(\calA_{\Delta t}^{\GSV}\right)\to\calX$) defines a numerical flow for the numerical scheme~$\Phi_{\Delta t/2}^{\EulerB}$ (respectively for the numerical scheme~$\Phi_{\Delta t/2}^{\EulerA}$). Indeed, if~$x\in\calA_{\Delta t}^{\GSV}$, then it holds
\begin{equation*}
  \Phi_{\Delta t}^{\GSV}\left(x,\varphi_{\Delta t/2}^{\EulerB}(x),\varphi_{\Delta t/2}^{\EulerA}\circ\varphi_{\Delta t/2}^{\EulerB}(x)\right)=0,  
\end{equation*}
which implies that~$\Phi_{\Delta t/2}^{\EulerB}\left(x,\varphi_{\Delta t/2}^{\EulerB}(x)\right)=0$ and~$\Phi_{\Delta t/2}^{\EulerA}\left(\varphi_{\Delta t/2}^{\EulerB}(x),\varphi_{\Delta t/2}^{\EulerA}\circ\varphi_{\Delta t/2}^{\EulerB}(x)\right)=0$. Moreover, the second equality in~\eqref{eq:GSV_nabla_y} shows that the matrix~$\nabla_{y}\Phi_{\Delta t}^{\GSV}\left(x,\chi_{\Delta t}^{\GSV}(x)\right)$ is invertible if and only if the matrices~$\nabla_{y}\Phi_{\Delta t/2}^{\EulerB}\left(x,\varphi_{\Delta t/2}^{\EulerB}(x)\right)$ and~$\nabla_{y}\Phi_{\Delta t/2}^{\EulerA}\left(\varphi_{\Delta t/2}^{\EulerB}(x),\varphi_{\Delta t/2}^{\EulerA}\circ\varphi_{\Delta t/2}^{\EulerB}(x)\right)$ are invertible. Finally, notice that~$\varphi_{\Delta t/2}^{\EulerB}\left(\calA_{\Delta t}^{\GSV}\right)$ is open and nonempty as a simple consequence of the implicit function theorem.

We now compute the gradient of~$\chi_{\Delta t}^{\GSV}$ using Lemma~\ref{lem:numerical_solver_C1}. We drop the use of arguments of functions for readability: we fix~$x\in\calA_{\Delta t}^{\GSV}$, so that~$\left(x,\varphi_{\Delta t/2}^{\EulerB}(x),\varphi_{\Delta t/2}^{\EulerA}(x)\right)$ is the argument of~$\Phi_{\Delta t}^{\GSV}$,~$\left(x,\varphi_{\Delta t/2}^{\EulerB}(x)\right)$ of~$\Phi_{\Delta t/2}^{\EulerB}$, $\left(\varphi_{\Delta t/2}^{\EulerB}(x),\varphi_{\Delta t/2}^{\EulerA}\circ\varphi_{\Delta t/2}^{\EulerB}(x)\right)$ of~$\Phi_{\Delta t/2}^{\EulerA}$, and~$x$ of~$\chi_{\Delta t}^{\GSV}$,~$\varphi_{\Delta t/2}^{\EulerB}$ and~$\varphi_{\Delta t/2}^{\EulerA}$.

First, notice that with straightforward computations, it holds
\begin{equation}
  \label{eq:GSV_nabla_y_inverse}
  \left(
    \nabla_{y}\Phi_{\Delta t}^{\GSV}
  \right)^{-1}=
  \begin{pmatrix}
    \left(
      \nabla_y\Phi_{\Delta t/2}^{\EulerB}
    \right)^{-1}&0_d\\
    0_d & \left(
      \nabla_{y}\Phi_{\Delta t/2}^{\EulerA}
    \right)^{-1}
  \end{pmatrix}
  \begin{pmatrix}
    \rmI_d & 0_d\\
    -\nabla_{x}\Phi_{\Delta t/2}^{\EulerA}\left(
      \nabla_{y}\Phi_{\Delta t/2}^{\EulerB}
    \right)^{-1} & \rmI_d
  \end{pmatrix}.
\end{equation}
Then multiplying~\eqref{eq:GSV_nabla_y_inverse} with the first equation in~\eqref{eq:GSV_nabla_y}, one obtains
\begin{align*}
  \nabla\chi_{\Delta t}^{\GSV}
  &=
  -
  \begin{pmatrix}
    \left(
      \nabla_y\Phi_{\Delta t/2}^{\EulerB}
    \right)^{-1}&0_d\\
    0_d & \left(
      \nabla_{y}\Phi_{\Delta t/2}^{\EulerA}
    \right)^{-1}
  \end{pmatrix}
  \begin{pmatrix}
    \nabla_{x}\Phi_{\Delta t/2}^{\EulerB}\\
    -\nabla_{x}\Phi_{\Delta t/2}^{\EulerA}\left(\nabla_{y}\Phi_{\Delta t/2}^{\EulerB}\right)^{-1}\nabla_x\Phi_{\Delta t/2}^{\EulerB}
  \end{pmatrix}\\
  &=
  -\begin{pmatrix}
    \left(\nabla_y\Phi_{\Delta t/2}^{\EulerB}\right)^{-1}\nabla_x\Phi_{\Delta t/2}^{\EulerB}\\
    -\left(\nabla_y\Phi_{\Delta t/2}^{\EulerA}\right)^{-1}\nabla_x\Phi_{\Delta t/2}^{\EulerA}\left(\nabla_y\Phi_{\Delta t/2}^{\EulerB}\right)^{-1}\nabla_x\Phi_{\Delta t/2}^{\EulerB}
  \end{pmatrix}\\
  &=
  \begin{pmatrix}
    \nabla\varphi_{\Delta t/2}^{\EulerB}\\
    \nabla\varphi_{\Delta t/2}^{\EulerA}\nabla\varphi_{\Delta t/2}^{\EulerB}
  \end{pmatrix},
\end{align*}
where we used Lemma~\ref{lem:numerical_solver_C1} for the last equality. In particular,
\begin{equation*}
  %\label{eq:GSV_composition_EulerB_EulerA}
  \nabla\varphi_{\Delta t}^{\GSV}=\nabla\varphi_{\Delta t/2}^{\EulerA}\nabla\varphi_{\Delta t/2}^{\EulerB}.
\end{equation*}
Since the two numerical flows~$\varphi_{\Delta t/2}^{\EulerB}$ and~$\varphi_{\Delta t/2}^{\EulerA}$ are symplectic on their domains (using similar arguments as above and~\cite[Theorem~VI.3.3]{hairer_2006}), it holds
\begin{align*}
  \left(\nabla\varphi_{\Delta t}^{\GSV}\right)^{\sfT}J\nabla\varphi_{\Delta t}
  &=
  \left(\nabla\varphi_{\Delta t/2}^{\EulerB}\right)^{\sfT}\left(\nabla\varphi_{\Delta t/2}^{\EulerA}\right)^{\sfT}J\nabla\varphi_{\Delta t/2}^{\EulerA}\nabla\varphi_{\Delta t/2}^{\EulerB}\\
  &=
  \left(\nabla\varphi_{\Delta t/2}^{\EulerB}\right)^{\sfT}J\nabla\varphi_{\Delta t/2}^{\EulerB}=J.
\end{align*}
This shows that the numerical flow~$\varphi_{\Delta t}^{\GSV}$ is symplectic on its domain.

%---------------------------------------------------------
%---------------------------------------------------------
%---------------- B OPEN SET -----------------------------
%---------------------------------------------------------
%---------------------------------------------------------
\subsection{Proof of Lemma~\ref{lem:B_open_set}}
\label{subsec:proof:B_open_set}
This proof is an adaptation of the proof of~\cite[Lemma~2.3]{lelievre_2019}. As both~$\varphi_{\Delta t}$ and~$S$ are continuous and~$\calA_{\Delta t}$ is open,~$\calA_{\Delta t}\bigcap\psi_{\Delta t}^{-1}(\calA_{\Delta t})$ is an open set of~$\calX$. Let us prove~\eqref{eq:reversibility_open_set} by contradiction. Consider a non-empty path connected component~$C$ of~$\calA_{\Delta t}\bigcap\psi_{\Delta t}^{-1}(\calA_{\Delta t})$, an element~$X_{0}\in C$ such that
\begin{equation*}
  \chi_{\Delta t}\circ\psi_{\Delta t}
  (X_{0})=
  \Bigl(
    S\circ\chi_{\Delta t,k-1}(X_{0}),\dots,S\circ\chi_{\Delta t,1}(X_{0}),S(X_{0})
  \Bigr). 
\end{equation*}
and assume that there exists~$X_{1}\in C$ such that
\begin{equation}
  \label{eq:contradiction_x_1_B_open_set}
  \chi_{\Delta t}\circ\psi_{\Delta t}
  (X_{1})\neq
  \Bigl(
    S\circ\chi_{\Delta t,k-1}(X_{1}),\dots,S\circ\chi_{\Delta t,1}(X_{1}),S(X_{1})
  \Bigr).
\end{equation}
Let us connect the elements~$X_{0}$ and~$X_{1}$ in~$C$ by a continuous path~$[0,1]\ni\theta\mapsto X_{\theta}$. Define 
\begin{equation*}
    \theta_{\star}=\sup\left\lbrace
        \theta\geqslant0\,\middle|\,
        \begin{aligned}
          &\forall s\in[0,\theta],\\
          &\chi_{\Delta t}\circ\psi_{\Delta t}(X_{s})=
          \Bigl(
            S\circ\chi_{\Delta t,k-1}(X_{s}),\dots,S\circ\chi_{\Delta t,1}(X_{s}),S(X_{s})
          \Bigr)
        \end{aligned}
    \right\rbrace.
\end{equation*}
By continuity of~$\chi_{\Delta t}$,~$S$ and~$\theta\mapsto X_{\theta}$, the value~$\theta_{\star}$ is actually a maximum, and it holds~$\theta_{\star}<1$ by~\eqref{eq:contradiction_x_1_B_open_set}. By definition of the supremum, there exists a sequence~$(\theta_{n})_{n\geqslant0}$ with values in~$(\theta_{\star},1)$ such that~$\theta_{n}\to\theta_{\star}$ as~$n\to+\infty$ and 
\begin{equation*}
    \forall n\geqslant0,\qquad \chi_{\Delta t}\circ\psi_{\Delta t}(X_{\theta_n})\neq
    \Bigl(
      S\circ\chi_{\Delta t,k-1}(X_{\theta_n}),\dots,S\circ\chi_{\Delta t,1}(X_{\theta_n}),S(X_{\theta_n})
    \Bigr).
\end{equation*}
We can therefore define the~$dk$-dimensional unit vectors 
\begin{equation}
  \label{eq:tau_n}
    \tau_{n}=\frac{
      \chi_{\Delta t}\circ\psi_{\Delta t}(X_{\theta_n})-
      \Bigl(
      S\circ\chi_{\Delta t,k-1}(X_{\theta_n}),\dots,S\circ\chi_{\Delta t,1}(X_{\theta_n}),S(X_{\theta_n})
      \Bigr)
      }
      {\left\lVert \chi_{\Delta t}\circ\psi_{\Delta t}(X_{\theta_n})-
      \Bigl(
      S\circ\chi_{\Delta t,k-1}(X_{\theta_n}),\dots,S\circ\chi_{\Delta t,1}(X_{\theta_n}),S(X_{\theta_n})
      \Bigr)\right\rVert
      },
\end{equation}
since the denominator is positive. By compactness of the~$dk$-dimensional unit ball and upon extraction (keeping the same notation for the extracted subsequence), there exists a subsequence~$(\theta_n)_{n\geqslant 0}$ and a unit vector~$\tau_{\star}$ such that~$\tau_{n}\to\tau_{\star}$ as~$n\to+\infty$. In particular, it holds~$\left\lVert\tau_{\star}\right\rVert=1$.
We can then rewrite~\eqref{eq:tau_n} as
\begin{equation*}
  \chi_{\Delta t}\circ\psi_{\Delta t}(X_{\theta_n})=
  \Bigl(
    S\circ\chi_{\Delta t,k-1}(X_{\theta_n}),\dots,S\circ\chi_{\Delta t,1}(X_{\theta_n}),S(X_{\theta_n})
  \Bigr)+\varepsilon_n\tau_n,  
\end{equation*}
with~$\left\lVert\tau_n\right\rVert=1$ and
\begin{equation*}
  \varepsilon_n=
  \left\lVert\chi_{\Delta t}\circ\psi_{\Delta t}(X_{\theta_n})-
  \Bigl(
  S\circ\chi_{\Delta t,k-1}(X_{\theta_n}),\dots,S\circ\chi_{\Delta t,1}(X_{\theta_n}),S(X_{\theta_n})
  \Bigr)\right\rVert\xrightarrow[n\to+\infty]{}0
\end{equation*}
by continuity of the mapping~$\theta\mapsto X_{\theta}$ and the fact that the latter quantity vanishes at~$\theta_{\star}$. Using the fact that~$\psi_{\Delta t}(X_{\theta_n})\in\calA_{\Delta t}$ and~\eqref{eq:phi_chi_relation}, and then performing a first order Taylor expansion on~$\Phi_{\Delta t}$,
\begin{equation}
  \label{eq:DL_PHI}
  \begin{aligned}
    0&=\Phi_{\Delta t}\left(\psi_{\Delta t}(X_{\theta_n}),\chi_{\Delta t}\circ\psi_{\Delta t}(X_{\theta_n})\right),\\
    &=
    \Phi_{\Delta t}\left(\psi_{\Delta t}(X_{\theta_n}),
    \Bigl(
      S\circ\chi_{\Delta t,k-1}(X_{\theta_n}),\dots,S\circ\chi_{\Delta t,1}(X_{\theta_n}),S(X_{\theta_n})
    \Bigr)\right)\\
    &\quad+\varepsilon_n
    \nabla_{y}\Phi_{\Delta t}
    \left(\psi_{\Delta t}(X_{\theta_n}),
    \Bigl(
      S\circ\chi_{\Delta t,k-1}(X_{\theta_n}),\dots,S\circ\chi_{\Delta t,1}(X_{\theta_n}),S(X_{\theta_n})
    \Bigr)
    \right)
    \tau_n\\
    &\quad+\rmO\left(\varepsilon_n\right).
  \end{aligned}
\end{equation}
The first term on the right-hand side of~\eqref{eq:DL_PHI} vanishes. Indeed, from the~$S$-reversibility of~$\Phi_{\Delta t}$ and~\eqref{eq:phi_chi_relation},
\begin{align*}
  \MoveEqLeft[5]
  \Phi_{\Delta t}\left(\psi_{\Delta t}(X_{\theta_n}),
  \Bigl(
    S\circ\chi_{\Delta t,k-1}(X_{\theta_n}),\dots,S\circ\chi_{\Delta t,1}(X_{\theta_n}),S(X_{\theta_n})
  \Bigr)\right)\\
  &=
  \Phi_{\Delta t}\Bigl(
    X_{\theta_n},
    S\circ S\circ\chi_{\Delta t,1}(X_{\theta_n}),\dots,S\circ S\circ\chi_{\Delta t,k-1}(X_{\theta_n}),S\circ S\circ\varphi_{\Delta t}(X_{\theta_n})
  \Bigr)\\
  &=
  \Phi_{\Delta t}\Bigl(X_{\theta_n},\chi_{\Delta t}(X_{\theta_n})\Bigr)
  =
  0.
\end{align*}
Dividing~\eqref{eq:DL_PHI} by~$\varepsilon_n\neq0$, and taking the limit as~$n\to+\infty$, we obtain
\begin{equation}
  \label{eq:contradiction}
  \nabla_{y}\Phi_{\Delta t}
  \Bigl(\psi_{\Delta t}(X_{\theta_{\star}}), S\circ\chi_{\Delta t,k-1}(X_{\theta_\star}),\dots,S\circ\chi_{\Delta t,1}(X_{\theta_\star}),S(X_{\theta_\star})
  \Bigr)
  \tau_{\star}=0.
\end{equation}
Now, the matrix~$\nabla_{y}\Phi_{\Delta t}(\psi_{\Delta t}(X_{\theta_{\star}}), S\circ\chi_{\Delta t,k-1}(X_{\theta_\star}),\dots,S\circ\chi_{\Delta t,1}(X_{\theta_\star}),S(X_{\theta_\star}))$ is invertible as, from the definition of~$\theta_{\star}$,
\begin{align*}
  &\nabla_{y}\Phi_{\Delta t}(\psi_{\Delta t}(X_{\theta_{\star}}),
    S\circ\chi_{\Delta t,k-1}(X_{\theta_\star}),\dots,S\circ\chi_{\Delta t,1}(X_{\theta_\star}),S(X_{\theta_\star}))\phantom{\tau_{\star}=0.}\\
  &\qquad
  =\left(
    \nabla_{y}\Phi_{\Delta t}
    \right)\left(\psi_{\Delta t}(X_{\theta_\star}),\chi_{\Delta t}\circ\psi_{\Delta t}(X_{\theta_\star})\right),
\end{align*}
the latter matrix being invertible by the fact that~$\psi_{\Delta t}(X_{\theta_{\star}})\in\calA_{\Delta t}$ and the definition of~$\calA_{\Delta t}$. As~$\tau_{\star}$ is a nonzero vector ($\left\lVert\tau_{\star}\right\rVert=1$), Equation~\eqref{eq:contradiction} leads to a contradiction. This shows that there is no element~$X_{1}\in C$ such that
\begin{equation*}
  \chi_{\Delta t}\circ\psi_{\Delta t}(X_{1})\neq
  \Bigl(
    S\circ\chi_{\Delta t,k-1}(X_{1}),\dots,S\circ\chi_{\Delta t,1}(X_{1}),S(X_{1})
  \Bigr),
\end{equation*}
which concludes the proof of~\eqref{eq:reversibility_open_set}.

%---------------------------------------------------------
%---------------------------------------------------------
%---------------- GSV/IMR WEAK CONSISTENCY ---------------
%---------------------------------------------------------
%---------------------------------------------------------
\subsection{Proof of Lemma~\ref{lem:one_step_hmc_weak_consistency_overdamped_langevin}}
\label{subsec:proof:HMC_weak_consistency}

We use the following notation: for~$q\in\calO$ and for any~$\xi_1,\xi_2\in\bbR^{m}$,
\begin{align*}
    &\nabla D(q)=\left(\frac{\partial D}{\partial q_i}(q)\right)_{1\leqslant i\leqslant m}\in\bbR^{m^{2}\times m},\\
    &\xi_{1}^{\sfT}\nabla D(q)\xi_2=\left(\xi_1^{\sfT}\frac{\partial D}{\partial q_i}(q)\xi_2\right)_{1\leqslant i\leqslant m}\in\bbR^{m},\\
    &\Tr\left(D(q)^{-1}\nabla D(q)\right)=\left(\Tr\left(D(q)^{-1}\frac{\partial D}{\partial q_i}(q)\right)\right)_{1\leqslant i\leqslant m}\in\bbR^{m},\\
    &F(q)=-\nabla V(q)+\frac{1}{2}\Tr\left(D(q)^{-1}\nabla D(q)\right)\in\bbR^{m}.
\end{align*}
When we write a remainder term as~$c(q,p)=\rmO(\Delta t^{\delta})$ (where~$c(q,p)$ is either a vector or a matrix), it means that there exist~$\Delta t_{\star}>0$ and~$M_{\alpha}\in\bbR_{+}$ (which depend on~$(q,p)$) such that
\begin{equation}
  \label{eq:remainder}
  \forall\Delta t\in(0,\Delta t_{\star}],\qquad
  \left\lVert c(q,p)\right\rVert\leqslant M_{\alpha}\Delta t^{\alpha}.
\end{equation}
In all this section, we fix~$(q^{n},p^{n})\in\calA_{\Delta t}$.

\subsubsection{GSV numerical scheme}
%\label{subsubsec:GSV}

As~$\nabla\left(\det\,D\right)=\left(\det\,D\right)\Tr\left(D^{-1}\nabla D\right)$, the GSV numerical scheme for the Hamiltonian function~\eqref{eq:H_RMHMC} writes, since~$(q^{n},p^{n})\in\calA_{\Delta t}$,
\begin{equation*}
    \left\lbrace
        \begin{aligned}
            \widetilde{p}^{n+1/2} &= p^{n}-\dfrac{\Delta t}{2}\left(-F(q^{n})+\dfrac{1}{2}\left(\widetilde{p}^{n+1/2}\right)^{\sfT}\nabla D(q^{n})\widetilde{p}^{n+1/2}\right),\\
            \widetilde{q}^{n+1} &= q^{n}+\dfrac{\Delta t}{2}\left(D\left(\widetilde{q}^{n+1}\right)+D(q^{n})\right)\widetilde{p}^{n+1/2},\\
            \widetilde{p}^{n+1} &= \widetilde{p}^{n+1/2}-\dfrac{\Delta t}{2}\left(-F\left(\widetilde{q}^{n+1}\right)+\dfrac{1}{2}\left(\widetilde{p}^{n+1/2}\right)^{\sfT}\nabla D\left(\widetilde{q}^{n+1}\right)\widetilde{p}^{n+1/2}\right).
        \end{aligned}
    \right.
\end{equation*}
Therefore,
\begin{align}
    \widetilde{q}^{n+1}
    &=
    q^{n}+\dfrac{\Delta t}{2}\left(
        D(q^{n})+D\left(\widetilde{q}^{n+1}\right)
    \right)\widetilde{p}^{n+1/2}\nonumber\\
    &=
    q^{n}+\dfrac{\Delta t}{2}\left(
        D(q^{n})+D\left(\widetilde{q}^{n+1}\right)
    \right)\left[
        p^{n}-\dfrac{\Delta t}{2}\left(
            -F(q^{n})
            +\dfrac{\left(\widetilde{p}^{n+1/2}\right)^{\sfT}\nabla D(q^{n})\widetilde{p}^{n+1/2}}{2}
        \right)
    \right].\label{eq:53}
\end{align}
We expand the latter equality in powers of~$\Delta t$ up to order 4 by consecutively looking at the terms of the right-hand side of~\eqref{eq:53}. We write
\begin{equation}
  \label{eq:expansion_D}
  D\left(\widetilde{q}^{n+1}\right)= D(q^{n})+\nabla D(q^{n})\odot \left(\widetilde{q}^{n+1}-q^{n}\right)+\dfrac{1}{2}\nabla^{2}D(q^{n})\odot\left(\widetilde{q}^{n+1}-q^{n}\right)^{\otimes 2}+\rmO(\Delta t^{3}),
\end{equation}
where we use the following notation: for any~$u\in\bbR^{m}$,
\begin{align*}
    &\nabla D(q)\odot u=\dps{\sum_{i=1}^{m}\dfrac{\partial D}{\partial q_i}(q)u_{i}}\in\bbR^{m\times m},\\
    &\nabla^{2}D(q)\odot u^{\otimes2}=\dps{\sum_{i,j=1}^{m}}\dfrac{\partial^{2}D}{\partial q_{i}\partial q_{j}}(q)u_{i}u_j\in\bbR^{m\times m}.
\end{align*}
Note that the remainder in~\eqref{eq:expansion_D} satisfies~\eqref{eq:remainder}: this follows from Taylor's formula with integral remainder together with the implicit function theorem to obtain estimates on~$\left\lVert\widetilde{q}^{n+1}-q^{n}\right\rVert$ as done in~\cite{hairer_2006}. The latter estimate relies on the fact that~$(\widetilde{q}^{n+1},\widetilde{p}^{n+1})$ is close to~$(q^{n},p^{n})$ (see~\eqref{eq:solution_close} and Assumption~\ref{ass:RMHMC} since~$D$ and its derivatives grow at most polynomially. This reasoning is used everytime we write~$\rmO(\Delta t^{\alpha})$ below.

One thus obtains
\begin{align}
    \dfrac{\Delta t}{2}\left(
        D(q^{n})+D\left(\widetilde{q}^{n+1}\right)
    \right)p^{n}
    &=
    \Delta t~D(q^{n})p^{n}\nonumber\\
    &\quad\label{eq:56}
    +\dfrac{\Delta t^{2}}{4}\left[
        \nabla D(q^{n})\odot\left(\left(
        D(q^{n})+D(\widetilde{q}^{n+1})
    \right)\widetilde{p}^{n+1/2}\right)\right]
    p^{n}\\
    &\quad
    +\dfrac{\Delta t^{3}}{4}\left[\nabla^{2}D(q^{n})\odot\left(D(q^{n})p^{n}\right)^{\otimes2}\right]p^{n}\nonumber\\
    &\quad +\rmO(\Delta t^{4}).\nonumber
\end{align}
Further expanding the second term on the right-hand side of~\eqref{eq:56} we obtain
\begin{align*}
    &\dfrac{\Delta t^{2}}{4}
    \left\lbrace\nabla D(q^{n})\odot\left[
        \left(
            D(q^{n})+D\left(\widetilde{q}^{n+1}\right)
        \right)\widetilde{p}^{n+1/2}
    \right]
    \right\rbrace p^{n}\\
    &=
    \dfrac{\Delta t^{2}}{4}
    \Bigg\lbrace\nabla D(q^{n})\odot
    \left[
        2D(q^{n})+\nabla D(q^{n})\odot\left(\widetilde{q}^{n+1}-q^{n}\right)+\rmO(\Delta t^{2})
    \right]\\
    &\quad\times\left[
        p^{n}-\dfrac{\Delta t}{2}\left(
            -F(q^{n})+\dfrac{\left(\widetilde{p}^{n+1/2}\right)^{\sfT}\nabla D(q^{n})\widetilde{p}^{n+1/2}}{2}
        \right)
    \right]\Bigg\rbrace p^{n},\\
    &
        \begin{aligned}
            =~&\dfrac{\Delta t^{2}}{2}
            \left(
                \nabla D(q^{n})\odot\left[
                    D(q^{n})p^{n}
                \right]
            \right)p^{n}\\
            &
            -\dfrac{\Delta t^{3}}{4}
            \left\lbrace
                \nabla D(q^{n})\odot\left[
                    D(q^{n})\left(
                        -F(q^{n})+\dfrac{\left(p^{n}\right)^{\sfT}\nabla D(q^{n})p^{n}}{2}
                    \right)
                \right]
            \right\rbrace p^{n}\\
            &
            +\dfrac{\Delta t^{3}}{4}
            \Bigg\lbrace
                \nabla D(q^{n})\odot\Big[\big(
                    \nabla D(q^{n})\odot\left[
                        D(q^{n})p^{n}
                    \right]
                \big)p^{n}\Big]
            \Bigg\rbrace p^{n}\\
            &+\rmO(\Delta t^{4}).
        \end{aligned}
\end{align*}
Note that~$\nabla D=\nabla D^{\sfT}$ (since~$D(q)$ is a symmetric matrix) so that
\begin{align*}
    \left(\widetilde{p}^{n+1/2}\right)^{\sfT}\nabla D(q^{n})\widetilde{p}^{n+1/2}
    &=\left(p^{n}\right)^{\sfT}\nabla D(q^{n})p^{n}\\
    &\quad-\Delta t\left(p^{n}\right)^{\sfT}\nabla D(q^{n})\left[
        -F(q^{n})+\dfrac{\left(p^{n}\right)^{\sfT}\nabla D(q^{n})p^{n}}{2}
    \right]+\rmO(\Delta t^{2}).
\end{align*}
The last term in~\eqref{eq:53} can therefore be expanded as
\begin{align}
    \dfrac{\Delta t^{2}}{4}&\left(
        D(q^{n})+D\left(\widetilde{q}^{n+1}\right)
    \right)\left(-F(q^{n})+\dfrac{\left(\widetilde{p}^{n+1/2}\right)^{\sfT}\nabla D(q^{n})\widetilde{p}^{n+1/2}}{2}\right)\nonumber\\
    &=
    \dfrac{\Delta t^{2}}{2}D(q^{n})\left(
        -F(q^{n})+\dfrac{\left(p^{n}\right)^{\sfT}\nabla D(q^{n})p^{n}}{2}
    \right)\nonumber\\
    &\quad
    -\dfrac{\Delta t^{3}}{4}D(q^{n})\left[
        \left(p^{n}\right)^{\sfT}\nabla D(q^{n})\left(
            -F(q^{n})+\dfrac{\left(p^{n}\right)^{\sfT}\nabla D(q^{n})p^{n}}{2}
        \right)
    \right]\nonumber\\
    &\quad
    +\dfrac{\Delta t^{3}}{4}\left[
        \nabla D(q^{n})\odot\left(
            D(q^{n})p^{n}
        \right)
    \right]\left[
        -F(q^{n})+\dfrac{\left(p^{n}\right)^{\sfT}\nabla D(q^{n})p^{n}}{2}
    \right]\label{eq:39}\\
    &\quad+\rmO(\Delta t^{4}).\nonumber
\end{align}
By gathering the previous equalities~\eqref{eq:56} and~\eqref{eq:39} in~\eqref{eq:53},
\begin{align}
    \widetilde{q}^{n+1}
    &=q^{n}+\Delta t\,D(q^{n})p^{n}\nonumber\\
    &\quad
    +\dfrac{\Delta t^{2}}{2}\left\lbrace
        \left[
            \nabla D(q^{n})\odot\left[
                D(q^{n})p^{n}
            \right]
        \right]p^{n}
        -
        D(q^{n})\left[
            -F(q^{n})+\dfrac{\left(p^{n}\right)^{\sfT}\nabla D(q^{n})p^{n}}{2}
        \right]
    \right\rbrace\nonumber\\
    &\quad
    +\Delta t^{3}
    \left[
        \begin{array}[]{l}
            -\left\lbrace\dfrac{\nabla D(q^{n})}{4}\odot\left[
                D(q^{n})\left(
                    -F(q^{n})+\dfrac{\left(p^{n}\right)^{\sfT}\nabla D(q^{n})p^{n}}{2}
                \right)
            \right]\right\rbrace p^{n}\\[0.3cm]
            +\left\lbrace\dfrac{\nabla D(q^{n})}{4}\odot\left[
                \left(\nabla D(q^{n})\odot\left[
                    D(q^{n})p^{n}
                \right]\right)p^{n}
            \right]\right\rbrace p^{n}\\[0.3cm]
            +\dfrac{D(q^{n})}{4}\left\lbrace
                \left(p^{n}\right)^{\sfT}\nabla D(q^{n})\left[
                    -F(q^{n})+\dfrac{\left(p^{n}\right)^{\sfT}\nabla D(q^{n})p^{n}}{2}
                \right]
            \right\rbrace\\[0.3cm]
            +\dfrac{1}{4}\left\lbrace
                \nabla^{2}D(q^{n})\odot\left(D(q^{n})p^{n}\right)^{\otimes2}
            \right\rbrace p^{n}\\[0.3cm]
            -\dfrac{1}{4}\left[
                \nabla D(q^{n})\odot\left[
                    D(q^{n})p^{n}
                \right]
            \right]\left[
                -F(q^{n})+\dfrac{\left(p^{n}\right)^{\sfT}\nabla D(q^{n})p^{n}}{2}
            \right]
        \end{array}
    \right]\nonumber\\
    &\quad+\rmO(\Delta t^{4}),\nonumber\\
    &=q^{n} + \Delta t\,\calF_{1}(q^{n}, p^{n}) + \dfrac{\Delta t^{2}}{2}\calF_{2}(q^{n}, p^{n})+\Delta t^{3}\calF_{3}(q^{n},p^{n})+\rmO(\Delta t^{4}),\label{eq:gsv_expansion_functions}
\end{align}
where the functions~$\calF_1,\calF_2,\calF_3$ are defined by comparing the power expansion in~$\Delta t$ between the first and second equality of~\eqref{eq:gsv_expansion_functions}. The remainder term in~\eqref{eq:gsv_expansion_functions} satisfies~\eqref{eq:expansion_position_rmhmc_bounds}, which shows that~\eqref{eq:expansion_position_rmhmc} holds for the GSV numerical scheme.

Since~$p^{n}\sim\calN(0,D(q^{n})^{-1})$ in step~\ref{step:hmc_1} of Algorithm~\ref{alg:hmc_scheme_rev_check}, we replace the momenta by~$p^{n}=D(q^{n})^{-1/2}\rmG^{n}$ where~$\rmG^{n}\sim\calN(0,\rmI_m)$. Then,
\begin{equation*}
    \left\lbrace
        \begin{aligned}
            \calF_1(q^{n}, D(q^{n})^{-1/2}\rmG^{n})&= D(q^{n})^{1/2}\rmG^{n},\\
            \calF_2(q^{n}, D(q^{n})^{-1/2}\rmG^{n})&= -D(q^{n})\nabla V(q^{n})+\dfrac{D(q^{n})}{2}\Tr\left(
                D(q^{n})^{-1}\nabla D(q^{n})
            \right)\\
            &\phantom{=~}
            -\dfrac{D(q^{n})}{2}\left[
                \left[D(q^{n})^{-1/2}\rmG^{n}\right]^{\sfT}\nabla D(q^{n})\left[D(q^{n})^{-1/2}\rmG^{n}\right]
            \right]\\
            &\phantom{=~}
            +\left(\nabla D(q^{n})\odot\left[D(q^{n})^{1/2}\rmG^{n}\right]\right)D(q^{n})^{-1/2}\rmG^{n}.
        \end{aligned}
    \right.
\end{equation*}
The functions~$\calF_{1}$ and~$\calF_{3}$ being odd in~$p^{n}$ hence in~$\rmG^{n}$, and the function~$\calF_2$ being even in~$p^{n}$ hence in~$\rmG^{n}$, it holds
\begin{equation}
    \label{eq:expectation_odd_f}
    \left\lbrace
    \begin{aligned}
        \bbE_{\rmG^{n}}\left[\calF_{1}(q^{n},D(q^{n})^{-1/2}\rmG^{n})\right]&=0,\\
        \bbE_{\rmG^{n}}\left[\calF_{3}(q^{n},D(q^{n})^{-1/2}\rmG^{n})\right]&=0,\\
        \bbE_{\rmG^{n}}\left[\calF_{1}(q^{n},D(q^{n})^{-1/2}\rmG^{n})\otimes\calF_{2}(q^{n},D(q^{n})^{-1/2}\rmG^{n})\right]&=0,
    \end{aligned}
    \right.
\end{equation}
where here~$\otimes$ simply denotes the outer product.
Finally, to show that~\eqref{eq:computation_f2} holds, let us compute the various terms in~$\bbE_{\rmG^{n}}\left[\calF_2(q^{n},D(q^{n})^{-1/2}\rmG^{n}))\right]$. For~$1\leqslant i\leqslant m$, it holds
\begin{align*}
    &\bbE_{\rmG^{n}}
    \left(
        \left[
            \left\lbrace
                \nabla D(q^{n})\odot
                \left(
                    D(q^{n})^{1/2}\rmG^{n}
                \right)
            \right\rbrace
            D(q^{n})^{-1/2}\rmG^{n}
        \right]_{i}
    \right)\\
    &\quad=
    \bbE_{\rmG^{n}}
    \left(
        \sum_{k=1}^{m}
        \left[
            \nabla D(q^{n})\odot
            \left(
                D(q^{n})^{1/2}\rmG^{n}
            \right)
        \right]_{i,k}
        \left[
            D(q^{n})^{-1/2}\rmG^{n}
        \right]_{k}
    \right)\\
    &\quad=
    \bbE_{\rmG^{n}}
    \left(
        \sum_{k=1}^{m}\sum_{j=1}^{m}\partial_{q_j}
        D_{i,k}(q^{n})
        \left[
            D(q^{n})^{1/2}\rmG^{n}
        \right]_{j}
        \left[
            D(q^{n})^{-1/2}\rmG^{n}
        \right]_{k}
    \right)\\
    &\quad=
    \sum_{k=1}^{m}\sum_{j=1}^{m}\partial_{q_j}D_{i,k}(q^{n})
    \bbE_{\rmG^{n}}
    \left(
        \left[
            D(q^{n})^{1/2}\rmG^{n}
        \right]_{j}
        \left[
            D(q^{n})^{-1/2}\rmG^{n}
        \right]_{k}
    \right)\\
    &\quad=
    \sum_{k=1}^{m}\sum_{j=1}^{m}\partial_{q_j}D_{i,k}(q^{n})
    \underbrace{\sum_{\ell=1}^{m}\left[D(q^{n})^{1/2}\right]_{j,\ell}\left[D(q^{n})^{-1/2}\right]_{k,\ell}}_{=~\left[D(q^{n})^{1/2}D(q^{n})^{-\sfT/2}\right]_{j,k}=~\delta_{j,k}}\\
    &\quad=
    \sum_{k=1}^{m}\partial_{q_{k}}D_{i,k}(q^{n})=\left[\div D(q^{n})\right]_{i}.
\end{align*}
Additionally,
\begin{align*}
    &\bbE_{\rmG^{n}}
    \left(
        \left[
            D(q^{n})
            \left\lbrace
                \left(D(q^{n})^{-1/2}\rmG^{n}\right)^{\sfT}\nabla D(q^{n})\left(D(q^{n})^{-1/2}\rmG^{n}\right)
            \right\rbrace
        \right]_{i}
    \right)\\
    &\quad=
    \bbE_{\rmG^{n}}
    \left(
        \sum_{k=1}^{m}D_{i,k}(q^{n})
        \left[
            \left(D(q^{n})^{-1/2}\rmG^{n}\right)^{\sfT}\nabla D(q^{n})\left(
                D(q^{n})^{-1/2}\rmG^{n}
            \right)
        \right]_{k}
    \right)\\
    &\quad=
    \bbE_{\rmG^{n}}
    \left(
        \sum_{k=1}^{m}D_{i,k}(q^{n})
        \sum_{j,\ell=1}^{m}\partial_{q_k}
        D_{j,\ell}(q^{n})
        \left[
            D(q^{n})^{-1/2}\rmG^{n}
        \right]_{j}
        \left[
            D(q^{n})^{-1/2}\rmG^{n}
        \right]_{\ell}
    \right)\\
    &\quad=
    \sum_{k=1}^{m}D_{i,k}(q^{n})\sum_{j,\ell=1}^{m}\partial_{q_k}
    D_{j,\ell}(q^{n})
    \underbrace{\bbE_{\rmG^{n}}\left[
        \left[
            D(q^{n})^{-1/2}\rmG^{n}
        \right]_{j}\left[
            D(q^{n})^{-\sfT/2}\rmG^{n}
        \right]_{\ell}
    \right]}_{=~\sum_{\alpha=1}^{m}\left[D(q^{n})^{-1/2}\right]_{j,\alpha}\left[D(q^{n})^{-\sfT/2}\right]_{\alpha,\ell}=~\left[D(q^{n})^{-1}\right]_{j,\ell}}\\
    &\quad=
    \sum_{k=1}^{m}D_{i,k}(q^{n})\sum_{j,\ell=1}^{m}\partial_{q_k}\left[D(q^{n})\right]_{j,\ell}\left[D(q^{n})^{-1}\right]_{j,\ell}.
\end{align*}
By noticing that
\begin{align*}
    \left[D(q^{n})\Tr\left(D(q^{n})^{-1}\nabla D(q^{n})\right)\right]_{i}
    &=
    \sum_{k=1}^{m}D_{i,k}(q^{n})\left[\Tr\left(
        D(q^{n})^{-1}\nabla D(q^{n})
    \right)\right]_{k}\\
    &=\sum_{k=1}^{m}D_{i,k}(q^{n})
        \Tr\left(
            D(q^{n})^{-1}\partial_{q_{k}}D(q^{n})
        \right)\\
    &=\sum_{k=1}^{m}D_{i,k}(q^{n})\sum_{j,\ell=1}^{m}\partial_{q_k}\left[D(q^{n})\right]_{j,\ell}\left[D(q^{n})^{-1}\right]_{j,\ell},
\end{align*}
we thus conclude that
\begin{equation*}
    \bbE_{\rmG^{n}}\left[\calF_{2}(q^{n},D(q^{n})^{-1/2}\rmG^{n})\right]=-D(q^{n})\nabla V(q^{n})+\div D(q^{n}),
\end{equation*}
which is exactly~\eqref{eq:computation_f2}. This concludes the proof of Lemma~\ref{lem:one_step_hmc_weak_consistency_overdamped_langevin} for the GSV numerical scheme.

\subsubsection{IMR numerical scheme}
The IMR numerical scheme for the Hamiltonian function~\eqref{eq:H_RMHMC} writes
\begin{equation*}
    \left\lbrace
    \begin{aligned}
        \widetilde{q}^{n+1}&=q^{n}+\Delta t D\left(\frac{q^{n}+\widetilde{q}^{n+1}}{2}\right)\left(\frac{p^{n}+\widetilde{p}^{n+1}}{2}\right),\\
        \widetilde{p}^{n+1}&=p^{n}-\Delta t
            \left(-F\left(\frac{q^{n}+\widetilde{q}^{n+1}}{2}\right)
            +\frac{1}{2}\left(\frac{p^{n}+\widetilde{p}^{n+1}}{2}\right)^{\sfT}\nabla D\left(\frac{q^{n}+\widetilde{q}^{n+1}}{2}\right)\left(\frac{p^{n}+\widetilde{p}^{n+1}}{2}\right)
            \right)
        .
    \end{aligned}
    \right.
\end{equation*}
Introduce~$y^{n}=(q^{n},p^{n})$. We start by rewriting the update of the position as
\begin{align*}
    \widetilde{q}^{n+1}
    &=q^{n}+\Delta t~D\left(\dfrac{q^{n}+\widetilde{q}^{n+1}}{2}\right)\left(\dfrac{p^{n}+\widetilde{p}^{n+1}}{2}\right),\\
    &=q^{n}+A(\Delta t, y^{n}, \widetilde{y}^{n+1})-B(\Delta t, y^{n}, \widetilde{y}^{n+1}),
\end{align*}
where the term~$A(\Delta t,y^{n}, \widetilde{y}^{n+1})$ is
\begin{align*}
    A(\Delta t,y^{n}, \widetilde{y}^{n+1})
    &=\Delta t~D\left(\dfrac{q^{n}+\widetilde{q}^{n+1}}{2}\right)p^{n}\\
    &=
    \Delta t~D(q^{n})p^{n}\\
    &\quad
    +\dfrac{\Delta t^{2}}{2}\left\lbrace
        \nabla D(q^{n})\odot\left[
            D\left(\dfrac{q^{n}+\widetilde{q}^{n+1}}{2}\right)\left(\dfrac{p^{n}+\widetilde{p}^{n+1}}{2}\right)
        \right]
    \right\rbrace p^{n}\\
    &\quad
    +\dfrac{\Delta t^{3}}{8}\left[
        \nabla^2 D(q^{n})\odot\left(D(q^{n})p^{n}\right)^{\otimes2}
    \right]p^{n}\\
    &\quad+\rmO(\Delta t^{4}),
\end{align*}
and the term~$B(\Delta t,y^{n},\widetilde{y}^{n+1})$ is
\begin{equation*}
    \dfrac{\Delta t^{2}}{2}D\left(\frac{q^{n}+\widetilde{q}^{n+1}}{2}\right)\left[
        -F\left(\dfrac{q^{n}+\widetilde{q}^{n+1}}{2}\right)+\dfrac{1}{2}\left(\dfrac{p^{n}+\widetilde{p}^{n+1}}{2}\right)^{\sfT}\nabla D\left(\dfrac{q^{n}+\widetilde{q}^{n+1}}{2}\right)\left(\dfrac{p^{n}+\widetilde{p}^{n+1}}{2}\right)
    \right].
\end{equation*}
An expansion in powers of~$\Delta t$ of the second term in~$A$ gives
\begin{align*}
    &\dfrac{\Delta t^{2}}{2}\left\lbrace
        \nabla D(q^{n})\odot\left[
            D\left(\dfrac{q^{n}+\widetilde{q}^{n+1}}{2}\right)\left(\dfrac{p^{n}+\widetilde{p}^{n+1}}{2}\right)
        \right]
    \right\rbrace p^{n}\\
    &=\dfrac{\Delta t^{2}}{2}\left\lbrace
        \nabla D(q^{n})\odot\left[
            D(q^{n})p^{n}
        \right]
    \right\rbrace p^{n}\\
    &\quad+\dfrac{\Delta t^{3}}{4}\left\lbrace
        \nabla D(q^{n})\odot\left[
            \left(
                \nabla D(q^{n})\odot\left[
                    D(q^{n})p^{n}
                \right]
            \right)p^{n}
        \right]
    \right\rbrace p^{n}\\
    &\quad-
    \dfrac{\Delta t^{3}}{4}\left\lbrace
        \nabla D(q^{n})\odot\left[
            D(q^{n})\left(
                -F(q^{n})+\left(p^{n}\right)^{\sfT}\dfrac{\nabla D(q^{n})}{2}p^{n}
            \right)
        \right]
    \right\rbrace p^{n}\\
    &\quad+\rmO(\Delta t^{4}).
\end{align*}
To expand the terms present in~$B(\Delta t,y^{n},\widetilde{y}^{n+1})$, we need to expand~$F\left(\dfrac{q^{n}+\widetilde{q}^{n+1}}{2}\right)$ in powers of~$\Delta t$. As~$F(q)=-\nabla V(q)+\Tr(D(q)^{-1}\nabla D(q))/2$, we first write
\begin{equation*}
    \nabla V\left(
        \frac{q^{n}+\widetilde{q}^{n+1}}{2}
    \right)=\nabla V(q^{n})+\frac{\Delta t}{2}\nabla^{2}V(q^{n})D(q^{n})p^{n}+\rmO(\Delta t^{2}).
\end{equation*}
As for the second term~$\Tr\left[D\left(\dfrac{q^{n}+\widetilde{q}^{n+1}}{2}\right)^{-1}\nabla D\left(\dfrac{q^{n}+\widetilde{q}^{n+1}}{2}\right)\right]$, it holds
\begin{align*}
    D\left(\dfrac{q^{n}+\widetilde{q}^{n+1}}{2}\right)^{-1}
    &=D(q^{n})^{-1}-D(q^{n})^{-1}\left[\nabla D(q^{n})\odot\left[\dfrac{\widetilde{q}^{n+1}-q^{n}}{2}\right]\right]D(q^{n})^{-1}+\rmO(\Delta t^{2})\\
    &=D(q^{n})^{-1}-\dfrac{\Delta t}{2}D(q^{n})^{-1}\left[\nabla D(q^{n})\odot\left( D(q^{n}) p^{n}\right)\right]D(q^{n})^{-1}+\rmO(\Delta t^{2}),
\end{align*}
and
\begin{equation*}
    \nabla D\left(\dfrac{q^{n}+\widetilde{q}^{n+1}}{2}\right)=\nabla D(q^{n})+\dfrac{\Delta t}{2}\nabla^2 D(q^{n})\odot\left[D(q^{n})p^{n}\right]+\rmO(\Delta t^{2}).
\end{equation*}
Therefore,
\begin{align*}
    &
    \dfrac{\Delta t^{2}}{2}D\left(\dfrac{q^{n}+\widetilde{q}^{n+1}}{2}\right)\left[
        -F\left(\dfrac{q^{n}+\widetilde{q}^{n+1}}{2}\right)+\dfrac{1}{2}\left(\dfrac{p^{n}+\widetilde{p}^{n+1}}{2}\right)^{\sfT}\nabla D\left(\dfrac{q^{n}+\widetilde{q}^{n+1}}{2}\right)\left(\dfrac{p^{n}+\widetilde{p}^{n+1}}{2}\right)
    \right]\\
    &=
    \dfrac{\Delta t^{2}}{2}D(q^{n})\left(
        -F(q^{n})+\dfrac{1}{2}\left(p^{n}\right)^{\sfT}\nabla D(q^{n})p^{n}
    \right)\\
    &\quad+
    \dfrac{\Delta t^{3}}{4}\left[
        \Bigl[
            \nabla D(q^{n})\odot\left[
                D(q^{n})p^{n}
            \right]
        \Bigr]
        \left(
            -F(q^{n})+\dfrac{1}{2}\left(p^{n}\right)^{\sfT}\nabla D(q^{n})p^{n}
        \right)
    \right]\\
    &\quad+
    \dfrac{\Delta t^{3}}{4}D(q^{n})\nabla^2 V(q^{n})D(q^{n})p^{n}\\
    &\quad-\dfrac{\Delta t^{3}}{8}D(q^{n})
    \Tr\left[
        D(q^{n})^{-1}
        \Bigl(
            \nabla^2 D(q^{n})\odot\left[D(q^{n})p^{n}\right]
        \Bigr)
    \right]\\
    &\quad+\dfrac{\Delta t^{3}}{8}D(q^{n})
    \Tr\left[
        \Bigl(
            D(q^{n})^{-1}
            \left[
                \nabla D(q^{n})\odot 
                \left(
                    D(q^{n})p^{n}
                \right)
            \right]
            D(q^{n})^{-1}
        \Bigr)
        \nabla D(q^{n})
    \right]\\
    &\quad-\dfrac{\Delta t^{3}}{4}D(q^{n})
    \left[
        \left(p^{n}\right)^{\sfT}\nabla D(q^{n})\left(
            -F(q^{n})+\dfrac{1}{2}\left(p^{n}\right)^{\sfT}\nabla D(q^{n})p^{n}
        \right)
    \right]\\
    &\quad+\dfrac{\Delta t^{3}}{8}D(q^{n})\left[
        \left(p^{n}\right)^{\sfT}\Bigl\lbrace
            \nabla^2 D(q^{n})\odot\left[D(q^{n})p^{n}\right]
        \Bigr\rbrace p^{n}
    \right]\\
    &\quad+\rmO(\Delta t^{4}).
\end{align*}
By gathering the previous equalities, one finally obtains
\begin{align*}
    \widetilde{q}^{n+1}
    &=q^{n}+\Delta t\,D(q^{n})p^{n}\\
    &\quad
    +\dfrac{\Delta t^{2}}{2}\left\lbrace
        \left[
            \nabla D(q^{n})\odot\left[
                D(q^{n})p^{n}
            \right]
        \right]p^{n}
        -
        D(q^{n})\left[
            -F(q^{n})+\dfrac{\left(p^{n}\right)^{\sfT}\nabla D(q^{n})p^{n}}{2}
        \right]
    \right\rbrace\\
    &\quad
    +\Delta t^{3}
    \left[
        \begin{array}[]{l}
            +\dfrac{1}{4}\left\lbrace\nabla D(q^{n})\odot\left[\left(\nabla D(q^{n})\odot\left[D(q^{n})p^{n}\right]\right)p^{n}\right]\right\rbrace p^{n}\\[0.2cm]
            -\dfrac{1}{4}\left\lbrace\nabla D(q^{n})\odot\left[D(q^{n})\left(-F(q^{n})+\left(p^{n}\right)^{\sfT}\dfrac{\nabla D(q^{n})}{2}p^{n}\right)\right]\right\rbrace p^{n}\\[0.2cm]
            +\dfrac{1}{4}\left[\nabla D(q^{n})\odot\left[D(q^{n})p^{n}\right]\right]\left(-F(q^{n})+\left(p^{n}\right)^{\sfT}\dfrac{\nabla D(q^{n})}{2}p^{n}\right)\\[0.2cm]
            +\dfrac{1}{4}D(q^{n})\nabla^{2}V(q^{n})D(q^{n})p^{n}\\[0.2cm]
            -\dfrac{1}{8}D(q^{n})\Tr\left[D(q^{n})^{-1}\left(\nabla^{2}D(q^{n})\odot\left[D(q^{n})p^{n}\right]\right)\right]\\[0.2cm]
            +\dfrac{1}{8}D(q^{n})\Tr\left[\left(D(q^{n})^{-1}\left[\nabla D(q^{n})\odot\left(D(q^{n})p^{n}\right)\right]D(q^{n})^{-1}\right)\nabla D(q^{n})\right]\\[0.2cm]
            -\dfrac{1}{4}D(q^{n})\left[\left(p^{n}\right)^{\sfT}\nabla D(q^{n})\left(-F(q^{n})+\left(p^{n}\right)^{\sfT}\dfrac{\nabla D(q^{n})}{2}p^{n}\right)\right]\\[0.2cm]
            +\dfrac{1}{8}D(q^{n})\left[\left(p^{n}\right)^{\sfT}\left\lbrace\nabla^{2}D(q^{n})\odot\left[D(q^{n})p^{n}\right]\right\rbrace p^{n}\right]\\[0.2cm]
            +\dfrac{1}{8}\left[\nabla^{2}D(q^{n})\odot\left(D(q^{n})p^{n}\right)^{\otimes2}\right]p^{n}
        \end{array}
    \right]\\
    &\quad+\rmO(\Delta t^{4})\\
    &=q^{n} + \Delta t\,\calF_{1}(q^{n}, p^{n}) + \dfrac{\Delta t^{2}}{2}\calF_{2}(q^{n}, p^{n})+\Delta t^{3}\calF_{3}(q^{n},p^{n})+\rmO(\Delta t^{4}),
\end{align*}
where~$\calF_1$ and~$\calF_2$ are the same functions as in~\eqref{eq:gsv_expansion_functions}, whereas~$\calF_3$ is different but still odd in~$\rmG^{n}$. In particular,~$\bbE_{\rmG^{n}}\left[\calF_3(q^{n},D(q^{n})^{-1/2}\rmG^{n})\right]=0$ and~$\bbE_{\rmG^{n}}\left[\calF_{2}(q^{n},D(q^{n})^{-1/2}\rmG^{n})\right]=-D(q^{n})\nabla V(q^{n})+\div D(q^{n})$ following the same computations as in the previous section. This concludes the proof for the IMR numerical scheme.

\subsection{Proof of Proposition~\ref{prop:one_step_hmc_weakly_consistent_overdamped_langevin}}
\label{subsec:proof:HMC_weakly_consistent}

First note that, if~$f:\calX\to\calX$ grows at most polynomially, with derivatives growing at most polynomially, then~\eqref{eq:ass_D_bound} implies that there exist~$\alpha \in\bbN$ and~$K\in\bbR_{+}$ such that
\begin{equation*}
  \forall q\in\calO,\qquad
  \left\lvert \Delta t^{\delta} f\left(q,D(q)^{-1/2}\rmG\right)\right\rvert\leqslant K\Delta t^{\delta}\left(1+\left\lVert \rmG\right\rVert^{\alpha }+\left\lVert q\right\rVert^{\alpha }\right).
\end{equation*}
We write such remainder terms as~$\Delta t^{\delta}f\left(q,D(q)^{-1/2}\rmG\right)=\rmO(\Delta t^{\delta})$. In particular,
\begin{equation*}
  \bbE_{\rmG}\left[\left\lvert \Delta t^{\delta}f\left(q,D(q)^{-1/2}\rmG\right)\right\rvert^{r}\right]\leqslant K_r\Delta t^{\delta r}\left(1+\left\lVert q\right\rVert^{\alpha r}\right),  
\end{equation*}
with~$K_r\in\bbR_{+}$ for any~$r\geqslant 0$, which we may also write~$\bbE_{\rmG}\left[\left\lvert \Delta t^{\delta}f\left(q,D(q)^{-1/2}\rmG\right)\right\rvert^{r}\right]=\rmO\left(\Delta t^{\delta r}\right)$. 
We now prove that, for any~$\Delta t\in(0,\Delta t_{\star}]$, it holds
\begin{equation}
  \label{eq:add_Gaussians_not_in_B}
  \bbE_{\rmG}\left[
    f\left(q,D(q)^{-1/2}\rmG\right)
  \right]=\bbE_{\rmG}\left[
    f\left(q,D(q)^{-1/2}\rmG\right)\mathbf{1}_{\left(q,D(q)^{-1/2}\rmG\right)\in\calB_{\Delta t}}
  \right]+\rmO\left(\Delta t^{2(r_3+1)}\right).
\end{equation}
Indeed, consider~$K\in\bbR_{+}$ and~$\alpha\in\bbN$ such that, for all~$q\in\calO$, it holds~$\left\lvert f\left(q,D(q)^{-1/2}\rmG\right)\right\rvert \leqslant K(1+\left\lVert q\right\rVert^{\alpha}+\left\lVert\rmG\right\rVert^{\alpha})$. Then, using~\eqref{eq:rmhmc_rejection_probability} with~$a=0$ and~$a=\alpha$,
\begin{align*}
  \MoveEqLeft[6]
  \left\lvert\bbE_{\rmG}\left[
    f\left(q,D(q)^{-1/2}\rmG\right)\mathbf{1}_{\left(q,D(q)^{-1/2}\rmG\right)\in\calX\setminus\calB_{\Delta t}}
  \right]\right\rvert\\
  &\quad\leqslant
  K(1+\left\lVert q\right\rVert^{\alpha})\bbE_{\rmG}\left[\mathbf{1}_{\left(q,D(q)^{-1/2}\rmG\right)\in\calX\setminus\calB_{\Delta t}}\right]+K\bbE_{\rmG}\left[\left\lVert\rmG\right\rVert^{\alpha}\mathbf{1}_{\left(q,D(q)^{-1/2}\rmG\right)\in\calX\setminus\calB_{\Delta t}}\right]\\
  &\quad\leqslant
  K(1+\left\lVert q\right\rVert^{\alpha})C_0\Delta t^{2(r_3+1)}+KC_{\alpha}\left(1+\left\lVert q\right\rVert^{b_{\alpha}}\right)\Delta t^{2(r_3+1)}
  =\rmO\left(\Delta t^{2(r_3+1)}\right).
\end{align*}

We now fix~$f\in\scrF_{\pol}$ and~$q\in\calO$. The discrete position evolution operator~$P_{\Delta t}$ using one step of Algorithm~\ref{alg:hmc_scheme_rev_check} with either the IMR or the GSV numerical scheme acts on~$f$ as
\begin{equation*}
  \left(
    P_{\Delta t}f
  \right)(q)=
  \bbE^{q}\left[
    f(q^{1})
  \right].
\end{equation*}
Denote by~$A_{\Delta t}(q,\rmG)$ the acceptance probability in the Metropolis--Hastings step~\ref{step:hmc_3}, which is defined for~$\left(q,D(q)^{-1/2}\rmG\right)\in\calA_{\Delta t}$ by
\begin{equation*}
  A_{\Delta t}(q,\rmG)=\min\left(1,\rme^{-\Delta H\left(q,\rmG\right)}\right),
\end{equation*}
where
\begin{equation*}
  \Delta H\left(q,\rmG\right)=H\left(\varphi_{\Delta t}\left(q,D(q)^{-1/2}\rmG\right)\right)-H\left(q,D(q)^{-1/2}\rmG\right).
\end{equation*}
If~$\rmG$ is such that~$\left(q,D(q)^{-1/2}\rmG\right)\in\calX\setminus\calB_{\Delta t}$ or if the proposal is rejected in the Metropolis--Hastings procedure, then~$q^{1}=q$. Therefore,
\begin{align}
  (P_{\Delta t}f-f)(q)\nonumber
  &=\bbE^{q}\left[
    \left(
      f(q^{1})-f(q)
    \right)\mathbb{1}_{\left(q,D(q)^{-1/2}\rmG\right)\in\calB_{\Delta t}}A_{\Delta t}(q,\rmG)
  \right]\nonumber\\
  &=\label{eq:weak1}
  \begin{aligned}[t]
    &\bbE^{q}\left[
      \left(f(q^{1})-f(q)\right)\mathbb{1}_{\left(q,D(q)^{-1/2}\rmG\right)\in\calB_{\Delta t}}
    \right]\\
    &+
    \bbE^{q}\left[
      \left(A_{\Delta t}\left(q,\rmG\right)-1\right)\left(f(q^{1})-f(q)\right)\mathbb{1}_{\left(q,D(q)^{-1/2}\rmG\right)\in\calB_{\Delta t}}
    \right].
  \end{aligned}
\end{align}
We first focus on the first term of~\eqref{eq:weak1}. We expand
\begin{align*}
    f(q^{1})-f(q)
    &=\nabla f(q)\cdot\left(q^{1}-q\right)+\frac{1}{2}\nabla^{2}f(q):\left(q^{1}-q\right)^{\otimes2}\\
    &\quad+\frac{1}{6}\nabla^{3}f(q):\left(q^{1}-q\right)^{\otimes3}+\frac{1}{6}\int_{0}^{1}(1-\theta)^{3}\nabla^{4}f((1-\theta)q^{1}+\theta q):(q^{1}-q)^{\otimes4}\rmd\theta,
\end{align*}
where we denoted, for any positions~$(q,\tilde{q})$ in~$\calO$,
\begin{equation*}
    \nabla^{3}f(q):\tilde{q}^{\otimes 3}=\sum_{\alpha,\beta,\gamma = 1}^{m}\frac{\partial^{3}f}{\partial_{q_\alpha}\partial_{q_\beta}\partial_{q_\gamma}}(q)\,\tilde{q}_\alpha\tilde{q}_\beta\tilde{q}_\gamma,
\end{equation*}
and similarly for~$\nabla^{4}f$.

Since~$\calB_{\Delta t}\subset\calA_{\Delta t}$, we may plug the expansion~\eqref{eq:expansion_position_rmhmc} into the first term of~\eqref{eq:weak1}. In order to write the remainder~$\Delta t^{4}\calR_{\Delta t}(q,p)$ of the expansion~\eqref{eq:expansion_position_rmhmc} as~$\rmO(\Delta t^{4})$ when integrating with respect to~$\rmG$, we need to show that~$\calR_{\Delta t}(q,p)$ is bounded by a polynomial function of~$\left\lVert q\right\rVert$ and~$\left\lVert p\right\rVert$. We show more generally that~$\varphi_{\Delta t}(q,p)$ (which is the proposal of~$(q,p)$ and not simply the proposal of the position) is bounded by a polynomial function of~$\left\lVert q\right\rVert$ and~$\left\lVert p\right\rVert$. Indeed, let~$(q,p)\in\calX$ and~$g\colon \bbR_{+}\to\calX$ defined by~$g(t)=\phi_{t}(q,p)$ where~$\phi_t$ is the exact flow of~\eqref{eq:hamiltonian_dynamics} with initial condition~$(q_{0},p_{0})=(q,p)$. Note first that~$g(t)$ is well defined for all times~$t\in\bbR_{+}$. Indeed, the solution to~\eqref{eq:hamiltonian_dynamics} is well defined for small times by the Cauchy--Lipschitz theorem, and globally defined for all times using the Hamiltonian function as a Lyapunov function. To make the latter point precise, we show in fact the stronger result that the norm of~$g(s)=(q_s,p_s)$ is controlled by a polynomial function of~$\left\lVert q\right\rVert$ and~$\left\lVert p\right\rVert$. 
Since the Hamiltonian is exactly preserved by~$\phi_s$, is holds~$H(q_s,p_s)=H(q,p)$, which can be rewritten as
\begin{equation*}
  V(q_{s})-\frac{1}{2}\ln\det D(q_{s})+\frac{1}{2}p_{s}^{\sfT}D(q_{s})p_{s}=V(q)-\frac{1}{2}\ln\det D(q)+\frac{1}{2}p^{\sfT}D(q)p.
\end{equation*}
By Assumption~\ref{ass:RMHMC}, the right-hand side is bounded by a polynomial function of~$\left\lVert q\right\rVert$ and~$\left\lVert p\right\rVert$. Using that~$V$ is bounded from below and that~$D$ is uniformly bounded (from above and from below), it follows that~$\left\lVert p_s\right\rVert$ is bounded by a polynomial function of~$\left\lVert q\right\rVert$ and~$\left\lVert p\right\rVert$. The first equation of~\eqref{eq:hamiltonian_dynamics}, namely
\begin{equation*}
  %\label{eq:ham_position_polynomial_bound}
  \frac{\rmd}{\rmd t}q_t=D(q_t)p_t,
\end{equation*}
then implies that~$\left\lVert q_s\right\Vert$ is bounded by a polynomial function of~$\left\lVert q\right\rVert$ and~$\left\lVert p\right\rVert$ (uniformly on bounded intervals). Combined with~\eqref{eq:remainder_term_uniform_bound}, this shows that~$\varphi_{\Delta t}(q,p)$ is also bounded by a polynomial function of~$\left\lVert q\right\rVert$ and~$\left\lVert p\right\rVert$. Similar reasonings underly remainder terms appearing below.

Using~\eqref{eq:add_Gaussians_not_in_B}, and in view of~\eqref{eq:expectation_odd_f} and~\eqref{eq:computation_f2}, 
\begin{align*}
  \MoveEqLeft[2]
  \bbE_{\rmG}\left[
      \left(f(q^{1})-f(q)\right)\mathbb{1}_{\left(q,D(q)^{-1/2}\rmG\right)\in\calB_{\Delta t}}
  \right]\\
  &\quad=
  \frac{\Delta t^{2}}{2}\nabla f(q)\cdot
      \left(
          -D(q)\nabla V(q)+\div D(q)
      \right)\\
  &\qquad+\frac{\Delta t^{2}}{2}\bbE_{\rmG}\left[
      \nabla^{2} f(q):\left(\calF_1\left(q,D(q)^{-1/2}\rmG\right)\right)^{\otimes2}
  \right]+\rmO\left(\Delta t^{2(\min(1,r_3)+1)}\right).
\end{align*}
The expectation in the latter term can be computed as
\begin{align*}
    \MoveEqLeft[4]
    \bbE_{\rmG}\left(
        \nabla^{2}f(q):\left(\calF_1\left(q,D(q)^{-1/2}\rmG\right)\right)^{\otimes2}
    \right)\\
    &=
    \bbE_{\rmG}\left(
        \sum_{i,j=1}^{m}\frac{\partial^{2}f}{\partial q_i\partial q_j}(q)\left[D(q)^{1/2}\rmG\right]_{i}\left[D(q)^{1/2}\rmG\right]_{j}
    \right)\\
    &=
    \bbE_{\rmG}\left(
        \sum_{i,j=1}^{m}\frac{\partial^{2}f}{\partial q_i\partial q_j}(q)\sum_{k,\ell=1}^{m}\left[D(q)^{1/2}\right]_{i,k}\rmG_{k}\left[D(q)^{1/2}\right]_{j,\ell}\rmG_\ell
    \right)\\
    &=
    \sum_{i,j=1}^{m}\frac{\partial^{2}f}{\partial q_i\partial q_j}(q)\sum_{k=1}^{m}\left[D(q)^{1/2}\right]_{i,k}\left[D(q)^{1/2}\right]_{j,k}\\
    &=
    \sum_{i,j=1}^{m}\frac{\partial^{2}f}{\partial q_i\partial q_j}(q)\left[D(q)\right]_{i,j}=D(q):\nabla^{2}f(q).
\end{align*}
This shows that
\begin{equation*}
  \bbE_{\rmG}\left[
    \left(f(q^{1})-f(q)\right)\mathbb{1}_{\left(q,D(q)^{-1/2}\rmG\right)\in\calB_{\Delta t}}
\right]=\frac{\Delta t^{2}}{2}\calL f(q)+\rmO\left(\Delta t^{2(\min(1,r_3)+1)}\right),
\end{equation*}
where~$\calL$ is the generator of the dynamics~\eqref{eq:overdamped_langevin_diffusion} defined by~\eqref{eq:generator_overdamped_langevin}.

We now focus on the second term of~\eqref{eq:weak1}. In view of~\eqref{eq:remainder_term_uniform_bound}, it holds~$\varphi_{\Delta t}(q,p)=\phi_{\Delta t}(q,p)+\Delta t^{3}\widetilde{R}_{\Delta t}(q,p)$, where~$\widetilde{R}_{\Delta t}$ is a function growing at most polynomially. Since the Hamiltonian is exactly preserved by~$\phi_t$, there exists, for any~$\Delta t\in(0,\Delta t_{\star}]$ and~$(q,p)\in\calA_{\Delta t}$, a real number~$\theta_{\Delta t}(q,p)\in[0,1]$ such that
\begin{align*}
  H(\varphi_{\Delta t}(q,p))
  =H(q,p)
  +\Delta t^{3}\nabla H\left(\phi_{\Delta t}(q,p)+\Delta t^{3}\theta_{\Delta t}(q,p)\widetilde{R}_{\Delta t}(q,p)\right)\cdot\widetilde{R}_{\Delta t}(q,p).
\end{align*}
Using~$0\leqslant 1-\min(1,\rme^{-x})\leqslant\max(0,x)\leqslant\left\lvert x\right\rvert$, it holds for~$\left(q,p\right)\in\calA_{\Delta t}$ with~$p=D(q)^{-1/2}\rmG$,
\begin{align}
  0\leqslant 1-A_{\Delta t}(q,\rmG)
  &\leqslant \Delta t^{3}\left\lvert\nabla H\left(\phi_{\Delta t}\left(q,p\right)+\Delta t^{3}\theta_{\Delta t}(q,p)\widetilde{R}_{\Delta t}\left(q,p\right)\right)\cdot\widetilde{R}_{\Delta t}\left(q,p\right)\right\rvert,\label{eq:weak2}
\end{align}
the right-hand side being bounded by a polynomial function of~$\left\lVert q\right\rVert$ and~$\left\lVert p\right\rVert$. Similarly, for~$\left(q,D(q)^{-1/2}\rmG\right)\in\calA_{\Delta t}$,
\begin{equation*}
  f(q^{1})-f(q)=\Delta t\nabla f(q)\cdot D(q)^{1/2}\rmG+\rmO(\Delta t^{2}).
\end{equation*}
Plugging first the previous estimate and~\eqref{eq:weak2} into the second term of~\eqref{eq:weak1}, and then using~\eqref{eq:add_Gaussians_not_in_B} to remove the indicator function, one obtains
\begin{align*}
  &\left\lvert\bbE_{\rmG}\left[
    \left(A_{\Delta t}\left(q,\rmG\right)-1\right)\left(f(q^{1})-f(q)\right)\mathbb{1}_{\left(q,D(q)^{-1/2}\rmG\right)\in\calB_{\Delta t}}
  \right]\right\rvert\\
  &\qquad\qquad\leqslant\bbE_{\rmG}\left[
    \left\lvert A_{\Delta t}\left(q,\rmG\right)-1\right\rvert\left\lvert f(q^{1})-f(q)\right\rvert\mathbb{1}_{\left(q,D(q)^{-1/2}\rmG\right)\in\calB_{\Delta t}}
  \right]=\rmO(\Delta t^{4}).
\end{align*}
We therefore conclude that
\begin{equation*}
  (P_{\Delta t}f-f)(q)=\frac{\Delta t^{2}}{2}\calL f(q)+\rmO\left(\Delta t^{2(\min(1,r_3)+1)}\right),
\end{equation*}
so that~$P_{\Delta t}f-\rme^{\Delta t^{2}\calL/2} f=\rmO\left(\Delta t^{2(\min(1,r_3)+1)}\right)$ in view of~\eqref{eq:stability_continous_process}, which leads to~\eqref{eq:rmhmc_weak_estimate}.

%---------------------------------------------------------
%---------------------------------------------------------
%---------------- GSV FP ---------------------------------
%---------------------------------------------------------
%---------------------------------------------------------
\subsection{Proof of Proposition~\ref{prop:GSV_FP}}
\label{subsec:proof_GSV_FP}

Let~$x=(q,p)\in\calX$. The configurations~$x_1=(q_1,p_1)\in\calX$ and~$x_2=(q_2,p_2)\in\calX$ such that~$\Phi_{\Delta t}^{\GSV}(x,x_1,x_2)=0$ satisfy
\begin{equation*}
  \left\lbrace
    \begin{aligned}
      x_1&=x+\frac{\Delta t}{2}J\nabla H(q,p_1),\\
      x_2&=x_1+\frac{\Delta t}{2}J\nabla H(q_2,p_1),
    \end{aligned}
  \right.
\end{equation*}
where~$J$ is defined in~\eqref{eq:hamiltonian_dynamics}. Following the same reasoning and using the same notation as in the proof of Proposition~\ref{prop:IMR_FP}, fix~$R>0$. For~$\Delta t\left\lVert\nabla H\right\rVert_{\calC^{0}\left(B_x^{2R}\right)}\leqslant 2R$, define
\begin{equation*}
  \calF_{\Delta t}^{x}\colon\left\lbrace
  \begin{array}[]{ccl}
    B_x^R &\to&B_x^R \\
    x_1&\mapsto &x+\dfrac{\Delta t}{2} J\nabla H\left(q,p_1\right),
  \end{array}
  \right.
\end{equation*}
and for~$x_1\in B_x^R$,
\begin{equation*}
  \scrF_{\Delta t}^{x_1}\colon\left\lbrace
  \begin{array}[]{ccl}
    B_{x_1}^R &\to &B_{x_1}^R \\
    x_2&\mapsto&x_1+\dfrac{\Delta t}{2} J\nabla H\left(q_2,p_1\right).
  \end{array}
  \right.
\end{equation*}
The map~$\calF_{\Delta t}^{x}$ is well-defined since~$\frac{\Delta t}{2}\left\lVert\nabla H\right\rVert_{\calC^{0}(B_x^R)}\leqslant\frac{\Delta t}{2}\left\lVert\nabla H\right\rVert_{\calC^{0}(B_x^{2R})}\leqslant R$. The map~$\scrF_{\Delta t}^{x_1}$ is also well-defined since, if~$x_1\in B_x^R$ and~$x_2\in B_{x_1}^{R}$, then~$x_2\in B_{x}^{2R}$ so that~$\frac{\Delta t}{2}\left\lVert\nabla H\right\rVert_{\calC^{0}(B_{x_1}^R)}\leqslant\frac{\Delta t}{2}\left\lVert\nabla H\right\rVert_{\calC^{0}(B_x^{2R})}\leqslant R$. It is then easy to show that, under the condition
\begin{equation*}
  \Delta t\frac{\left\lVert\nabla^{2}H\right\rVert_{\calC^{0}(B_x^{2R})}}{2}<1,  
\end{equation*}
the two maps~$\calF_{\Delta t}^{x}$ and~$\scrF_{\Delta t}^{x_1}$ are contractions. Define~$F_1\colon\calX\to(0,+\infty]$ as
\begin{equation*}
  F_1(x)=
  \left\lbrace
  \begin{array}[]{ll}
    \min\left(\dfrac{2R}{\left\lVert\nabla H\right\rVert_{\calC^{0}\left(B_x^{2R}\right)}},\dfrac{2}{\left\lVert\nabla^{2}H\right\rVert_{\calC^{0}\left(B_x^{2R}\right)}}\right)&\text{if }\left\lVert\nabla H\right\rVert_{\calC^{0}\left(B_x^{2R}\right)}\left\lVert\nabla^{2}H\right\rVert_{\calC^{0}\left(B_x^{2R}\right)}\neq0,\\
    +\infty&\text{otherwise},
  \end{array}
  \right.
\end{equation*}
and~$F_2\colon\calX\to(0,+\infty]$ as
\begin{equation*}
  F_2(x)=\min\left(F_1(x),\min\limits_{x_1\in B_x^{R}}\left\lbrace F_1(x_1)\right\rbrace\right).
\end{equation*}
Note that~$F_1$ (respectively~$F_2$) is continuous on~$\dom F_1=\left\lbrace x\in\calX,F_1(x)<+\infty\right\rbrace$ (respectively on~$\dom F_2=\left\lbrace x\in\calX,f_2(x)<+\infty\right\rbrace$). Note also that~$F_2(x)$ is positive (and can be infinite). If~$\Delta t<F_2(x)$, there exists by the Banach fixed point theorem a unique~$x_1\in B_x^{R}$ such that~$x_1-\calF_{\Delta t}^{x}(x_1)=\Phi_{\Delta t/2}^{\EulerB}(x,x_1)=0$ and~$x_2\in B_{x_1}^{R}\subset B_{x}^{2R}$ such that~$x_2-\scrF_{\Delta t}^{x_1}(x_2)=\Phi_{\Delta t/2}^{\EulerA}(x_1,x_2)=0$, so that~$\Phi_{\Delta t}^{\GSV}(x,x_1,x_2)=0$. We then denote by~$\chi_{\Delta t}^{\GSV,\FP}(x):=(x_1,x_2)$. This naturally defines the numerical flow~$\chi_{\Delta t}^{\GSV,\FP}$ on the open set
\begin{equation*}
  \calA_{\Delta t}^{\GSV,\FP}:=F_{2}^{-1}(\Delta t,+\infty)\cup\Int F_2^{-1}\left\lbrace+\infty\right\rbrace.
\end{equation*}
For~$\Delta t>0$ small enough, this set is nonempty. Indeed, if for all~$x\in\calX$,~$F_2(x)=+\infty$, then~$\calA_{\Delta t}^{\GSV,\FP}=\calX$. Otherwise, there exists~$x_0\in\calX$ such that~$F(x_0)$ is finite. In this case, we choose~$\Delta t_{\star}=F(x_0)/2>0$ (which will later be reduced but will still remain positive), and~$\calA_{\Delta t_{\star}}^{\GSV,\FP}$ is nonempty. Moreover, for any~$\Delta t\in(0,\Delta t_{\star}]$, it holds~$\calA_{\Delta t}^{\GSV,\FP}\subset\calA_{\Delta t_{\star}}^{\GSV,\FP}$.
We next define the map~$G\colon\calX\to(0,+\infty]$ by
\begin{equation*}
  G(x)=\min\left(F_2(x), \min\limits_{y\in B_x^{2R}}\left\lbrace F_2(S(y))\right\rbrace\right).
\end{equation*}
Note that~$G(x)$ is positive (and can be infinite). Assume that~$\Delta t<G(x)$ for some~$x\in\calX$. Then~$x_2:=\varphi_{\Delta t}^{\GSV,\FP}(x)$ and~$(x_1,x_2):=\chi_{\Delta t}^{\GSV,\FP}(x)$ are well-defined. Moreover, since in this case~$\Delta t<F_1(S(x_2))$, there exists a unique~$x'_1\in B_{S(x_2)}^R$ such that~$\Phi_{\Delta t/2}^{\EulerB}(S(x_2),x'_1)=0$. Since~$\Phi_{\Delta t}^{\GSV}$ is~$S$-reversible (see Proposition~\ref{prop:IMR_GSV_S_reversibility}), it holds~$\Phi_{\Delta t}^{\EulerB}(S(x_2),S(x_1))=0$. Using the fact that~$S$ is an isometry,~$\left\lVert S(x_2)-S(x_1)\right\rVert=\left\lVert x_2-x_1\right\rVert\leqslant R$. Thus, by the uniqueness of the zero of~$y\mapsto\Phi_{\Delta t}^{\EulerB}(S(x_2),y)$ in~$B_{S(x_2)}^{R}$, one can conclude that~$x'_1=S(x_1)$. The same reasoning shows that~$x'_2=S(x)$. Therefore, it holds 
\begin{equation*}
  \chi_{\Delta t}^{\GSV,\FP}\circ S\circ \varphi_{\Delta t}^{\GSV,\FP}(x)=\left(
    S\circ\chi_{\Delta t,1}^{\GSV,\FP}(x),S(x)
  \right).
\end{equation*}
Following the same reasoning as for the construction of~$\calA_{\Delta t}^{\GSV,\FP}$, there exists~$\Delta t_{\star}>0$ such that for any~$\Delta t\in(0,\Delta t_{\star}]$, the map~$\chi_{\Delta t}^{\GSV,\FP}$ is~$S$-reversible. Then the set~$\calB_{\Delta t}^{\GSV,\FP}$ defined by~\eqref{eq:B_GSV_FP} is open by Lemma~\ref{lem:B_open_set} and nonempty as it contains~$G^{-1}(\Delta t,+\infty]$.

To conclude, it remains to prove that~$\chi_{\Delta t}^{\GSV,\FP}\colon\calA_{\Delta t}^{\GSV,\FP}\to\calX^{2}$ is~$\calC^{1}$ for~$\Delta t\in(0,\Delta t_{\star}]$ and that the second statement of~\eqref{eq:gsv_fp} holds. Note that, for~$x\in\calA_{\Delta t}^{\GSV,\FP}$, one has~$\Phi_{\Delta t}^{\GSV}\left(x,\chi_{\Delta t}^{\GSV,\FP}(x)\right)=0$, and the matrices
\begin{equation*}
  \left\lbrace
    \begin{aligned}
      \nabla_{y}\Phi_{\Delta t/2}^{\EulerB}&=\rmI_d-\frac{\Delta t}{2}\begin{pmatrix}
        0_m & \nabla_p^{2}H\\
        0_m & -\nabla_{p,q}^{2}H
      \end{pmatrix},\\[0.2cm]
      \nabla_{y}\Phi_{\Delta t/2}^{\EulerA}&=\rmI_d-\frac{\Delta t}{2}\begin{pmatrix}
        \nabla_{q,p}^{2}H & 0_m\\
        -\nabla_{p}^{2}H & 0_m
      \end{pmatrix},
    \end{aligned}
  \right.
\end{equation*}
evaluated respectively at~$\left(x,\chi_{\Delta t,1}^{\GSV,\FP}(x)\right)$ and at~$\left(\chi_{\Delta t,1}^{\GSV,\FP}(x),\chi_{\Delta t,2}^{\GSV,\FP}(x)\right)$, are invertible since~$\Delta t<2/\left\lVert\nabla^{2}H\right\rVert_{\calC^{0}(B_x^{2R})}$ so that the matrix
\begin{equation*}
  \nabla_{y}\Phi_{\Delta t}^{\GSV}=
  \begin{pmatrix}
    \nabla_{y}\Phi_{\Delta t/2}^{\EulerB} & 0_d\\[0.2cm]
    \star & \nabla_{y}\Phi_{\Delta t/2}^{\EulerA}
  \end{pmatrix}
\end{equation*}
evaluated at~$\left(x,\chi_{\Delta t}^{\GSV,\FP}(x)\right)$ is invertible. Therefore, the map~$\chi_{\Delta t}^{\GSV,\FP}$ is~$\calC^{1}$ by the implicit function theorem, and~\eqref{eq:gsv_fp} holds. This concludes the proof.

%---------------------------------------------------------
%---------------------------------------------------------
%---------------- LAMBDA IMPLICIT ------------------------
%---------------------------------------------------------
%---------------------------------------------------------
\subsection{Proof of Proposition~\ref{prop:Dimp}}
\label{subsec:proof:prop_D_imp}
We follow the strategy of the proof of~\cite[Proposition~2.5]{lelievre_2019}.
We start by stating and proving two useful lemmas, and introduce to this end some notation. Since~$\calX$ is an open subset of~$\bbR^d$, there exists a sequence~$(K_n)_{n\geqslant 1}$ of compact subsets of~$\bbR^{d}$ such that~$\calX=\bigcup_{n\geqslant 1}K_n$. For instance, when~$\calX=\calO\times\bbR^{m}$ with~$\calO\subsetneq\bbR^{m}$, one can choose
\begin{equation*}
    K_n=\left\lbrace x\in\calX,\, \left\lVert x\right\rVert\leqslant n\text{ and }\dist(x,\bbR^d\setminus\calX)\geqslant\frac{1}{n}\right\rbrace.
\end{equation*}

\begin{lemma}
    For any~$1\leqslant n,m< +\infty$, the set
    \begin{equation*}
        \calG_{\Delta t}^{n,m}
        =
        \left\lbrace
            (x,y)\in K_n\times K_n^{k},\, \Phi_{\Delta t}(x,y)=0,\,\nabla_{y}\Phi_{\Delta t}(x,y)\text{ is invertible and }\left\lVert\nabla_{y}\Phi_{\Delta t}(x,y)^{-1}\right\rVert\leqslant m
        \right\rbrace.
    \end{equation*}
    is compact.
\end{lemma}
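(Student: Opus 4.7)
The plan is to show compactness by establishing that $\calG_{\Delta t}^{n,m}$ is a closed subset of the compact set $K_n\times K_n^{k}$. Since the latter is a finite product of compact subsets of Euclidean space, its compactness is immediate, and closedness of $\calG_{\Delta t}^{n,m}$ in $K_n\times K_n^{k}$ will imply the claim.

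To prove closedness, I would take a sequence $(x_j,y_j)_{j\geqslant 1}$ in $\calG_{\Delta t}^{n,m}$ converging to some limit $(x,y)\in K_n\times K_n^{k}$ and check that $(x,y)\in\calG_{\Delta t}^{n,m}$. The equation $\Phi_{\Delta t}(x,y)=0$ passes to the limit directly by continuity of $\Phi_{\Delta t}$, which is $\calC^{1}$. The substantive point is handling the invertibility and the norm bound on $\nabla_y\Phi_{\Delta t}(x,y)^{-1}$. Write $A_j=\nabla_{y}\Phi_{\Delta t}(x_j,y_j)$ and $A=\nabla_{y}\Phi_{\Delta t}(x,y)$. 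By continuity, $A_j\to A$ in the space of $dk\times dk$ matrices.

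The key argument is to show that $A$ is invertible. Suppose, by contradiction, that $A$ is singular; then there exists a unit vector $v$ with $Av=0$. The bound $\lVert A_j^{-1}\rVert\leqslant m$ yields
\begin{equation*}
    \lVert v\rVert = \lVert A_j^{-1}A_j v\rVert\leqslant m\,\lVert A_j v\rVert,
\end{equation*}
so $\lVert A_j v\rVert\geqslant 1/m$ for all $j$, contradicting $A_j v\to Av=0$. Hence $A$ is invertible, and by the continuity of matrix inversion on the open set of invertible matrices one obtains $A_j^{-1}\to A^{-1}$. Passing to the limit in the inequality $\lVert A_j^{-1}\rVert\leqslant m$ yields $\lVert A^{-1}\rVert\leqslant m$, which shows that $(x,y)\in\calG_{\Delta t}^{n,m}$ and concludes the proof.

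I do not anticipate any serious obstacle here: the only subtle step is the argument that the limit matrix remains invertible under a uniform bound on the inverses, which is a standard perturbation-theoretic fact. The rest is routine continuity together with the compactness of $K_n\times K_n^{k}$.
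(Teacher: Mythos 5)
Your proof is correct, and the overall skeleton coincides with the paper's: reduce compactness to closedness in the compact cube $K_n\times K_n^k$, pass to the limit in $\Phi_{\Delta t}(x_j,y_j)=0$ by continuity, and then argue about the limit of the matrices $A_j=\nabla_y\Phi_{\Delta t}(x_j,y_j)$. The one step where you genuinely diverge is the invertibility of the limit matrix $A$. The paper first extracts, by Bolzano--Weierstrass on the bounded sequence $A_j^{-1}$ (bounded by $m$), a convergent subsequence $A_j^{-1}\to B$ with $\lVert B\rVert\leqslant m$, and then passes to the limit in $A_j A_j^{-1}=\rmI_{dk}$ to obtain $AB=\rmI_{dk}$, so $A$ is invertible with $A^{-1}=B$. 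You instead argue directly that $A$ is injective (a unit vector $v$ in $\ker A$ would give $\lVert A_j v\rVert\geqslant 1/m$, contradicting $A_j v\to 0$), then invoke continuity of matrix inversion on the open set of invertible matrices to get $A_j^{-1}\to A^{-1}$ and the bound. Both are standard and equally rigorous; the subsequence route makes the bound on $\lVert A^{-1}\rVert$ fall out without needing continuity of inversion as a cited fact, while your kernel argument avoids the extraction and is arguably more self-contained.
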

\begin{proof}
    Since~$\calG_{\Delta t}^{n,m}\subset K_n\times K_n^k$ and~$K_n$ is compact, we only have to show that the set is closed. Let~$((x_j,y_j))_{j\geqslant1}$ be a sequence of elements of~$\calG_{\Delta t}^{n,m}$ converging to~$(x,y)\in\calX\times\calX^k$. Since~$K_n$ is closed,~$(x,y)\in K_n\times K_n^k$. Moreover,~$\Phi_{\Delta t}(x,y)=0$ follows from the continuity of~$\Phi_{\Delta t}$. Now, upon extraction, there exists a matrix~$B\in\bbR^{dk}\times\bbR^{dk}$ such that
    \begin{equation*}
        \left(\nabla_{y}\Phi_{\Delta t}(x_j,y_j)\right)^{-1}\xrightarrow[j\to+\infty]{}B,
    \end{equation*}
    with~$\left\lVert B\right\rVert\leqslant m$. For all~$j\geqslant 1$, one has
    \begin{equation*}
        \nabla_{y}\Phi_{\Delta t}(x_j,y_j)\left(\nabla_{y}\Phi_{\Delta t}(x_j,y_j)\right)^{-1}=\rmI_{dk},
    \end{equation*}
    so that from the~$\calC^1$ regularity of~$\Phi_{\Delta t}$, taking the limit~$j\to+\infty$ implies that
    \begin{equation*}
        \nabla_{y}\Phi_{\Delta t}(x,y)B=\rmI_{dk}.
    \end{equation*}
    Thus, the matrix~$\nabla_{y}\Phi_{\Delta t}(x,y)$ is invertible, its inverse being~$B$, and it holds~$\left\lVert\left(\nabla_{y}\Phi_{\Delta t}(x,y)\right)^{-1}\right\rVert\leqslant m$. It follows that~$(x,y)\in\calG_{\Delta t}^{n,m}$ and~$\calG_{\Delta t}^{n,m}$ is a compact set.
\end{proof}

Note that from Assumption~\ref{ass:existence_solution_implicit_problem}, the set~$\calG_{\Delta t}^{n,m}$ is nonempty for~$n,m\geqslant1$ sufficiently large.

\begin{lemma}
    There exists an open subset~$\calD_{\Delta t}^{n,m}$ such that~$\calG_{\Delta t}^{n,m}\subset\calD_{\Delta t}^{n,m}$, and a~$\calC^{1}$ function~$\Lambda_{\Delta t}^{n,m}\colon\calD_{\Delta t}^{n,m}\to\calX^{k}$ such that 
    \begin{equation*}
        \forall (x,y)\in\calD_{\Delta t}^{n,m},\qquad \Phi_{\Delta t}(x,y+\Lambda_{\Delta t}^{n,m}(x,y))=0
    \end{equation*}
    and
    \begin{equation*}
        \forall (x,y)\in\calD_{\Delta t}^{n,m},\qquad\nabla_{y}\Phi_{\Delta t}(x,y+\Lambda_{\Delta t}^{n,m}(x,y))\text{ is invertible}.
    \end{equation*}
    Moreover, the set~$\calD_{\Delta t}^{n,m}$ can be chosen such that there exists~$\widetilde{\alpha}(n,m)>0$ for which,
    \begin{align}
        \forall(x,y)\in\calD_{\Delta t}^{n,m},&\,\left\lVert\Lambda_{\Delta t}^{n,m}(x,y)\right\rVert<\widetilde{\alpha}(n,m)\text{ and}\nonumber\\
        &\forall\lambda\in\calX^{k}\setminus\left\lbrace\Lambda_{\Delta t}^{n,m}(x,y)\right\rbrace,\,\Phi_{\Delta t}(x,y+\lambda)=0\Longrightarrow\left\lVert\lambda\right\rVert\geqslant\widetilde{\alpha}(n,m).\label{eq:smallest_projection_zeta}
    \end{align}
\end{lemma}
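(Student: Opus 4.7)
The plan is to apply the implicit function theorem pointwise on the compact set $\calG_{\Delta t}^{n,m}$, then use compactness to glue the local solutions into a globally consistent $\calC^1$ map whose image lies in a uniformly small ball around $0$.

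First, for each $p=(x_0,y_0)\in\calG_{\Delta t}^{n,m}$, I would apply the implicit function theorem to the auxiliary map $F(x,y,\lambda)=\Phi_{\Delta t}(x,y+\lambda)$ at the point $(x_0,y_0,0)$. Since $\nabla_\lambda F(x_0,y_0,0)=\nabla_{y}\Phi_{\Delta t}(x_0,y_0)$ is invertible by definition of $\calG_{\Delta t}^{n,m}$, and $F(x_0,y_0,0)=\Phi_{\Delta t}(x_0,y_0)=0$, the IFT provides an open neighborhood $U_p\subset\calX\times\calX^k$ of $p$, a radius $\delta_p>0$, and a unique $\calC^1$ map $g_p\colon U_p\to B(0,\delta_p)$ such that $g_p(p)=0$ and, for all $(x,y,\lambda)\in U_p\times B(0,\delta_p)$, the equation $\Phi_{\Delta t}(x,y+\lambda)=0$ is equivalent to $\lambda=g_p(x,y)$. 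Upon shrinking $U_p$ using continuity of $(x,y)\mapsto\nabla_{y}\Phi_{\Delta t}(x,y+g_p(x,y))$, I may further assume this matrix is invertible throughout $U_p$.

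Next, I extract a finite subcover $\{U_{p_i}\}_{i=1}^N$ of $\calG_{\Delta t}^{n,m}$ and set $\delta_0=\tfrac{1}{2}\min_i\delta_{p_i}>0$. A key observation is that for any $p\in\calG_{\Delta t}^{n,m}\cap U_{p_i}$, the value $\lambda=0$ satisfies $\Phi_{\Delta t}(p+(0,\lambda))=0$ and lies in $B(0,\delta_{p_i})$, so the IFT uniqueness forces $g_{p_i}(p)=0$; by continuity of $g_{p_i}$ there is thus an open neighborhood $V_p\subset U_{p_i}$ of $p$ on which $\|g_{p_i}\|<\delta_0$. Extracting a finite subcover $\{V_{q_j}\}_{j=1}^M$ of $\calG_{\Delta t}^{n,m}$ with associated indices $i(j)$ such that $V_{q_j}\subset U_{p_{i(j)}}$, I define
\begin{equation*}
    \calD_{\Delta t}^{n,m}=\bigcup_{j=1}^M V_{q_j},\qquad \Lambda_{\Delta t}^{n,m}(x,y)=g_{p_{i(j)}}(x,y) \text{ for } (x,y)\in V_{q_j},
\end{equation*}
which is open by construction.

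The main obstacle is the consistency of this definition across overlaps, and it is here that the uniform threshold $\delta_0$ is crucial. On $V_{q_j}\cap V_{q_{j'}}$, both $g_{p_{i(j)}}(x,y)$ and $g_{p_{i(j')}}(x,y)$ have norm strictly less than $\delta_0\leq\min\{\delta_{p_{i(j)}},\delta_{p_{i(j')}}\}$, so both lie in $B(0,\delta_{p_{i(j)}})$ and both solve $\Phi_{\Delta t}(x,y+\cdot)=0$ for $(x,y)\in U_{p_{i(j)}}$; the IFT uniqueness clause at $p_{i(j)}$ then identifies them. This makes $\Lambda_{\Delta t}^{n,m}$ well-defined and $\calC^1$, and by construction $\Phi_{\Delta t}(x,y+\Lambda_{\Delta t}^{n,m}(x,y))=0$ with $\nabla_{y}\Phi_{\Delta t}(x,y+\Lambda_{\Delta t}^{n,m}(x,y))$ invertible, and $\|\Lambda_{\Delta t}^{n,m}(x,y)\|<\delta_0$. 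Setting $\widetilde{\alpha}(n,m):=\delta_0$, the gap property~\eqref{eq:smallest_projection_zeta} follows from the same uniqueness argument: any $\lambda\neq\Lambda_{\Delta t}^{n,m}(x,y)$ with $\Phi_{\Delta t}(x,y+\lambda)=0$ and $\|\lambda\|<\widetilde{\alpha}(n,m)\leq\delta_{p_{i(j)}}$ would lie in $B(0,\delta_{p_{i(j)}})$ and thus have to equal $g_{p_{i(j)}}(x,y)=\Lambda_{\Delta t}^{n,m}(x,y)$, a contradiction, hence $\|\lambda\|\geqslant\widetilde{\alpha}(n,m)$.
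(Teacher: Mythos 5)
Your proof is correct and follows essentially the same strategy as the paper: apply the implicit function theorem at each point of the compact set $\calG_{\Delta t}^{n,m}$, extract a finite subcover, and glue the local implicit functions using the uniqueness clause of the IFT. The only cosmetic difference is that you obtain the uniform norm bound $\|\Lambda_{\Delta t}^{n,m}\|<\widetilde{\alpha}(n,m)$ by a second, explicit finite-subcover extraction over the sets $V_p$ on which $\|g_{p_i}\|<\delta_0$, whereas the paper reaches the same conclusion by a slightly more informal appeal to the continuity of $\Lambda_{\Delta t}^{n,m}$ and the fact that it vanishes on $\calG_{\Delta t}^{n,m}$; the extra factor $\tfrac12$ in your $\delta_0$ is harmless.
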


\begin{proof}
    Fix~$n,m\geqslant1$ and a couple~$(x_0,y_0)\in\calG_{\Delta t}^{n,m}$. Let~$\calU_0\subset\calX\times\calX^k\times\calX^k$ be an open neighborhood of~$(x_0,y_0,0)$ such that
    \begin{equation*}
        \forall (x,y,\lambda)\in\calU_0,\qquad y+\lambda\in\calX^k.
    \end{equation*}
    This neighborhood exists since~$\calX$ is an open set. Let us consider the function
    \begin{equation*}
        F\colon
        \left\lbrace
            \begin{aligned}
                \calU_0&\to\calX^{k}\\
                (x,y,\lambda)&\mapsto\Phi_{\Delta t}(x,y+\lambda).
            \end{aligned}
        \right.
    \end{equation*}
    This is a~$\calC^{1}$ function such that~$F(x_0,y_0,0)=0$ and the matrix
    \begin{equation*}
        \nabla_{\lambda}F(x_0,y_0,0)=\nabla_{y}\Phi_{\Delta t}(x_0,y_0)
    \end{equation*}
    is invertible. Therefore, by the implicit function theorem, there exist~$\eta(x_0,y_0)>0$,~$\alpha(x_0,y_0)>0$ and a~$\calC^{1}$ function~$\Lambda_{\Delta t}^{n,m,0}\colon\calB(x_0,\eta(x_0,y_0))\times\calB(y_0,\eta(x_0,y_0))\to\calB(0,\alpha(x_0,y_0))$ such that~$\calB(x_0,\eta(x_0,y_0))\times\calB(y_0,\eta(x_0,y_0))\times\calB(0,\alpha(x_0,y_0))\subset\calU_0$ and
    \begin{multline}
        \label{eq:ift_uniqueness}
        \forall (x,y,\lambda)\in\calB(x_0,\eta(x_0,y_0))\times\calB(y_0,\eta(x_0,y_0))\times\calB(0,\alpha(x_0,y_0)),\\
        \Phi_{\Delta t}(x,y+\lambda)=0\iff\lambda=\Lambda_{\Delta t}^{n,m,0}(x,y).
    \end{multline}
    By a continuity argument, upon reducing~$\eta(x_0,y_0)$, one can assume that the matrix~$\nabla_{y}\Phi_{\Delta t}(x,y+\Lambda_{\Delta t}^{n,m,0}(x,y))$ is invertible for all~$(x,y)\in\calB(x_0,\eta(x_0,y_0))\times\calB(y_0,\eta(x_0,y_0))$, since this is true for~$(x,y)=(x_0,y_0)$. Note that, by the definition of~$F$, for any~$(y,\lambda)\in\calB(y_0,\eta(x_0,y_0))\times\calB(0,\alpha(x_0,y_0))$, it holds~$y+\lambda\in\calX^k$ so that~$\Phi_{\Delta t}(x,y+\Lambda_{\Delta t}^{n,m,0}(x,y))$ is well-defined whenever~$(x,y)\in\calB(x_0,\eta(x_0,y_0))\times\calB(y_0,\eta(x_0,y_0))$.

    Remark now that~$\bigcup_{(x_0,y_0)\in\calG_{\Delta t}^{n,m}}\calB(x_0,\eta(x_0,y_0))\times\calB(y_0,\eta(x_0,y_0))$ is an open over of the compact set~$\calG_{\Delta t}^{n,m}$, from which it is possible to extract a finite cover:
    \begin{equation*}
        \calG_{\Delta t}^{n,m}\subset\bigcup_{i=1}^{\ell^{n,m}}\calB(x_i,\eta_0(x_i,y_i))\times\calB(y_i,\eta_0(x_i,y_i))=:\calD_{\Delta t}^{n,m},
    \end{equation*}
    where~$(x_i,y_i)_{1\leqslant i\leqslant \ell^{n,m}}$ are elements of~$\calG_{\Delta t}^{n,m}$. Note that the set~$\calD_{\Delta t}^{n,m}$ is an open set contained in~$\calX\times\calX^{k}$. Let us define the function~$\Lambda_{\Delta t}^{n,m}\colon\calD_{\Delta t}^{n,m}\to\calX^{k}$ as follows: for~$(x,y)\in\calD_{\Delta t}^{n,m}$ and~$1\leqslant i\leqslant \ell^{n,m}$ for which~$(x,y)\in\calB(x_i,\eta(x_i,y_i))\times\calB(y_i,\eta(x_i,y_i))$,
    \begin{equation*}
        \Lambda_{\Delta t}^{n,m}(x,y)=\Lambda_{\Delta t}^{n,m,i}(x,y).
    \end{equation*}
    Let us show that this function is well-defined. If~$(x,y)\in\calB(x_i,\eta(x_i,y_i))\times\calB(y_i,\eta(x_i,y_i))\cap\calB(x_j,\eta(x_j,y_j))\times\calB(y_j,\eta(x_j,y_j))$ with~$i\neq j$, we can assume without loss of generality that~$\alpha(x_i,y_i)\leqslant\alpha(x_j,y_j)$. Then 
    \begin{equation*}
        \Lambda_{\Delta t}^{n,m,i}(x,y)\in\calB(0,\alpha(x_i,y_i))\subset\calB(0,\alpha(x_j,y_j))
    \end{equation*}
    with~$\Phi_{\Delta t}\left(x,y+\Lambda_{\Delta t}^{n,m,i}(x,y)\right)=0$. By~\eqref{eq:ift_uniqueness}, this shows that~$\Lambda_{\Delta t}^{n,m,i}(x,y)=\Lambda_{\Delta t}^{n,m,j}(x,y)$. 

    Let us now define 
    \begin{equation}
        \label{eq:tilde_alpha}
        \widetilde{\alpha}(n,m)=\min\limits_{1\leqslant i\leqslant \ell^{n,m}}\alpha(x_i,y_i)>0.
    \end{equation}
    Upon reducing~$\calD_{\Delta t}^{n,m}$ to a smaller open set still containing~$\calG_{\Delta t}^{n,m}$, one can assume that for all~$(x,y)\in\calG_{\Delta t}^{n,m}$, it holds~$\Lambda_{\Delta t}^{n,m}(x,y)\in\calB(0,\widetilde{\alpha}(n,m))$ since~$\Lambda_{\Delta t}^{n,m}$ is~$\calC^{1}$ and~$\Lambda_{\Delta t}^{n,m}=0$ on~$\calG_{\Delta t}^{n,m}$. The property~\eqref{eq:smallest_projection_zeta} is then a consequence of~\eqref{eq:ift_uniqueness}.
\end{proof}

We are now in position to prove Proposition~\ref{prop:Dimp}.
\begin{proof}[Proof of Proposition~\ref{prop:Dimp}]
    Let us define the set
    \begin{equation*}
        \calD_{\Delta t}^{\imp}=\bigcup_{n,m\geqslant1}\calD_{\Delta t}^{n,m}.
    \end{equation*}
    The set~$\calD_{\Delta t}^{\imp}$ is open and contained in~$\calX\times\calX^k$ as it is the union of open sets contained in~$\calX\times\calX^k$. By construction, the set~$\calD_{\Delta t}^{\imp}$ contains~$\calG_{\Delta t}$ since~$\calG_{\Delta t}=\bigcup_{n,m\geqslant1}\calG_{\Delta t}^{n,m}$.

    Let us define the function~$\Lambda_{\Delta t}^{\imp}\colon\calD_{\Delta t}^{\imp}\to\calX^{k}$ as follows: for~$(x_0,y_0)\in\calD_{\Delta t}^{\imp}$, there exist~$n_0,m_0\geqslant1$ such that~$(x_0,y_0)\in\calD_{\Delta t}^{n_0,m_0}$; then,
    \begin{equation*}
        \Lambda_{\Delta t}^{\imp}(x,y)=\Lambda_{\Delta t}^{n_0,m_0}(x,y).
    \end{equation*}
    The function~$\Lambda_{\Delta t}^{\imp}$ is well-defined. Indeed, if~$n,m,n',m'\geqslant1$ are such that~$(x,y)\in\calD_{\Delta t}^{n,m}\cap\calD_{\Delta t}^{n',m'}$, we can assume without loss of generality that~$\widetilde{\alpha}(n,m)\leqslant\widetilde{\alpha}(n',m')$ where~$\widetilde{\alpha}(n,m)$ is defined by~\eqref{eq:tilde_alpha}. Then,
    \begin{equation*}
        \Lambda_{\Delta t}^{n,m}(x,y)\in\calB(0,\widetilde{\alpha}(n,m))\subset\calB(0,\widetilde{\alpha}(n',m')),
    \end{equation*}
    with~$\Phi_{\Delta t}(x,y+\Lambda_{\Delta t}^{n,m}(x,y))=0$, so that~\eqref{eq:ift_uniqueness} implies that~$\Lambda_{\Delta t}^{n,m}(x,y)=\Lambda_{\Delta t}^{n',m'}(x,y)$. The last property of Proposition~\ref{prop:Dimp} follows from~\eqref{eq:smallest_projection_zeta} by considering~$\calV_0=\calD_{\Delta t}^{n_0,m_0}$ and~$\alpha_0=\widetilde{\alpha}(n_0,m_0)$ when~$(x,y)\in\calD_{\Delta t}^{n_0,m_0}$. This concludes the proof.
\end{proof}

%---------------------------------------------------------
%---------------------------------------------------------
%---------------------------------------------------------
%---------------------------------------------------------
%---------------------------------------------------------
%---------------- ACKNOWLEDGEMENTS -----------------------
%---------------------------------------------------------
%---------------------------------------------------------
%---------------------------------------------------------
%---------------------------------------------------------
%---------------------------------------------------------
\paragraph{Acknowledgements.}
The works of T.L., R.S. and G.S. benefit from fundings from the European Research Council (ERC) under the European Union's Horizon 2020 research and innovation program (project EMC2, grant agreement No 810367), and from the Agence Nationale de la Recherche through the grants ANR-19-CE40-0010-01 (QuAMProcs) and ANR-21-CE40-0006 (SINEQ). This project was initiated as T.L. was a visiting professor at Imperial College of London (ICL), with a visiting professorship grant from the Leverhulme Trust. The Department of Mathematics at ICL and the Leverhulme Trust are warmly thanked for their support. 

%---------------------------------------------------------
%---------------------------------------------------------
%---------------------------------------------------------
%---------------------------------------------------------
%---------------------------------------------------------
%---------------- BIBLIOGRAPHY ---------------------------
%---------------------------------------------------------
%---------------------------------------------------------
%---------------------------------------------------------
%---------------------------------------------------------
%---------------------------------------------------------
\bibliographystyle{plain}
\bibliography{biblio.bib}

\begin{thebibliography}{10}

\bibitem{abdulle_2019}
A.~Abdulle, G.~A. Pavliotis, and G.~Vilmart.
\newblock Accelerated convergence to equilibrium and reduced asymptotic
  variance for {L}angevin dynamics using {S}tratonovich perturbations.
\newblock {\em C. R. Math. Acad. Sci. Paris}, 357(4):349--354, 2019.

\bibitem{andersen_1983}
H.~C. Andersen.
\newblock Rattle: A “velocity” version of the shake algorithm for molecular
  dynamics calculations.
\newblock {\em Journal of Computational Physics}, 52(1):24--34, 1983.

\bibitem{balian_2007}
R.~Balian.
\newblock {\em From {M}icrophysics to {M}acrophysics: {M}ethods and
  {A}pplications of {S}tatistical {P}hysics. {V}ol. {II}}.
\newblock Springer, Berlin, 2007.

\bibitem{bou-rabee_2010}
N.~{Bou-Rabee} and E.~{Vanden-Eijnden}.
\newblock Pathwise accuracy and ergodicity of metropolized integrators for
  {{SDEs}}.
\newblock {\em Communications on Pure and Applied Mathematics}, 63(5):655--696,
  2010.

\bibitem{brofos_2021_b}
J.~A. Brofos and R.~R. Lederman.
\newblock Evaluating the {I}mplicit {M}idpoint {I}ntegrator for {R}iemannian
  {M}anifold {H}amiltonian {M}onte {C}arlo.
\newblock {\em Proc Mach Learn Res}, 139:1072--1081, July 2021.

\bibitem{brofos_2021}
J.~A. Brofos and R.~R. Lederman.
\newblock On numerical considerations for {R}iemannian {M}anifold {H}amiltonian
  {M}onte {C}arlo.
\newblock {\em arXiv preprint, \em{\bf{2111.09995}}}, 2021.

\bibitem{cances_2007}
E.~Canc\`es, F.~Legoll, and G.~Stoltz.
\newblock Theoretical and numerical comparison of some sampling methods for
  molecular dynamics.
\newblock {\em M2AN Math. Model. Numer. Anal.}, 41(2):351--389, 2007.

\bibitem{comer_2013}
J.~Comer, C.~Chipot, and F.~D. González-Nilo.
\newblock Calculating position-dependent diffusivity in biased molecular
  dynamics simulations.
\newblock {\em Journal of Chemical Theory and Computation}, 9(2):876--882,
  2013.

\bibitem{douc_2018}
R.~Douc, E.~Moulines, P.~Priouret, and P.~Soulier.
\newblock {\em Markov Chains}.
\newblock Springer Series in Operations Research and Financial Engineering.
  Springer, Cham, 2018.

\bibitem{duane_1987}
S.~Duane, A.~D. Kennedy, B.~J. Pendleton, and D.~Roweth.
\newblock Hybrid {{Monte Carlo}}.
\newblock {\em Physics Letters B}, 195(2):216--222, 1987.

\bibitem{duncan_2016}
A.~B. Duncan, T.~Leli{\`e}vre, and G.~A. Pavliotis.
\newblock Variance reduction using nonreversible langevin samplers.
\newblock {\em Journal of Statistical Physics}, 163(3):457--491, 2016.

\bibitem{durmus_2020}
A.~Durmus, E.~Moulines, and E~Saksman.
\newblock Irreducibility and geometric ergodicity of {H}amiltonian {M}onte
  {C}arlo.
\newblock {\em Ann. Statist.}, 48(6):3545--3564, 2020.

\bibitem{evans_2008}
D.~J. Evans and G.~Morriss.
\newblock {\em Statistical Mechanics of Nonequilibrium Liquids}.
\newblock Cambridge University Press, 2 edition, 2008.

\bibitem{fang_2014}
Y.~Fang, J.~M. Sanz-Serna, and R.~D. Skeel.
\newblock Compressible {G}eneralized {H}ybrid {M}onte {C}arlo.
\newblock {\em The Journal of Chemical Physics}, 140(17):174108, 2014.

\bibitem{fathi_2017}
M.~Fathi and G.~Stoltz.
\newblock Improving dynamical properties of metropolized discretizations of
  overdamped {{Langevin}} dynamics.
\newblock {\em Numerische Mathematik}, 136(2):545--602, 2017.

\bibitem{frenkel_2001}
D.~Frenkel and B.~Smit.
\newblock {\em Understanding Molecular Simulation}.
\newblock Academic Press, Inc., USA, 2nd edition, 2001.

\bibitem{garbuno_2020}
A.~Garbuno-Inigo, N.~N\"{u}sken, and S.~Reich.
\newblock Affine invariant interacting {L}angevin dynamics for {B}ayesian
  inference.
\newblock {\em SIAM Journal on Applied Dynamical Systems}, 19(3):1633--1658,
  2020.

\bibitem{girolami_2011}
M.~Girolami and B.~Calderhead.
\newblock Riemann manifold {{Langevin}} and {{Hamiltonian Monte Carlo}}
  methods.
\newblock {\em Journal of the Royal Statistical Society: Series B (Statistical
  Methodology)}, 73(2):123--214, 2011.

\bibitem{hairer_2006}
E.~Hairer, C.~Lubich, and G.~Wanner.
\newblock {\em Geometric Numerical Integration}, volume~31 of {\em Springer
  Series in Computational Mathematics}.
\newblock Springer-Verlag, Berlin, second edition, 2006.

\bibitem{hastings_1970}
W.~K. Hastings.
\newblock Monte {{Carlo}} sampling methods using {{Markov}} chains and their
  applications.
\newblock {\em Biometrika}, 57(1):97--109, 1970.

\bibitem{horowitz_1991}
A.~M. Horowitz.
\newblock A generalized guided {Monte} {Carlo} algorithm.
\newblock {\em Physics Letters B}, 268(2):247--252, 1991.

\bibitem{izaguirre_2004}
J.~A. Izaguirre and S.~S. Hampton.
\newblock Shadow hybrid {M}onte {C}arlo: an efficient propagator in phase space
  of macromolecules.
\newblock {\em Journal of Computational Physics}, 200(2):581--604, 2004.

\bibitem{jardat_1999}
M.~Jardat, O.~Bernard, P.~Turq, and G.~R. Kneller.
\newblock Transport coefficients of electrolyte solutions from {Smart}
  {Brownian} dynamics simulations.
\newblock {\em The Journal of Chemical Physics}, 110(16):7993--7999, 1999.

\bibitem{kook_2022}
Y.~Kook, Y.~Lee, R.~Shen, and S.~Vempala.
\newblock Sampling with {R}iemannian {H}amiltonian {M}onte {C}arlo in a
  constrained space.
\newblock In Alice~H. Oh, Alekh Agarwal, Danielle Belgrave, and Kyunghyun Cho,
  editors, {\em Advances in Neural Information Processing Systems}, 2022.

\bibitem{kook_2023}
Y.~Kook, Y.~Lee, R.~Shen, and S.~Vempala.
\newblock Condition-number-independent convergence rate of {R}iemannian
  {H}amiltonian {M}onte {C}arlo with numerical integrators.
\newblock {\em arXiv preprint, \em{\bf{2210.07219}}}, 2023.

\bibitem{kopec_2015}
M.~Kopec.
\newblock Weak backward error analysis for {L}angevin process.
\newblock {\em BIT}, 55(4):1057--1103, 2015.

\bibitem{legoll_2010}
Frédéric L. and Tony L.
\newblock Effective dynamics using conditional expectations.
\newblock {\em Nonlinearity}, 23(9):2131, jul 2010.

\bibitem{leimkuhler_2015}
B.~Leimkuhler, C.~Matthews, and G.~Stoltz.
\newblock The computation of averages from equilibrium and nonequilibrium
  {L}angevin molecular dynamics.
\newblock {\em IMA J. Numer. Anal.}, 36(1):13--79, 2016.

\bibitem{leimkuhler_2018}
B.~Leimkuhler, C.~Matthews, and J.~Weare.
\newblock Ensemble preconditioning for {{Markov}} chain {{Monte Carlo}}
  simulation.
\newblock {\em Statistics and Computing}, 28(2):277--290, March 2018.

\bibitem{leimkuhler_2005}
B.~Leimkuhler and S.~Reich.
\newblock {\em Simulating {{Hamiltonian Dynamics}}}.
\newblock Cambridge {{Monographs}} on {{Applied}} and {{Computational
  Mathematics}}. {Cambridge University Press}, {Cambridge}, 2005.

\bibitem{lelievre_2023}
T.~Leli\`evre, G.~Pavliotis, G.~Robin, R.~Santet, and G.~Stoltz, 2023.
\newblock in preparation.

\bibitem{lelievre_2010}
T.~Leli{\`e}vre, M.~Rousset, and G.~Stoltz.
\newblock {\em Free {{Energy Computations}}: {{A Mathematical Perspective}}}.
\newblock {Imperial College Press}, 2010.

\bibitem{lelievre_2019}
T.~Leli{\`e}vre, M.~Rousset, and G.~Stoltz.
\newblock Hybrid {{Monte Carlo}} methods for sampling probability measures on
  submanifolds.
\newblock {\em Numerische Mathematik}, 143(2):379--421, 2019.

\bibitem{lelievre_2016}
T.~Leli\`evre and G.~Stoltz.
\newblock Partial differential equations and stochastic methods in molecular
  dynamics.
\newblock {\em Acta Numer.}, 25:681--880, 2016.

\bibitem{lelievre_2022}
T.~Lelièvre, G.~Stoltz, and W.~Zhang.
\newblock {Multiple projection {M}arkov chain {M}onte {C}arlo algorithms on
  submanifolds}.
\newblock {\em IMA Journal of Numerical Analysis}, 2022.

\bibitem{liu_2022}
Z.~Liu, A.M. Stuart, and Y.~Wang.
\newblock Second order ensemble {L}angevin method for sampling and inverse
  problems.
\newblock {\em arXiv preprint, \em{\bf{2208.04506}}}, 2022.

\bibitem{livingstone_2019}
S.~Livingstone, M.~Betancourt, S.~Byrne, and M.~Girolami.
\newblock {On the geometric ergodicity of {H}amiltonian {M}onte {C}arlo}.
\newblock {\em Bernoulli}, 25(4A):3109 -- 3138, 2019.

\bibitem{metropolis_1953}
N.~Metropolis, A.~W. Rosenbluth, M.~N. Rosenbluth, A.~H. Teller, and E.~Teller.
\newblock Equation of {{state calculations}} by {{fast computing machines}}.
\newblock {\em The Journal of Chemical Physics}, 21(6):1087--1092, 1953.

\bibitem{meyn_2009}
S.~P. Meyn and R.~L. Tweedie.
\newblock {\em Markov Chains and Stochastic Stability}.
\newblock Cambridge University Press, Cambridge, second edition, 2009.

\bibitem{milstein_2004}
G.~N. Milstein and M.~V. Tretyakov.
\newblock {\em Stochastic Numerics for Mathematical Physics}.
\newblock Scientific Computation. Springer-Verlag, Berlin, 2004.

\bibitem{monmarche_2022}
P.~Monmarché.
\newblock An entropic approach for {H}amiltonian {M}onte {C}arlo: the idealized
  case.
\newblock {\em arXiv preprint, \em{\bf{2209.13405}}}, 2022.

\bibitem{munkres_2000}
J.~R. Munkres.
\newblock {\em Topology}.
\newblock Prentice Hall, Inc., Upper Saddle River, NJ, 2nd edition, 2000.

\bibitem{neal_1993}
R.~M. Neal.
\newblock {\em Probabilistic Inference Using Markov Chain Monte Carlo Methods}.
\newblock Technical Report CRG-TR-93-1. Department of Computer Science,
  University of Toronto, 1993.

\bibitem{noble_2023}
M.~Noble, V.~De~Bortoli, and A.~Durmus.
\newblock Unbiased constrained sampling with self-concordant barrier
  {H}amiltonian {M}onte {C}arlo.
\newblock {\em arXiv preprint, \em{\bf{2210.11925}}}, 2023.

\bibitem{phillips_2023}
D.~Phillips, C.~Matthews, and B.~Leimkuhler.
\newblock Numerical methods with coordinate transforms for efficient brownian
  dynamics simulations.
\newblock {\em arXiv preprint, \em{\bf{2307.02913}}}, 2023.

\bibitem{poncet_2017}
R.~Poncet.
\newblock Generalized and hybrid {M}etropolis--{H}astings overdamped {Langevin}
  algorithms.
\newblock {\em arXiv preprint, \em{\bf{1701.05833}}}, 2017.

\bibitem{rey-bellet_2015}
L.~Rey-Bellet and K.~Spiliopoulos.
\newblock Irreversible {L}angevin samplers and variance reduction: a large
  deviations approach.
\newblock {\em Nonlinearity}, 28(7):2081, 2015.

\bibitem{rey-bellet_2015_ii}
L.~Rey-Bellet and K.~Spiliopoulos.
\newblock {Variance reduction for irreversible {L}angevin samplers and
  diffusion on graphs}.
\newblock {\em Electronic Communications in Probability}, 20:1--16, 2015.

\bibitem{rey-bellet_2016}
L.~Rey-Bellet and K.~Spiliopoulos.
\newblock Improving the convergence of reversible samplers.
\newblock {\em J. Stat. Phys.}, 164(3):472--494, 2016.

\bibitem{robert_2007}
C.~P. Robert.
\newblock {\em The {B}ayesian {C}hoice}.
\newblock Springer Texts in Statistics. Springer, New York, second edition,
  2007.

\bibitem{robert_2004}
C.~P. Robert and G.~Casella.
\newblock Markov {{chains}}.
\newblock In Christian~P. Robert and George Casella, editors, {\em Monte
  {{Carlo Statistical Methods}}}, Springer {{Texts}} in {{Statistics}}, pages
  205--265. {Springer}, {New York, NY}, 2004.

\bibitem{roberts_1998}
G.~O. Roberts and J.~S. Rosenthal.
\newblock Optimal scaling of discrete approximations to {{Langevin}}
  diffusions.
\newblock {\em Journal of the Royal Statistical Society: Series B (Statistical
  Methodology)}, 60(1):255--268, 1998.

\bibitem{roberts_2002}
G.~O. Roberts and O.~Stramer.
\newblock Langevin diffusions and {M}etropolis-{H}astings algorithms.
\newblock {\em Methodol. Comput. Appl. Probab.}, 4:337--357 (2003), 2002.

\bibitem{rossky_1978}
P.~J. Rossky, J.~D. Doll, and H.~L. Friedman.
\newblock Brownian dynamics as smart {{Monte Carlo}} simulation.
\newblock {\em The Journal of Chemical Physics}, 69(10):4628--4633, 1978.

\bibitem{ryckaert_1977}
J-P. Ryckaert, G.~Ciccotti, and H.~J.~C. Berendsen.
\newblock Numerical integration of the {C}artesian equations of motion of a
  system with constraints: {M}olecular dynamics of n-alkanes.
\newblock {\em Journal of Computational Physics}, 23(3):327--341, 1977.

\bibitem{saffman_1975}
P.~G. Saffman and M.~Delbrück.
\newblock Brownian motion in biological membranes.
\newblock {\em Proceedings of the National Academy of Sciences},
  72(8):3111--3113, 1975.

\bibitem{schuette_1999}
C.~Sch{\"u}tte.
\newblock {\em Habilitation Thesis}.
\newblock Freie Universität Berlin, 1999.

\bibitem{stoltz_2018}
G.~Stoltz and Z.~Trstanova.
\newblock Langevin dynamics with general kinetic energies.
\newblock {\em Multiscale Model. Simul.}, 16(2):777--806, 2018.

\bibitem{talay_1990}
D.~Talay and L.~Tubaro.
\newblock Expansion of the global error for numerical schemes solving
  stochastic differential equations.
\newblock {\em Stochastic Anal. Appl.}, 8(4):483--509 (1991), 1990.

\bibitem{tuckerman_2010}
M.~E. Tuckerman.
\newblock {\em Statistical Mechanics: Theory and Molecular Simulation}.
\newblock Oxford University Press, Oxford, 2010.

\bibitem{zappa_2018}
E.~Zappa, M.~{Holmes-Cerfon}, and J.~Goodman.
\newblock Monte {{Carlo}} on {{manifolds}}: {{sampling densities}} and
  {{integrating functions}}.
\newblock {\em Communications on Pure and Applied Mathematics},
  71(12):2609--2647, 2018.

\bibitem{zeidler_1995}
E.~Zeidler.
\newblock {\em Applied Functional Analysis}, volume 108 of {\em Applied
  Mathematical Sciences}.
\newblock Springer-Verlag, New York, 1995.

\bibitem{zhang_2022}
B.~J. Zhang, Y.~M. Marzouk, and K.~Spiliopoulos.
\newblock Geometry-informed irreversible perturbations for accelerated
  convergence of {{Langevin}} dynamics.
\newblock {\em Statistics and Computing}, 32(5):78, September 2022.

\end{thebibliography}

\end{document}